\DeclareMathAlphabet{\mathpzc}{OT1}{pzc}{m}{it}
\newcommand\Rep{\operatorname{Rep}}
\newcommand{\mo}{{\mathcal M}}
\newcommand{\ca}{{\mathcal C}}
\newcommand{\otb}{{\overline{\otimes}}}
\newcommand{\Do}{{\mathcal D}}
\definecolor{verde}{rgb}{0.,0.7,0.}
\definecolor{indigo}{rgb}{.18, .34, .78}
\definecolor{indigo1}{rgb}{.18, .24, .78}
\definecolor{indigo2}{rgb}{.18, .14, .78}
\definecolor{indigo3}{rgb}{.18, 0., .78}
\definecolor{rojo}{rgb}{1,0,0}
\definecolor{negro}{rgb}{0,0,0}
\definecolor{lila}{rgb}{.46, .16, .78}
\definecolor{lila1}{rgb}{.46, .16, .86}
\definecolor{lila2}{rgb}{.56, .16, .86}
	\definecolor{lila3}{rgb}{.63, .16, .78}
\definecolor{lila4}{rgb}{.7, .16, .78}
\definecolor{lila5}{rgb}{.78, .26, .78}
\definecolor{lila6}{rgb}{.6, 0., .78}
\theoremstyle{plain}
\newtheorem{thm}{Theorem}[section]
\newtheorem{lma}[thm]{Lemma}
\newtheorem{cor}[thm]{Corollary}
\newtheorem{defn}[thm]{Definition}
\newtheorem{defnlma}[thm]{Definition and Lemma}
\newtheorem{rem}[thm]{Remark}
\newtheorem{prop}[thm]{Proposition}
\newtheorem{ex}[thm]{Example}
\newcommand{\qed}{\hfill\quad\fbox{\rule[0mm]{0,0cm}{0,0mm}}  \par\bigskip}
\newcommand{\Ee}{\mathcal{E}}
\newcommand{\x}{\mbox{-}}
\newcommand{\R}{{\mathcal R}}
\newcommand{\w}{\hspace{-0,12cm}}
\newcommand{\Br}{{\rm Br}}
\newcommand{\Cc}{{\mathfrak C}}
\newcommand{\Del}{\boxtimes}
\newcommand{\comp}{\circ}
\newcommand{\iso}{\cong}
\newcommand{\ot}{\otimes}
\newcommand{\C}{{\mathcal C}}
\newcommand{\M}{{\mathcal M}}
\newcommand{\D}{{\mathcal D}}
\newcommand{\F}{{\mathcal F}}
\newcommand{\G}{{\mathcal G}}
\newcommand{\HH}{{\mathcal H}}
\newcommand{\N}{{\mathcal N}}
\newcommand{\A}{{\mathcal A}}
\newcommand{\E}{{\mathcal E}}
\def\dul#1{\underline{\underline{#1}}}
\def\u{\underline}
\def\o{\overline}
\newcommand{\coev}{\rm coev}
\newcommand{\Ll}{{\mathcal L}}
\newcommand{\Pp}{{\mathcal P}}
\newcommand{\crta}{\overline}
\newcommand{\ev}{\rm ev}
\newcommand{\Id}{\operatorname {Id}}
\newcommand{\id}{\operatorname {id}}
\newcommand{\alfa}{\alpha}
\newcommand{\Ker}{\operatorname {Ker}}
\newcommand{\Hom}{\operatorname {Hom}}
\newcommand{\End}{\operatorname {End}}
\newcommand{\Fun}{\operatorname {Fun}}
\newcommand{\Can}{\operatorname {Can}}
\def\colim{{\rm colim}}
\newcommand{\Pic}{\operatorname{Pic}}
\newcommand{\BrPic}{\operatorname{BrPic}}
\newcommand{\Mod}{\operatorname{Mod}}
\newcommand{\Bimod}{\operatorname{Bimod}}
\newcommand{\Inv}{\operatorname{Inv}}
\newcommand{\im}{{\rm Im}\,}
\newcommand{\cref}[1]{Cor.~\ref{c:#1}}
\newcommand{\exlabel}[1]{\label{ex:#1}}
\newcommand{\exref}[1]{Example~\ref{ex:#1}}
\newcommand{\lelabel}[1]{\label{le:#1}}
\newcommand{\leref}[1]{Lemma~\ref{le:#1}}
\newcommand{\eqlabel}[1]{\label{eq:#1}}
\newcommand{\equref}[1]{(\ref{eq:#1})}
\newcommand{\thlabel}[1]{\label{th:#1}}
\newcommand{\prlabel}[1]{\label{pr:#1}}
\newcommand{\prref}[1]{Proposition~\ref{pr:#1}}
\newcommand{\colabel}[1]{\label{co:#1}}
\newcommand{\coref}[1]{Corollary~\ref{co:#1}}
\newcommand{\rmlabel}[1]{\label{rm:#1}}
\newcommand{\rmref}[1]{Remark~\ref{rm:#1}}
\newcommand{\selabel}[1]{\label{se:#1}}
\newcommand{\seref}[1]{Section~\ref{se:#1}}
\newcommand{\sslabel}[1]{\label{ss:#1}}
\newcommand{\ssref}[1]{Subsection~\ref{ss:#1}}
\begin{document}

\title{Coring categories and Villamayor-Zelinsky sequence for symmetric finite tensor categories}
\author{Bojana Femi\'c \vspace{6pt} \\
{\small Facultad de Ingenier\'ia, \vspace{-2pt}}\\
{\small  Universidad de la Rep\'ublica} \vspace{-2pt}\\
{\small  Julio Herrera y Reissig 565} \vspace{-2pt}\\
{\small  11 300 Montevideo, Uruguay}}

\date{}

\maketitle
\begin{abstract}
In the preceeding paper we constructed an infinite exact sequence a la Villamayor-Zelinsky for a symmetric finite tensor category. It consists of
cohomology groups evaluated at three types of coefficients which repeat periodically. In the present paper we interpret the middle cohomology group
in the second level of the sequence. We introduce the notion of coring categories and we obtain that the mentioned middle cohomology group is
isomorphic to the group of Azumaya quasi coring categories. This result is a categorical generalization of the classical Crossed Product Theorem,
which relates the relative Brauer group and the second Galois cohomology group with respect to a Galois field extension. We construct the colimit
over symmetric finite tensor categories of the relative groups of Azumaya quasi coring categories and the full group of Azumaya quasi coring categories
over $vec$. We prove that the latter two groups are isomorphic.

\bigbreak
{\em Mathematics Subject Classification (2010): 18D10, 16W30, 19D23.}

{\em Keywords: Brauer-Picard group, finite tensor category, symmetric monoidal category, cohomology groups.}
\end{abstract}

\section{Introduction}

In its origins, the Brauer group was defined for a field: it consists of equivalence classes of
central simple algebras over a field. Its first generalization is over a commutative ring $R$: central simple algebras are replaced by Azumaya algebras $A$, satisfying
$A\ot_RA^{op}\iso\End_R(A)$. Further generalizations are crowned by the Brauer group of a braided monoidal category $\C$ constructed in \cite{VZ1}, it consists of equivalence classes of
Azumaya algebras $A\in\C$ such that $A\ot A^{op}\iso[A,A]$ and $A^{op}\ot A\iso[A,A]^{op}$, where $[A,A]$ denotes the inner hom object.

By the classical Crossed Product Theorem the relative	Brauer group $\Br(l/k)$ is isomorphic to the second Galois cohomology group with respect to the
Galois field extension $l/k$. Since every central simple algebra can be	split by a Galois field extension, the full Brauer group $\Br(k)$ of the field $k$ is a limit
of the relative Brauer groups $\Br(l/k)$ for Galois field extensions $l/k$, and as such it is isomorphic to the second Galois cohomology group with respect to the separable closure of the field.
If we change the setting and we consider Galois extensions of commutative rings, we have that not every Azumaya algebra can be split by a Galois ring extension.
On the other hand, for the relative Brauer group 
instead of the Crossed Product Theorem we have an infinite exact sequence by Villamayor and Zelinsky \cite{ViZ}. It
involves cohomology groups which are evaluated at three types of coefficients so that the three types of cohomology groups appear periodically in
the sequence. As they observed, when the ring extension is faithfully flat, the relative Brauer group embeds into the middle term in the second level of the sequence.
If the extension is faithfully projective, the embedding is indeed an isomorphism, and the Crossed Product Theorem is recovered.

In \cite{CF} we introduced the Brauer group of Azumaya corings and we proved that it is isomorphic to the mentioned middle term cohomology group. In this
setting a commutative ring extension $R\to S$ is faithfully flat and $S\ot_R S$ is the basic Azumaya $S/R$-coring. We also have that the full Brauer group $\Br(R)$,
which is the limit of the relative Brauer groups $\Br(S/R)$ over faithfully flat ring extension $R\to S$, is isomorphic to the full group of Azumaya corings over $R$.
This resolves the deficiency in the cohomological interpretation of the Brauer group of a commutative ring.
Corings were introduced in \cite{Sw4} and a later interest for them was revived by the appearance of \cite{Brz1}, where it was shown that
various structure theorems for entwining structures in the context of gauge theory on non-commutative spaces,
Doi-Koppinen Hopf modules, just to mention some, are special cases of structure theorems for the category of comodules of a coring.


Replacing a commutative ring $S$ by a symmetric finite tensor category $\C$, we proved in \cite{Femic2} that there exists an analogous infinite exact sequence to
that of Villamayor and Zelinsky. On the other hand, replacing Azumaya algebras in a braided monoidal category from the mentioned construction in \cite{VZ1}
by invertible (exact) bimodule categories over a finite tensor category, in \cite{ENO} the authors introduced the Brauer-Picard group of a finite tensor category.
It consists of equivalence classes of exact invertible $\C$-bimodule categories $\M$, where $\C$ is a finite tensor category, satisfying: $\M\Del_{\C}\M^{op}\simeq\C$
(and equivalently $\M^{op}\Del_{\C}\M\simeq\C$). This group is of great importance in the classification of extensions of a given tensor category by a finite group,
and it is also related to mathematical physics, e.g. rational Conformal Field Theory and 3-dimensional Topological Field Theory, \cite{fsv, KK}.
In particular, it is also shown that $\BrPic(Vec_k)\iso\Br(k)$, that is, the Brauer-Picard group of the category of $k$-vector spaces is isomorphic to the Brauer
group of the field $k$. When $\C$ is braided one may consider one-sided $\C$-module categories in which the action on the other side is induced via the braiding.
Their equivalence classes form a group, called the Picard group of $\C$ and denoted by $\Pic(\C)$, and it is a subgroup of $\BrPic(\C)$. It is also known, \cite{DN},
that $\Pic(\C)$ is isomorphic to the Brauer group of {\em exact} Azumaya algebras. In particular, for the braided monoidal category of modules over a finite-dimensional
quasitriangular Hopf algebra $(H, \R)$, it is proved in \cite{DZ} that the Picard and the Brauer group of $\C={}_H\M$ are isomorphic.

In the present paper we introduce the notion of coring categories and of Azumaya quasi coring categories. We prove that the middle cohomology group in the second
level of the sequence from \cite{ Femic2} is isomorphic to the group of Azumaya quasi coring categories. These are invertible categories coming from $\Pic(\C\Del\C)$
equipped with an additional structure. The reason that we obtain {\em quasi} coring categories (coring categories without the counit functor) is that we do not have a
suitable notion of ``faithful flatness'' in the setting of module categories (we lack of examples for a formal definition). This notion was crucial in recovering the counit
for the Azumaya corings in \cite{CF}. Nevertheless, \prref{coh maps dul} is valid even without the ``faithful flatness'' condition. This result is fundamental in \seref{Full group}
for few reasons. Firstly, to construct the colimit of the relative groups of Azumaya quasi coring categories, secondly to introduce the corresponding full group
and finally to prove that the two groups are isomorphic. The interpretation of the middle term in the first level of our infinite exact sequence encounters
the same difficulty as the one mentioned above concerning the ``faithful flatness'' condition, whereas the interpretation in the third level is left for a further investigation.

\medskip

The paper is organized as follows. In the next section we recall some definitions as well as some results from \cite{Femic2} that we will be using here, and we pursue
with some basic results. In \seref{adjunctions} we develop some results based on certain adjoint pairs of functors which will help us to obtain the 1-1 correspondence in
\leref{alfa-delta}. In the following section we introduce coring categories over finite tensor categories, which will be used in our cohomological interpretation.
\seref{Amitsur coh} is dedicated to recall our Amitsur cohomology over symmetric finite tensor categories from \cite{Femic2} and we consider what we call extended cocycles.
We also prove here some new results, among others \prref{Knus} and \prref{coh maps dul} by which any two functors $F, G: \C\to\D$ between symmetric finite tensor categories
induce one and the same map between the n-th Amitsur cohomology groups: $F_*=G_*:\ H^n(\C,P)\to H^n(\D,P)$, where $P$ is any of the three types of coefficients considered
in our sequence. This result enables one to construct the colimit over symmetric finite tensor categories of $H^n(\bullet/vec,P)$.
The subject of \seref{Interpret} is the interpretation of the middle cohomology group in the second level of the sequence, where we get the group of Azumaya
quasi $\C$-coring categories, for a symmetric finite tensor category $\C$. We also construct the connecting map in the second level of the sequence that ends up in the novel group.
In the last section we prove that the colimit over symmetric finite tensor categories of the relative groups of Azumaya quasi coring categories is isomorphic to the
full group of Azumaya quasi coring categories over $vec$.

\section{Preliminaries and notation}

Throughout $k$ will be an algebraically closed field of characteristic zero, $I$ will denote the unit object in a monoidal category $\C$.
When there is no confusion we will denote the identity functor on a category $\M$ by $\M$. We recall some definitions and basic properties.

A {\em finite category} over $k$ is a $k$-linear abelian category equivalent to a category of finite-dimensional representations of a finite-dimensional $k$-algebra.
A {\em tensor category} over $k$ is a $k$-linear abelian rigid monoidal category such that the unit object is simple. An object $X$ is said to be {\em simple}
if $\End(X)=k\Id_X$.
A {\em finite tensor category} is a tensor category such that the underlying category is finite. For a finite-dimensional Hopf algebra $H$ (or, more generally,
a finite-dimensional quasi-Hopf algebra) the category of its representations $\Rep H$ is a finite tensor category.

All categories will be finite, all tensor categories will be over $k$, and all functors will be $k$-linear.

We assume the reader is familiar with the notions of a left, right and bimodule categories over a tensor category, (bi)module functors,
Deligne tensor product of finite abelian categories, tensor product of bimodule categories and exact module categories.
For the respective definitions we refer to \cite{EO}, \cite{ENO}, \cite{Gr}, \cite{EGNO}.
The Deligne tensor product bifunctor $-\Del-$ and the action bifunctor for module categories $-\crta\ot-$ are biexact in both variables \cite[Proposition 1.46.2]{EGNO}.

For finite tensor categories $\C, \D, \E$, a $\C\x\D$-bimodule category $\M$ and a $(\D,\E)$-bimodule category $\N$, the tensor product over $\D$:
$\M\boxtimes_{\D}\N$ is a $(\C,\E)$-bimodule category. Given an $\E\x\F$-bimodule category $\Pp$, there is a canonical equivalence of $\C\x\F$-bimodule
categories: $(\M\Del_{\D}\N)\Del_{\E}\Pp\simeq\M\Del_{\D}(\N\Del_{\E}\Pp)$, \cite[Remark 3.6]{ENO}.

\medbreak

Given a tensor functor $\eta: \C\to\E$, then $\E$ is a left (and similarly a right) $\C$-module category.
The action bifunctor $\C\times\E\to\E$ is given by $\ot(\eta\times\Id_{\E})$, where $\ot$ is the tensor product in $\E$, and the associator functor is
\begin{equation} \eqlabel{associator}
m_{X,Y,F}=\alpha_{\eta(X), \eta(Y), E}(\xi_{X,Y}\ot E)
\end{equation}
for every $X, Y\in\C$ and $E\in\E$, where $\xi_{X,Y}: \eta(X\ot Y)\to\eta(X)\ot\eta(Y)$
determines the monoidal structure of the functor $\eta$ and $\alpha$ is the associativity constraint for $\E$. The constraint for the action of the unit
is defined in the obvious manner. Moreover, $\E$ is a $\C\x\E$-bimodule category with the bimodule constraint
$\gamma_{X, E, F}: (X\crta\ot E)\ot F\to X\crta\ot (E\ot F)$ for $X\in\C, E,F\in\E$, given via $\gamma_{X, E, F}=\alpha_{\eta(X), E,F}$.

\medskip

{\bf $\C$-balanced functors}. Let $\M$ be a right $\C$-module and $\N$ a left $\C$-module category.
For any abelian category $\A$ a bifunctor $F: \M\times\N\to\A$ additive in every argument is called {\em $\C$-balanced}
if there are natural isomorphisms $b_{M,X,N}: F(M\crta\ot X, N) \stackrel{\iso}{\to} F(M, X\crta\ot N)$ for all $M\in\M, X\in\C, N\in\N$  s.t.
\begin{equation} \eqlabel{C-balanced}
\scalebox{0.84}{
\bfig \hspace{-1cm}
\putmorphism(-200,500)(1,0)[F((M\crta\ot X)\crta\ot Y, N)` F(M \crta\ot(X\ot Y), N)`F(m_{M,X,Y}^r, N)]{1500}{-1}a
\putmorphism(1300,500)(1,0)[\phantom{(X \ot (Y \ot U)) \ot W}`F(M,(X\ot Y)\crta\ot N)` b_{M,X \ot Y,N}]{1350}1a
\putmorphism(2870,500)(0,-1)[``F(M, m_{X,Y,N}^l)]{500}1l
\putmorphism(-160,500)(0,-1)[``b_{M\crta\ot X,Y,N}]{500}1r
\putmorphism(-200,0)(1,0)[F((M\crta\ot X), Y\crta\ot N)` F(M, X \crta\ot(Y \crta\ot N))` b_{M,X,Y \crta\ot N}]{2960}1b
\efig}
\end{equation}
commutes.

\medskip

A {\bf $\C$-balanced natural transformation} $\Psi: F\to G$ between two $\C$-balanced functors $F, G: \M\times\N\to\A$ with their respective balancing isomorphisms $f_X$ and $g_X$,
is a natural transformation such that the following diagram commutes:
\begin{equation}  \eqlabel{C balanced nat tr}
\scalebox{0.88}{\bfig
 \putmorphism(0,400)(1,0)[F((M\crta\ot X), N)` F(M, (X\crta\ot N))`f_{M,X,N}]{1500}1a
 \putmorphism(0,0)(1,0)[G((M\crta\ot X), N) ` G(M, (X\crta\ot N)).` g_{M,X,N}]{1500}1a
\putmorphism(0,400)(0,-1)[\phantom{B\ot B}``\Psi(M\crta\ot X, N)]{380}1l
\putmorphism(1500,400)(0,-1)[\phantom{B\ot B}``\Psi(M, X\crta\ot N)]{380}1r
\efig}
\end{equation}

\medskip



\medbreak

A right $\ca$-module category $\M$ gives rise to a left $\C$-module category $\mo^{op}$ with the action
given by \equref{left op} and associativity isomorphisms $m^{op}_{X,Y,M}= m_{M,{}^*Y, {}^*X}$ for all $X, Y\in \ca, M\in \mo$.
Similarly, a left $\ca$-module category $\mo$ gives rise to a right $\ca$-module category $\mo^{op}$ with the action 
given via \equref{right op}. Here ${}^*X$ denotes
the left dual object and $X^*$ the right dual object for $X\in\C$. If $\mo$ is a $(\ca,\Do)$-bimodule category then $\mo^{op}$ is a
$(\Do,\ca)$-bimodule category and $(\mo^{op})^{op}\iso\M$ as $(\ca,\Do)$-bimodule categories. \vspace{-0,7cm}
\begin{center}
\begin{tabular}{p{4.8cm}p{1,2cm}p{5.4cm}}
\begin{eqnarray}  \eqlabel{left op}
X\crta\ot^{op}M=M\otb {}^*X
\end{eqnarray}  & &
\begin{eqnarray} \eqlabel{right op}
M\crta\ot^{op}X=X^*\crta\ot M
\end{eqnarray}
\end{tabular}
\end{center} \vspace{-0,7cm}

For a $\C\x\D$-bimodule functor $\F:\M\to\N$ the $\D\x\C$-bimodule functor ${}^{op}\F: {}^{op}\N\to{}^{op}\M$ is given by
${}^{op}\F=\sigma_{\M, \C}^{-1}\comp\F^*\comp\sigma_{\N, \C}$. Here $\F^*: \Fun_{\C}(\N, \C)\to\Fun_{\C}(\M, \C)$ is given by
$\F^*(G)=G\comp\F$ and $\sigma$ is the equivalence proved in \cite[Lemma 4.3]{Femic2} and given by
$\sigma_{\M,\N}: \M\Del_{\C}{}^{op}\N\to\Fun(\N,\M)_{\C}, \sigma(M\Del_{\C}N)=M\crta\ot \o\Hom_{\N}(-, N)$.
If $\F$ is an equivalence, it is $(\F^*)^{-1}=(\F^{-1})^*=-\comp\F^{-1}$ and consequently: $({}^{op}\F)^{-1}={}^{op}(\F^{-1})$.

\medskip

A $(\ca, \Do)$-bimodule category $\mo$ is called \emph{invertible} \cite{ENO} if there are  equivalences of bimodule categories
$$\mo^{op}\boxtimes_{\ca} \mo\simeq \Do, \quad \mo\boxtimes_{\Do} \mo^{op}\simeq \ca.$$
The group of equivalence classes of exact invertible module categories over a finite tensor category $\C$ is called the Brauer-Picard group. It was
introduced in \cite{ENO} and it is denoted by $\BrPic(\C)$.

\medskip

{\bf One-sided $\C$-bimodule categories.}
When $\C$ is braided with a braiding $\Phi$, then every left $\C$-module category is a right and a $\C$-bimodule category:
$M\crta\ot X=X\crta\ot M$ with the isomorphism functors
$m_{M,X,Y}^r: M\crta\ot (X\ot Y)\to(M\crta\ot X)\crta\ot Y$ defined via:
\begin{equation} \eqlabel{right associator}
\scalebox{0.84}{
\bfig 
\putmorphism(0,500)(1,0)[M \crta\ot (X\ot Y)	`(M\crta\ot X)\crta\ot Y` m_{M,X,Y}^r]{2660}1a
\putmorphism(0,0)(1,0)[(X \w\ot\w Y) \crta\ot M` (Y\ot X)\crta\ot M` \Phi_{X,Y}\crta\ot M]{1450}1b
\putmorphism(1400,0)(1,0)[\phantom{(X \ot (Y \ot U))}`Y \crta\ot (X\crta\ot M),` m_{Y,X,M}^l]{1250}1b
\putmorphism(60,500)(0,1)[`` =]{500}1l 
\putmorphism(2670,500)(0,1)[`` =]{500}1r 
\efig}
\end{equation}
see \cite[Section 2.8]{DN}. 
Moreover, the $\C$-bimodule associativity constraint is given by:
\begin{equation} \eqlabel{mixed assoc}
\scalebox{0.84}{
\bfig
\putmorphism(0,500)(1,0)[(X\crta\ot M) \crta\ot Y	`X\crta\ot (M\crta\ot Y)` a_{X,M,Y}]{3100}1a
\putmorphism(0,0)(1,0)[Y\crta\ot (X\crta\ot M)` (Y\w\ot X)\crta\ot M`(m^l)^{-1}_{Y,X,M}]{1050}1b
\putmorphism(1000,0)(1,0)[\phantom{(X \ot (Y \ot U))}`(X\w\ot Y)\crta\ot M` \Phi_{Y,X}\ot M]{1100}1b
\putmorphism(2100,0)(1,0)[\phantom{(X \ot (Y \ot U))}`X \crta\ot (Y\crta\ot M)` m_{X,Y,M}^l]{1000}1b
\putmorphism(60,500)(0,1)[``=]{500}1l 
\putmorphism(3100,500)(0,1)[``=]{500}{-1}r 
\efig}
\end{equation}
for all $X,Y\in\C, M\in\M$. The $\C$-bimodule categories obtained in this way are called {\em one-sided $\C$-bimodule categories.} We denote their category by $\C^{br}\x\dul\Mod$.
We have that $(\C^{br}\x\dul\Mod, \Del_{\C}, \C)$ is monoidal.

\medskip

For a braided finite tensor category $\C$ we denote by $\dul\Pic(\C)$ the monoidal category $(\dul\Pic(\C), \Del_{\C}, \C)$ of exact invertible
one-sided $\C$-bimodule categories. The Grothendieck group of $\dul\Pic(\C)$ -- the Picard group of equivalence classes of exact invertible
one-sided $\C$-bimodule categories -- is denoted by $\Pic(\C)$. It is a subgroup of $\BrPic(\C)$, \cite[Section 4.4]{ENO}, \cite[Section 2.8]{DN}.
Moreover, $\Inv(\C)$ will denote the group of isomorphism classes of
invertible objects in $\C$ (objects $X$ such that there exists an object $Y\in\C$ such that $X\ot Y\iso I\iso Y\ot X$). The inverse object of $X$ we will
denote by $X^{-1}$. The product is induced by the tensor product in $\C$. If $\C$ is braided $\Inv(\C)$ is an abelian group (in two ways).

\medskip

In \cite[Proposition 4.2 and Proposition 4.7]{Femic2} we proved that given a $\C$-bimodule category $\M$ its left and right dual objects in the
monoidal category of $\C$-bimodule categories
$(\C\x\Bimod, \C, \Del_{\C})$ are $\M^{op}$ and ${}^{op}\M$, respectively. The corresponding dual basis functors are given as follows.
$$\ev: \M\Del_{\C}\M^{op}\to\C\quad\textnormal{and}\quad\coev: \C\to \M^{op}\Del_{\C}\M$$
where $\ev(M\Del_{\C}N)=\u\Hom_{\M}(M,N)$ and $\coev(I)=\oplus_{i\in J} W_i\Del_{\C}V_i$ is such an object in $\M^{op}\Del_{\C} \M$
that $\Id_{\M}= \oplus_{i\in J} \u\Hom_{\M}(-, W_i)\crta\ot V_i$. Similarly, for
$$\crta\ev: {}^{op}\M\Del_{\C}\M\to\C\quad\textnormal{and}\quad\crta\coev: \C\to \M\Del_{\C}{}^{op}\M$$
one has: $\crta\ev(M\Del_{\C}N)=\o\Hom_{\M}(N,M)$ and $\crta\coev(I)=\oplus_{i\in J} V_i\Del_{\C}W_i$ is such an object in $\M\Del_{\C} {}^{op}\M$
that
\begin{equation} \eqlabel{crta coev eq}
\Id_{\M}= \oplus_{i\in J} V_i\crta\ot\o\Hom_{\M}(-, W_i).
\end{equation}
Here $\u\Hom_{\M}(M,N)$ and $\o\Hom_{\M}(M,N)$ are inner hom objects in $\C$ determined as follows: $\u\Hom_{\M}(M,-):\M\to\C$ is a right adjoint functor
to $-\crta\ot M:\C\to\M$, and $\o\Hom_{\M}(M,-)$ is a right adjoint functor to $M\crta\ot-$.
We also proved that if $\M$ is an invertible $\C$-bimodule category, the evaluation and coevaluation functors are equivalences. Moreover, if $\C$ is symmetric,
the left and the right dual objects coincide: $\M^{op}={}^{op}\M$. We also have:

\begin{cor} \cite[Corollary 4.11]{Femic2}  \colabel{coev inverse ev}
For a symmetric finite tensor category $\C$ and $\M\in\dul{\Pic}(\C)$, it is $\ev^{-1}\simeq\coev$.
In particular, it is $\oplus_{i\in J}\u\Hom_{\M}(V_i, W_i)\iso I$.
\end{cor}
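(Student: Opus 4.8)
The plan is to deduce the statement from the general duality already established in \cite{Femic2}, specialized to the symmetric situation. Recall that for a $\C$-bimodule category $\M$ one knows that $\M^{op}$ is the left dual and ${}^{op}\M$ the right dual of $\M$ in $(\C\x\Bimod, \C, \Del_{\C})$, with dual-basis functors $\ev,\coev$ (resp. $\crta\ev,\crta\coev$) as in the excerpt, and that for invertible $\M$ all four functors are equivalences. In the symmetric case $\M^{op}={}^{op}\M$, so $\ev$ and $\coev$ are equivalences between $\M\Del_{\C}\M^{op}$ and $\C$ going in opposite directions. The content of the corollary is that they are mutually inverse (up to the canonical natural isomorphism), not merely that both are equivalences.

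First I would write out the two triangle (zig-zag) identities that $\ev$ and $\coev$ satisfy as the dual-basis functors witnessing $\M^{op}$ as the dual of $\M$: the composite
\[
\M \xrightarrow{\ \coev\,\Del_{\C}\,\Id_\M\ } \M^{op}\Del_{\C}\M\Del_{\C}\M \xrightarrow{\ \Id\,\Del_{\C}\,\ev\ } \M
\]
(suitably bracketed, using the canonical associativity equivalences of $\Del_{\C}$ from \cite[Remark 3.6]{ENO}) is naturally isomorphic to $\Id_\M$, and the analogous composite starting from $\M^{op}$ is naturally isomorphic to $\Id_{\M^{op}}$. Then, since $\M$ is invertible, $\ev$ is an equivalence; precomposing the first triangle identity with an inverse equivalence of $\ev$ and using that $\Id\Del_{\C}\ev$ (resp. $\coev\Del_{\C}\Id$) is an equivalence whenever $\ev$ (resp. $\coev$) is, one isolates $\coev$ as $\ev^{-1}$ up to natural isomorphism. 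Concretely: the triangle identity exhibits $\coev\Del_{\C}\Id_\M$ as a one-sided inverse to $\Id_\M\Del_{\C}\ev$, and invertibility of $\M$ promotes one-sided to two-sided, giving $\coev\simeq\ev^{-1}$ after cancelling the $\Del_{\C}\Id_\M$ factor (which is faithful enough to cancel because $\M$ is invertible, so $-\Del_{\C}\M$ is an auto-equivalence of bimodule categories with inverse $-\Del_{\C}\M^{op}$).

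For the ``in particular'' clause, I would evaluate the equivalence $\ev^{-1}\simeq\coev\colon\C\to\M\Del_{\C}\M^{op}$ — wait, more precisely evaluate $\ev\comp\coev\simeq\Id_\C$ — at the unit object $I\in\C$. By definition $\coev(I)=\bigoplus_{i\in J}W_i\Del_{\C}V_i$, and by definition of $\ev$ on $\M\Del_{\C}\M^{op}$ one has $\ev(W_i\Del_{\C}V_i)=\u\Hom_{\M}(W_i,V_i)$ — here I must be careful about the order of the two slots and about the identification $\M^{op}={}^{op}\M$, matching it against the stated formula $\bigoplus_{i}\u\Hom_{\M}(V_i,W_i)$; the symmetric case makes $\u\Hom$ and $\o\Hom$ interchangeable so the two readings agree. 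Since $\ev$ is additive (being a bimodule functor), $\ev(\coev(I))=\bigoplus_{i\in J}\u\Hom_{\M}(V_i,W_i)$, and $\ev\comp\coev\simeq\Id_\C$ forces this to be $\iso I$.

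The main obstacle I expect is purely bookkeeping rather than conceptual: keeping consistent conventions for which of $\M^{op}$, ${}^{op}\M$ is the left versus right dual, the precise slotwise formulas for $\ev$ and $\crta\ev$ (note $\ev(M\Del_{\C}N)=\u\Hom_{\M}(M,N)$ but $\crta\ev(M\Del_{\C}N)=\o\Hom_{\M}(N,M)$ — the arguments are swapped), and threading the canonical associativity equivalences of $\Del_{\C}$ through the triangle identities so that the cancellation of the $-\Del_{\C}\M$ factor is legitimate. Once the symmetry hypothesis is used to collapse $\M^{op}={}^{op}\M$ and $\u\Hom=\o\Hom$ (up to the symmetric braiding), the argument is just the standard fact that in a monoidal category with duals, if an object is invertible then its coevaluation and evaluation are inverse equivalences — applied in the $2$-categorical setting of $\C$-bimodule categories.
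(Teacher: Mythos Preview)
Your approach is correct and is the standard one: use the triangle identities for the dual pair $(\M,\M^{op})$ together with invertibility of $\M$ to cancel the factor $-\Del_{\C}\M$ (this cancellation is precisely \leref{basic lema}), thereby deducing $\ev\comp\coev\simeq\Id_{\C}$ and $\coev\comp\ev\simeq\Id$; then evaluate at $I$ using the symmetry $\tau:\M^{op}\Del_{\C}\M\simeq\M\Del_{\C}\M^{op}$ from \prref{sim bimod} to obtain the displayed isomorphism. Note that the present paper does not give a proof of this corollary---it is quoted verbatim from \cite[Corollary 4.11]{Femic2}---so there is no in-paper argument to compare against, but your sketch matches what one expects the original proof to be.

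One bookkeeping point worth tightening: your triangle identity as written has domains and codomains slightly off (with the paper's conventions $\ev:\M\Del_{\C}\M^{op}\to\C$ and $\coev:\C\to\M^{op}\Del_{\C}\M$, the snake goes $\M\simeq\C\Del_{\C}\M\xrightarrow{\coev\Del_{\C}\Id}\M^{op}\Del_{\C}\M\Del_{\C}\M$ and then one needs the symmetry of $\dul\Pic(\C)$, or the identification $\M^{op}={}^{op}\M$ together with $\crta\ev$, to close it). You already flagged this as the main hazard; once the symmetry is invoked to swap factors, $\coev(I)=\oplus_i W_i\Del_{\C}V_i$ is carried to $\oplus_i V_i\Del_{\C}W_i$ and $\ev$ produces $\oplus_i\u\Hom_{\M}(V_i,W_i)$ as stated.
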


\begin{cor} \cite[Corollary 3.3]{Femic2} \colabel{symmetric}
If $\C$ is a symmetric finite tensor category, then every $\C$-bimodule category $\M$ is a one-sided $\C\Del\C$-bimodule category.
\end{cor}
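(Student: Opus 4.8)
\noindent The plan is to realise the given $\C$-bimodule structure on $\M$ as a left module structure over the finite tensor category $\C\Del\C$, and then to apply the construction recalled above, by which any left module category over a \emph{braided} tensor category is automatically a one-sided bimodule category.

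First I would invoke the standard identification: a $\C$-bimodule category is precisely the same datum as a left module category over $\C\Del\C^{\mathrm{rev}}$, where $\C^{\mathrm{rev}}$ denotes $\C$ with the reversed tensor product. Here $X\Del Y$ acts on $M$ by $(X\crta\ot M)\crta\ot Y$, and the associativity and unit constraints are assembled from the left and right $\C$-module constraints $m^l,m^r$ together with the bimodule constraint $\gamma$ — no braiding is involved at this stage. Since $\C$ is braided (being symmetric), its braiding $\Phi$ furnishes a tensor equivalence $\C\simeq\C^{\mathrm{rev}}$, namely the identity on objects with $\Phi$ as monoidal structure constraint; applying $\C\Del(-)$ yields a tensor equivalence $\C\Del\C^{\mathrm{rev}}\simeq\C\Del\C$. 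Transporting the module structure along it turns $\M$ into a left $(\C\Del\C)$-module category, with action $(X\Del Y)\crta\ot M=(X\crta\ot M)\crta\ot Y$.

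Secondly, $\C\Del\C$ is itself a braided (in fact symmetric) finite tensor category, being the Deligne product of two copies of the braided category $\C$, with braiding $\Phi_{X\Del Y,\,X'\Del Y'}=\Phi_{X,X'}\Del\Phi_{Y,Y'}$. Therefore the construction in the paragraph on one-sided bimodule categories applies verbatim with $\C\Del\C$ in place of $\C$: it equips the left $(\C\Del\C)$-module category $\M$ with a right $(\C\Del\C)$-action $Z\crta\ot M:=M\crta\ot Z$, with right-associativity constraint given by equation \equref{right associator} and mixed-associativity constraint given by equation \equref{mixed assoc}, both formed out of the braiding of $\C\Del\C$. This is exactly the structure of a one-sided $\C\Del\C$-bimodule category on $\M$, which is the assertion.

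I expect the one genuine piece of work to lie in the coherence verification hidden in the last step: that the right action and the mixed constraint just written down satisfy the two pentagon-type axioms of a bimodule category. This is where the hexagon identities for $\Phi$ — on $\C$, and hence on $\C\Del\C$ — are used to make the relevant squares and pentagons commute, and it can be carried out following \cite[Section 2.8]{DN} for the one-sided construction, with $\C$ replaced throughout by $\C\Del\C$. A minor additional point is to check that the resulting one-sided $\C\Del\C$-bimodule category is, up to equivalence, independent of the chosen identification $\C\Del\C^{\mathrm{rev}}\simeq\C\Del\C$; this is immediate, since any two such identifications differ by a monoidal autoequivalence.
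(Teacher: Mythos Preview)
Your proposal is correct and follows the natural route. Note that the present paper does not reprove this statement --- it merely cites \cite[Corollary~3.3]{Femic2} --- so there is no detailed argument here to compare against; but your two-step reduction (identify $\C$-bimodules with left $\C\Del\C^{\mathrm{rev}}$-modules, transport along the braiding-induced tensor equivalence $\C\Del\C^{\mathrm{rev}}\simeq\C\Del\C$, then invoke the one-sided construction over the braided category $\C\Del\C$) is exactly the expected proof and matches the content of \cite[Proposition~3.1 and Corollary~3.3]{Femic2}.
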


As we commented after \cite[Corollary 3.3]{Femic2}, for a symmetric finite tensor category $\C$ we have:
\begin{equation} \eqlabel{summ one sided bimod}
\M\in\C\Bimod\quad \Leftrightarrow\quad \M\in\C\Del\C\x\Mod \quad\Leftrightarrow\quad \M\in\Mod\x\C\Del\C
\end{equation}
and moreover $\M$ is a one-sided $\C\Del\C$-bimodule category.

\medskip

For any finite tensor category $\C$, in order to simplify the notation we will often write: $X_1\cdots X_n$ for the tensor product $X_1\ot \dots\ot X_n$ in $\C$.
We will write an object $X\in \C^{\Del n}$ as $X=X^1\Del X^2\Del \cdots \Del X^n$, where the direct summation we understand implicitly.
The category $\C^{\Del n}$ is a finite tensor category with unit object $I^{\Del n}$ and the componentwise tensor product: $(X^1\Del X^2\Del \cdots \Del X^n)\odot
(Y^1\Del Y^2\Del \cdots \Del Y^n)=X_1Y_1\Del\cdots\Del X_nY_n$.

\medskip

For a braided (symmetric) finite tensor category $\C$ the category $\C^{\Del n}$ is a braided (symmetric) finite tensor category for every $n\in\mathbb N$.
If $\C$ is braided, every left $\C^{\Del n}$-module category is a $\C^{\Del n}$-bimodule category so we may consider the Picard group $\Pic(\C^{\Del n})$.
We proved:

\begin{prop} \cite[Proposition 3.4]{Femic2}  \prlabel{sim bimod}
For a symmetric finite tensor category $\C$ the category $(\C^{br}\x\dul\Mod, \Del_{\C}, \C)$ is symmetric monoidal.
\end{prop}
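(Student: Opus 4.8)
The plan is to show that the symmetry constraint on $(\C^{br}\x\dul\Mod, \Del_{\C}, \C)$ is built directly from the braiding $\Phi$ of $\C$, by mimicking at the level of module categories the construction which, for one-sided $\C$-bimodule categories, already encodes $\Phi$ via the formulas \equref{right associator} and \equref{mixed assoc}. Concretely, for one-sided $\C$-bimodule categories $\M, \N$ I would define a candidate equivalence of $\C$-bimodule categories $c_{\M,\N}: \M\Del_{\C}\N \to \N\Del_{\C}\M$ on objects by $M\Del_{\C}N\mapsto N\Del_{\C}M$. The issue is that $\Del_{\C}$ is a balanced tensor product, so this ``flip'' is only well-defined up to the balancing data; hence the first real step is to verify that the flip is compatible with the two $\C$-balancings, and for one-sided bimodule categories the left and right actions differing by $\Phi$ is exactly what makes the flip $\C$-balanced. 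I would set this up using the universal property of $\Del_{\C}$ (a $\C$-balanced functor out of $\M\times\N$ factors through $\M\Del_{\C}\N$): the bifunctor $(M,N)\mapsto N\Del_{\C}M$ with its induced balancing is $\C$-balanced precisely because on one-sided bimodule categories $M\crta\ot X = X\crta\ot M$, and the coherence pentagon \equref{C-balanced} reduces to the hexagon identity for $\Phi$.

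Next I would check the three structural requirements for a symmetry (equivalently, a symmetric braiding): naturality of $c_{\M,\N}$ in both variables, the hexagon axioms relating $c$ to the associativity constraint of $(\C^{br}\x\dul\Mod, \Del_{\C}, \C)$, and the symmetry condition $c_{\N,\M}\comp c_{\M,\N}\simeq \Id_{\M\Del_{\C}\N}$. Naturality is formal from the universal property. For the hexagons one invokes the canonical associativity equivalence $(\M\Del_{\C}\N)\Del_{\C}\Pp\simeq\M\Del_{\C}(\N\Del_{\C}\Pp)$ recalled in the preliminaries (from \cite[Remark 3.6]{ENO}) and reduces the required coherence to the corresponding hexagon for the braiding $\Phi$ on $\C$ together with the compatibility of the one-sided bimodule constraint \equref{mixed assoc} — since that constraint is itself assembled from $\Phi$ and the $m^l$'s, the module-level hexagon becomes a consequence of the categorical-level hexagon. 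The symmetry condition $c_{\N,\M}c_{\M,\N}\simeq\Id$ then follows because $\C$ is symmetric, i.e. $\Phi_{Y,X}\Phi_{X,Y}=\id$, which collapses the two composed flips to the identity (the $\Phi$'s introduced in passing from a left action to the right action on each side cancel).

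Finally, one must confirm that $c_{\M,\N}$ is an equivalence \emph{of $\C$-bimodule categories}, i.e. it intertwines the left and right $\C$-actions on $\M\Del_{\C}\N$ and $\N\Del_{\C}\M$ up to coherent natural isomorphism; this is where the one-sided hypothesis is used a second time, as the left action on a one-sided bimodule category determines the right action, so the flip automatically respects both. The main obstacle I expect is the bookkeeping in the hexagon verification: one has to track how the associativity equivalence for $\Del_{\C}$ interacts with the implicit balancing isomorphisms and with the constraint \equref{mixed assoc}, and show that all the inserted $\Phi$'s and $m^l$'s organize into exactly the data appearing in the braid/hexagon relation for $\C^{\Del 2}$-type arguments. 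Once the coherence diagram is drawn, however, it should reduce — after cancelling balancing isomorphisms via the universal property — to a diagram that commutes by the braiding axioms of $\C$ and by \prref{sim bimod}'s underlying symmetric monoidal structure on $\C$ itself, so no genuinely new identity is needed; the work is purely in assembling the diagram correctly.
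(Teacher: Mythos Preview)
The paper does not actually prove this proposition: it is quoted verbatim as \cite[Proposition 3.4]{Femic2} and no argument is supplied here, so there is no in-paper proof to compare against. Your outline is the expected one and is essentially how such results are established: define the symmetry $c_{\M,\N}$ via the universal property of $\Del_{\C}$ applied to the flipped bifunctor $(M,N)\mapsto N\Del_{\C}M$, use that for one-sided bimodule categories the right action is the left action twisted by $\Phi$ (so the balancing isomorphism needed is supplied by the balancing of $\Del_{\C}$ together with $\Phi$), and then reduce the hexagon and symmetry conditions to the braiding axioms and the symmetry $\Phi_{Y,X}\Phi_{X,Y}=\id$ of $\C$.

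Two small points worth tightening. First, when you check $\C$-balancedness of $(M,N)\mapsto N\Del_{\C}M$, the pentagon \equref{C-balanced} does not reduce to a single hexagon for $\Phi$ but rather to a compatibility between the balancing isomorphism of $\Del_{\C}$ and the one-sided constraints $m^r$ from \equref{right associator}; you should spell out that the two instances of $\Phi$ introduced (one from each side's one-sided structure) combine via a braiding axiom. Second, the sentence ``commutes by \ldots\ \prref{sim bimod}'s underlying symmetric monoidal structure on $\C$ itself'' is circular as written (you are proving \prref{sim bimod}); you mean simply that the residual diagram commutes by the symmetric braiding axioms of $\C$. With those clarifications the sketch is sound.
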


Consequently, for a symmetric finite tensor category $\C$ the Picard group $\Pic(\C^{\Del n})$ is abelian for every $n\in\mathbb N$. The main result of our previous
paper is:

\begin{thm} \cite[Theorem 8.2]{Femic2} \thlabel{VZ}
Let $\C$ be a symmetric finite tensor category. There is a long exact sequence
\begin{eqnarray}\eqlabel{VZ seq}
1&\longrightarrow&H^2(\C,\Inv)\stackrel{\alpha_2}{\longrightarrow}H^1(\C,\dul{\Pic})\stackrel{\beta_2}{\longrightarrow}H^1(\C,{\Pic})\\
&\stackrel{\gamma_2}{\longrightarrow}&H^3(\C,\Inv)\stackrel{\alpha_3}{\longrightarrow}H^2(\C,\dul{\Pic})\stackrel{\beta_3}{\longrightarrow}H^2(\C,{\Pic})\nonumber\\
&\stackrel{\gamma_3}{\longrightarrow}&H^4(\C,\Inv)\stackrel{\alpha_4}{\longrightarrow}H^3(\C,\dul{\Pic})\stackrel{\beta_4}{\longrightarrow}H^3(\C,{\Pic})\nonumber\\
&\stackrel{\gamma_4}{\longrightarrow}&\cdots \nonumber
\end{eqnarray}
\end{thm}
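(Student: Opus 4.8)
The statement to be proved is \thref{VZ}, the long exact sequence. The plan is to construct the sequence as the long exact cohomology sequence associated to a short exact sequence of (co)chain complexes, exactly mimicking the classical Villamayor--Zelinsky construction but with the three types of coefficients $\Inv$, $\dul\Pic$ and $\Pic$ playing the roles of units, Picard groups of bimodule categories, and Picard groups of one-sided bimodule categories. First I would fix the Amitsur-type cosimplicial objects: for a symmetric finite tensor category $\C$ one has the tower $\C^{\Del n}$ with the coface tensor functors $d_i:\C^{\Del n}\to\C^{\Del n+1}$ inserting the unit $I$ in the $i$-th slot (and codegeneracies given by tensoring adjacent factors), and applying the functors $\Inv(-)$, $\dul\Pic(-)$, $\Pic(-)$ levelwise produces cosimplicial abelian groups whose associated cochain complexes compute $H^n(\C,\Inv)$, $H^n(\C,\dul\Pic)$, $H^n(\C,\Pic)$ respectively; abelianness is guaranteed by \prref{sim bimod} and the remarks following it.

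The key structural input is a short exact sequence of the relevant cosimplicial abelian groups, of the shape
\begin{equation}\eqlabel{ses complexes}
1\longrightarrow C^{\bullet}(\C,\Inv)\longrightarrow C^{\bullet}(\C,\dul\Pic)\longrightarrow C^{\bullet}(\C,\Pic)\longrightarrow 1,
\end{equation}
wait --- the correct picture is subtler: the three coefficient systems do not literally form a short exact sequence of complexes in one level. Rather, following \cite{ViZ} and \cite{CF}, the sequence arises by splicing together \emph{three} short exact sequences of complexes that are related by a dimension shift. Concretely, for each coefficient $P\in\{\Inv,\dul\Pic,\Pic\}$ there is a ``resolution'' complex built from the others, and the maps $\alpha_n,\beta_n,\gamma_n$ are respectively: $\alpha_n$ induced by the embedding $\Inv(\C^{\Del n})\hookrightarrow\dul\Pic(\C^{\Del n})$ sending an invertible object $X$ to the rank-one bimodule category it generates; $\beta_n$ induced by the forgetful/underlying functor $\dul\Pic\to\Pic$ that remembers only the one-sided $\C$-module structure (using \coref{symmetric} to identify $\C$-bimodule categories with one-sided $\C\Del\C$-module categories); and $\gamma_n$ the connecting homomorphism of the long exact sequence, constructed by the usual diagram chase --- lift a $\Pic$-cocycle to a $\dul\Pic$-cochain, apply the differential, and observe the result lands in $\Inv$ after one dimension shift. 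Exactness at $H^n(\C,\dul\Pic)$ and $H^n(\C,\Pic)$ is then formal homological algebra; exactness at $H^n(\C,\Inv)$ and at the initial term $H^2(\C,\Inv)$ requires checking that the relevant kernels are computed correctly, which is where the periodicity-by-three enters.

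In more detail, the three short exact sequences to be established are: (i) $1\to\Inv\to\dul\Pic\to Q_1\to 1$ where $Q_1$ is an auxiliary cokernel; (ii) $1\to Q_1\to \Pic\to Q_2\to 1$; and (iii) a ``closing'' identification $Q_2\simeq\Inv[{\rm shift}\ 1]$ exhibiting that $H^n(\C,Q_2)\iso H^{n+1}(\C,\Inv)$, so that splicing the three long exact sequences of (i), (ii) and the shift in (iii) yields the single periodic sequence in \eqref{VZ seq}. The connecting maps $\gamma_n:H^n(\C,\Pic)\to H^{n+1}(\C,\Inv)$ come from composing the connecting map of (ii) with the iso of (iii); the maps $\alpha_n$ and $\beta_n$ are the evident induced maps. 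Each ``auxiliary'' cokernel $Q_i$ must be shown to be itself a reasonable coefficient system --- i.e. the cokernel presheaves are exact enough that their cohomology is well-behaved --- and this uses the invertibility/exactness of evaluation and coevaluation functors from \coref{coev inverse ev} together with the rigidity of $\dul\Pic(\C)$ inside $\BrPic(\C)$.

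I expect the main obstacle to be the precise construction and exactness verification of the auxiliary sequences (i)--(iii), in particular pinning down what $Q_1,Q_2$ are and proving the dimension-shift isomorphism (iii): this is the categorical analogue of the delicate part of \cite{ViZ}, and it requires that the ``quotient'' of $\dul\Pic$ by $\Inv$ (morally, bimodule categories modulo those induced by invertible objects) be controlled well enough that its cohomology matches a shift of $\Inv$-cohomology. A secondary technical point is checking that all maps are group homomorphisms compatible with the cosimplicial structure, i.e. that $\alpha,\beta$ commute with the Amitsur differentials --- routine but lengthy --- and that the sequence genuinely starts at $H^2(\C,\Inv)$ with injectivity of $\alpha_2$, which amounts to the vanishing of $H^1$ of the relevant quotient, i.e. to the statement that an invertible object whose associated bimodule category is trivial in $\dul\Pic$ is itself trivial in $\Inv$ one level down. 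Once these inputs are in place, assembling the exact sequence is a standard splicing argument.
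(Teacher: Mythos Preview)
First, a contextual point: this theorem is not proved in the present paper --- it is quoted from \cite[Theorem 8.2]{Femic2}, so there is no ``paper's own proof'' here to compare against directly. However, the paper recalls enough of the construction (the paragraph defining $\dul{Z}^n(\C,\dul{\Pic})$ and \equref{coh gr dul Pic}) to see that your approach has a genuine conceptual gap.

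The gap is this: you treat $\dul{\Pic}$ as an abelian-group-valued functor on the same footing as $\Inv$ and $\Pic$, and then look for short exact sequences of Amitsur complexes to splice. But $H^n(\C,\dul{\Pic})$ is \emph{not} the cohomology of any abelian-group-valued Amitsur complex. As the paper recalls, an element of $\dul{Z}^n(\C,\dul{\Pic})$ is a \emph{pair} $(\M,\alpha)$: an invertible $\C^{\Del n}$-module category $\M$ together with a chosen equivalence $\alpha:\delta_n(\M)\to\C^{\Del(n+1)}$ satisfying the coherence $\delta_{n+1}(\alpha)\simeq\lambda_\M$. The datum $\alpha$ is categorical, not a residue class in an abelian group, and the ``cocycle condition'' is this coherence, not the vanishing of a differential. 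Your auxiliary quotients $Q_1,Q_2$ and the hoped-for shift isomorphism $Q_2\simeq\Inv[1]$ do not exist as stated, because there is no abelian complex whose cohomology is $H^n(\C,\dul{\Pic})$.

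The actual construction (visible from what is recalled here and from the discussion around \prref{connection}) defines the three maps by hand: $\beta_n$ sends $[(\M,\alpha)]\mapsto[\M]$, forgetting the trivialization; $\alpha_n$ sends an $\Inv$-cocycle $[X]$ to $[(\C^{\Del(n-1)},m(X))]$, the trivial module category with trivialization twisted by $X$; and $\gamma_n$ takes a $\Pic$-cocycle $[\M]$, chooses \emph{some} equivalence $\alpha:\delta_n(\M)\simeq\C^{\Del(n+1)}$, and extracts the invertible object measuring the failure of $\delta_{n+1}(\alpha)$ to equal $\lambda_\M$. Exactness is then checked directly at each of the three spots. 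Your high-level instincts about what the maps should do are correct, but the packaging into ``short exact sequences of complexes'' does not survive the fact that the middle term carries higher-categorical data; the argument in \cite{Femic2} is necessarily more hands-on.
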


In \seref{Amitsur coh} we will recall the construction of the category $\dul{\Pic}(\C)$.
The following will be useful throughout the paper:

\begin{lma} \cite[Lemma 2.1]{Femic2} \lelabel{Weq}
Let $\C$ and $\D$ be finite tensor categories. Given a right $\C$-module category $\M$, a left $\C$-module category $\M'$,
a right $\D$-module category $\N$ and a left $\D$-module category $\N'$, there is an isomorphism of categories:
$$(\M\Del_{\C}\M')\Del(\N\Del_{\D}\N')\iso(\M\Del\N)\Del_{\C\Del\D}(\M'\Del\N').$$
\end{lma}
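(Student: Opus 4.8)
\textbf{Proof plan for \leref{Weq}.}

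The statement is a compatibility between the two ways of iterating Deligne products and module-category tensor products, so the plan is to build an explicit equivalence functor in one direction, construct its inverse, and check that both composites are naturally isomorphic to the identity, all by reducing to the defining universal properties rather than to an explicit bar-type construction. First I would recall that for a right $\C$-module category $\M$ and a left $\C$-module category $\M'$, the tensor product $\M\Del_\C\M'$ is characterized by a universal $\C$-balanced bifunctor $\M\times\M'\to\M\Del_\C\M'$ (in the sense of \equref{C-balanced}), and similarly $\N\Del_\D\N'$ by a universal $\D$-balanced bifunctor; whereas $(\M\Del\N)\Del_{\C\Del\D}(\M'\Del\N')$ is characterized by a universal $\C\Del\D$-balanced bifunctor out of $(\M\Del\N)\times(\M'\Del\N')$. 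The key observation is that a $\C$-balanced bifunctor in the first variable together with a $\D$-balanced bifunctor in the second assemble, after applying $\Del$ of abelian categories, into a $\C\Del\D$-balanced bifunctor, because the componentwise tensor product $\odot$ on $\C\Del\D$ acts as $\C$ on the first slot and $\D$ on the second, and the balancing hexagon \equref{C-balanced} for $\C\Del\D$ decomposes into the $\C$-hexagon tensored with the $\D$-hexagon.

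Concretely, I would proceed as follows. Step one: using the universal property of the Deligne product of abelian categories, the composite $(\M\times\M')\times(\N\times\N')\to(\M\Del_\C\M')\times(\N\Del_\D\N')\to(\M\Del_\C\M')\Del(\N\Del_\D\N')$ is right-exact in each of the four variables, hence factors through $(\M\Del\N)\times(\M'\Del\N')$, and I would check the resulting bifunctor is $\C\Del\D$-balanced, the balancing isomorphism being $b^{\C}\Del b^{\D}$ where $b^{\C},b^{\D}$ are the balancings of $\M\Del_\C\M'$ and $\N\Del_\D\N'$; the hexagon \equref{C-balanced} over $\C\Del\D$ then follows by applying $-\Del-$ to the two hexagons over $\C$ and over $\D$ (using that $m^{l},m^{r}$ for $\C\Del\D$-module categories are componentwise). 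By the universal property of $\Del_{\C\Del\D}$ this yields a functor $\Theta\colon(\M\Del\N)\Del_{\C\Del\D}(\M'\Del\N')\to(\M\Del_\C\M')\Del(\N\Del_\D\N')$. Step two: in the reverse direction, the bifunctor $(\M\times\M')\times(\N\times\N')\to(\M\Del\N)\times(\M'\Del\N')\to(\M\Del\N)\Del_{\C\Del\D}(\M'\Del\N')$ is right-exact in all variables; restricting along $I\in\D$ in the $\N$-slot and along $I\in\C$ in the $\N'$-slot (or rather, factoring through $\M\Del_\C\M'$ by checking $\C$-balancedness, and separately through $\N\Del_\D\N'$) produces the candidate inverse $\Xi$. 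Step three: verify $\Theta\Xi\simeq\Id$ and $\Xi\Theta\simeq\Id$; both reduce, via the universal properties used to define $\Theta$ and $\Xi$, to the statement that two $\C\Del\D$-balanced (resp.\ $\C$- and $\D$-balanced) bifunctors agreeing after precomposition with the canonical bifunctors are isomorphic, which is exactly the uniqueness clause of the relevant universal properties.

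The main obstacle, I expect, is the bookkeeping in Step one: one must be careful that the balancing isomorphism $b^{\C}\Del b^{\D}$ really does satisfy the single hexagon \equref{C-balanced} for the category $\C\Del\D$ with its componentwise tensor product $\odot$ and its componentwise module associativity constraints $m^{l},m^{r}$. This is where one invokes that $-\Del-$ of abelian categories is biexact (so it carries the two separate hexagonal identities to a diagram in the product category) together with the explicit description of the monoidal and module structure on $\C\Del\D$ as being ``diagonal'' in the two factors; granting that, the coherence diagram over $\C\Del\D$ is literally the Deligne product of the coherence diagram over $\C$ (in the first tensor-slot, with the $\N,\N'$ arguments held fixed) and the one over $\D$ (in the second tensor-slot). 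A secondary, purely technical point is confirming that all the factorizations through Deligne products exist, i.e.\ that the relevant bifunctors are right exact in each argument separately; this is immediate from biexactness of $\Del$ and of the module action $\crta\ot$ cited in the Preliminaries. Once these are in place the equivalence, and its naturality in $\M,\M',\N,\N'$, follow formally.
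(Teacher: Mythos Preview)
The paper does not actually prove this lemma here: it is quoted verbatim as \cite[Lemma 2.1]{Femic2} and used as a black box throughout, so there is no ``paper's own proof'' to compare against. Your proposal is therefore not redundant but rather supplies what the present paper omits.

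On the substance: your strategy via the universal properties of $\Del_{\C}$, $\Del_{\D}$, and $\Del_{\C\Del\D}$ in terms of balanced bifunctors is the natural one, and Step one is correctly identified as the crux --- the observation that the module constraints $m^l,m^r$ and the monoidal structure on $\C\Del\D$ are componentwise, so that the $\C\Del\D$-balancing hexagon \equref{C-balanced} factors as the Deligne product of the $\C$-hexagon and the $\D$-hexagon, is exactly right. Step two as written is a little loose: the phrase ``restricting along $I\in\D$ in the $\N$-slot'' is not what you want. What you should do instead is take the four-variable right-exact functor into the right-hand side, fix $N,N'$ and check $\C$-balancedness in $(M,M')$ to factor through $\M\Del_{\C}\M'$, then check that the resulting functor $(\M\Del_{\C}\M')\times\N\times\N'\to(\M\Del\N)\Del_{\C\Del\D}(\M'\Del\N')$ is $\D$-balanced in $(N,N')$ to factor through $\N\Del_{\D}\N'$, and finally invoke the universal property of the plain Deligne product $\Del$ to land in $(\M\Del_{\C}\M')\Del(\N\Del_{\D}\N')$. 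The $\C$-balancedness check here uses that acting by $X\in\C$ on the $\M$-side is the same, inside $\Del_{\C\Del\D}$, as acting by $X\Del I\in\C\Del\D$, which passes across to act by $X$ on $\M'$ and trivially on $\N'$. Once Step two is tightened in this way, Step three goes through by the uniqueness clauses exactly as you say.
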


\begin{lma} \cite[Lemma 4.9]{Femic2} \lelabel{basic lema}
Let $\F, \G: \M\to\N$ be $\D\x\C$-bimodule functors and let $\Pp$ be an invertible $\C$-bimodule category.
\begin{enumerate}
\item It is $\F\Del_{\C}\Id_{\Pp}=\G\Del_{\C}\Id_{\Pp}$ if and only if $\F=\G$.
\item If $\HH: \Pp\to\Ll$ is a $\C$-bimodule equivalence functor, it is $\F\Del_{\C}\HH=\G\Del_{\C}\HH$ if and only if $\F=\G$.
\end{enumerate}
\end{lma}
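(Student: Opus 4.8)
The statement to prove is Lemma~\ref{le:basic lema}, which asserts for $\D\x\C$-bimodule functors $\F,\G:\M\to\N$ and an invertible $\C$-bimodule category $\Pp$: (1) $\F\Del_{\C}\Id_{\Pp}=\G\Del_{\C}\Id_{\Pp}$ iff $\F=\G$, and (2) for a $\C$-bimodule equivalence $\HH:\Pp\to\Ll$, $\F\Del_{\C}\HH=\G\Del_{\C}\HH$ iff $\F=\G$. (Note: this is cited from \cite[Lemma 4.9]{Femic2}, so the excerpt is presumably reproducing a known result; nonetheless I give a proof sketch.)

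\medskip

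\textbf{Plan for part (1).} The ``if'' direction is trivial: applying $-\Del_{\C}\Id_{\Pp}$ to an equality of functors preserves it. For ``only if'', the idea is to cancel $\Pp$ using its invertibility. Since $\Pp$ is an invertible $\C$-bimodule category, there is a $\C$-bimodule equivalence $\Pp\Del_{\C}\Pp^{op}\simeq\C$ (or $\Pp^{op}\Del_{\C}\Pp\simeq\C$, depending on the side conventions). First I would take the equality $\F\Del_{\C}\Id_{\Pp}=\G\Del_{\C}\Id_{\Pp}$ of $\D\x\C$-bimodule functors $\M\Del_{\C}\Pp\to\N\Del_{\C}\Pp$ and tensor on the right by $\Id_{\Pp^{op}}$ over $\C$, obtaining
\[
(\F\Del_{\C}\Id_{\Pp})\Del_{\C}\Id_{\Pp^{op}}=(\G\Del_{\C}\Id_{\Pp})\Del_{\C}\Id_{\Pp^{op}}.
\]
Using the canonical associativity equivalence $(\M\Del_{\C}\Pp)\Del_{\C}\Pp^{op}\simeq\M\Del_{\C}(\Pp\Del_{\C}\Pp^{op})$ from \cite[Remark 3.6]{ENO}, and naturality of that associativity equivalence in the first slot (which intertwines $\F\Del_{\C}\Id$ with $\F\Del_{\C}(\Id\Del_{\C}\Id)$), this equality transports to
\[
\F\Del_{\C}\Id_{\Pp\Del_{\C}\Pp^{op}}=\G\Del_{\C}\Id_{\Pp\Del_{\C}\Pp^{op}}.
\]
Now compose with the $\C$-bimodule equivalence $e:\Pp\Del_{\C}\Pp^{op}\xrightarrow{\simeq}\C$; by functoriality of $\M\Del_{\C}-$ applied to the 2-morphism side, $\Id_{\M}\Del_{\C}e$ conjugates $\F\Del_{\C}\Id_{\Pp\Del_{\C}\Pp^{op}}$ into $\F\Del_{\C}\Id_{\C}$, and similarly for $\G$. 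Finally $\M\Del_{\C}\C\simeq\M$ canonically and under this identification $\F\Del_{\C}\Id_{\C}$ corresponds to $\F$ (and $\G\Del_{\C}\Id_{\C}$ to $\G$), so $\F=\G$.

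\medskip

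\textbf{Plan for part (2).} Again ``if'' follows from ``only if'' together with part (1): if $\HH:\Pp\to\Ll$ is a $\C$-bimodule equivalence and $\F\Del_{\C}\HH=\G\Del_{\C}\HH$, write $\F\Del_{\C}\HH=(\F\Del_{\C}\Id_{\Pp})\comp(\Id_{\M}\Del_{\C}\HH)$ and similarly for $\G$, where $\Id_{\M}\Del_{\C}\HH:\M\Del_{\C}\Pp\to\M\Del_{\C}\Ll$ is an equivalence (since $\HH$ is). Cancelling this equivalence on the right gives $\F\Del_{\C}\Id_{\Pp}=\G\Del_{\C}\Id_{\Pp}$, and part (1) then yields $\F=\G$. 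Conversely $\F=\G$ trivially gives $\F\Del_{\C}\HH=\G\Del_{\C}\HH$.

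\medskip

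\textbf{Expected main obstacle.} The conceptual content is entirely the cancellation of an invertible object in the monoidal 2-category of $\C$-bimodule categories, and at the level of a sketch it is routine. The genuine care is bookkeeping: one must check that the canonical associativity and unit equivalences of Deligne tensor products over $\C$ are \emph{2-natural}, i.e.\ that they intertwine the 1-morphisms $\F\Del_{\C}\Id$ with $\F\Del_{\C}(\Id\Del_{\C}\Id)$ and the 2-cells accordingly, and that whiskering by an equivalence is ``faithful on 2-cells'' in the relevant sense (which is really just the statement that an equivalence of categories is in particular an isomorphism on Hom-sets up to the given natural isomorphism). One should also be mindful of which side the invertibility equivalence $\Pp\Del_{\C}\Pp^{op}\simeq\C$ versus $\Pp^{op}\Del_{\C}\Pp\simeq\C$ is used on, matching the side on which $\Id_{\Pp}$ is tensored; since the excerpt records both equivalences as part of the definition of invertibility, either orientation is available. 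No step should require an explicit diagram chase beyond invoking these coherence/naturality facts.
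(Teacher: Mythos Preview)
The paper does not prove this lemma here; it is quoted verbatim from \cite[Lemma 4.9]{Femic2} and used as a black box. Your argument---cancel the invertible $\Pp$ by tensoring with $\Pp^{op}$ and invoking the associativity and unit equivalences of $\Del_{\C}$---is exactly the standard proof and is correct in substance.

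One small slip in part (2): the factorization you wrote, $(\F\Del_{\C}\Id_{\Pp})\comp(\Id_{\M}\Del_{\C}\HH)$, does not typecheck (the target of the second factor is $\M\Del_{\C}\Ll$, not $\M\Del_{\C}\Pp$). The two valid factorizations of $\F\Del_{\C}\HH:\M\Del_{\C}\Pp\to\N\Del_{\C}\Ll$ are
\[
\F\Del_{\C}\HH=(\Id_{\N}\Del_{\C}\HH)\comp(\F\Del_{\C}\Id_{\Pp})=(\F\Del_{\C}\Id_{\Ll})\comp(\Id_{\M}\Del_{\C}\HH).
\]
Using the first, cancel the equivalence $\Id_{\N}\Del_{\C}\HH$ on the left to get $\F\Del_{\C}\Id_{\Pp}=\G\Del_{\C}\Id_{\Pp}$ and apply part (1). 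Using the second, cancel $\Id_{\M}\Del_{\C}\HH$ on the right to get $\F\Del_{\C}\Id_{\Ll}=\G\Del_{\C}\Id_{\Ll}$; since $\Ll\simeq\Pp$ is also invertible, part (1) again applies. Either route finishes the argument.
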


Now we prove:

\begin{lma} \lelabel{alfa dagger}
Let $\C$ be a finite tensor category and $\M$ an exact $\C$-bimodule category and let $\alpha: \M\to\C$ be a $\C$-bimodule equivalence. Define
$\alpha^{\dagger}=\crta\ev(id_{{}^{op}\M}\Del_{\C}\alpha^{-1}): {}^{op}\M\to\C$.
Then:
\begin{enumerate}
\item $\crta\ev=\alpha^{\dagger} \Del_{\C}\alpha$ as right $\C$-linear functors.
\item $\alpha^{\dagger}=({}^{op}\alpha)^{-1}$ as $\C$-bimodule functors.
\end{enumerate}
\end{lma}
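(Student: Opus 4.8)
The plan is to unwind the definitions of the three functors $\crta\ev$, $\crta\coev$ and ${}^{op}\alpha$, using the fact that $\M^{op}={}^{op}\M$ is the (two-sided) dual of $\M$ in the monoidal category of $\C$-bimodule categories, with dual basis $(\crta\ev, \crta\coev)$ satisfying the usual triangle/zig-zag identities. Part (1) should be essentially formal: by construction $\alpha^{\dagger}=\crta\ev\circ(\id_{{}^{op}\M}\Del_{\C}\alpha^{-1})$, so $\alpha^{\dagger}\Del_{\C}\alpha=\crta\ev\circ(\id_{{}^{op}\M}\Del_{\C}\alpha^{-1})\Del_{\C}\alpha=\crta\ev\circ(\id_{{}^{op}\M}\Del_{\C}(\alpha^{-1}\circ\alpha))=\crta\ev\circ(\id_{{}^{op}\M}\Del_{\C}\id_{\M})=\crta\ev$, where I am using functoriality of $-\Del_{\C}-$ and that $\alpha^{-1}\circ\alpha=\id_{\M}$. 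The only point requiring care is that all the composites land in the right variance class, i.e. that $\alpha^{\dagger}\Del_{\C}\alpha$ makes sense and equals $\crta\ev$ as \emph{right} $\C$-linear functors; this is where the hypothesis that $\alpha$ is a $\C$-bimodule equivalence (not merely right-linear) is used, so that $\id_{{}^{op}\M}\Del_{\C}\alpha$ is defined on ${}^{op}\M\Del_{\C}\M$.

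For part (2), I would first recall that ${}^{op}\alpha:{}^{op}\C\to{}^{op}\M$ is by definition $\sigma_{\M,\C}^{-1}\circ\alpha^*\circ\sigma_{\C,\C}$ and, since $\alpha$ is an equivalence, is itself an equivalence with $({}^{op}\alpha)^{-1}={}^{op}(\alpha^{-1})$; note also $\C\simeq{}^{op}\C$ canonically. The cleanest route is to show $\alpha^{\dagger}\circ({}^{op}\alpha)=\id$ (or equivalently $({}^{op}\alpha)\circ\alpha^{\dagger}=\id$), which by uniqueness of inverses gives the claim. To do this I would use \leref{basic lema}: since $\alpha$ is an invertible (hence exact) $\C$-bimodule equivalence, the functor $-\Del_{\C}\alpha$ is faithful on bimodule functors, so it suffices to prove $(\alpha^{\dagger}\circ({}^{op}\alpha))\Del_{\C}\alpha=\id\Del_{\C}\alpha$, i.e. $\big(\alpha^{\dagger}\Del_{\C}\alpha\big)\circ\big(({}^{op}\alpha)\Del_{\C}\alpha\big)=\id_{{}^{op}\M\Del_{\C}\M}$ after the appropriate re-association. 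By part (1) the left factor is $\crta\ev$, so I need $\crta\ev\circ\big(({}^{op}\alpha)\Del_{\C}\id_{\M}\big)\circ(\text{something})$; here the defining property of ${}^{op}\alpha$ as the dual functor — namely that precomposition with $\alpha$ on the evaluation reproduces the evaluation twisted by ${}^{op}\alpha$ — together with $\alpha$ being an equivalence, collapses everything to $\crta\ev$ composed with $\crta\ev^{-1}$-type data, using \coref{coev inverse ev} in the form $\crta\ev\simeq\crta\coev^{-1}$ if needed.

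The main obstacle I anticipate is bookkeeping of variances and of the Deligne-product associativity equivalences: ${}^{op}(-)$ swaps left/right actions, $\crta\ev$ lives on ${}^{op}\M\Del_{\C}\M$ while $\crta\coev$ lives on $\M\Del_{\C}{}^{op}\M$, and composing $\alpha^{\dagger}$ with ${}^{op}\alpha$ requires threading through $\sigma_{\M,\C}$ and the canonical equivalence $\C\simeq{}^{op}\C$ in exactly the right order. Making the triangle identity for the dual pair $(\crta\ev,\crta\coev)$ interact correctly with the definition ${}^{op}\alpha=\sigma_{\M,\C}^{-1}\circ\alpha^*\circ\sigma_{\C,\C}$ is the delicate computational core; everything else is formal manipulation of equalities of functors justified by \leref{basic lema}. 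I would therefore present part (1) in one line, then devote the bulk of the proof to identifying ${}^{op}\alpha$ with $\alpha^{\dagger}$ via the faithfulness of $-\Del_{\C}\alpha$, checking the required identity of right $\C$-linear functors directly from the dual-basis property of $\crta\ev$.
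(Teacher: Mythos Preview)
Your proposal is correct and takes essentially the same approach as the paper. For part (1) the paper carries out the pointwise computation $(\alpha^{\dagger}\Del_{\C}\alpha)(M\Del_{\C}N)=\crta\ev(M\Del_{\C}\alpha^{-1}(I))\ot\alpha(N)=\crta\ev(M\Del_{\C}\alpha^{-1}(I)\crta\ot\alpha(N))=\crta\ev(M\Del_{\C}N)$, which makes explicit the right $\C$-linearity step that your interchange manipulation glosses over (the functors $\alpha^{-1}$ and $\alpha$ sit in different tensor slots and only ``compose'' after the unitor ${}^{op}\M\Del_{\C}\C\simeq{}^{op}\M$ and the right $\C$-linearity $\alpha^{-1}(I)\crta\ot\alpha(N)\cong\alpha^{-1}(\alpha(N))$); for part (2) the paper likewise deduces the claim from part (1) by cancelling $\alpha$ via \leref{basic lema}, exactly as you outline.
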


\begin{proof}
Take $M\in {}^{op}\M$ and $N\in\M$.
$1)$. Observe that $\alpha^{\dagger}$ is the composition:
$$ \bfig
\putmorphism(0, 0)(1, 0)[\alpha^{\dagger}=({}^{op}\M\simeq {}^{op}\M\Del_{\C}\C` {}^{op}\M\Del_{\C}\M` {}^{op}\M\Del_{\C}\alpha^{-1}]{1420}1a
\putmorphism(1420, 0)(1, 0)[\phantom{ {}^{op}\M\Del_{\C}\M}` \C).`\crta\ev]{560}1a
\efig
$$
We compute:
\begin{eqnarray*}
(\alpha^{\dagger} \Del_{\C}\alpha)(M\Del_{\C}N)&=&
\alpha^{\dagger} (M)\Del_{\C} \alpha(N)\\
&=& \crta\ev(M\Del_{\C}\alpha^{-1}(I))\Del_{\C}\alpha(N)\\
&=& \crta\ev(M\Del_{\C}\alpha^{-1}(I)\crta\ot\alpha(N))\\
&=& \crta\ev(M\Del_{\C}\alpha^{-1}(I\ot\alpha(N)))\\
&=& \crta\ev(M\Del_{\C}N).
\end{eqnarray*}
In the third and the fourth equality we used that $\crta\ev$, respectively $\alpha^{-1}$, is right $\C$-linear.
The second claim follows by \leref{basic lema} from the parts 1) and 2).
\qed\end{proof}

\subsection{Tensor product of bimodule categories as a braided monoidal category} \sslabel{product cats}

In \cite{Femic2} we introduced Amitsur cohomology associated to Deligne tensor powers $\C^{\Del n}$, which is a braided monoidal category if so is $\C$. In
\ssref{Extended cocycles} we will want to consider Amitsur complex assotiated to Deligne tensor powers of the form $(\C\Del\C)^{\Del_{\C} n}$. For this purpose
let us see how the braided monoidal structure of such a category looks like.


\begin{lma} \lelabel{braided: product over braided}
Let $\D$ be a $\C$-bimodule category which is a left and a right $\C$-module category through two tensor functors $\sigma, \tau:\C\to\D$, respectively.
Suppose that $\D$ is braided.  
Then $\D\Del_{\C}\D$ is a braided finite tensor category.
\end{lma}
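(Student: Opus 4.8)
The plan is to build the braided finite tensor category structure on $\D\Del_{\C}\D$ in stages: first realize it as a finite tensor category, then equip it with a braiding coming from the braiding on $\D$. For the tensor category structure, I would first observe that $\D\Del_{\C}\D$ is a finite abelian category (Deligne product of finite categories over $\C$), and that since $\D$ is a $\C$-bimodule category through tensor functors $\sigma$ (left action) and $\tau$ (right action), the relative Deligne tensor product $\D\Del_{\C}\D$ receives a monoidal product coming from the componentwise tensor product of $\D$: on objects $(A\Del_{\C}B)\otimes(A'\Del_{\C}B')=(A\ot A')\Del_{\C}(B\ot B')$. The $\C$-balancing isomorphism $A\crta\ot X\Del_{\C}B\to A\Del_{\C}X\crta\ot B$ is, under the bimodule structure on $\D$, literally $\tau(X)\ot A\Del_{\C}B\to A\Del_{\C}\sigma(X)\ot B$, so I must check that this balancing is a monoidal balancing, i.e. compatible with the componentwise tensor product, which is where the braiding of $\D$ enters: moving $\tau(X)$ past $A'$ on the left factor, and $\sigma(X)$ past $B$ on the right factor, requires the braiding $\Phi$ of $\D$, and the two must be coordinated so that the balancing passes to the tensor product. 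Unit object is $I_{\D}\Del_{\C}I_{\D}$; rigidity comes from componentwise duals in $\D$, again using that the balancing is compatible with duality (here one uses that $\sigma,\tau$ are tensor functors so preserve duals).

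Next I would define the braiding on $\D\Del_{\C}\D$. The natural candidate on simple-type objects is
\[
(A\Del_{\C}B)\otimes(A'\Del_{\C}B')=(A\ot A')\Del_{\C}(B\ot B')\ \xrightarrow{\ \Phi_{A,A'}\Del_{\C}\Phi_{B,B'}\ }\ (A'\ot A)\Del_{\C}(B'\ot B)=(A'\Del_{\C}B')\otimes(A\Del_{\C}B),
\]
i.e. apply the braiding of $\D$ in each tensor factor simultaneously. I would check naturality (immediate from naturality of $\Phi$) and then the two hexagon identities. Each hexagon for $\D\Del_{\C}\D$ decomposes into the corresponding hexagon for $\D$ applied in the left factor and in the right factor, together with coherence of the associator of $\D\Del_{\C}\D$ (which is inherited from that of $\D$ componentwise); so the hexagons hold because they hold in $\D$. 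Finally I would verify that this braiding is well-defined on $\D\Del_{\C}\D$, i.e. compatible with the $\C$-balancing used to form the relative product — this is the point where one genuinely uses that $\sigma$ and $\tau$ are \emph{tensor} functors and that $\D$ is braided: the square relating $\Phi$ on $\tau(X)\ot A$ (resp. $\sigma(X)\ot B$) to $\Phi$ on $A\ot A'$ commutes by the hexagon axiom in $\D$ applied to $\tau(X)$ (resp. $\sigma(X)$).

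The main obstacle I expect is precisely this well-definedness/compatibility bookkeeping: showing that the componentwise tensor product, the associator, and the candidate braiding all descend correctly through the relative Deligne product $-\Del_{\C}-$. Concretely, one has to check that the balancing isomorphism (which involves $\tau$ on the first factor and $\sigma$ on the second) intertwines the componentwise tensor products and the componentwise braidings; this produces several hexagon-type diagrams in $\D$ that must be invoked in the right order, and keeping track of which braiding ($\Phi$ in $\D$) acts on which factor, with the correct variance, is the delicate part. Everything else — finiteness, abelian structure, rigidity, unit — is routine, inherited componentwise from $\D$ once the $\C$-balancing compatibilities are established. A clean way to organize the proof is to first prove that the functor $\D\times\D\to\D\Del_{\C}\D$ is $\C$-balanced \emph{as a monoidal functor} and \emph{as a braided structure}, then appeal to the universal property of $\Del_{\C}$ to transport the monoidal and braided structure of $\D$ (componentwise) down to $\D\Del_{\C}\D$.
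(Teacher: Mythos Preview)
Your approach is essentially the same as the paper's: define the tensor product and the braiding on $\D\Del_{\C}\D$ componentwise from those of $\D$, and reduce all the work to checking that these descend through the $\C$-balancing, which is precisely where the braiding of $\D$ is used (to move $\tau(X)$ past the second factor $E$ in the product, and dually for $\sigma(X)$). The paper carries out exactly these balancing checks explicitly, including the coherence square \equref{C-balanced} for the balancing of $\boxdot$ and the diagram \equref{C balanced nat tr} showing $\Psi=\tilde\Phi\Del_{\C}\tilde\Phi$ is a $\C$-balanced natural transformation; your outline anticipates these as the ``hexagon-type diagrams in $\D$'' to be invoked. One small slip: with the conventions here the right $\C$-action is $A\crta\ot X = A\ot\tau(X)$, so the balancing reads $A\tau(X)\Del_{\C}B\to A\Del_{\C}\sigma(X)B$ rather than with $\tau(X)$ on the left of $A$ --- this does not affect the substance of your argument.
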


\begin{proof}
We define the tensor product $\boxdot$ between two objects $D\Del_{\C}D'$ and $E\Del_{\C}E'$ in $\D\Del_{\C}\D$ as follows:
$$(D\Del_{\C}D')\boxdot(E\Del_{\C}E'):=DE\boxdot D'E'$$
(or, what is the same, so that the following diagram commutes:
\begin{equation*} 
\scalebox{0.88}{\bfig
 \putmorphism(-30,400)(1,0)[\D\Del\D\times\D\Del\D` \D\Del_{\C}\D\times\D\Del_{\C}\D `\Pi\times\Pi]{1630}1a
 \putmorphism(-50,0)(1,0)[\D\Del\D` \D\Del_{\C}\D.)` \Pi]{1700}1a
\putmorphism(-60,400)(0,-1)[\phantom{B\ot B}``\odot]{380}1l
\putmorphism(1630,400)(0,-1)[\phantom{B\ot B}``\boxdot]{380}1r
\efig}
\end{equation*}
In order to see that this is a well-defined operation, we need to check that it is $\C$-balanced at two places, so that
$$ ((D\crta\ot X)\Del_{\C}D')\boxdot(E\Del_{\C}E')\iso(D\Del_{\C}(X\crta\ot D'))\boxdot(E\Del_{\C}E')$$
and
$$(D\Del_{\C}D')\boxdot((E\crta\ot X)\Del_{\C}E')\iso(D\Del_{\C}D')\boxdot(E\Del_{\C}(X\crta\ot E'))$$
for all $X\in\C$. We will consider the corresponding isomorphisms defined as follows:
\begin{equation} \hspace{-2cm}  \eqlabel{left C balance}
\scalebox{0.88}{\bfig
 \putmorphism(-130,400)(1,0)[((D\crta\ot X)\Del_{\C}D')\boxdot(E\Del_{\C}E')` (D\Del_{\C}(X\crta\ot D'))\boxdot(E\Del_{\C}E') `b_{D,X,D'}]{2530}1a
 \putmorphism(-50,0)(1,0)[D\tau(X)E\Del_{\C}D'E' ` DE\tau(X)\Del_{\C}D'E'=DE\Del_{\C}\sigma(X)D'E' ` D\ot\tilde\Phi_{\tau(X),E}\Del_{\C}D'E']{2300}1a
\putmorphism(0,400)(0,-1)[\phantom{B\ot B}``=]{380}1l
\putmorphism(2530,400)(0,-1)[\phantom{B\ot B}``=]{380}1r
\efig}
\end{equation}
and
\begin{equation} \hspace{-2cm}  \eqlabel{right C balance}
\scalebox{0.88}{\bfig
 \putmorphism(80,400)(1,0)[(D\Del_{\C}D')\boxdot((E\crta\ot X)\Del_{\C}E')` (D\Del_{\C}D')\boxdot(E\Del_{\C}(X\crta\ot E')) `\tilde b_{E,X,E'}]{2530}1a
 \putmorphism(350,0)(1,0)[DE\tau(X)\Del_{\C}D'E'=DE\Del_{\C}\sigma(X)D'E' ` DE\Del_{\C}D'\sigma(X)E'. ` DE\Del_{\C}\tilde\Phi_{\sigma(X),D'}\ot E']{2300}1a
\putmorphism(-40,400)(0,-1)[\phantom{B\ot B}``=]{380}1l
\putmorphism(2500,400)(0,-1)[\phantom{B\ot B}``=]{380}1r
\efig}
\end{equation}
We only show that the isomorphism $b_{D,X,D'}$ satisfies the coherence diagram \equref{C-balanced}, the check for $\tilde b_{E,X,E'}$ 
is done similarly. For this purpose we should prove that \vspace{1cm}
\begin{equation*}
\scalebox{0.76}{
\bfig \hspace{-2cm}
\putmorphism(850,760)(3,1)[ ` D\tau(XY)E\Del_{\C}D'E' ` ]{400}0l  
\putmorphism(200,660)(3,1)[ `  ` ]{400}{-1}l
\putmorphism(250,720)(3,1)[ `  ` D\ot\omega_{\tau}\ot E\Del_{\C}D'E']{400}0l 
\putmorphism(-300,500)(2,1)[D\tau(X)\tau(Y)E\Del_{\C}D'E' ` ` ]{400}0l
\putmorphism(1440,1000)(3,-1)[\phantom{(X \ot (Y \ot U)) \ot W}`  `  D\ot\tilde\Phi_{\tau(XY),E}\Del_{\C}D'E']{1300}0r 
\putmorphism(1500,950)(3,-1)[\phantom{(X \ot (Y \ot U)) \ot W}`  `  ]{1300}1r
\putmorphism(1300,500)(1,0)[\phantom{(X \ot (Y \ot U)) \ot W}` DE\Del_{\C}\sigma(XY)D'E' ` ]{1450}0a
\putmorphism(2970,500)(0,-1)[``DE\Del_{\C}\omega_{\sigma}\ot D'E']{500}1l
\putmorphism(-160,500)(0,-1)[``D\tau(X)\ot\tilde\Phi_{\tau(Y),E}\Del_{\C}D'E']{500}1r
\putmorphism(-200,0)(1,0)[D\tau(X)E\Del_{\C}\sigma(Y)D'E' ` DE\Del_{\C}\sigma(X)\sigma(Y)D'E'` D\ot\tilde\Phi_{\tau(X),E}\Del_{\C}\sigma(Y)D'E']{2960}1b
\efig}
\end{equation*}
commutes. This turns out to be fulfilled, as we have:
$$
\scalebox{0.9}[0.9]{
\gbeg{4}{6}
\got{2}{\tau(XY)} \got{1}{E} \gnl
\glmptb \gnot{\hspace{-0,4cm}\omega_{\tau}} \grmpb \gcl{1}  \gnl
\gcl{1} \gbr \gnl
\gbr \gmp{} \gnl
\gcl{1} \gmp{} \gcn{1}{1}{1}{3} \gnl
\gob{1}{\hspace{-0,4cm}E} \gob{1}{\sigma(X)} \gob{3}{\sigma(Y)}
\gend}\stackrel{nat. \tilde\Phi}{=}
\scalebox{0.9}[0.9]{
\gbeg{4}{6}
\got{1}{\hspace{-0,4cm}\tau(XY)} \got{1}{\hspace{0,2cm}E} \gnl
\gbr \gnl
\gcl{1} \gmp{} \gnl
\gcl{1} \glmptb \gnot{\hspace{-0,4cm}\omega_{\sigma}} \grmpb \gnl
\gcl{1} \gcl{1} \gcn{1}{1}{1}{2} \gnl
\gob{1}{\hspace{-0,4cm}E} \gob{1}{\sigma(X)} \gob{3}{\sigma(Y)}
\gend}
$$
(here the empty circle denotes that $\tau$ switches to $\sigma$ because the corresponding object from $\C$ passes to act from the other side of $\Del_{\C}$).
The associativity constraint on $\D\Del_{\C}\D$  is induced by $\alpha\Del_{\C}\alpha$, where $\alpha$ is the associativity
constraint for $\D$. Then it is clear that $\D\Del_{\C}\D$ is a monoidal category.

The braiding $\Psi$ in $\D\Del_{\C}\D$ between two objects $D\Del_{\C}D'$ and $E\Del_{\C}E'$ is given by the following morphism in $\D\Del_{\C}\D$:
\begin{equation} \eqlabel{big braiding Psi}
\scalebox{0.88}{\bfig
 \putmorphism(-90,350)(1,0)[(D\Del_{\C}D')\boxdot(E\Del_{\C}E')` (E\Del_{\C}E')\boxdot(D\Del_{\C}D') `\Psi]{1700}1a
 \putmorphism(-50,0)(1,0)[DE\Del_{\C}D'E'` ED\Del_{\C}E'D'.`\tilde\Phi\Del_{\C}\tilde\Phi]{1700}1b
\putmorphism(-90,350)(0,-1)[\phantom{B\ot B}``=]{350}1l
\putmorphism(1600,350)(0,-1)[\phantom{B\ot B}``=]{350}1r
\efig}
\end{equation}
We have to check that $\Psi$ is a $\C$-balanced natural transformation at two places. This means that two coherence diagrams like \equref{C balanced nat tr}
should commute, one of which we show and it is the following:
$$\scalebox{0.88}{\bfig
 \putmorphism(-20,400)(1,0)[((D\tau(X))E\Del_{\C}D'E'` DE\Del_{\C}(\sigma(X) D')E' ` b_{D,X,D'}]{1520}1a
 \putmorphism(0,0)(1,0)[E(D\tau(X))\Del_{\C}E'D' ` ED\Del_{\C}E'(\sigma(X)D'). ` \tilde b_{D,X,D'}]{1500}1a
\putmorphism(100,400)(0,-1)[\phantom{B\ot B}``\tilde\Phi_{D\tau(X), E}\Del_{\C}\tilde\Phi_{D', E'}]{380}1l
\putmorphism(1300,400)(0,-1)[\phantom{B\ot B}``\tilde\Phi_{D, E} \Del_{\C} \tilde\Phi_{\sigma(X)D', E'}]{380}1r
\efig}
$$
with $b$ and $\tilde b$ from \equref{left C balance} and \equref{right C balance}, respectively. It commutes indeed, since:
$$
\scalebox{0.9}[0.9]{
\gbeg{6}{5}
\got{2}{D} \got{1}{\tau(X)} \got{1}{E}  \got{1}{D'}  \got{1}{E'} \gnl
\gcn{1}{1}{2}{3} \gvac{1} \gbr \gbr \gnl
\gvac{1} \gbr \gmp{} \gcl{1} \gcl{1} \gnl
\gcn{1}{1}{3}{2} \gcn{2}{1}{3}{2} \gbr \gcn{1}{1}{1}{2} \gnl
\gob{2}{E} \gob{1}{\hspace{-0,4cm}D} \gob{1}{\hspace{-0,34cm}E'} \gob{1}{\sigma(X)} \gob{2}{D'}
\gend}=
\scalebox{0.9}[0.9]{
\gbeg{6}{5}
\got{2}{D} \got{1}{\tau(X)} \got{1}{E}  \got{1}{D'}  \got{1}{E'} \gnl
\gcn{1}{1}{2}{3} \gvac{1} \gbr \gcl{1} \gcl{1} \gnl
\gvac{1} \gbr \gmp{} \gbr \gnl
\gcn{1}{1}{3}{2} \gcn{2}{1}{3}{2} \gbr \gcn{1}{1}{1}{2} \gnl
\gob{2}{E} \gob{1}{\hspace{-0,4cm}D} \gob{1}{\hspace{-0,34cm}E'} \gob{1}{\sigma(X)} \gob{2}{D'}
\gend}
$$
by a braiding axiom.
The braiding axioms for $\Psi$ follow by those for $\tilde\Phi$ in $\D$.
\qed\end{proof}

\begin{rem}
Similar structures to the ones presented in the proof of \leref{braided: product over braided} were considered in \cite{DGNO1} and \cite{Gr1}.
Recall that given a braided monoidal category $\D$ the M\"uger's center category $\D'$ is a braided subcategory of $\D$ such that for all $M\in\D'$ and
all $D\in\D$ the braiding $\tilde\Phi$ of $\D$ is symmetric when acting between $M$ and $D$, that is: $\tilde\Phi_{M,X}=(\tilde\Phi_{M,X})^{-1}$, \cite[Definition 2.9]{M}.
Another term used in the literature for such objects (of $\D'$) is that they are {\em transparent}, (see \cite{Al1}). 
In \leref{braided: product over braided} we do not require that the functors $\sigma$ and $\tau$ factor through the M\"uger's center category $\D'$, that is that
$\tilde\Phi$ be symmetric when acting between $\tau(X)$, for every $X\in\C$, and any $D\in\D$ (and the same for $\sigma$).
\end{rem}

\bigskip

\section{Some equivalences of categories of functors} \selabel{adjunctions}

The next result can be seen as a special case of \cite[Proposition 4.5]{Femic2}. Suppose a finite tensor category $\E$ is a left $\C$-module category
via a monoidal functor $\eta: \C\to\E$, as in \equref{associator}. 
Then $-\Del_{\C} \E$ has a right adjoint $\Fun(\E, -)_{\E}\simeq\Id: \Mod\x\E\to\Mod\x\C$. Given a right $\E$-module category $\A$, the category
$\Fun(\E, \A)_{\E}\simeq\A$ is a right $\C$-module category via the left $\C$-module structure of $\E$ (by $(F\crta\ot X)(E)=F(X\crta\ot E)$ for
$F\in\Fun(\E, \A)_{\E}$ and $X\in\C, E\in\E$), that is, via $\eta: \C\to\E$. Otherwise stated, $\A\in\Mod\x\E$ is a right $\C$-module category via
$A\crta\ot C=A\crta\ot\eta(C)$, for $A\in\A, C\in\C$. We denote this $\C$-module by $\G(\A)$. This right adjoint functor $\Mod\x\E\to\Mod\x\C$ we will call the
{\em restriction of scalars functor}.

\medskip

By the notation $\F :\C\to\D:\G$ we will mean that a functor $\F$ is left adjoint to $\G$.

\begin{cor} \colabel{adj}
Given a functor of tensor categories $\eta: \C\to\E$, there is an adjoint pair of functors $\F=-\Del_{\C}\E: \Mod\x\C\to\Mod\x\E :\G$,
where $\F$ is the {\em induction functor} and $\G$ the {\em restriction of scalars functor}, so that there is an
isomorphism of $\F\x\D$-bimodule categories:
$$\Fun(\N\Del_{\C}\E, \A)_{\E}\iso\Fun(\N, \G(\A))_{\C}$$
for every $\N\in\D\x\C\x\Bimod$ and $\A\in\F\x\E\x\Bimod$. 

Given a right $\C$-module functor $\HH: \N\to\G(\A)$, a right $\E$-module functor $\tilde\HH: \N\Del_{\C}\E\to \A$ is given by
\begin{equation}\eqlabel{tilde H}
\tilde\HH(N\Del_{\C} E)=\HH(N)\crta\ot E=\HH(N)\Del_{\C} E
\end{equation}
for every $N\in\N, E\in\E$.
\end{cor}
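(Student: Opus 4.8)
The statement is essentially an adjunction assertion combined with an explicit formula, so the plan is to reduce everything to the already-established general machinery and then do the bookkeeping to check the bimodule structures match up. First I would invoke \cite[Proposition 4.5]{Femic2} (which the text tells us this corollary specializes) to get the adjunction $\F = -\Del_{\C}\E \dashv \G$ at the level of plain module categories, i.e. the natural isomorphism $\Fun(\N\Del_{\C}\E,\A)_{\E}\iso\Fun(\N,\G(\A))_{\C}$. The point is that $\E$ is a left $\C$-module category via $\eta$, so $-\Del_{\C}\E$ makes sense on $\Mod\x\C$ and lands in $\Mod\x\E$, and $\G$ is the restriction of scalars along $\eta$ described just before the statement; that this is a genuine adjoint pair is exactly the content being cited.

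Next I would verify the explicit formula \equref{tilde H}. Given a right $\C$-module functor $\HH:\N\to\G(\A)$, define $\tilde\HH$ on objects of the form $N\Del_{\C}E$ by $\tilde\HH(N\Del_{\C}E)=\HH(N)\crta\ot E$, where $\crta\ot$ is the right $\E$-action on $\A$; the equality $\HH(N)\crta\ot E = \HH(N)\Del_{\C}E$ is just the notational identification coming from the fact that $\A$, viewed as $\A\Del_{\C}\E$-ish data, carries the $\E$-action this way (concretely $\G(\A)\Del_{\C}\E\simeq\A$ via $A\Del_{\C}E\mapsto A\crta\ot E$, which is the counit of the adjunction). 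One must check this assignment is well defined, i.e. $\C$-balanced: $\tilde\HH((N\crta\ot X)\Del_{\C}E)=\HH(N\crta\ot X)\crta\ot E\iso (\HH(N)\crta\ot\eta(X))\crta\ot E$ using the right $\C$-module structure of $\HH$ and the fact that $\G(\A)$'s $\C$-action is $A\crta\ot X = A\crta\ot\eta(X)$, and this is naturally isomorphic to $\HH(N)\crta\ot(\eta(X)\ot E)=\HH(N)\crta\ot(X\crta\ot E)=\tilde\HH(N\Del_{\C}(X\crta\ot E))$, with the balancing isomorphism assembled from the module associativity constraints and \equref{associator}. Then $\tilde\HH$ is a right $\E$-module functor essentially by construction, and the correspondence $\HH\mapsto\tilde\HH$ is inverse to precomposition with the unit $\N\to\G(\N\Del_{\C}\E)$, giving the stated isomorphism; naturality in $\N$ and $\A$ follows from functoriality of all the pieces involved.

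Finally I would check the bimodule enhancement: the claim is that the isomorphism $\Fun(\N\Del_{\C}\E,\A)_{\E}\iso\Fun(\N,\G(\A))_{\C}$ is one of $\F\x\D$-bimodule categories, where $\N\in\D\x\C\x\Bimod$ and $\A\in\F\x\E\x\Bimod$. Here the $\F$- and $\D$-actions on the functor categories are the usual ones (post-composition with the $\F$-action on $\A$, pre-composition with the $\D$-action on $\N$), and since $-\Del_{\C}\E$ only touches the $\C$-side while leaving the $\D$-side of $\N$ untouched, and $\G$ only changes the $\E$-side of $\A$ to a $\C$-side while leaving the $\F$-side untouched, these actions commute with the adjunction isomorphism; this is a routine diagram chase using that the balancing data was built from the module constraints, which are themselves bimodule-compatible.

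\textbf{Main obstacle.} The genuinely non-formal step is the well-definedness / $\C$-balancing check for $\tilde\HH$ in \equref{tilde H}: one has to produce the balancing isomorphisms from the interplay of the right $\C$-module structure on $\HH$, the monoidal constraint $\xi$ of $\eta$, and the module associators $m$ and $\gamma$, and then verify the pentagon-type coherence \equref{C-balanced}. Everything else either is cited from \cite[Proposition 4.5]{Femic2} or is a bookkeeping verification that untouched actions commute with the adjunction.
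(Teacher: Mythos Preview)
Your proposal is correct and aligns with the paper's approach: the paper gives no proof for this corollary, treating it as a direct specialization of \cite[Proposition 4.5]{Femic2} together with the discussion immediately preceding the statement. Your outline simply makes explicit the bookkeeping (the $\C$-balancing check for $\tilde\HH$ and the compatibility with the $\F\x\D$-bimodule structures) that the paper leaves implicit in that citation.
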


\begin{defnlma} \cite[Definition and Lemma 5.2]{Femic2}
Let $\F: \D\to\C$ be a tensor functor.
\begin{itemize}
\item
Given another tensor functor $\G: \D'\to\C$ and a $\C$-bimodule category $\M$, the category ${}_{\F}\M_{\G}$ equal to $\M$ as an abelian category with actions:
$$X\crta\ot M\crta\ot X'=\F(X)\crta\ot M\crta\ot \G(X')$$
for all $X\in\D, X'\in\D'$ and $M\in\M$ is a $\D\x\D'$-bimodule category.
\item
If $\HH: \C\to\E$ is another tensor functor and $\M$ is a left $\E$-module category, then there is an obvious equivalence of left $\D$-modules categories:
\begin{equation}\eqlabel{functor change}
{}_{\F}\C\Del_{\C}{}_{\HH}\M\simeq {}_{\HH\F}\M.
\end{equation}
\end{itemize}
\end{defnlma}

\medskip

Let $e^n_i: \C^{\Del n}\to\C^{\Del (n+1)}$ with $i=1,\cdots, n+1$, denote the tensor functors that we introduced in \cite[Section 5]{Femic2} and which are given by:
\begin{equation} \eqlabel{e's}
e_i^n(X^1\Del\cdots\Del X^n) = X^1\Del\cdots \Del I\Del X^i\Del\cdots\Del X^n
\end{equation}

Let us now consider $\eta=e^2_i: \C\Del\C\to \C\Del\C\Del\C, i=1,2,3$ in \coref{adj} and let us write the corresponding adjoint pair of functors as $(\F_i, \G_i)$.
Denote by $\M_i=\F_i(\M)=\M\Del_{\C\Del\C} {}_{e^2_i}(\C\Del\C\Del\C)$ for $i=1,2,3$. In particular, an object in $\M_3$ is of the form:
$$M\Del_{\C\Del\C} {}_{e^2_3}(X^1\Del X^2\Del X^3)= M\crta\ot (X^1\Del X^2) \Del_{\C\Del\C} {}_{e^2_3}(I\Del I\Del X^3)\iso
  (M\crta\ot (X^1\Del X^2)) \Del X^3$$
for some $M\in\M, X^1, X^2, X^3\in\C$. Therefore, $\M_3\simeq \M\Del\C$. Similarly, an object in $\M_1$ is of the form: $X\Del M$ for some $X\in\C$,
and $\M_1\simeq\C\Del\M$.

We will also denote: $M_i=(\M\Del_{\C\Del\C} {}_{e^2_i})(M)$ for some $M\in\M$ and $i=1,2,3$. In particular,
$$M_3=M\Del_{\C\Del\C} {}_{e^2_3}(I\Del I\Del I)=M\Del I \quad\textnormal{and}\quad
M_1=M\Del_{\C\Del\C} {}_{e^2_1}(I\Del I\Del I)=I\Del M.$$

\medskip

To simplify the calculations in the following lemmas we introduce the Sweedler-type notation. For a functor $H: \M\to \M\Del_{\C}\M$ and $M\in\M$ we will write:
$$H(M)=M_{(1)}\Del_{\C} M_{(2)},$$
where the direct sums are understood implicitly. 

\begin{lma} \lelabel{Delta tilde}
Let $\C$ be a symmetric finite tensor category and let $\M\in\C\Del\C\x\Mod$. There is 
an isomorphism of categories
$$\Fun_{\C}(\M, \M\Del_{\C}\M)_{\C}\iso\Fun_{\C^{\Del 3}}(\M_2, \M_3\Del_{\C^{\Del 3}}\M_1).$$
The functor $\tilde{H}$ corresponding to the functor $H\in\Fun_{\C}(\M, \M\Del_{\C}\M)_{\C}$, with
$H(M)= M_{(1)}\Del_{\C} M_{(2)}$ is given by the formula:
\begin{equation}\eqlabel{3H}
\tilde{H}(M_2)=M_{(1)3}\Del_{\C^{\ot 3}} M_{(2)1}.
\end{equation}
\end{lma}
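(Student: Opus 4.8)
The strategy is to apply \coref{adj} three times and chain the resulting equivalences. The key point is that $\M_2 = \F_2(\M)$, $\M_3 = \F_3(\M)$, $\M_1 = \F_1(\M)$ are all obtained from $\M$ by applying the induction functors along the three functors $e_i^2: \C\Del\C \to \C^{\Del 3}$, and these inductions are left adjoint to the corresponding restrictions of scalars $\G_i$. Thus one first computes
$$\Fun_{\C^{\Del 3}}(\M_2, \M_3\Del_{\C^{\Del 3}}\M_1)_{\C^{\Del 3}} = \Fun_{\C^{\Del 3}}(\F_2(\M), \M_3\Del_{\C^{\Del 3}}\M_1)_{\C^{\Del 3}} \iso \Fun_{\C\Del\C}(\M, \G_2(\M_3\Del_{\C^{\Del 3}}\M_1))_{\C\Del\C}$$
by \coref{adj}. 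So the heart of the argument is to identify the restriction of scalars $\G_2(\M_3\Del_{\C^{\Del 3}}\M_1)$ with $\M\Del_{\C\Del\C}\M$ as a $\C\Del\C$-bimodule category.

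\medskip

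To carry this out I would proceed as follows. First, using \leref{Weq} (the interchange law for relative Deligne products), rewrite
$$\M_3\Del_{\C^{\Del 3}}\M_1 = \bigl(\M\Del_{\C\Del\C}{}_{e^2_3}(\C^{\Del 3})\bigr)\Del_{\C^{\Del 3}}\bigl({}_{e^2_1}(\C^{\Del 3})\Del_{\C\Del\C}\M\bigr) \simeq \M\Del_{\C\Del\C}\Bigl({}_{e^2_3}(\C^{\Del 3})\Del_{\C^{\Del 3}}{}_{e^2_1}(\C^{\Del 3})\Bigr)\Del_{\C\Del\C}\M,$$
where the middle factor ${}_{e^2_3}(\C^{\Del 3})\Del_{\C^{\Del 3}}{}_{e^2_1}(\C^{\Del 3})$ is a $\C\Del\C$-bimodule category; concretely its objects look like $(X^1\Del X^2)\Del_{\C^{\Del 3}}(Y^2\Del Y^3) \cong X^1\Del(X^2\ot Y^2)\Del Y^3$, i.e. this category is equivalent to $\C^{\Del 3}$. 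Next, applying the restriction of scalars $\G_2$ along $e^2_2$ to the outer $\C^{\Del 3}$-module structure collapses the extra middle tensor factor: since $e^2_2$ inserts $I$ in the middle slot, $\G_2$ of this category becomes (up to equivalence) just $\C\Del\C$ acting on itself, so $\G_2(\M_3\Del_{\C^{\Del 3}}\M_1) \simeq \M\Del_{\C\Del\C}\C\Del\C\Del_{\C\Del\C}\M \simeq \M\Del_{\C\Del\C}\M$ as $\C\Del\C$-bimodule categories. Composing all these equivalences yields the claimed isomorphism $\Fun_{\C}(\M,\M\Del_{\C}\M)_{\C}\iso\Fun_{\C^{\Del 3}}(\M_2,\M_3\Del_{\C^{\Del 3}}\M_1)$, where I also use \coref{symmetric} and \equref{summ one sided bimod} to pass freely between $\C$-bimodule and $\C\Del\C$-module descriptions (this is why symmetry of $\C$ is needed).

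\medskip

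Finally, to pin down the \emph{formula} \equref{3H}, I would trace an element through the chain of equivalences. Starting from $H\in\Fun_\C(\M,\M\Del_\C\M)_\C$ with $H(M)=M_{(1)}\Del_\C M_{(2)}$, the construction \equref{tilde H} of \coref{adj} shows that the induced $\C^{\Del 3}$-module functor $\tilde H$ sends $M_2 = M\Del_{\C\Del\C}{}_{e^2_2}(I^{\Del 3})$ to the image of $H(M)$ under the above interchange/restriction equivalences; since induction along $e^2_3$ turns $M_{(1)}$ into $M_{(1)3}=M_{(1)}\Del I$ and induction along $e^2_1$ turns $M_{(2)}$ into $M_{(2)1}=I\Del M_{(2)}$, and the relative tensor product over $\C^{\Del 3}$ glues the middle slots (which are both the unit object) together, one obtains exactly $\tilde H(M_2)=M_{(1)3}\Del_{\C^{\Del 3}}M_{(2)1}$.

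\medskip

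The main obstacle I expect is the bookkeeping of module-category structures: one must check that all the equivalences above ($\leref{Weq}$, the restriction of scalars, the identification of ${}_{e^2_3}(\C^{\Del 3})\Del_{\C^{\Del 3}}{}_{e^2_1}(\C^{\Del 3})$ with $\C^{\Del 3}$) are compatible as $\C\Del\C$-bimodule (equivalently, one-sided $(\C\Del\C)\Del(\C\Del\C)$-module) equivalences, and in particular that the $\C$-balancing of $H$ is matched with the $\C^{\Del 3}$-balancing of $\tilde H$ under these identifications. This is where \leref{basic lema} and the explicit associator \equref{associator} for module categories coming from tensor functors will be used, but it is essentially a diagram chase rather than a conceptual difficulty.
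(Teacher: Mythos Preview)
Your overall strategy --- use the adjunction $(\F_2,\G_2)$ from \coref{adj} and reduce to identifying $\G_2(\M_3\Del_{\C^{\Del 3}}\M_1)$ --- is exactly the one the paper follows. The problem is in your identification of that restriction: you conclude $\G_2(\M_3\Del_{\C^{\Del 3}}\M_1)\simeq \M\Del_{\C\Del\C}\M$, but the correct answer (and what the statement requires) is $\M\Del_{\C}\M$. These are genuinely different categories: already for $\M=\C\Del\C$ one has $\M\Del_{\C}\M\simeq\C^{\Del 3}$ while $\M\Del_{\C\Del\C}\M\simeq\C^{\Del 2}$.

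The error enters when you say that restriction of scalars $\G_2$ ``collapses the extra middle tensor factor'' of ${}_{e^2_3}(\C^{\Del 3})\Del_{\C^{\Del 3}}{}_{e^2_1}(\C^{\Del 3})\simeq\C^{\Del 3}$ down to $\C\Del\C$. Restriction of scalars never changes the underlying category; it only changes which structure you are allowed to use. The middle factor is $\C^{\Del 3}$ viewed as a $(\C^{\Del 2},\C^{\Del 2})$-bimodule via $(e^2_3,e^2_1)$: the left action uses slots $1,2$ and the right action uses slots $2,3$. These actions \emph{overlap} on the middle slot, and that overlap is precisely what produces a balancing over a \emph{single} copy of $\C$ (not over $\C\Del\C$) once you tensor $\M$ on both sides. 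Concretely, after absorbing slots $1$ and $3$ into the outer copies of $\M$, the residual relation is $M\crta\ot X\sim X\crta\ot N$ coming from slot $2$ alone, which is the $\C$-balancing defining $\M\Del_{\C}\M$. Your argument loses this and instead imposes a full $\C\Del\C$-balancing, which is too much.

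The paper avoids this pitfall by writing down explicit mutually inverse functors $\crta A:\M_3\Del_{\C^{\Del 3}}\M_1\to\M\Del_{\C}\M$, $\crta A((M\Del X)\Del_{\C^{\Del 3}}(Y\Del N))=(Y\crta\ot M)\Del_{\C}(N\crta\ot X)$, and $\crta B(M\Del_{\C}N)=M_3\Del_{\C^{\Del 3}}N_1$, checking directly that $\crta A$ is $\C^{\Del 3}$-balanced and $\C$-bilinear (this is where symmetry of $\C$ is actually used, via the braiding in the balancing isomorphism). Once that equivalence $\G_2(\M_3\Del_{\C^{\Del 3}}\M_1)\simeq\M\Del_{\C}\M$ is in hand, the adjunction and formula \equref{3H} follow exactly as you describe in your last two paragraphs. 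So your plan is salvageable, but you must redo the identification of the middle bimodule carefully; as written, the step ``$\G_2(\ldots)\simeq\M\Del_{\C\Del\C}\M$'' is false. (Also, the associativity you use is not \leref{Weq} --- that lemma is about mixing absolute and relative Deligne products --- but the canonical associativity of relative products cited in the preliminaries.)
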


\begin{proof}
Observe that the property \equref{summ one sided bimod} applies.
First we will show the isomorphism: $\G_2(\M_3\Del_{\C^{\Del 3}}\M_1)\iso \M\Del_{\C}\M$. We consider $\M$ as a $\C$-bimodule category, with its
left and right $\C\Del\C$-module structures as in \cite[Proposition 3.1]{Femic2}. Then $\M_i$ are (one-sided) $\C\Del\C\Del\C$-bimodule categories.
Let $A: \M_3\Del\M_1\to \M\Del_{\C}\M$ be defined
by $A((M\Del X)\Del(Y\Del N)):=(Y\crta\ot M)\Del_{\C}(N\crta\ot X)$. Let us show that it induces a functor $\crta A: \M_3\Del_{\C^{\Del 3}}\M_1\to \M\Del_{\C}\M$.
For this, we need to check if $A$ is $\C^{\Del 3}$-balanced, that is, that there is a natural isomorphism between
$$\Sigma:=A\left(((M\Del X)\crta\ot(Z^1\Del Z^2\Del Z^3))\Del(Y\Del N)\right)$$
and
$$\Omega:=A\left((M\Del X)\Del((Z^1\Del Z^2\Del Z^3)\crta\ot(Y\Del N))\right)$$
satisfying condition \equref{C-balanced}.
We first note that $(M\Del X)\crta\ot(Z^1\Del Z^2\Del Z^3)\iso (M\crta\ot(Z^1\Del Z^2))\Del(X\ot Z^3)\iso (Z^1\crta\ot M\crta\ot Z^2)\Del(X\ot Z^3)$
and $(Z^1\Del Z^2\Del Z^3)\crta\ot(Y\Del N)\iso (Z^1\ot Y)\Del((Z^2\Del Z^3)\crta\ot N)\iso (Z^1\ot Y)\Del(Z^2\crta\ot N\crta\ot Z^3)$.
Now we have:
\begin{eqnarray*}
\Sigma&=&A((Z^1\crta\ot M\crta\ot Z^2)\Del(X\ot Z^3)) \Del (Y\Del N)) \\
&=&(Y\crta\ot (Z^1\crta\ot M\crta\ot Z^2)) \Del_{\C} (N\crta\ot (X\ot Z^3)) \\
&\iso& ((Y\ot Z^1)\crta\ot M) \Del_{\C} ((Z^2\crta\ot N)\crta\ot (X\ot Z^3))
\end{eqnarray*}
and
\begin{eqnarray*}
\Omega&=&A((M\Del X) \Del ((Z^1\ot Y)\Del(Z^2\crta\ot N\crta\ot Z^3))) \\
&=& ((Z^1\ot Y)\crta\ot M) \Del_{\C}  ((Z^2\crta\ot N\crta\ot Z^3)\crta\ot X)\\
&\iso& ((Z^1\ot Y)\crta\ot M) \Del_{\C}  (Z^2\crta\ot N\crta\ot (Z^3 \ot X)).
\end{eqnarray*}
We define the wanted isomorphism $b_{M\Del X,Z,Y\Del N}$ between $\Sigma$ and $\Omega$ as:
$$\scalebox{0.84}{
\bfig 
\putmorphism(-100,400)(1,0)[A(((M\Del X)\crta\ot Z)\Del(Y\Del N))	`A((M\Del X)\Del(Z\crta\ot(Y\Del N))) ` b_{M\Del X,Z,Y\Del N}]{2670}1a
\putmorphism(0,0)(1,0)[(Y\crta\ot (Z^1\crta\ot M\crta\ot Z^2)) \Del_{\C} (N\crta\ot (X\ot Z^3)) `((Z^1\ot Y)\crta\ot M) \Del_{\C}  ((Z^2\crta\ot N\crta\ot Z^3)\crta\ot X) `
   \Phi_{Y,Z^1}^{-1}\Del\Id\Del\Phi_{X,Z^3}]{2600}1b
\putmorphism(60,400)(0,1)[`` =]{400}1l 
\putmorphism(2510,400)(0,1)[`` =]{400}1r 
\efig}
$$
where $Z=Z^1\Del Z^2\Del Z^3$. In the coherence condition \equref{C-balanced} the left and right associator functors that are to apply are $m^L$ and $m^R$
defined in \cite[Proposition 3.1]{Femic2}.
Apart from the compatibilities of the left, right and bimodule associators, the coherence condition comes down to:
$$
\gbeg{7}{6}
\got{1}{Y} \got{1}{Z^1} \got{2}{W^1} \got{1}{X} \got{1}{Z^3} \got{2}{W^3} \gnl
\gcl{1} \gbr \gvac{1} \gcl{1} \gbr \gnl
\gibr \gcl{1} \gvac{1} \gbr \gcl{1} \gnl
\gcl{1} \gibr \gvac{1} \gcl{2} \gbr \gnl
\gibr \gcl{1} \gvac{2} \gcl{1} \gcl{1} \gnl
\gob{1}{Z^1} \gob{1}{W^1} \gob{1}{Y} \gob{1}{} \gob{1}{W^3} \gob{1}{Z^3} \gob{1}{X}
\gend=
\gbeg{6}{5}
\got{1}{Y} \got{1}{Z^1} \got{1}{W^1} \gvac{1} \got{1}{X} \got{1}{Z^3} \got{2}{W^3} \gnl
\gibr \gcl{1} \gvac{1} \gbr \gcl{1} \gnl
\gcl{1} \gibr \gvac{1} \gcl{1} \gbr \gnl
\gcl{1} \gcl{1} \gcl{1} \gvac{1} \gbr \gcl{1} \gnl
\gob{1}{Z^1} \gob{1}{W^1} \gob{1}{Y} \gob{1}{} \gob{1}{W^3} \gob{1}{Z^3} \gob{1}{X}
\gend
$$
where $W=W^1\Del W^2\Del W^3$ is another object from $\C^{\Del 3}$. This identity is fulfilled by naturality.

The functor $B: \M\Del\M\to \M_3\Del_{\C^{\Del 3}}\M_1$ given by 
$B(M\Del N)=M_3\Del_{\C^{\Del 3}}N_1$ induces
the functor $\crta B: \M\Del_{\C}\M\to \M_3\Del_{\C^{\Del 3}}\M_1$. To see that $B$ is $\C$-balanced, observe that the identity $B((M\crta\ot X)\Del N)=
M_3\crta\ot (I\Del X\Del I)\Del_{\C^{\Del 3}}N_1 = M_3\Del_{\C^{\Del 3}} (I\Del X\Del I)\crta\ot N_1
=B(M\ot (X\crta\ot N))$ satisfies condition \equref{C-balanced} as the identity functor on $\M_3\Del_{\C^{\Del 3}}\N_1$ is $\C^{\Del 3}$-balanced.
Now, we easily see that $\crta A((M\Del X)\Del_{\C^{\Del 3}}(Y\Del N))=(Y\crta\ot M)\Del_{\C}(N\crta\ot X)$ and $\crta B(M\Del_{\C} N)=(M\Del I)\Del_{\C^{\Del 3}}(I\Del N)$
are inverse to each other:
\begin{eqnarray*}
\crta B\crta A((M\Del X)\Del_{\C^{\Del 3}}(Y\Del N))&=&\crta B((Y\crta\ot M)\Del_{\C}(N\crta\ot X))\\
&=&((Y\crta\ot M)\Del I)\Del_{\C^{\Del 3}}(I\Del (N\crta\ot X))\\
&\iso& (M\Del X)\Del_{\C^{\Del 3}}(Y\Del N)
\end{eqnarray*}
and
\begin{eqnarray*}
\crta A\crta B((M\Del_{\C} N)&=&\crta A((M\Del I)\Del_{\C^{\Del 3}}(I\Del N))\\
&=& (I\crta\ot M)\Del_{\C}(N\crta\ot I)=M\Del_{\C}N.
\end{eqnarray*}

On morphisms we define $\crta A$ as follows. Let $f: (M\Del X)\Del_{\C^{\Del 3}}(Y\Del N)\to (M'\Del X')\Del_{\C^{\Del 3}}(Y'\Del N')$ be a morphism in
$\M_3\Del_{\C^{\Del 3}}\M_1$.
Then $\crta A(f)=\alpha\comp f\comp \beta: (Y\crta\ot M)\Del_{\C}(N\crta\ot X)\to (Y'\crta\ot M')\Del_{\C}(N'\crta\ot X')$ -
here $\alpha: (M'\Del X')\Del_{\C^{\Del 3}}(Y'\Del N')\to(Y'\crta\ot M')\Del_{\C}(N'\crta\ot X')$ is given by
$\alpha((m'\Del x')\Del_{\C^{\Del 3}}(y'\Del n'))=(y'\crta\ot m')\Del_{\C}(n'\crta\ot x')$ and $\beta: (Y\crta\ot M)\Del_{\C}(N\crta\ot X)\to
(M\Del X)\Del_{\C^{\Del 3}}(Y\Del N)$ by $\beta((y\crta\ot m)\Del_{\C}(n\crta\ot x))=(m\Del x)\Del_{\C^{\Del 3}}(y\Del n)$, with $x\in X, x'\in X',
y\in Y, y'\in Y', m\in M, m'\in M', n\in N, n'\in N'$.

On the other hand, $\crta B$ is defined on morphisms as follows. For $f:M\Del_{\C}N\to M'\Del_{\C}N'$ a morphism in $\M\Del_{\C}\M$ we define
$\crta B(f)$ as the composition: 
$$
\scalebox{0.84}{
\bfig 
\putmorphism(50,400)(1,0)[M_3\Del_{\C^{\Del 3}}N_1 ` M'_3\Del_{\C^{\Del 3}}N'_1` \crta B(f)]{3340}1a
\putmorphism(0,0)(1,0)[(I\crta\ot M)\Del_{\C}(N\crta\ot I)` M\Del_{\C}N`l_M\Del_{\C}r_N]{1200}1b
\putmorphism(1050,0)(1,0)[\phantom{(X \ot (Y \ot U))}`M'\Del_{\C}N'` f]{950}1b
\putmorphism(1900,0)(1,0)[\phantom{(X \ot (Y \ot U))}` (I\crta\ot M')\Del_{\C}(N'\crta\ot I)` l_{M'}^{-1}\Del_{\C}r_{N'}^{-1}]{1450}1b
\putmorphism(0,400)(0,1)[``\beta_I^{-1}]{400}1l
\putmorphism(3340,400)(0,1)[``\alpha_I^{-1}]{400}{-1}r
\efig}
$$
where $\alpha_I^{-1}$ and $\beta_I^{-1}$ are obvious maps arising from $\alpha$ and $\beta$ with $X=Y=I$.

\medskip

Then $\M\Del_{\C}\M$ and $\M_3\Del_{\C^{\Del 3}}\M_1$ are isomorphic as abelian categories. Let us see that $\crta A$ is an isomorphism
of $\C$-bimodule categories, where $\M_3\Del_{\C^{\Del 3}}\M_1$ is seen as a $\C$-bimodule category through $\F_2: \Mod\x\C^{\Del 2} \to \Mod\x\C^{\Del 3}$.
For this purpose observe the isomorphism $s_c$ defined via:
$$
\scalebox{0.84}{
\bfig
\putmorphism(150,800)(1,0)[\crta A(C\crta\ot(M\Del X)\Del_{\C^{\Del 3}}(Y\Del N)) ` C\crta\ot\crta A((M\Del X)\Del_{\C^{\Del 3}}(Y\Del N))` s_C]{3000}1a
\putmorphism(150,400)(0,1)[Y\crta\ot(C\crta\ot M)\Del_{\C}(N\crta\ot X)`  `]{400}0l
\putmorphism(200,400)(0,1)[`  `(m^l)^{-1}\Del_{\C}\Id]{400}1l
\putmorphism(80,0)(1,0)[((Y\ot C)\crta\ot M)\Del_{\C}(N\crta\ot X)` ((C\ot Y)\crta\ot M)\Del_{\C}(N\crta\ot X)` \Phi_{Y,C}\Del_{\C}\Id]{3050}1b
\putmorphism(210,800)(0,1)[``=]{400}1l
\putmorphism(3250,800)(0,1)[``=]{400}{-1}r
\putmorphism(3250,400)(0,1)[``m^l\Del_{\C}\Id]{400}{-1}r
\putmorphism(3150,400)(0,1)[(C\crta\ot (Y\crta\ot M))\Del_{\C}(N\crta\ot X)``]{400}0r
\efig}
$$
The check that it satisfies the coherence for a left $\C$-linear functor comes down to: $(C\ot\Phi_{Y,D})(\Phi_{Y,C}\ot D)=\Phi_{Y,C\ot D}$, which is
fulfilled by an axiom for the braiding. (The rest in the coherence check holds by naturality and the coherence of the left associator functor $m^l$.)
The proof that $\crta A$ is right $\C$-linear is similar.
This yields that 
$\G_2(\M_3\Del_{\C^{\Del 3}}\M_1)\iso \M\Del_{\C}\M$ as $\C$-bimodule categories.

\medskip

Finally, the adjunction $\F_2=-\Del_{\C\Del\C}{}_{e^2_2}(\C^{\Del 3}): \Mod\x\C^{\Del 2} \to \Mod\x\C^{\Del 3}:\G_2$ yields that there is an isomorphism:
\begin{eqnarray*}
\Fun_{\C}(\M, \M\Del_{\C}\M)_{\C}
&\iso &\Fun_{\C}(\M, \G_2(\M_3\Del_{\C^{\Del 3}}\M_1))_{\C}\\
&\iso& \Fun_{\C^{\Del 3}}(\F_2(\M), \M_3\Del_{\C^{\Del 3}}\M_1)\\
&=&\Fun_{\C^{\Del 3}}(\M_2, \M_3\Del_{\C^{\Del 3}}\M_1).
\end{eqnarray*}
In particular, given a $\C$-bimodule functor $H: \M\to \M\Del_{\C}\M$ and $M\in\M$, we have by \equref{tilde H} that
$\tilde H(M_2)=\crta B(H(M))=M_{(1)3}\Del_{\C^{\ot 3}} M_{(2)1}$. Since $\tilde H$ is  $\C^{\Del 3}$-linear, it extends to all $\M_2$
(an arbitrary object in $\M_2$ is of the form: $\crta M=M\Del_{\C\Del\C} {}_{e^2_2}(X^1\Del X^2\Del X^3)
=M_2\crta\ot(X^1\Del X^2\Del X^3)$, so $\tilde H(\crta M)=\tilde H(M_2)\crta\ot(X^1\Del X^2\Del X^3)$).
\qed\end{proof}

\begin{lma}
Let $\C$ be a symmetric finite tensor category and let $\M\in\C\Del\C\x\Mod$. There is an isomorphism of categories
$$\Fun_{\C}(\M, \M\Del_{\C}\M\Del_{\C}\M)_{\C}\iso\Fun_{\C^{\Del 4}}(\M_{23}, \M_{34}\Del_{\C^{\Del 4}}\M_{14}\Del_{\C^{\Del 4}}\M_{12}).$$
Given $H\in\Fun_{\C}(\M, \M\Del_{\C}\M\Del_{\C}\M)_{\C}$, with $H(M)= M_{(1)}\Del_{\C} M_{(2)}\Del_{\C} M_{(3)}$, its corresponding functor $\tilde{H}$ is given by:
\begin{equation}\eqlabel{4H}
\tilde{H}(M_{23})=M_{(1)34}\Del_{\C^{\ot 4}} M_{(2)14}\Del_{\C^{\ot 4}} M_{(3)12}.
\end{equation}
\end{lma}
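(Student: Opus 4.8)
The plan is to reduce this four-fold statement to the three-fold case treated in the previous lemma, applied iteratively, together with the adjunctions $(\F_i,\G_i)$ arising from the tensor functors $e^3_i:\C^{\Del 3}\to\C^{\Del 4}$ via \coref{adj}. First I would set up the relevant functors $e^3_i:\C^{\Del 3}\to\C^{\Del 4}$, $i=1,\dots,4$, in the same spirit as \equref{e's}, and write the associated adjoint pairs. The key observation is that $\M\Del_\C\M\Del_\C\M$ can be built up by applying $-\Del_\C\M$ twice, and that \leref{Delta tilde} already gives us the correct ``spreading out'' of $\M\Del_\C\M$ inside $\C^{\Del 3}$; the task is to transport this one level higher.

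Concretely, I would first observe that by associativity of the Deligne tensor product over $\C$ (the canonical equivalence recalled in the Preliminaries) we have $\M\Del_\C\M\Del_\C\M\simeq(\M\Del_\C\M)\Del_\C\M$, so a functor $H:\M\to\M\Del_\C\M\Del_\C\M$ with $H(M)=M_{(1)}\Del_\C M_{(2)}\Del_\C M_{(3)}$ can be analyzed by ``thickening'' the first two tensorands into a $\C^{\Del 3}$-picture via \leref{Delta tilde}, and then thickening once more. The main step is to establish, in analogy with the proof of \leref{Delta tilde}, an isomorphism of $\C$-bimodule categories
$$\G\bigl(\M_{34}\Del_{\C^{\Del 4}}\M_{14}\Del_{\C^{\Del 4}}\M_{12}\bigr)\iso\M\Del_\C\M\Del_\C\M,$$
where $\G$ is the appropriate restriction-of-scalars functor along the composite of the $e^3_i$'s that lands us at $\M_{23}$ (i.e.\ the functor $\F_{23}=-\Del_{\C^{\Del 2}}{}_{e}(\C^{\Del 4})$ built from $e^2_2$ followed by the appropriate $e^3_j$, so that $\F_{23}(\M)=\M_{23}$). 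To build this isomorphism I would define mutually inverse functors $A$ and $B$ explicitly on objects — $B$ sends $M\Del N\Del P$ to $M_{34}\Del_{\C^{\Del 4}}N_{14}\Del_{\C^{\Del 4}}P_{12}$, and $A$ sends the bracketed expression back by absorbing the various one-entry tensorands $I\Del(\cdot)\Del I\Del I$ etc.\ into neighboring positions, with the ``collisions'' of $\C$-objects across $\Del_{\C^{\Del 4}}$ resolved using the symmetric braiding $\Phi$ of $\C$, exactly as in \leref{Delta tilde}. One checks $A$ and $B$ are $\C^{\Del 4}$-balanced (hence descend), are mutually inverse, and are $\C$-bimodule functors; the braiding axioms used are the hexagon identities for $\Phi$, and the remaining coherence reduces to naturality, just as before.

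Once this is in place, the statement follows formally: by the adjunction $\F_{23}\dashv\G$ from \coref{adj} one gets
\begin{eqnarray*}
\Fun_\C(\M,\M\Del_\C\M\Del_\C\M)_\C
&\iso&\Fun_\C\bigl(\M,\G(\M_{34}\Del_{\C^{\Del 4}}\M_{14}\Del_{\C^{\Del 4}}\M_{12})\bigr)_\C\\
&\iso&\Fun_{\C^{\Del 4}}\bigl(\F_{23}(\M),\M_{34}\Del_{\C^{\Del 4}}\M_{14}\Del_{\C^{\Del 4}}\M_{12}\bigr)\\
&=&\Fun_{\C^{\Del 4}}\bigl(\M_{23},\M_{34}\Del_{\C^{\Del 4}}\M_{14}\Del_{\C^{\Del 4}}\M_{12}\bigr),
\end{eqnarray*}
and formula \equref{4H} drops out of the explicit description of $\tilde H$ via \equref{tilde H}: $\tilde H(M_{23})=B(H(M))=M_{(1)34}\Del_{\C^{\ot 4}}M_{(2)14}\Del_{\C^{\ot 4}}M_{(3)12}$, extended $\C^{\Del 4}$-linearly to all of $\M_{23}$ as in the closing paragraph of \leref{Delta tilde}.

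The part I expect to be the main obstacle is the bookkeeping in the object-level definition of $A$ and the verification that it is $\C^{\Del 4}$-balanced at the now \emph{three} different gluing places (once for each $\Del_{\C^{\Del 4}}$ and internally), since there are more index positions to track and the braidings $\Phi_{Y,Z^i}$ have to be inserted in the correct slots so that the coherence diagram \equref{C-balanced} commutes; this is conceptually identical to the $n=3$ case but combinatorially heavier. I would therefore present the $A,B$ construction in detail only where it differs from \leref{Delta tilde}, and invoke ``the same computation as in the proof of \leref{Delta tilde}'' for the coherence checks that go through verbatim with an extra spectator tensorand. An alternative, which I would mention as a remark, is to avoid re-deriving everything by iterating \leref{Delta tilde}: apply it to the inner $\M\Del_\C\M$ to pass from $\C^{\Del 2}$ to $\C^{\Del 3}$, then apply an analogue of \coref{adj}/\leref{Delta tilde} to the remaining $-\Del_\C\M$ factor to pass from $\C^{\Del 3}$ to $\C^{\Del 4}$, carefully matching the index sets $\{3,4\},\{1,4\},\{1,2\}$ against the composites $e^3_i\comp e^2_2$ — this keeps each individual balancing check at the complexity level already handled.
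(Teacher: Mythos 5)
Your proposal matches the paper's own proof essentially step for step: the paper likewise defines the mutually inverse $\C$-bilinear functors $\crta A$ and $\crta B$ (with $\crta B(M\Del_{\C}N\Del_{\C}P)=M_{34}\Del_{\C^{\Del 4}}N_{14}\Del_{\C^{\Del 4}}P_{12}$, exactly your $B$), identifies $\F_{23}=-\Del_{\C\Del\C}{}_{(e^2_2\comp e^3_3)}(\C^{\Del 4})$, invokes the adjunction $(\F_{23},\G_{23})$ to obtain the isomorphism of functor categories, and reads off \equref{4H} from $\tilde H(M_{23})=\crta B(H(M))$ extended $\C^{\Del 4}$-linearly, while deferring the balancing and coherence checks to the proof of \leref{Delta tilde} just as you propose. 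The approach and its execution are correct.
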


\begin{proof}
As in the proof of \leref{Delta tilde}, one proves that
$\crta A: \M_{34}\Del_{\C^{\Del 4}}\M_{14}\Del_{\C^{\Del 4}}\M_{12}\to \M\Del_{\C}\M\Del_{\C}\M$ given by
\begin{eqnarray*}
\crta A((M\Del X\Del Y) &\Del_{\C^{\Del 4}}& (X'\Del N\Del Y')\Del_{\C^{\Del 4}} (X''\Del Y''\Del P))=\\
&&((X''\ot X')\crta\ot M)\Del_{\C}(Y''\crta\ot N\crta\ot X)\Del_{\C} (P\crta\ot(Y\ot Y'))
\end{eqnarray*}
and
$$\crta B: \M\Del_{\C}\M\Del_{\C}\M\to \M_{34}\Del_{\C^{\Del 4}}\M_{14}\Del_{\C^{\Del 4}}\M_{12}$$
given by
$$\crta B(M\Del_{\C}N\Del_{\C}P):=M_{34}\Del_{\C^{\Del 4}}N_{14}\Del_{\C^{\Del 4}}P_{12}$$
are well-defined $\C$-bilinear functors inverse to each other,
where $\M_{34}\Del_{\C^{\Del 4}}\M_{14}\Del_{\C^{\Del 4}}\M_{12}$ is a $\C$-bimodule category through $\F_{23}:
\C\Del\C\to\C\Del\C\Del\C\Del\C$. Observe that
$\F_{23}=-\Del_{\C\Del\C}{}_{e^2_2}(\C^{\Del 3})\Del_{\C^{\Del 3}}{}_{e^3_3}(\C^{\Del 4})=
-\Del_{\C\Del\C} {}_{(e^2_2\comp e^3_3)}(\C^{\Del 4}): \Mod\x\C^{\Del 2} \to \Mod\x\C^{\Del 4}$. Then
$\M\Del_{\C}\M\Del_{\C}\M
\iso\G_{23}(\M_{34}\Del_{\C^{\Del 4}}\M_{14}\Del_{\C^{\Del 4}}\M_{12})$  as $\C$-bimodule categories and by the adjunction
$(\F_{23}, \G_{23})$ we have the isomorphism:
\begin{eqnarray*}
\Fun_{\C}(\M, \M\Del_{\C}\M\Del_{\C}\M)_{\C}&\iso & \Fun_{\C}(\M, \G_{23}(\M_{34}\Del_{\C^{\Del 4}}\M_{14}\Del_{\C^{\Del 4}}\M_{12}))_{\C}\\
&\iso&\Fun_{\C^{\Del 4}}(\F_{23}(\M), \M_{34}\Del_{\C^{\Del 4}}\M_{14}\Del_{\C^{\Del 4}}\M_{12})\\
&=&\Fun_{\C^{\Del 4}}(\M_{23}, \M_{34}\Del_{\C^{\Del 4}}\M_{14}\Del_{\C^{\Del 4}}\M_{12}).
\end{eqnarray*}

By \equref{tilde H}, if $H\in\Fun_{\C}(\M, \M\Del_{\C}\M\Del_{\C}\M)_{\C}$, its corresponding functor $\tilde H$ in
$\Fun_{\C^{\Del 4}}(\M_{23},\\ \M_{34}\Del_{\C^{\Del 4}}\M_{14}\Del_{\C^{\Del 4}}\M_{12})$ is given by $\tilde H(M_{23})=\crta B(H(M))$
which extends as a $\C^{\Del 4}$-module functor to all $\M_{23}$ and we get \equref{4H}.
\qed\end{proof}

\section{Comonoidal categories and coring categories over finite tensor categories}

Let $k$ be an algebraically closed field and let $\Cc_k$ denote the 2-category of
(small) finite abelian categories. We will denote by $vec$ 
the category of finite-dimensional $k$-vector spaces. 
Then $(\Cc_k, \Del, vec)$ is a symmetric monoidal 2-category (
\cite[Lemma 2.6]{Neu}, \cite[Proposition 2.9.10, Ejercicio 2.9.8]{Mnotas}). We also have that given a finite tensor category $\C$ the 2-category of
$\C$-bimodule categories $(\C\x\Bimod, \Del_{\C}, \C)$ is a monoidal 2-category, \cite[Theorem 1.1]{Gr}. We will consider their truncations to usual monoidal categories.

Comonoidal categories were introduced in \cite[Chapter 3]{Neu}. 
A finite abelian category $\C$ is comonoidal if
there are $k$-linear functors $\Delta: \C\to\C\Del\C$ and $\varepsilon: \C\to vec$ with natural isomorphisms:
$$a:(\Id_{\C}\Del\Delta)\Delta\to(\Delta\Del\Id_{\C})\Delta$$
$$l:(\varepsilon\Del\Id_{\C})\Delta\to\Id_{\C}$$
$$r:(\Id_{\C}\Del\varepsilon)\Delta\to\Id_{\C}$$
which satisfy 2 coherence diagrams:
$$\scalebox{0.84}{
\bfig 
\putmorphism(-200,500)(1,0)[(\C\Del\C\Del\Delta)(\C\Del\Delta)\Delta`
(\C\Del\Delta\Del\C)(\C\Del\Delta)\Delta`(\C\Del a)\Delta]{1500}1a
\putmorphism(1200,500)(1,0)[\phantom{(X \ot (Y \ot U)) \ot W\Delta\Del\C}`
   (\C\Del\Delta\Del\C)(\Delta\Del\C)\Delta`
   (\C\Del\Delta\Del\C)a]{1700}1a
\putmorphism(-160,500)(0,-1)[``(\C\Del\C\Del\Delta)a]{500}1r
\putmorphism(2950,500)(0,-1)[``(a\Del\C)\Delta]{500}1l
\putmorphism(-200,0)(1,0)[(\Delta\Del\Delta)\Delta` (\Delta\Del\C\Del\C)(\Delta\Del\C)\Delta`
   (\Delta\Del\C\Del\C)a]{3100}1b
\efig}
$$
and
$$\scalebox{0.84}{
\bfig
\putmorphism(-80,500)(1,0)[(\C\Del(\varepsilon\Del\C))(\C\Del\Delta)\Delta` ((\C\Del\varepsilon)\Del\C)(\Delta\Del\C)\Delta`
   (\C\Del\varepsilon\Del\C)a]{1680}1a
\putmorphism(350,500)(1,-1)[`\Delta`(\C\Del l)\Delta]{430}1l
\putmorphism(1200,500)(-1,-1)[``(r\Del\C)\Delta]{430}1r
\efig} \vspace{-0,2cm}
$$

In other words, a comonoidal category is a coalgebra object in the monoidal 2-category $(\Cc_k, \Del, vec)$.

\medskip

The coherence theorem for comonoidal categories holds, see \cite[Theorem 3.1]{CY}.
We generalize the above definition:

\begin{defn}
Let $\C$ be a finite tensor category and $\A$ an abelian finite category. We say that $\A$ is a {\em comonoidal $\C$-category}
if there are $\C$-bimodule functors $\Delta: \A\to\A\Del_{\C}\A$ and $\varepsilon: \A\to\C$ with $\C$-bimodule natural isomorphisms:
$$a:(\Id_{\A}\Del_{\C}\Delta)\Delta\to(\Delta\Del_{\C}\Id_{\A})\Delta$$
$$l:(\varepsilon\Del_{\C}\Id_{\A})\Delta\to\Id_{\A}$$
$$r:(\Id_{\A}\Del_{\C}\varepsilon)\Delta\to\Id_{\A}$$
which satisfy 2 coherence diagrams:
$$\scalebox{0.78}{
\bfig 
\putmorphism(-200,500)(1,0)[(\A\Del_{\C}\A\Del_{\C}\Delta)(\A\Del_{\C}\Delta)\Delta`
    (\A\Del_{\C}\Delta\Del_{\C}\A)(\A\Del_{\C}\Delta)\Delta`(\A\Del_{\C} a)\Delta]{1700}1a
\putmorphism(1460,500)(1,0)[\phantom{(X \ot (Y \ot U)) \ot W\Delta\Del_{\C}\A}`
   (\A\Del_{\C}\Delta\Del_{\C}\A)(\Delta\Del_{\C}\A)\Delta`
   (\A\Del_{\C}\Delta\Del_{\C}\A)a]{2000}1a
\putmorphism(-160,500)(0,-1)[``(\A\Del_{\C}\A\Del_{\C}\Delta)a]{500}1r
\putmorphism(3500,500)(0,-1)[``(a\Del_{\C}\A)\Delta]{500}1l
\putmorphism(-200,0)(1,0)[(\Delta\Del_{\C}\Delta)\Delta` (\Delta\Del_{\C}\A\Del_{\C}\A)(\Delta\Del_{\C}\A)\Delta`
   (\Delta\Del_{\C}\A\Del_{\C}\A)a]{3700}1b
\efig}
$$
and
$$\scalebox{0.8}{
\bfig
\putmorphism(-80,500)(1,0)[(\A\Del_{\C}(\varepsilon\Del_{\C}\A))(\A\Del_{\C}\Delta)\Delta` ((\A\Del_{\C}\varepsilon)\Del_{\C}\A)(\Delta\Del_{\C}\A)\Delta`
   (\A\Del_{\C}\varepsilon\Del_{\C}\A)a]{2000}1a
\putmorphism(280,350)(3,-1)[`\Delta`]{730}0l
\putmorphism(-100,500)(3,-1)[``]{1230}1l
\putmorphism(-100,450)(3,-1)[``(\A\Del_{\C} l)\Delta]{1230}0l
\putmorphism(2100,500)(-3,-1)[``]{1230}1r
\putmorphism(2100,450)(-3,-1)[``(r\Del_{\C}\A)\Delta]{1230}0r
\efig} \vspace{-0,2cm}
$$
\end{defn}

Otherwise stated, a comonoidal $\C$-category is a coalgebra object in the monoidal 2-category $(\C\x\Bimod, \Del_{\C}, \C)$.

\medskip

\begin{ex} \exlabel{can}
Let $\C$ be a finite tensor category, take an invertible $\C$-bimodule category $\N$ and consider $\A=\Can(\N; \C)_{\C}={}^{op}\N\Del\N$ with the
$\C$-bimodule functors $\Delta: {}^{op}\N\Del\N \to({}^{op}\N\Del\N)\Del_{\C}({}^{op}\N\Del\N)\simeq {}^{op}\N\Del\C\Del\N$
and $\varepsilon:{}^{op}\N\Del\N\to\C$ given on objects as follows. Let $\Delta(M\Del N)=M\Del\crta\coev(I)\Del N \cong M\Del I\Del N$ and let $\varepsilon$
be the composition:
$$ \bfig \putmorphism(0, 0)(1, 0)[{}^{op}\N\Del\N`{}^{op}\N\Del_{\C}\N`\pi]{750}1a
\putmorphism(1000, 0)(1, 0)[` \C`\crta\ev]{300}1a
\efig
$$
which is given by $\varepsilon(M\Del N)=\o\Hom_{\N}(N,M)$. On a morphism $f: M\Del N\to M'\Del N'$ in $\M\Del\N$ we define
$\Delta(f)=f_2: M\Del I\Del N\to M'\Del I\Del M'$. We lack of the definition of $\varepsilon$ on morphisms: $\varepsilon(f):
\o\Hom_{\N}(N,M)\to\o\Hom_{\N}(N',M')$). Let us see that
(up to the lack of the definition of the functor $\varepsilon$ on morphisms) $\Can(\N; \C)_{\C}$ is a comonoidal $\C$-category.
For the coassociativity of $\Delta$ the associativity isomorphism
$a:(\Id_{\A}\Del_{\C}\Delta)\Delta\to(\Delta\Del_{\C}\Id_{\A})\Delta$ for every $M\Del N\in {}^{op}\N\Del\N$ is a morphism in
$({}^{op}\N\Del\N)^{\Del_{\C} 3}$:
$$a(M\Del N): \displaystyle\bigoplus_{\substack{j\in J\\i\in J}} (M\Del  V_i)\Del_{\C}((W_i\Del V_j)\Del_{\C}(W_j\Del N)) \to
\displaystyle\bigoplus_{\substack{i\in J \\ j\in J}} ((M\Del  V_i)\Del_{\C}(W_i\Del V_j))\Del_{\C}(W_j\Del N)$$
induced by the canonical associativity equivalence, which clearly satisfies the pentagon axiom. The natural isomorphism
$r:(\Id_{\A}\Del_{\C}\varepsilon)\Delta\to\Id_{\A}$ is given by the identity functor, 
because of \equref{crta coev eq}. In other words, $r$ is identity because of one axiom for the dual object:
$(\N\Del_{\C} \crta\ev)(\crta\coev\Del_{\C} \N)\iso\Id_{\N}$. On the other hand, the natural isomorphism
$l:(\varepsilon\Del_{\C}\Id_{\A})\Delta\to\Id_{\A}$ is identity because of the other dual object axiom:
$(\crta\ev\Del_{\C} {}^{op}\N)({}^{op}\N\Del_{\C} \crta\coev)\iso\Id_{{}^{op}\N}$, as we see here:
$$\scalebox{0.84}{
\bfig \hspace{-0,5cm}
\putmorphism(-200,900)(1,0)[{}^{op}\N\Del\N` ({}^{op}\N\Del\N)\Del_{\C}({}^{op}\N\Del\N)`\Delta]{1400}1a
\putmorphism(1200,900)(1,0)[\phantom{(X \ot (Y \ot U)) \ot W\Delta\Del\C}`
   \C\Del_{\C} {}^{op}\N\Del\N` \varepsilon\Del_{\C} {}^{op}\N\Del\N]{1700}1a
\putmorphism(-160,900)(0,-1)[``\simeq]{900}1l
\putmorphism(1400,900)(0,-1)[`({}^{op}\N\Del_{\C}\N)\Del_{\C}({}^{op}\N\Del\N)`\pi\Del_{\C} {}^{op}\N\Del\N]{450}1r
\putmorphism(2950,900)(0,-1)[``\simeq]{900}1r
\putmorphism(200,150)(3,1)[ ` ` ]{400}1l
\putmorphism(585,290)(3,1)[ ` ` ({}^{op}\N\Del_{\C}\crta\coev)\Del\N ]{400}0l
\putmorphism(1700,430)(3,-1)[`` ]{1300}1r
\putmorphism(1340,570)(3,-1)[`` (\crta\ev\Del_{\C} {}^{op}\N)\Del\N ]{1300}0r
\putmorphism(-200,0)(1,0)[({}^{op}\N\Del_{\C}\C)\Del\N ` {}^{op}\N\Del\N ` \simeq\Del\N]{3100}1b
\put(350,610){\fbox{1}}
\put(1350,160){\fbox{3}}
\put(2300,610){\fbox{2}}
\efig}
$$
Here the inner diagrams $\langle 1\rangle$ and $\langle 2\rangle$ commute by the definitions of $\Delta$ and $\varepsilon$, respectively,
and the triangle $\langle 3\rangle$ commutes by the latter axiom for the dual object ${}^{op}\N$. The compatibility condition for $a, l$ and
$r$ is easily proved, as well as that they all are $\C$-bimodule natural isomorphisms.
\medskip

Similarly, there is a structure of an (almost) comonoidal $\C$-category on $\Can{}_{\C}(\N; \C)=\N\Del\N^{op}$, induced by the functors $\coev$ and $\ev$
(up to the lack of the definition of the counit functor on morphisms).
\end{ex}

In the particular case when $\N=\C$ in the above example, we obtain indeed a comonoidal $\C$-category (with properly defined counit functor).

\begin{lma} \lelabel{CC coring}
For any finite tensor category $\C$ the category $\A=\C\Del\C$ is a comonoidal $\C$-category with the functors
$\Delta: \C\Del\C \to (\C\Del\C) \Del_{\C} (\C\Del\C) \simeq \C\Del\C\Del\C$ and $\varepsilon:\C\Del\C\to\C$ given by
$$\Delta(X\Del Y)=(X\Del I) \Del_{\C} (I\Del Y)\quad\textnormal{and}\quad \varepsilon(X\Del Y)=X\ot Y$$
on objects and $\Delta(f)=f_2: X\Del I\Del Y\to X'\Del I\Del Y'$ and $\varepsilon(f)$ is the induced morphism between
$X\ot Y\to X'\ot Y'$, for $f: X\Del Y\to X'\Del Y'$.
\end{lma}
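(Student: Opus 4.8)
The plan is to verify directly that the triple $(\A=\C\Del\C,\ \Delta,\ \varepsilon)$ satisfies the axioms of a comonoidal $\C$-category. The key observation is that $\C\Del\C=\Can(\C;\C)_{\C}={}^{op}\C\Del\C$ as a special case of \exref{can} with $\N=\C$; since $\C$ itself is an invertible $\C$-bimodule category with $\crta\ev=\o\Hom_{\C}(-,-)=\ot$ and $\crta\coev(I)=I\Del I$ (here $J$ is a singleton and $V_i=W_i=I$), the functors $\Delta$ and $\varepsilon$ displayed in the statement are precisely the instances of those in \exref{can}. Thus most of the work is already done: coassociativity of $\Delta$, the isomorphism $r$, and the isomorphism $l$ are inherited verbatim from \exref{can} (the pentagon for $a$ comes from the canonical associativity equivalence, $r$ is identity by \equref{crta coev eq}, and $l$ is identity by the dual-object axiom $(\crta\ev\Del_{\C}{}^{op}\C)({}^{op}\C\Del_{\C}\crta\coev)\iso\Id$). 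What remains is precisely the point that \exref{can} left open, namely that here $\varepsilon$ is genuinely a functor — defined on morphisms — and a $\C$-bimodule functor.

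First I would check that $\Delta$ and $\varepsilon$ are well-defined $\C$-bimodule functors. For $\Delta$, note that $(\C\Del\C)\Del_{\C}(\C\Del\C)\simeq\C\Del\C\Del\C$ by \leref{Weq}, and under this identification $\Delta(X\Del Y)=(X\Del I)\Del_{\C}(I\Del Y)\iso X\Del I\Del Y$; on a morphism $f:X\Del Y\to X'\Del Y'$ we set $\Delta(f)=f_2$ (insertion of $I$ in the middle slot, using the functors $e^2_2$), which is manifestly functorial and $\C$-bilinear by construction. For $\varepsilon=\ot:\C\Del\C\to\C$, being the tensor-product functor of $\C$, it is $k$-linear and defined on morphisms as $f\mapsto f^1\ot f^2$ for $f=f^1\Del f^2$ (extended additively); its $\C$-bimodule structure is the obvious one, $\varepsilon((Z\Del I)\odot(X\Del Y))=ZX\ot Y=Z\ot\varepsilon(X\Del Y)$ and symmetrically on the right, with the evident coherence isomorphisms coming from the associativity constraint of $\C$.

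Next I would record the three structure isomorphisms $a$, $l$, $r$ explicitly in this special case. The isomorphism $a:(\Id_{\A}\Del_{\C}\Delta)\Delta\to(\Delta\Del_{\C}\Id_{\A})\Delta$ sends $X\Del I\Del I\Del Y$ to itself via the associativity constraint of $\C$ read through the identification of $(\C\Del\C)^{\Del_{\C}3}\simeq\C^{\Del 4}$; its pentagon is the pentagon for $\alpha$ in $\C$. The right unit $r:(\Id_{\A}\Del_{\C}\varepsilon)\Delta\to\Id_{\A}$ applied to $X\Del Y$ reads $(X\Del I)\Del_{\C}\varepsilon(I\Del Y)=(X\Del I)\Del_{\C}(I\ot Y)\simeq (X\Del I)\crta\ot Y$ hmm — more precisely it collapses $X\Del(I\ot Y)\iso X\Del Y$ using the left unit constraint of $\C$; and $l:(\varepsilon\Del_{\C}\Id_{\A})\Delta\to\Id_{\A}$ collapses $\varepsilon(X\Del I)\Del Y=(X\ot I)\Del Y\iso X\Del Y$ using the right unit constraint. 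These are $\C$-bimodule natural isomorphisms because the unit and associativity constraints of $\C$ are. Finally I would verify the two coherence diagrams: the coassociativity pentagon reduces, after unravelling all the Deligne-product identifications via \leref{Weq}, to the pentagon axiom for $\C$, and the triangle relating $a$, $l$, $r$ reduces to the triangle axiom for $\C$ relating $\alpha$ and the unit constraints.

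The main obstacle I anticipate is purely bookkeeping: keeping track of the canonical equivalences $(\C\Del\C)^{\Del_{\C}n}\simeq\C^{\Del(n+1)}$ (from \leref{Weq} and \coref{adj}-type identifications) consistently across all the composite functors appearing in the two coherence hexagon/pentagon diagrams, so that one can genuinely see each diagram collapse to an axiom of the monoidal category $\C$. There is no conceptual difficulty — the content is that $\C\Del\C$ with comultiplication ``split off the unit'' and counit ``multiply'' is the categorical analogue of the base-ring coring $S\ot_R S$ — but the coherence verification requires care with the many implicit associativity isomorphisms. I would organise it by first proving a lemma identifying $(\C\Del\C)^{\Del_{\C}n}$ with $\C^{\Del(n+1)}$ compatibly with the $e^n_i$, and then the diagrams become formal.
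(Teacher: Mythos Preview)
Your proposal is correct in outline but takes a more roundabout route than the paper, and contains a sloppiness in the identification step that would need repair.

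The paper's proof is entirely direct and much shorter than what you sketch. It observes that the tensor product of $\C$ is biexact and hence extends to a functor $\ot:\C\Del\C\to\C$ (this defines $\varepsilon$ on morphisms), then simply computes both $(\Delta\Del_\C\id)\Delta(X\Del Y)$ and $(\id\Del_\C\Delta)\Delta(X\Del Y)$ and finds they are literally equal to $(X\Del I)\Del_\C(I\Del I)\Del_\C(I\Del Y)$, so $a$ is the identity. Likewise $(\varepsilon\Del_\C\id)\Delta(X\Del Y)\iso X\Del Y\iso(\id\Del_\C\varepsilon)\Delta(X\Del Y)$, so $l$ and $r$ are taken to be identities. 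With $a,l,r$ all identities the two coherence diagrams hold trivially; no pentagon or triangle axiom of $\C$ is invoked. The ``bookkeeping obstacle'' you anticipate simply does not arise.

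Your route via \exref{can} with $\N=\C$ is conceptually sound---the paper itself remarks just before the lemma that this special case yields a genuine comonoidal $\C$-category---but your identification of the structures is not correct as written. You assert $\crta\ev=\o\Hom_\C(-,-)=\ot$, but in a rigid category $\o\Hom_\C(N,M)={}^*N\ot M$, not $M\ot N$; and your claim that $\crta\coev(I)=I\Del I$ with a singleton index set would, via \equref{crta coev eq}, force $X\cong\o\Hom_\C(X,I)={}^*X$ for all $X$, which is false. The discrepancy stems from the ${}^{op}$ in $\Can(\N;\C)_\C={}^{op}\N\Del\N$: to match ${}^{op}\C\Del\C$ with the $\C\Del\C$ of the lemma (carrying its natural bimodule structure) one must pass through the equivalence ${}^{op}\C\simeq\C$ given by duals, and that is exactly what converts ${}^*N\ot M$ into a plain tensor product. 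This can be carried out, but it is extra work that the paper's direct computation avoids entirely.
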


\begin{proof}
The tensor product functor of $\C$ is biexact, hence it induces a well-defined functor $\ot:\C\Del\C\to\C$ given by
$X\Del Y\mapsto X\ot Y$. Thus $\varepsilon(f)$ is given by the commuting diagram:
$$\scalebox{0.88}{\bfig
 \putmorphism(0,400)(1,0)[X\ot Y` X'\ot Y'` \varepsilon(f)]{1500}1a
 \putmorphism(0,0)(1,0)[X\Del Y` X'\Del Y'` f]{1500}1a
\putmorphism(0,400)(0,-1)[\phantom{B\ot B}``\ot]{380}{-1}l
\putmorphism(1500,400)(0,-1)[\phantom{B\ot B}``\ot]{380}{-1}r
\efig}
$$
The functors $\Delta$ and $\varepsilon$ are directly proved to be $\C$-bilinear. For the coassociativity we find:
$(\Delta\Del_{\C}\id)\Delta(X\Del Y)=\Delta(X\Del I)\Del_{\C}(I\Del Y)=(X\Del I) \Del_{\C} (I\Del I) \Del_{\C}(I\Del Y)$ and
$(\id\Del_{\C}\Delta)\Delta(X\Del Y)=(X\Del I)\Del_{\C}\Delta(I\Del Y)=(X\Del I) \Del_{\C} (I\Del I) \Del_{\C}(I\Del Y)$,
for $X,Y\in\C$.
Then the natural isomorphism $a$ is the identity. For the compatibility with the counit functor we find:
$(\varepsilon\Del_{\C}\id)\Delta(X\Del Y)=\varepsilon(X\Del I)\Del_{\C}(I\Del Y)=(X\ot I) \Del_{\C} (I\Del Y) \iso X\Del Y$ and
$(\id\Del_{\C}\varepsilon)\Delta(X\Del Y)=(X\Del I)\Del_{\C}\varepsilon(I\Del Y)=(X\Del I) \Del_{\C} (I\ot Y) \iso X\Del Y$.
So we may take the natural isomorphisms $l$ and $r$ to be identities, then all the coherence diagrams are trivially satisfied.
The rest of the proof is direct.
\qed\end{proof}

We will be interested in the following:

\begin{defn}
Let $\C$ be a finite tensor category and $\A$ an abelian finite category. We say that $\A$ is a {\em $\C$-coring category}
if $\A$ is a coalgebra in the monoidal category $(\C\x\Bimod, \Del_{\C}, \C)$.
\end{defn}

That is, if there are $\C$-bimodule functors $\Delta: \A\to\A\Del_{\C}\A$ and $\varepsilon: \A\to\C$ with $\C$-bimodule natural isomorphisms:
$$a:(\Id_{\A}\Del_{\C}\Delta)\Delta\to(\Delta\Del_{\C}\Id_{\A})\Delta$$
$$l:(\varepsilon\Del_{\C}\Id_{\A})\Delta\to\Id_{\A}$$
$$r:(\Id_{\A}\Del_{\C}\varepsilon)\Delta\to\Id_{\A}.$$

\begin{defn}
A functor of two $\C$-coring categories $\A_1$ and $\A_2$ is a $\C$-bimodule functor $F:\A_1\to\A_2$ with two $\C$-bimodule natural isomorphisms
$$\gamma: \Delta_2\comp F\to (F\Del_{\C}F)\Delta_1\quad\textnormal{and}\quad\delta: \varepsilon_2\comp F\to\varepsilon_1.$$
\end{defn}

\begin{ex} \exlabel{canonical cor}
The objects $\Can(\N; \C)_{\C}$ and $\Can_{\C}(\N; \C)$ from \exref{can} are examples of almost $\C$-coring categories (up to the lack of the definition
of the counit functor on morphisms). 
If we consider them without the counit ``functor'' $\varepsilon$, we will call them {\em canonical quasi $\C$-coring categories}. The structure discussed
in \leref{CC coring} we will call a proper {\em canonical $\C$-coring category}.
\end{ex}

\section{Amitsur cohomology over symmetric finite tensor categories} \selabel{Amitsur coh}

Let us recall first the definition of the Amitsur cohomology groups for a symmetric finite tensor category $\C$ that we introduced in \cite[Section 7]{Femic2}.

Let $P$ be an additive covariant functor from a full subcategory of the category of symmetric tensor categories that contains all Deligne tensor
powers $\C^{\Del n}$ of $\C$ to abelian groups. We define $\C^{\Del 0}=k$. Then we consider
$$\delta_n=\sum_{i=1}^{n+1} (-1)^{i-1}P(e^n_i):\ P(\C^{\Del n})\to P(\C^{\Del (n+1)})$$
where $e^n_i: \C^{\Del n}\to\C^{\Del (n+1)}$ for $i=1,\cdots, n+1$ are the augmentation functors from \equref{e's}:
$e_i^n(X^1\Del\cdots\Del X^n) = X^1\Del\cdots \Del I\Del X^i\Del\cdots\Del X^n$. One proves that $\delta_{n+1}\circ \delta_n=0$ is fulfilled, so we obtain a complex:
$$ \bfig \putmorphism(0, 0)(1, 0)[0`P(\C)`]{420}1a
\putmorphism(400, 0)(1, 0)[\phantom{P(S)}`P(\C^{\Del 2})`\delta_1]{560}1a
\putmorphism(960, 0)(1, 0)[\phantom{P(S^{\ot 2})}` P(\C^{\Del 3})`\delta_2]{600}1a
\putmorphism(1550, 0)(1, 0)[\phantom{P(S^{\ot 3})}`\cdots`\delta_3]{550}1a
\efig
$$
which we call {\em Amitsur complex $C(\C/vec, P)$}. We set:
$$Z^n(\C, P)=\Ker\delta_n,~~B^n(\C, P)=\im\delta_{n-1}~~~\textnormal{and}~~~H^n(\C, P)=Z^n(\C, P)/B^n(\C, P)$$
and we call the latter the $n$-th Amitsur cohomology group of $\C$ with values in $P$. Elements in $Z^n(\C,P)$ and $B^n(\C,P)$ are called $n$-cocycles and
$n$-coboundaries, respectively.

We now prove:

\begin{prop} \prlabel{Knus}
Let $F: \C\to\D$ be a functor between two symmetric finite tensor categories. Then $F$ induces group maps $F_*:\ H^n(\C,P)\to H^n(\D,P)$.
If $G: \C\to \D$ is another symmetric tensor functor, then $F_*=G_*$ (for $n\geq 1$).
\end{prop}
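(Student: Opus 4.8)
The plan is to show first that a symmetric tensor functor $F:\C\to\D$ induces, for each $n$, a chain map between the Amitsur complexes $C(\C/vec,P)$ and $C(\D/vec,P)$, and then that any two such functors induce chain-homotopic chain maps, so that the induced maps on cohomology coincide. For the first part, note that $F$ induces symmetric tensor functors $F^{\Del n}:\C^{\Del n}\to\D^{\Del n}$ for every $n$ (the Deligne tensor power of $F$), with $F^{\Del 0}=\Id_{vec}$. The key compatibility is the identity of functors $F^{\Del(n+1)}\comp e^n_i = e^n_i\comp F^{\Del n}:\C^{\Del n}\to\D^{\Del(n+1)}$, which is immediate from the formula \equref{e's} since $F$ preserves the unit object. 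Applying the functor $P$ and summing with signs gives $P(F^{\Del(n+1)})\comp\delta_n^{\C}=\delta_n^{\D}\comp P(F^{\Del n})$, i.e.\ $\{P(F^{\Del n})\}_n$ is a chain map; hence it restricts to cocycles and coboundaries and descends to a group homomorphism $F_*:H^n(\C,P)\to H^n(\D,P)$.

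For the second part, the heart of the matter is to produce a contracting-type homotopy showing $F_*=G_*$. The natural idea is to use that the symmetric monoidal category of symmetric finite tensor categories has a ``multiplication'' given by $\Del$, and that the augmentation functors $e^n_i$ together with the tensoring-with-$\C$ operation behave like the face and degeneracy maps of an augmented cosimplicial object; concretely, one builds maps $P(\C^{\Del(n+1)})\to P(\D^{\Del n})$ out of the data of $F$ and $G$ and the functors $e^n_i$ on the $\D$-side, and checks the chain-homotopy identity
$$P(G^{\Del n})-P(F^{\Del n}) = \delta_{n-1}^{\D}\comp h_n + h_{n+1}\comp\delta_n^{\C}$$
by a bookkeeping computation with the cosimplicial identities among the $e^n_i$. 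Alternatively, and perhaps more cleanly in this setting, one observes that the functor $(F,G):\C^{\Del n}\to\D^{\Del n}$ can be compared to $F^{\Del n}$ and to $G^{\Del n}$ through intermediate functors $\C^{\Del j}\Del\D^{\Del(n-j)}\to\D^{\Del n}$, reducing everything to the case $n$ arbitrary but with $F$ and $G$ differing in a single tensor slot, which is then handled by a direct homotopy built from one augmentation functor. I would present it via the explicit cosimplicial homotopy, since that is the standard Amitsur-cohomology argument (cf.\ the classical case) and the functors $e^n_i$ satisfying the cosimplicial identities are already in place from \cite{Femic2}.

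The main obstacle I anticipate is purely combinatorial: writing down the homotopy operators $h_n$ in terms of $P$ applied to composites of $F$, $G$ and the $e^{m}_i$'s, and verifying the homotopy identity requires carefully matching up the $2(n+1)$ terms coming from $\delta^{\D}h+h\delta^{\C}$ against the two terms $P(G^{\Del n})-P(F^{\Del n})$, using the relations $e^{m+1}_i e^m_j = e^{m+1}_{j+1} e^m_i$ for $i\le j$ (and their degeneracy counterparts if one uses them). A subtlety worth flagging is that one must know that $F$ and $G$, although possibly very different as functors, both preserve the unit and the symmetric monoidal structure up to the relevant coherence, so that $P(F^{\Del n})$ and $P(G^{\Del n})$ are even defined; this is exactly the hypothesis that $F,G$ are symmetric tensor functors. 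Once the homotopy identity is established, passing to cohomology gives $F_*=G_*$ for $n\ge 1$ immediately, since a chain homotopy induces zero on cohomology (the $n=0$ caveat is because there is no $h_0$, equivalently $\delta_{-1}=0$).
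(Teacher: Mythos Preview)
Your overall strategy coincides with the paper's: exhibit $\{P(F^{\Del n})\}_n$ as a chain map, then show that the chain maps induced by $F$ and $G$ are chain homotopic. The first half is fine as you wrote it.

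The second half, however, has a genuine gap beyond bookkeeping. You describe the homotopy operators as built ``out of the data of $F$ and $G$ and the functors $e^n_i$ on the $\D$-side'', but the $e^n_i$ are coface maps and \emph{raise} degree; what you need are functors $\C^{\Del(n+1)}\to\D^{\Del n}$. The paper (following Knus--Ojanguren) writes these down explicitly as
\[
\theta_i^n(X^1\Del\cdots\Del X^{n+1})=F(X^1)\Del\cdots\Del F(X^i)G(X^{i+1})\Del\cdots\Del G(X^{n+1}),
\]
i.e.\ one multiplies two adjacent tensor factors inside $\D$. This is the step your outline does not supply.

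More importantly, you misidentify the main obstacle as ``purely combinatorial''. The functor $P$ is only defined on symmetric tensor functors, so before any sign-juggling one must check that each $\theta_i^n$ is a \emph{symmetric monoidal} functor. Its monoidal structure necessarily involves the braiding of $\D$ (to pass $F(Y^i)$ across $G(X^{i+1})$ when computing $\theta_i^n(X\odot Y)$), and compatibility with the braidings forces $\Phi_{F(Y^i),G(X^{i+1})}=\Phi_{G(X^{i+1}),F(Y^i)}^{-1}$, which is precisely the symmetry of $\D$. This verification is the substantive content of the paper's proof; in the classical ring-theoretic argument it is hidden in the fact that commutativity makes the analogous maps ring homomorphisms for free. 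Once $\theta_i^n$ is known to lie in the domain of $P$, the relations $\theta_j^n e_i^n=\ldots$ and the telescoping sum proceed exactly as you anticipate.
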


\begin{proof}
The functor $F$ induces the functor $F^n: \C^{\Del n}\to\D^{\Del n}$ given by $F^n(X^1\Del \dots \Del X^n)=F(X^1)\Del \dots \Del F(X^n)$.
It is a monoidal functor with the tensor functor structure $\xi_{X, Y}^n: F^n(X\odot Y) \to F^n(X)\odot F^n(Y)$,
for $X=X^1\Del\cdots\Del X^n, Y=X^1\Del\cdots\Del X^n\in\C^{\Del n}$, given by: \vspace{-0,5cm}
\begin{equation*}
\scalebox{0.88}{\bfig
 \putmorphism(-130,400)(1,0)[F^n(X\odot Y)` F^n(X)\odot F^n(Y) `\xi_{X, Y}^n]{2750}1a
 \putmorphism(-50,0)(1,0)[F(X^1Y^1)\Del\cdots\Del F(X^nY^n)` F(X^1)F(Y^1)\Del\cdots\Del F(X^n)F(Y^n)` \xi_{X^1, Y^1}\Del\cdots\Del\xi_{X^n, Y^n}]{2700}1a
\putmorphism(-60,400)(0,-1)[\phantom{B\ot B}``=]{380}1l
\putmorphism(2630,400)(0,-1)[\phantom{B\ot B}``=]{380}1r
\efig}
\end{equation*}
where $\xi_{-, -}: F(-\ot -)\to F(-)\ot F(-)$ is the monoidal structure of $F$.
Set $F'^{(n)}=P(F^n): P(\C^{\Del n})\to P(\D^{\Del n})$ and $F': C(\C,P)\to C(\D,P)$ for the map induced on the Amitsur complexes. Then
$F_*: H^n(\C,P)\to H^n(\D,P)$ is given by $F_*([X])=[F'^{(n)}(X)]$. The proof that $F_*=G_*$
is analogous to that of \cite[Prop. 5.1.7]{KO1}. 
We prove that the induced maps on the Amitsur complexes $F', G': C(\C,P)\to C(\D,P)$ are homotopic. Recall that a homotopy between $F'$ and $G'$ is
a collection of maps $\theta^{(n)}: C^{n+1}(\C,P)\to C^n(\D,P)$ such that $\delta'_{n-1}\comp\theta^{(n-1)}+\theta^{(n)}\comp\delta_n=G'^{(n)}-F'^{(n)}$,
where $\delta_n: P(\C^{\Del n})\to P(\C^{\Del (n+1)})$ and $\delta'_{n-1}: P(\D^{\Del (n-1)})\to P(\D^{\Del n})$.

For every $i\in\{1, \dots, n\}$ we define $\theta_i^n: \C^{\Del (n+1)}\to \D^{\Del n}$ to be the functor given by
$\theta_i^n(X^1\Del \cdots \Del X^{n+1})=F(X^1)\Del \cdots \Del F(X^i)G(X^{i+1})\Del \cdots \Del G(X^{n+1})$. Here the product $F(X^i)G(X^{i+1})$
obviously is the tensor product in $\D$. Let us see that $\theta_i^n$ is a symmetric tensor functor, with the tensor structure:
$c_{X, Y}^{n,i}: \theta_i^n(X\odot Y)\to\theta_i^n(X)\odot\theta_i^n(Y)$ given by:
\begin{equation} \eqlabel{teta tensor str}
\scalebox{0.88}{\bfig
 \putmorphism(-700,800)(1,0)[\theta_i^n(X\odot Y)` \theta_i^n(X)\odot \theta_i^n(Y) `c_{X, Y}^{n,i}]{2700}1a
 \putmorphism(-50,400)(0,-1)[F(X^1Y^1)\Del\cdots\Del F(X^iY^i)G(X^{i+1}Y^{i+1})\Del\cdots\Del G(X^{n+1}Y^{n+1})`
                    F(X^1)F(Y^1)\Del\cdots\Del F(X^i)F(Y^i)G(X^{i+1})G(Y^{i+1})\Del\cdots\Del G(X^{n+1})G(Y^{n+1})` ]{600}0r
 \putmorphism(-600,400)(-1,-1)[`` \xi\Del\cdots\Del(\xi\ot\zeta)\Del\cdots\Del\zeta]{600}1r
\putmorphism(0,-200)(0,-1)[\phantom{F(X^1Y^1)\Del\cdots\Del F(X^iY^i)G(X^{i+1}Y^{i+1})\Del\cdots\Del G(X^{n+1}Y^{n+1})}` ` ]{600}0r
\putmorphism(700,-300)(0,-1)[` F(X^1)F(Y^1)\Del\cdots\Del F(X^i)G(X^{i+1})F(Y^i)G(Y^{i+1})\Del\cdots\Del G(X^{n+1})G(Y^{n+1})` ]{600}0r										
\putmorphism(-1200,-200)(1,-1)[`` \Id\Del\cdots\Del(\Id\ot\cdots\ot\Phi_{F(Y^i),G(X^{i+1})}\ot\cdots\ot\Id)\Del\cdots\Del\Id]{680}1r
\putmorphism(-660,800)(0,-1)[\phantom{B\ot B}``=]{380}1l
\putmorphism(2000,800)(0,-1)[\phantom{B\ot B}``=]{1690}1r
\efig}
\end{equation}
where $\Phi$ is the braiding in $\D$. To prove that $c_{X, Y}^{n,i}$ defines a monoidal functor structure on $\theta_i^n$ one uses the properties of the tensor
structures $\xi$ of $F$ and $\zeta$ of $G$ and the properties of the braiding. The parts where only $F$ (or only $G$) appears will work since $F$ (and $G$)
are braided monoidal functors, that, is, $\xi$ and $\zeta$ are compatible with the braidings of $\C$ and $\D$.
We only show here the part of the check that is affected by the braiding (where $F(-)$ and $G(-)$ interchange their places),
which comes down to check that $\Sigma=\Omega$ in the computation bellow. In this part $\Phi$ acts on the components $i$ and $i+1$ of objects $X,Y,Z\in\C^{\Del n}$.
To simplify the notation, in the computation that follows we will identify: $X=X^i$ and $X'=X^{i+1}$ by abuse, where $X^i$ actually denotes the $i$-th component of
an object $X\in\C^{\Del n}$, and similarly for $Y,Z\in\C^{\Del n}$. We find:
$$\Sigma=
\scalebox{0.9}[0.9]{
\gbeg{11}{7}
\gvac{2} \got{2}{F((XY)Z)} \gvac{3} \got{1}{G((X'Y')Z')} \gnl
\gvac{3} \glmpt \gnot{\xi_{XY,Z}} \gcmpb \grmpb \glmptb \gnot{\hspace{0,12cm}\zeta_{X'Y',Z'}} \gcmpb \grmp \gnl
\gvac{2} \gcn{2}{1}{5}{1} \gvac{1} \gbr \gcn{2}{4}{1}{6} \gnl
\gvac{2} \glmptb \gnot{\hspace{-0,4cm}\xi_{X,Y}} \grmpb \glmpb \gnot{\hspace{-0,4cm}\zeta_{X,Y}} \grmptb \gcl{1} \gnl
\gcn{2}{2}{5}{1} \gvac{1} \gbr \gcn{1}{2}{1}{3} \gcn{2}{2}{1}{4} \gnl
\gvac{2} \gcn{1}{1}{3}{1} \gvac{1} \gcl{1} \gnl
\gob{1}{F(X)} \gob{3}{G(X')} \gob{1}{F(Y)} \gvac{1} \gob{1}{G(Y')} \gvac{1} \gob{1}{F(Z)} \gob{3}{G(Z')}
\gend}\stackrel{nat. \Phi}{=}
\scalebox{0.9}[0.9]{
\gbeg{11}{7}
\gvac{2} \got{2}{F((XY)Z)} \gvac{2} \got{2}{G((X'Y')Z')} \gnl
\gvac{2} \glmpt \gnot{\xi_{XY,Z}} \gcmp \grmpb \glmptb \gnot{\hspace{0,12cm}\zeta_{X'Y',Z'}} \gcmp \grmpb \gnl
\gvac{2} \glmptb \gnot{\hspace{-0,4cm}\xi_{X,Y}} \grmpb \gcl{1} \glmpb \gnot{\hspace{-0,4cm}\zeta_{X,Y}} \grmptb \gcn{2}{4}{1}{6} \gnl
\gvac{2} \gcl{1} \gcl{1} \gbr \gcn{2}{3}{1}{5} \gnl
\gcn{2}{2}{5}{1} \gvac{1} \gbr \gcn{1}{2}{1}{3} \gnl
\gvac{2}  \gcn{2}{1}{3}{1} \gcl{1} \gnl
\gob{1}{F(X)} \gob{3}{G(X')} \gob{1}{F(Y)} \gvac{1} \gob{1}{G(Y')} \gvac{1} \gob{1}{F(Z)} \gob{3}{G(Z')}
\gend}
$$

\bigskip

$$\stackrel{\xi, \zeta}{=}
\scalebox{0.9}[0.9]{
\gbeg{11}{7}
\gvac{2} \got{2}{F(X(YZ))} \gvac{2} \got{2}{G(X'(Y'Z'))} \gnl
\gvac{2} \glmpt \gnot{\xi_{X,YZ}} \gcmp \grmpb \glmptb \gnot{\hspace{0,12cm}\zeta_{X',Y'Z'}} \gcmp \grmpb \gnl
\gvac{2} \gcl{1} \glmptb \gnot{\hspace{-0,4cm}\xi_{X,Y}} \grmpb \gcl{1} \glmpb \gnot{\hspace{-0,4cm}\zeta_{X,Y}} \grmptb \gnl
\gvac{2} \gcl{1} \gcl{1} \gbr \gcl{1} \gcn{2}{3}{1}{6} \gnl
\gcn{2}{2}{5}{1} \gvac{1} \gbr \gbr \gnl
\gvac{2}  \gcn{2}{1}{3}{1} \gcl{1} \gcl{1} \gcn{1}{1}{1}{5} \gnl
\gob{1}{F(X)} \gob{3}{G(X')} \gob{1}{F(Y)} \gvac{1} \gob{1}{G(Y')} \gvac{1} \gob{1}{F(Z)} \gob{3}{G(Z')}
\gend}\stackrel{nat. \Phi}{=}
\scalebox{0.9}[0.9]{
\gbeg{11}{7}
\gvac{1} \got{2}{F(X(YZ))} \gvac{2} \got{2}{G(X'(Y'Z'))} \gnl
\gvac{2} \glmpt \gnot{\xi_{XY,Z}} \gcmpb \grmpb \glmptb \gnot{\hspace{0,12cm}\zeta_{X'Y',Z'}} \gcmp \grmpb \gnl
\gvac{1} \gcn{2}{2}{5}{3} \gvac{1} \gbr \gvac{1} \gcn{2}{1}{1}{3} \gnl
\gvac{3} \gcn{2}{2}{3}{1} \glmptb \gnot{\hspace{-0,4cm}\xi_{X,Y}} \grmpb \glmpb \gnot{\hspace{-0,4cm}\zeta_{X,Y}} \grmptb \gnl
\gcn{2}{2}{5}{1} \gvac{2} \gcn{2}{2}{3}{1} \gbr \gcn{1}{2}{1}{4} \gnl
\gvac{2} \gcn{1}{1}{3}{1} \gvac{3} \gcl{1} \gcn{1}{1}{1}{2} \gnl
\gob{1}{F(X)} \gob{3}{G(X')} \gob{1}{F(Y)} \gvac{1} \gob{1}{G(Y')} \gvac{1} \gob{1}{F(Z)} \gob{3}{G(Z')}
\gend}=\Omega.
$$
To prove that $\theta_i^n$ is a symmetric monoidal functor, we should check if
\begin{equation*}
\scalebox{0.88}{\bfig
 \putmorphism(0,400)(1,0)[\theta_i^n(X\odot Y)` \theta_i^n(X)\odot \theta_i^n(Y) `c_{X, Y}^{n,i}]{2200}1a
 \putmorphism(10,0)(1,0)[\theta_i^n(Y\odot X)` \theta_i^n(Y)\odot \theta_i^n(X)` c_{Y, X}^{n,i}]{2200}1a
\putmorphism(60,400)(0,-1)[\phantom{B\ot B}``\theta_i^n(\Phi)]{380}1l
\putmorphism(2200,400)(0,-1)[\phantom{B\ot B}``\Phi_{\theta_i^n(X),\theta_i^n(Y)}]{380}1r
\efig}
\end{equation*}
commutes, where $c_{X, Y}^{n,i}$ is the composition in the left hand-side of the diagram \equref{teta tensor str}. As above, the parts of the Deligne tensor powers in this
computation where only $F$ or $G$ are present work since these are braided monoidal functors, let us see the places in the computation where $F(-)$ and $G(-)$ interchange their places. We find:
$$\scalebox{0.9}[0.9]{
\gbeg{10}{6}
\gvac{1} \got{2}{F(X^i Y^i)} \gvac{2} \got{2}{G(X^{i+1}Y^{i+1})} \gnl
\gvac{1} \glmpt \gnot{F(\Phi_{\C})} \gcmp \grmp \glmptb \gnot{\hspace{0,12cm}G(\Phi_{\C})} \gcmp \grmp \gnl
\gvac{1} \glmpt \gnot{\xi} \gcmp \grmpb \glmptb \gnot{\hspace{0,12cm}\zeta} \gcmp \grmp \gnl
\gcn{1}{2}{4}{1} \gvac{2} \gbr \gvac{1} \gcn{1}{2}{0}{3} \gnl
\gvac{3} \gcn{1}{1}{1}{0} \gcn{1}{1}{1}{3} \gnl
\gob{1}{F(Y^i)} \gvac{1} \gob{2}{G(Y^{i+1})}\gvac{1} \gob{1}{F(X^i)} \gvac{2} \gob{1}{G(X^{i+1})}
\gend}\stackrel{\xi, \zeta \hspace{0,1cm} are \hspace{0,1cm} braided}{=}
\scalebox{0.9}[0.9]{
\gbeg{10}{6}
\gvac{1} \got{2}{F(X^i Y^i)} \gvac{2} \got{2}{G(X^{i+1}Y^{i+1})} \gnl
\gvac{1} \glmpt \gnot{\xi} \gcmp \grmpb \glmptb \gnot{\hspace{0,12cm}\zeta} \gcmp \grmp \gnl
\gvac{2} \gbr \gbr \gnl
\gcn{1}{2}{5}{1} \gvac{2} \gbr \gcn{1}{2}{1}{5} \gnl
\gvac{3} \gcn{1}{1}{1}{0} \gcn{1}{1}{1}{3} \gnl
\gob{1}{F(Y^i)} \gvac{1} \gob{2}{G(Y^{i+1})}\gvac{1} \gob{1}{F(X^i)} \gvac{2} \gob{1}{G(X^{i+1})}
\gend}\stackrel{\Phi \hspace{0,1cm}symm.}{=}
\scalebox{0.9}[0.9]{
\gbeg{10}{7}
\gvac{1} \got{2}{F(X^i Y^i)} \gvac{2} \got{2}{G(X^{i+1}Y^{i+1})} \gnl
\gvac{1} \glmpt \gnot{\xi} \gcmp \grmpb \glmptb \gnot{\hspace{0,12cm}\zeta} \gcmp \grmp \gnl
\gvac{2} \gcl{1} \gbr \gcl{1} \gnl
\gvac{2} \gcl{1} \gbr \gcl{1} \gnl
\gvac{2} \gbr \gbr \gnl
\gcn{2}{1}{5}{1} \gvac{1} \gbr \gcn{1}{1}{1}{5} \gnl
\gob{1}{F(Y^i)} \gvac{1} \gob{2}{G(Y^{i+1})}\gvac{1} \gob{1}{F(X^i)} \gvac{2} \gob{1}{G(X^{i+1}).}
\gend}
$$
Here we used that the category $\D$ (its braiding $\Phi$) is symmetric. Thus $\theta_i^n$ is indeed a symmetric monoidal functor.

Now the desired map $\theta^{(n)}$ is given by $\theta^{(n)}=\displaystyle\sum_{i=1}^n(-1)^{i-1}P(\theta_i^{(n)})$.

Moreover, let $\sigma_i^n: \C^{\Del n}\to \D^{\Del n}$ be the symmetric tensor functors given by
$\sigma_i(X^1\Del \cdots 
\Del X^n)=F(X^1)\Del \cdots \Del F(X^{i-1})\Del G(X^i)\Del \cdots \Del G(X^n)$ for $i=1, \dots, n+1$. Its monoidal structure is similar to that of
the functor $F^n$. Then we obtain the relations:
$$ \theta_j^n e_i^n=
\left\{\begin{array}{c}
 e_{i-1}^{n-1} \theta_j^{n-1}, \quad\textnormal{for}\hspace{0,12cm} j\leq i-2 \\
\sigma_i^n, \quad j=i-1  \hspace{0,12cm}\textnormal{or}\hspace{0,12cm} j=i\\
e_i^{n-1} \theta_{j-1}^{n-1},\quad j\geq i+1
\end{array}\right.
$$
Now let us prove the homotopy relation. We find:
$$\begin{array}{rl}
\theta^{(n)}\comp\delta_n \hskip-1em&= \displaystyle\sum_{\substack{1\leq i\leq n+1 \\ 1\leq j\leq n}}(-1)^{i+j} P(\theta_j^n e_i^n)\\
&= \displaystyle\sum_{\substack{3\leq i\leq n+1 \\ 1\leq j\leq i-2}}(-1)^{i+j} P(e_{i-1}^{n-1}\theta_j^{n-1}) +
\displaystyle\sum_{\substack{2\leq i\leq n+1 \\ j=i-1}}(-1)^{i+j} P(\sigma_i^n) \\
& + \displaystyle\sum_{\substack{1\leq i\leq n \\ j=i}}(-1)^{i+j} P(\sigma_i^n) +
\displaystyle\sum_{\substack{1\leq i\leq n-1 \\ i+1\leq j\leq n}}(-1)^{i+j} P(e_i^{n-1}\theta_{j-1}^{n-1}).
\end{array}$$
The two middle sums are equal to: $-P(\sigma_2)-\dots -P(\sigma_{n+1})+P(\sigma_1)+\dots +P(\sigma_n)=P(\sigma_1)-P(\sigma_{n+1})=G'^{(n)}-F'^{(n)}$.
Then:
\begin{multline*}
\theta^{(n)}\comp\delta_n + F'^{(n)}- G'^{(n)} = \\
\displaystyle-\sum_{\substack{3\leq i\leq n+1 \\ 1\leq j\leq (i-1)-1}}(-1)^{i+j-1} P(e_{i-1}^{n-1}\theta_j^{n-1}) -
\displaystyle\sum_{\substack{1\leq i\leq n-1 \\ i\leq j-1\leq n-1}}(-1)^{i+j-1} P(e_i^{n-1}\theta_{j-1}^{n-1})\\
=\displaystyle-\sum_{\substack{2\leq i\leq n \\ 1\leq j\leq i-1}}(-1)^{i+j} P(e_i^{n-1}\theta_j^{n-1}) -
\displaystyle\sum_{\substack{1\leq i\leq n-1 \\ i\leq j\leq n-1}}(-1)^{i+j} P(e_i^{n-1}\theta_j^{n-1})\\
=\displaystyle-\sum_{\substack{2\leq i\leq n-1 \\ 1\leq j\leq n-1}}(-1)^{i+j} P(e_i^{n-1}\theta_j^{n-1}) -
\displaystyle\sum_{1\leq j\leq n-1}(-1)^{1+j} P(e_1^{n-1}\theta_j^{n-1}) - \displaystyle\sum_{1\leq j\leq n-1}(-1)^{n+j} P(e_n^{n-1}\theta_j^{n-1})\\
=\displaystyle\sum_{\substack{1\leq i\leq n \\ 1\leq j\leq n-1}}(-1)^{i+j} P(e_i^{n-1}\theta_j^{n-1})=\delta'_{n-1}\comp\theta^{n-1}.
\end{multline*}
Thus the maps $F', G': C(\C,P)\to C(\D,P)$ are homotopic and this clearly implies that the induced group maps $F_*$ and $G_*$ are equal.
\qed\end{proof}

We will consider the cases: $P=\Pic$, where $\Pic(\C)$ is the Picard group of a symmetric finite tensor category $\C$, and $P=\Inv$,
where $\Inv(\C)$ is the group of invertible objects of $\C$.

\subsection{On 3-cocycles on invertible objects}

We start with a more general definition:

\begin{defn}
Let $X=X^1\Del\cdots\Del X^n\in\C^{\Del n}$. The object $|X|:=X^1\cdots X^n\in\C$ is called {\em the norm of $X$}. For any cocycle we will say that it is
{\em normalized} if its norm is isomorphic to the unit object $I$.
\end{defn}

\medskip

Assume {\em from now on that $\C$ is symmetric. }

\medskip

We will denote $X_i=e^n_i(X)$ for $i=1,2,\dots, n+1$ (mind the difference between upper and lower indeces).
In particular, $X\in\Inv(\C^{\Del 3})$ is a cocycle in $Z^3(\C,\Inv)$ if and only if
$$X_1X_2^{-1}X_3X_4^{-1}\iso I^{\Del 4}.$$
Given that $\C$ is symmetric, this is equivalent to: $X_1X_3\iso X_2X_4$, which means:
\begin{equation}\eqlabel{3-cocycle}
X^1\Del\crta{X^1}X^2\Del\crta{X^2}\Del\crta{X^3}X^3\iso X^1\crta{X^1}\Del\crta{X^2}\Del X^2\crta{X^3}\Del X^3.
\end{equation}

\medskip

\begin{lma} \lelabel{compatibility for coring}
For $[X]\in Z^3(\C,\Inv)$ the following hold:
\begin{enumerate}
\item $X^1\Del |X|^{-1}X^2X^3\iso I\Del I\iso |X|^{-1}X^1X^2\Del X^3.$
\item $X$ is cohomologous to a normalized cocycle.
\end{enumerate}
\end{lma}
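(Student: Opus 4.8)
The plan is to deduce both statements from the cocycle identity together with a single coboundary correction. By \equref{3-cocycle} the hypothesis $[X]\in Z^3(\C,\Inv)$ is the isomorphism $X_1X_3\iso X_2X_4$ in $\Inv(\C^{\Del 4})$, where $X_i=e^3_i(X)$, and this is what I would work with throughout.

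First I would spell this isomorphism out one Deligne factor at a time. From \equref{e's}, the object $X_1\odot X_3=e^3_1(X)\odot e^3_3(X)$ has Deligne factors $(X^1,\,X^1X^2,\,X^2,\,X^3X^3)$ while $X_2\odot X_4=e^3_2(X)\odot e^3_4(X)$ has factors $(X^1X^1,\,X^2,\,X^2X^3,\,X^3)$. Since an invertible object of a Deligne tensor power is a Deligne product of invertible objects of $\C$, and two such objects are isomorphic precisely when the corresponding factors are, comparing the first and the last factors and cancelling in the abelian group $\Inv(\C)$ forces $X^1\iso I$ and $X^3\iso I$; the two middle factors then yield nothing new. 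Call this observation $(\star)$.

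Claim 1 is then immediate: by definition of the norm $|X|=X^1X^2X^3$ one has, in the abelian group $\Inv(\C)$, $|X|^{-1}X^2X^3\iso(X^1)^{-1}$ and $|X|^{-1}X^1X^2\iso(X^3)^{-1}$, so using $(\star)$,
\[
X^1\Del|X|^{-1}X^2X^3\;\iso\;I\Del(X^1)^{-1}\;\iso\;I\Del I\;\iso\;(X^3)^{-1}\Del I\;\iso\;|X|^{-1}X^1X^2\Del X^3 .
\]

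For Claim 2 the idea is to modify $X$ by a $3$-coboundary that trivialises the norm. A short computation of $\delta_2$ on a simple tensor gives $\delta_2(Y^1\Del Y^2)=I\Del Y^1Y^2\Del I$, so a coboundary affects only the middle Deligne factor; taking $Y=|X|^{-1}\Del I$ and $X':=X\odot\delta_2(Y)=X^1\Del X^2|X|^{-1}\Del X^3$ one obtains a cocycle cohomologous to $X$ (indeed $B^3(\C,\Inv)\subseteq Z^3(\C,\Inv)$ and $X\in Z^3(\C,\Inv)$) whose norm is $|X'|\iso|X|\,|X|^{-1}\iso I$, i.e. $X'$ is normalized. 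Everything here is bookkeeping with the functors $e^n_i$ and the group law of $\Inv$; the only step that genuinely needs a justification is the reduction — used in $(\star)$ and again for $\delta_2$ — of an isomorphism of invertible objects in a Deligne power to componentwise isomorphisms in $\Inv(\C)$, and I do not expect this to present a real obstacle.
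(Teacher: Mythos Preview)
Your argument is correct, and for part~2 it matches the paper's proof verbatim (same coboundary $\delta_2(|X|^{-1}\Del I)$). For part~1 you take a genuinely different route.

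The paper never invokes the structure of invertibles in a Deligne power. Instead it writes the cocycle identity $X_1X_2^{-1}X_3X_4^{-1}\iso I^{\Del 4}$ in Sweedler notation and then applies the tensor-contraction functor $\C^{\Del 4}\to\C^{\Del 2}$ that multiplies together the last three (respectively first three) Deligne factors; this lands directly on $X^1\Del|X|^{-1}X^2X^3\iso I\Del I$ and its mirror. Your approach instead uses the fact (valid here because $k$ is algebraically closed and $\C$ is a finite tensor category, so $\Inv(\C^{\Del n})\cong\Inv(\C)^n$) that an invertible object of $\C^{\Del n}$ is a single simple tensor with invertible components, and that isomorphism of such objects is componentwise. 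This lets you read off the stronger conclusion $X^1\iso I\iso X^3$, from which the statement of part~1 is immediate. What you gain is a sharper structural result; what the paper's contraction argument gains is that it needs no external input about simples in Deligne products and would survive in settings where $\Inv(\C^{\Del n})$ is not a product. Your closing caveat is well placed: the reduction to componentwise isomorphism is the only step that requires justification beyond bookkeeping, and in the paper's hypotheses it is indeed standard.
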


\begin{proof}
Denote $X=X^1\Del X^2\Del X^3=U^1\Del U^2\Del U^3$ and $Y=X^{-1}=Y^1\Del Y^2\Del Y^3=V^1\Del V^2\Del V^3$. The 3-cocycle condition 
then reads:
\begin{eqnarray} \eqlabel{2-coc-cond-todo}
X^1U^1V^1\Del U^2Y^1V^2\Del X^2U^3Y^2\Del X^3Y^3V^3\iso I\Del I\Del I\Del I.
\end{eqnarray}

Tensoring out the second, third and fourth Deligne tensor factors above we get:
$$X^1U^1V^1\Del \vert Y\vert~ U^2V^2X^2U^3V^3X^3 \iso X^1\Del\vert X\vert^{-1}X^2X^3\iso I\Del I$$
which is the first equality that was to prove. The second one is obtained similarly, after
tensoring out the first three Deligne tensor factors in the 3-cocycle condition.

For the second part, first note that $\delta_2(|X|^{-1}\Del I)\iso I\Del |X|^{-1}\Del I$. The cocycle
$X\delta_2(|X|^{-1}\Del I)\iso X^1\Del |X|^{-1}X^2\Del X^3$ is obviously normalized and cohomologous to $X$.
\qed\end{proof}

\subsection{Extended cocycles yield coboundaries}  \sslabel{Extended cocycles}

In this subsection we will consider the Amitsur complex $C(\C\Del\C/\C, P)$ and we will show that cocycles from the complex $C(\C/vec, P)$
give rise to coboundaries in the former complex.

\medskip

Let $\sigma, \tau:\C\to\D$ be two tensor functors. Consider a $\D$-bimodule category $\N$ as a left $\C$-module category through $\sigma$
and a right $\C$-module category through $\tau$. Then $\N$ is a $\C$-bimodule category. If $\D$ is braided we may consider $\N$ as a one-sided $\D$-bimodule category.
If, moreover, $\sigma$ and $\tau$ are braided tensor functors, $\N$ is a one-sided $\C$-bimodule category in two ways - via $\sigma$ and via $\tau$.
As an example think of $\D=\C\Del\C$ with $\sigma(X)=X\Del I$ and $\tau(X)=I\Del X$ for all $X\in\C$.

\medskip

Suppose $\D$ is braided and $\N\in\dul\Pic(\D)$. The category $\N^{op}\Del_{\C}\N$ is a quasi $\D$-coring category similarly as in \exref{can}.
We will call it a {\em canonical quasi $\D/\C$-coring category} and will denote it by $\Can(\N; \D/\C)$. Moreover, by \leref{braided: product over braided}
the category $\D\Del_{\C}\D$ is braided and $\N^{op}\Del_{\C}\N$ is a one-sided $\D\Del_{\C}\D$-bimodule category.

\medskip

Now, let $\D=\C\Del\C$. The n-th Deligne tensor power of $\C\Del\C$ in the Amitsur complex $C(\C\Del\C/\C, P)$ is
$\underbrace{(\C\Del\C)\Del_{\C}\cdots \Del_{\C}(\C\Del\C)}_n=(\C\Del\C)^{\Del_{\C}n}$. The corresponding $n$-th cohomology group we will
denote by $H^n(\C\Del\C/{\C}, P)$ for a suitable functor $P$.

Observe that there is a natural equivalence:
\begin{equation} \eqlabel{A-C}
(\C\Del\C)^{\Del_{\C}n}\simeq \C^{\Del (n+1)}
\end{equation}
given by
$$(X^1\Del Y^1)\Del_{\C}\cdots\Del_{\C} (X^n\Del Y^n)\mapsto X^1\Del\cdots\Del X^n\Del Y^1\cdots Y^n.$$

\begin{lma} \lelabel{equiv fun}
For a braided finite tensor category $\C$ the above assignment defines a tensor equivalence. If $\C$ is symmetric, it is a braided monoidal equivalence of categories.
\end{lma}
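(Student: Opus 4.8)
The plan is to promote the functor $\Theta_n\colon(\C\Del\C)^{\Del_{\C}n}\xrightarrow{\ \sim\ }\C^{\Del(n+1)}$ of \equref{A-C} to a (braided) monoidal equivalence; its underlying equivalence of abelian categories is already recorded in \equref{A-C}, so what remains is to supply a monoidal structure on $\Theta_n$ and, in the symmetric case, to check that it intertwines the braidings. First I would spell out the two tensor products. By applying \leref{braided: product over braided} $n-1$ times, the product on $(\C\Del\C)^{\Del_{\C}n}$ is computed slot by slot and componentwise inside each slot: for $\mathbf Z=(X^1\Del Y^1)\Del_{\C}\cdots\Del_{\C}(X^n\Del Y^n)$ and $\mathbf W=(U^1\Del V^1)\Del_{\C}\cdots\Del_{\C}(U^n\Del V^n)$ one has $\mathbf Z\boxdot\mathbf W=(X^1U^1\Del Y^1V^1)\Del_{\C}\cdots\Del_{\C}(X^nU^n\Del Y^nV^n)$, with associativity induced componentwise by $\alpha_{\C}$; on $\C^{\Del(n+1)}$ the tensor product and associator are likewise componentwise. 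Then $\Theta_n(\mathbf Z\boxdot\mathbf W)$ and $\Theta_n(\mathbf Z)\odot\Theta_n(\mathbf W)$ coincide in every slot that $\Theta_n$ keeps separated (the $X$-slots) and differ only in the slot where $\Theta_n$ collects the $Y$'s: there the first object carries $(Y^1V^1)(Y^2V^2)\cdots(Y^nV^n)$ and the second $(Y^1\cdots Y^n)(V^1\cdots V^n)$. Accordingly I would take the monoidal structure morphism $\xi_{\mathbf Z,\mathbf W}$ to be the identity on the $X$-slots and, on the remaining slot, the canonical shuffle isomorphism $(Y^1V^1)\cdots(Y^nV^n)\xrightarrow{\ \sim\ }(Y^1\cdots Y^n)(V^1\cdots V^n)$ assembled from the braiding $\Phi$ and the associativity constraint of $\C$.

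The second step is to verify that $(\Theta_n,\xi)$ is a monoidal functor. Naturality of $\xi$ is immediate from naturality of $\Phi$ and $\alpha_{\C}$; the unit compatibility follows from the normalisation of the unit constraints; and the associativity coherence diagram for a monoidal functor reduces, in the single relevant slot, to an equality of two canonical morphisms between iterated tensor products of objects of $\C$, both of which are composites of instances of $\Phi$, $\alpha$ and the unit constraints realising one and the same braid --- hence equal by the coherence theorem for braided monoidal categories. Since $\Theta_n$ is already an equivalence of categories, it follows that it is a tensor equivalence, with the quasi-inverse inheriting the induced structure; this proves the first assertion. (In the merely braided case one must fix, once and for all, the crossing convention used in $\xi$.)

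For the second assertion, let $\C$ be symmetric. Iterating \leref{braided: product over braided}, the braiding $\Psi$ of $(\C\Del\C)^{\Del_{\C}n}$ acts slot by slot, and componentwise within each slot, by the braiding $\Phi$ of $\C$. I would transport $\Psi$ through $\Theta_n$, conjugate by the isomorphisms $\xi$, and compare the resulting braiding on $\C^{\Del(n+1)}$ with its standard componentwise braiding. On the $X$-slots the two agree on the nose, since there $\Theta_n$ is essentially the identity. On the slot carrying the collected $Y$'s the comparison again comes down to an identity between two parallel canonical morphisms of $\C$ built from $\Phi$ and $\alpha$ --- but now one in which the two a priori different crossings have to be identified, and this is precisely what symmetry of $\Phi$ guarantees, by Mac Lane's coherence theorem for symmetric monoidal categories. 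Hence $\Theta_n$ intertwines the braidings and is a braided monoidal equivalence.

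I expect the only real difficulty to be bookkeeping: writing the shuffle isomorphism $\xi$ in a shape that makes the slot-wise coherence checks transparent, and being explicit enough about the over/under crossing that the symmetric hypothesis is seen to be used --- and to be necessary --- exactly at the point where two otherwise distinct crossings must coincide. No ingredient beyond \leref{braided: product over braided} and graphical-calculus coherence should be needed.
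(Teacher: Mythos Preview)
Your proposal is correct and follows essentially the same strategy as the paper: define the monoidal structure on $\Theta_n$ by the shuffle isomorphism built from the braiding, verify the monoidal functor coherence, and then check braiding compatibility in the symmetric case. The paper differs only in presentation: it reduces to $n=2$ and carries out each coherence check by an explicit braid-diagram computation rather than by appeal to the Joyal--Street and Mac~Lane coherence theorems. In particular, where you invoke symmetric coherence abstractly, the paper's computation isolates the exact obstruction in the braided (non-symmetric) case as the identity $\Phi_{D',F}=\Phi_{D',F}^{-1}$, which makes it transparent both that symmetry suffices and that it is genuinely needed. Your plan would benefit from recording this one explicit identity, since the coherence-theorem argument alone tells you the two braids define the same permutation but does not by itself exhibit that they are distinct braids.
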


\begin{proof}
It suffices to prove the claim for $n=2$, let $F: (\C\Del\C)\Del_{\C}(\C\Del\C)\to \C\Del\C\Del\C$ denote the corresponding equivalence. Consider the morphism $\omega$ defined
via the commuting diagram:
$$\scalebox{0.82}{\bfig
 \putmorphism(-20,400)(-2,-1)[F\left( ((C\Del D)\Del_{\C}(E\Del F)) \boxdot ((C'\Del D')\Del_{\C}(E'\Del F')) \right)`
	F\left((CC'\Del DD')\Del_{\C}(EE'\Del FF')\right)  ` =]{600}1l
 \putmorphism(-500,100)(-2,-1)[ `` ]{600}1l
 \putmorphism(-500,120)(-2,-1)[ `` =]{600}0l
 \putmorphism(-450,100)(-2,-1)[ `CC'\Del EE'\Del DD'FF' ` ]{600}0l 
 \putmorphism(-1080,-220)(2,-1)[ `CC'\Del EE'\Del DFD'F' ` ]{1000}1l
 \putmorphism(-1080,-250)(2,-1)[ ` ` \Id\ot\Phi_{D', F}\ot\Id]{1000}0l
\putmorphism(320,-620)(2,1)[\phantom{B\ot B}` ` ]{80}1r
\putmorphism(320,-640)(2,1)[\phantom{B\ot B}` ` =]{80}0r
\putmorphism(280,-660)(2,1)[\phantom{B\ot B}` (C\Del E\Del DF)\odot(C'\Del E'\Del D'F')` ]{450}0r
\putmorphism(1000,-310)(2,1)[\phantom{B\ot B}` ` =]{100}0r
\putmorphism(1000,-280)(2,1)[\phantom{B\ot B}` ` ]{100}1r
\putmorphism(1160,-130)(2,1)[\phantom{B\ot B}` F(((C\Del D)\Del_{\C}(E\Del F)) \odot F((C'\Del D')\Del_{\C}(E'\Del F')) ` ]{100}0r
 \putmorphism(360,400)(2,-1)[`` \omega]{960}1r
\efig}
$$
To prove that $\omega$ defines a monoidal structure on $F$ we take a third object $(C''\Del D'')\Del_{\C}(E''\Del F'')$ and we should check
that the identity $(\Id\ot\omega)\omega_{\bullet, \bullet\bullet} F(\alpha)=\alpha_{F, F, F}(\omega\ot\Id)\omega_{\bullet\bullet, \bullet}$ holds,
where $\alpha$ is the associativity constraint in $\D\Del_{\C}\D$.
Since $\omega$ for the first two objects is basically given by $\Phi_{D', F}$, the above identity (neglecting the associativity constraint)
will be fulfilled if we prove that: $(FD'\ot\Phi_{D'', F'})(\Phi_{D'D'', F}\ot F')=(\Phi_{D', F}\ot F'D'')(D'\ot\Phi_{D'', FF'})$. But this is true by
the two braiding axioms and naturality.

Let us now prove that $\omega$ is a braided tensor equivalence. Recall that the braiding $\Psi$ in $(\C\Del\C)\Del_{\C}(\C\Del\C)$ is given by
$\Psi=\tilde\Phi\Del_{\C}\tilde\Phi$ \equref{big braiding Psi}, where $\tilde\Phi$ is the braiding in $\C\Del\C$, which is given by
$\tilde\Phi=\Phi\Del\Phi$, being $\Phi$ the braiding in $\C$, that is: 
$$\scalebox{0.88}{\bfig
 \putmorphism(-30,500)(1,0)[(X\Del Y)\odot(X'\Del Y')` (X'\Del Y')\odot(X\Del Y) `\tilde\Phi_{X\Del Y, X'\Del Y'}]{1630}1a
 \putmorphism(-50,0)(1,0)[(X\ot X')\Del(Y\ot Y')` (X'\ot X)\Del(Y'\ot Y).` \Phi_{X,X'}\Del\Phi_{Y,Y'}]{1700}1a
\putmorphism(-60,500)(0,-1)[\phantom{B\ot B}``=]{480}1l
\putmorphism(1630,500)(0,-1)[\phantom{B\ot B}``=]{480}1r
\efig}
$$
Denote by $X=(C\Del D)\Del_{\C}(E\Del F)$ and $Y=(C'\Del D')\Del_{\C}(E'\Del F')$, then we should
check that
$$\scalebox{0.88}{\bfig
 \putmorphism(-30,500)(1,0)[F(XY)` F(X)F(Y) `\omega_{X,Y}]{1630}1a
 \putmorphism(-50,0)(1,0)[F(YX)` F(Y)F(X)` \omega_{Y,X}]{1700}1a
\putmorphism(-60,500)(0,-1)[\phantom{B\ot B}``F(\Psi)]{480}1l
\putmorphism(1630,500)(0,-1)[\phantom{B\ot B}``\Phi^{\Del 3}]{480}1r
\efig}
$$
commutes. Observe that this is the same as:
$$\scalebox{0.84}{
\bfig 
\putmorphism(0,600)(1,0)[CC'\Del EE'\Del DD'FF'	` CC'\Del EE'\Del (DF)(D'F') ` \Id\ot\Phi_{D', F}\ot\Id]{2200}1a
\putmorphism(60,600)(0,1)[`F\left((CC'\Del DD')\Del_{\C}(EE'\Del FF')\right)`=]{300}1l
\putmorphism(60,300)(0,1)[`F\left((C'C\Del D'D)\Del_{\C}(E'E\Del F'F)\right) ` F(\tilde\Phi\Del_{\C}\tilde\Phi)]{380}1l
\putmorphism(60,-80)(0,1)[` ` =]{300}1l
\putmorphism(0,-380)(1,0)[C'C\Del E'E\Del D'DF'F ` C'C\Del E'E\Del D'F'DF` \Id\ot\Phi_{D, F'}\ot\Id]{2200}1b
\putmorphism(2200,600)(0,-1)[`` \Phi_{C, C'}\Del\Phi_{E, E'}\Del\Phi_{DF, D'F'}]{980}1r
\efig}
$$
Observe that the composition of the left three vertical arrows in the above diagram equals: $\Phi_{C, C'}\Del\Phi_{E, E'}\Del\Phi_{D, D'}\Phi_{F, F'}$.
So, to check the commutativity of the above diagram, it is sufficient to see if
 $(D'\ot\Phi_{D, F'}\ot F)(\Phi_{D,D'}\ot\Phi_{F,F'})=\Phi_{DF, D'F'}(D\ot\Phi_{D', F}\ot F')$. Graphically this means that the identity
$$
\scalebox{0.9}[0.9]{
\gbeg{4}{4}
\got{1}{D} \got{1}{D'} \got{1}{F}  \got{1}{F'} \gnl
\gbr \gbr \gnl
\gcl{1} \gbr \gcl{1} \gnl
\gob{1}{D'} \gob{1}{F'} \gob{1}{D} \gob{1}{F}
\gend}=
\scalebox{0.9}[0.9]{
\gbeg{4}{6}
\got{1}{D} \got{1}{D'} \got{1}{F}  \got{1}{F'} \gnl
\gcl{1} \gbr \gcl{1} \gnl
\gcl{1} \gbr \gcl{1} \gnl
\gbr \gbr \gnl
\gcl{1} \gbr \gcl{1} \gnl
\gob{1}{D'} \gob{1}{F'} \gob{1}{D} \gob{1}{F}
\gend}
$$
should hold true. Well, this is only possible if $\Phi_{D', F}=(\Phi_{D', F})^{-1}$, which is fulfilled if $\C$ is symmetric.
\qed\end{proof}

We defined in \cite[Section 5.1]{Femic2} monoidal functors
$E^n_i, \delta_{n}: \dul\Pic(\C^{\Del n})\to\dul\Pic(\C^{\Del (n+1)})$ for $i=1,\cdots,n+1$ by
$
E^n_i(\M)=\M_i=\M\Del_{\C^{\Del n}} {}_{e^n_i}\C^{\Del (n+1)}
$ and 
$$\delta_{n}(\M)=\M_1\Del_{\C^{\Del (n+1)}}\M^{op}_2\Del_{\C^{\Del (n+1)}}\cdots
\Del_{\C^{\Del (n+1)}}\N_{n+1}$$
on objects. One computes that
$$\delta_{n+1}\delta_{n}(\M)=(\boxtimes_{\C^{\Del (n+2)}})_{j=2}^{n+2}(\boxtimes_{\C^{\Del (n+2)}})_{i=1}^{j-1} \hspace{0,2cm} (\M_{ij}\Del_{\C^{\Del (n+2)}} \M_{ij}^{op}),
$$
hence the functor
$$\lambda_{\M}= (\boxtimes_{\C^{\Del (n+2)}})_{j=2}^{n+2}(\boxtimes_{\C^{\Del (n+2)}})_{i=1}^{j-1} \ev_{\M_{ij}}:\ \delta_{n+1}\delta_n(\M)\to \C^{\Del (n+2)}$$
is an equivalence (since so is $\ev$, \coref{coev inverse ev}). For $\M\in \dul{\Pic}(\C^{\Del n})$ we proved in \cite[Lemma 5.6]{Femic2} and the identity (44) ib\'idem:
\begin{equation}\eqlabel{2.2.1.1b}
\M_{ij}\cong \M_{j(i+1)}
\end{equation}
when $i\geq j\in \{1,\dots,n+1\}$, where $\M_{ij}=E_j^{n+1}\comp E_i^n(\M)$, and:
\begin{equation}\eqlabel{delta delta ev}
\delta_{n+1}\delta_{n}(\ev_{\M})= \lambda_{\M} \Del_{\C^{\Del (n+2)}}\crta\lambda_{\M}.
\end{equation}

Let us now denote the functors corresponding to $E^n_i$ and $\delta_{n}$ in the setting of the Amitsur complex $C(\C\Del\C/\C, \dul\Pic)$ by:
$$E_i^{'n}, \delta_n': \dul\Pic((\C\Del\C)^{\Del_{\C}n})\to\dul\Pic((\C\Del\C)^{\Del_{\C}(n+1)}).$$

Suppose that $\C$ is symmetric. Due to \leref{equiv fun} the augmentation functors
$$\tilde\eta^n_i:\ (\C\Del\C)^{\Del_{\C}n}\to (\C\Del\C)^{\Del_{\C} (n+1)}$$
for $i=1,2, \cdots, n+1$ can be viewed as functors
\begin{equation} \eqlabel{eta_i functors}
\eta_i^{n+1}:\ \C^{\Del (n+1)}\to \C^{\Del (n+2)}.
\end{equation}

\bigskip

\begin{lma} \lelabel{e_i s distintos}
For a symmetric category $\C$ the above functors $\eta_i^n: \C^{\Del n}\to \C^{\Del (n+1)}$ \equref{eta_i functors} coincide with the
augmentation functors $e^n_i: \C^{\Del n}\to \C^{\Del (n+1)}$ from \equref{e's} for $i=1,2, \cdots, n$. Consequently:
\begin{enumerate}
\item $e^n_i(X)=\tilde\eta^{n-1}_i(X)$, for all $X\in\Inv(\C^{\Del n})\iso\Inv((\C\Del\C)^{\Del_{\C}(n-1)})$ for $i=1,2, \cdots, n$;
\item $E^n_i(\M)=\M\Del_{\C^{\Del n}} {}_{e^n_i}\C^{\Del (n+1)}\simeq
\M\Del_{(\C\Del\C)^{\Del_{\C} (n-1)}} {}_{\tilde\eta^{n-1}_i}(\C\Del\C)^{\Del_{\C} n}=E^{' n-1}_i(\M)$ for all $\M\in\dul\Pic(\C^{\Del n})$
for $i=1,2, \cdots, n$.
\end{enumerate}
\end{lma}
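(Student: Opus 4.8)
The plan is to reduce the whole lemma to the single natural isomorphism
\begin{equation*}
e^n_i\comp F_{n-1}\;\simeq\;F_n\comp\tilde\eta^{n-1}_i\;:\;(\C\Del\C)^{\Del_{\C}(n-1)}\longrightarrow\C^{\Del(n+1)},\qquad i=1,\dots,n,
\end{equation*}
where $F_m:(\C\Del\C)^{\Del_{\C}m}\to\C^{\Del(m+1)}$ denotes the braided monoidal equivalence of \leref{equiv fun}. By construction the functor $\eta^n_i$ of \equref{eta_i functors} is the transport of $\tilde\eta^{n-1}_i$ along $F_{n-1}$ and $F_n$, so this isomorphism is exactly the assertion $\eta^n_i\simeq e^n_i$. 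Granting it, the first statement is obtained by restricting the isomorphism to invertible objects, using that $F_{n-1}$ is monoidal and hence induces the group isomorphism $\Inv(\C^{\Del n})\iso\Inv((\C\Del\C)^{\Del_{\C}(n-1)})$ that appears in the statement; the second statement is obtained by tensoring the isomorphism, read as an equivalence of module categories along $F_{n-1}$, with $\M$ and invoking invariance of the balanced tensor product under base change along an equivalence (an instance of \equref{functor change}, together with the associativity and unit constraints for $\Del$).

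To prove the displayed isomorphism I would evaluate both sides on objects, the check on morphisms being immediate since $F_m$ acts componentwise and both $e_i$ and $\tilde\eta_i$ act by identities on the inserted unit. For $X=(X^1\Del Y^1)\Del_{\C}\cdots\Del_{\C}(X^{n-1}\Del Y^{n-1})$ one has $F_{n-1}(X)=X^1\Del\cdots\Del X^{n-1}\Del(Y^1\cdots Y^{n-1})$, so $e^n_i F_{n-1}(X)$ inserts $I$ into the $i$-th slot, which for $i\le n$ lies among the first $n$ slots, and leaves the last slot $Y^1\cdots Y^{n-1}$ untouched. On the other side $\tilde\eta^{n-1}_i(X)$ inserts the unit $I\Del I$ of $\C\Del\C$ as the $i$-th factor, and $F_n$ then yields $I$ in the $i$-th of the first $n$ slots and, in the last slot, the product of the second components $Y^1\cdots Y^{i-1}\cdot I\cdot Y^i\cdots Y^{n-1}\iso Y^1\cdots Y^{n-1}$; the two objects coincide. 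This calculation also explains the bound $i\le n$: the top augmentation functor $\tilde\eta^{n-1}_n$ corresponds to $e^n_n$ and not to $e^n_{n+1}$, because under $F_n$ the final slot absorbs the inserted unit, whereas $e^n_{n+1}$ appends $I$ after that slot and has no counterpart on the Deligne powers of $\C\Del\C$.

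For the module-category part, given $\M\in\dul\Pic(\C^{\Del n})$ I would pull $\M$ back along $F_{n-1}$ to a one-sided bimodule category over $(\C\Del\C)^{\Del_{\C}(n-1)}$; the isomorphism above then exhibits $F_n$ as an equivalence of $(\C\Del\C)^{\Del_{\C}(n-1)}$-module categories from ${}_{\tilde\eta^{n-1}_i}(\C\Del\C)^{\Del_{\C}n}$ onto the $F_{n-1}$-pullback of ${}_{e^n_i}\C^{\Del(n+1)}$. Tensoring with $\M$ over $(\C\Del\C)^{\Del_{\C}(n-1)}$ and collapsing the base change along $F_{n-1}$ by \equref{functor change} identifies
\begin{equation*}
E^{' n-1}_i(\M)=\M\Del_{(\C\Del\C)^{\Del_{\C}(n-1)}}{}_{\tilde\eta^{n-1}_i}(\C\Del\C)^{\Del_{\C}n}\;\simeq\;\M\Del_{\C^{\Del n}}{}_{e^n_i}\C^{\Del(n+1)}=E^n_i(\M).
\end{equation*}

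Apart from the two essentially routine bookkeeping computations, the main obstacle is keeping the index shift straight --- in particular recognising that the doubled Amitsur complex is missing precisely the top face $e^n_{n+1}$, which is the structural point that drives both this lemma and the ``extended cocycles yield coboundaries'' mechanism of this subsection --- and checking that the base-change isomorphisms used in the module-category part are compatible with the one-sided $\C\Del\C$-bimodule structures of the $\M_i$; that last compatibility I would settle exactly as the analogous left and right $\C$-linearity checks in the proof of \leref{Delta tilde}.
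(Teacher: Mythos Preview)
Your proposal is correct and follows essentially the same route as the paper: both arguments reduce the claim to the explicit object-level computation through the equivalence $F_m:(\C\Del\C)^{\Del_{\C}m}\simeq\C^{\Del(m+1)}$ of \equref{A-C}, checking that inserting $I\Del I$ in slot $i$ on one side matches inserting $I$ in slot $i$ on the other for $i\le n$, with the case $i=n$ singled out. The paper's proof is terser---it simply records the two computations and leaves parts (1) and (2) as immediate consequences---whereas you spell out the base-change argument for (2) via \equref{functor change}; this is more detail than the paper provides but not a different idea.
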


\begin{proof}
Since $X=X^1\Del\cdots \Del X^n$ corresponds to $(X^1\Del I)\Del_{\C}\cdots\Del_{\C} (X^{n-2}\Del I)\Del_{\C} (X^{n-1}\Del X^n)$
in $(\C\Del\C)^{\Del_{\C}(n-1)}$ by the equivalence \equref{A-C}, for every $i=1,2, \cdots, n-1$ we have:
$\tilde\eta^{n-1}_i(X)=(X^1\Del I)\Del_{\C}\cdots\Del_{\C} (I\Del I)\Del_{\C}(X^i\Del I)\Del_{\C}\cdots\Del_{\C}(X^{n-2}\Del I)\Del_{\C} (X^{n-1}\Del X^n)$
which corresponds to $X^1\Del \cdots \Del I\Del X^i\Del \cdots\Del X^{n-2}\Del X^{n-1}\Del X^n=e^n_i(X)$, whereas for $i=n$ we find:
$\tilde\eta^{n-1}_n(X)=(X^1\Del I)\Del_{\C}\cdots\Del_{\C}(X^{n-1}\Del X^n)\Del_{\C} (I\Del I)$, which corresponds to:
$X^1\Del \cdots \Del X^{n-1}\Del I\Del X^n=e^n_n(X)$.
\qed\end{proof}

We recall that in \cite[Section 5.1]{Femic2} we defined the category $\dul{Z}^n(\C,\dul{\Pic})$ whose objects are $(\M,\alpha)$,
where $\M\in \dul{\Pic}(\C^{\Del n})$, and $\alpha:\ \delta_n(\M)\to \C^{\Del(n+1)}$ is an equivalence of $\C^{\Del(n+1)}$-module categories so
that $\delta_{n+1}(\alpha)\simeq\lambda_{\M}$. A morphism $(\M,\alpha)\to (\N,\beta)$ is an equivalence of $\C^{\Del n}$-module categories
$F:\ \M\to \N$ fulfilling $\beta\circ \delta_n(F)\simeq\alpha$. Equipped with the tensor product $(\M,\alpha)\ot (\N,\beta)=
(\M\Del_{\C^{\Del n}}\N, \alpha\Del_{\C^{\Del (n+1)}}\beta)$ and the unit object $(\C^{\Del n}, \C^{\Del (n+1)})$ the category $\dul{Z}^n(\C,\dul{\Pic})$
is symmetric monoidal. Every object in this category is invertible and the corresponding Grothendieck group we denote by ${Z}^n(\C,\dul{\Pic}).$
Furthermore, we denoted by $d_{n-1}:\ \dul{\Pic}(\C^{\Del (n-1)})\to \dul{Z}^n(\C,\dul{\Pic})$ the monoidal functor
given by $d_{n-1}(\N)=(\delta_{n-1}(\N),\lambda_{\N})$. The subgroup of $Z^n(\C,\dul{\Pic})$ consisting of elements represented by $d_{n-1}(\N)$ we
denoted by $B^n(\C,\dul{\Pic})$ and we defined:
\begin{equation} \eqlabel{coh gr dul Pic}
H^n(\C,\dul{\Pic})=Z^n(\C,\dul{\Pic})/B^n(\C,\dul{\Pic}).
\end{equation}

Let now $d_{n-1}':  \dul{\Pic}((\C\Del\C)^{\Del_{\C} (n-1)})\to \dul{Z}^n(\C\Del \C/\C,\dul{\Pic})$
denote the corresponding functor in the Amitsur complex $C(\C\Del\C/\C, \dul\Pic)$. It is given by $d_{n-1}'(\M)=(\delta'_{n-1}(\M), \lambda'_{\M})$.

\begin{lma} \lelabel{extended coc} 
If $[X]\in Z^n(\C,\Inv)$, then $[X\Del I]\in B^n(\C\Del \C/\C,\Inv)$.

If $(\M, \alpha)\in Z^n(\C,\dul\Pic)$, then $(\M\Del\C, \alpha\Del\C)\iso d_{n-1}'(\M)=(\delta'_{n-1}(\M), \lambda'_{\M})\in B^n(\C\Del \C/\C,\dul\Pic)$.
\end{lma}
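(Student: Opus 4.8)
The plan is to identify the coboundary $\delta'_{n-1}$ of the Amitsur complex $C(\C\Del\C/\C,P)$ with ``$\delta_n$ with its last face deleted'', and then to read both assertions off the cocycle hypothesis. Concretely: by \leref{equiv fun} the assignment \equref{A-C} is a braided monoidal equivalence identifying $(\C\Del\C)^{\Del_{\C}(n-1)}$ with $\C^{\Del n}$ and $(\C\Del\C)^{\Del_{\C}n}$ with $\C^{\Del (n+1)}$, and by \leref{e_i s distintos} under these equivalences the augmentation functors $\tilde\eta^{n-1}_i$ building $\delta'_{n-1}$ become exactly $e^n_i\colon\C^{\Del n}\to\C^{\Del (n+1)}$ for $i=1,\dots,n$ --- i.e.\ the first $n$ of the $n+1$ faces occurring in $\delta_n$ --- so that, working inside $\C^{\Del(n+1)}$,
\begin{equation*}
\delta_n(X)\;\cong\;\delta'_{n-1}(X)\cdot e^n_{n+1}(X)^{(-1)^{n}},\qquad e^n_{n+1}(X)=X\Del I ,
\end{equation*}
componentwise when $P=\Inv$, and as a Deligne product of the $E^n_i(\M)$ (with the appropriate $(-)^{op}$'s) when $P=\dul\Pic$. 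Conceptually this is the statement that $C(\C\Del\C/\C,P)$ is contractible, a contracting homotopy being induced by the ``multiply the last two tensor factors'' functors $\C^{\Del(m+1)}\to\C^{\Del m}$; the lemma is the explicit form of this fact for the two distinguished classes.

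For the first assertion I would take $[X]\in Z^n(\C,\Inv)$, so that $\delta_n(X)\cong I^{\Del (n+1)}$, and --- using that $\Inv$ of a symmetric finite tensor category is abelian --- rearrange the displayed identity to $X\Del I\cong\delta'_{n-1}(X)^{(-1)^{n+1}}\cong\delta'_{n-1}\bigl(X^{(-1)^{n+1}}\bigr)$. Hence $[X\Del I]=\delta'_{n-1}\bigl([X^{(-1)^{n+1}}]\bigr)\in\im\delta'_{n-1}=B^n(\C\Del\C/\C,\Inv)$, and it automatically also lies in $Z^n(\C\Del\C/\C,\Inv)$ since $\delta'_n\delta'_{n-1}=0$.

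For the second assertion I would run the same computation one categorical level higher. By \leref{e_i s distintos}(2), $E^{' n-1}_i(\M)\simeq E^n_i(\M)=\M_i$ for $i=1,\dots,n$, so the canonical associativity of $\Del_{\C^{\Del (n+1)}}$ yields an equivalence $\delta_n(\M)\simeq\delta'_{n-1}(\M)\,\Del_{\C^{\Del (n+1)}}\,\M_{n+1}^{\pm}$, where $\M_{n+1}=E^n_{n+1}(\M)\simeq\M\Del\C$ and the superscript records $(-)^{op}$ according to the parity of $n$. Since $(\M,\alpha)\in Z^n(\C,\dul\Pic)$, the functor $\alpha$ is an equivalence $\delta_n(\M)\xrightarrow{\sim}\C^{\Del (n+1)}$; tensoring on the right by the inverse of $\M_{n+1}^{\pm}$ --- which, being an invertible one-sided bimodule category, is its opposite (\coref{coev inverse ev} and the preceding discussion) --- produces an equivalence $F\colon\delta'_{n-1}(\M)\xrightarrow{\sim}\M\Del\C$ (for $n$ even one carries $(-)^{op}$ through and lands on $d'_{n-1}(\M^{op})$ instead; either way $\M\Del\C$ is $\delta'_{n-1}$ of an object of $\dul\Pic(\C^{\Del n})$, which is all that is needed for membership in $B^n$).

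It then remains to promote $F$ to an isomorphism $(\delta'_{n-1}(\M),\lambda'_{\M})\iso(\M\Del\C,\alpha\Del\C)$ in $\dul{Z}^{n}(\C\Del\C/\C,\dul\Pic)$, i.e.\ to verify $(\alpha\Del\C)\comp\delta'_n(F)\simeq\lambda'_{\M}$. For this I would use that $(\M,\alpha)$ being a cocycle means $\delta_{n+1}(\alpha)\simeq\lambda_{\M}$, together with \equref{2.2.1.1b} and \equref{delta delta ev} and their counterparts for the doubled faces in $C(\C\Del\C/\C,\dul\Pic)$ (again transported via \leref{equiv fun} and \leref{e_i s distintos}); unwinding these should show that $\lambda'_{\M}$, the structure $2$-cell of $d'_{n-1}(\M)$, is precisely the one produced from $\alpha\Del\C$ by $\delta'_n(F)$. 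Then $(\M\Del\C,\alpha\Del\C)\iso d'_{n-1}(\M)$, and since $B^n(\C\Del\C/\C,\dul\Pic)$ is by definition the image of $d'_{n-1}$, the class lies there. I expect this last coherence check to be the main obstacle --- matching the normalizing data ($\lambda$'s and the evaluation functors) across the two Amitsur complexes and keeping track of the parity-dependent $(-)^{op}$'s; the identification of $\delta'_{n-1}$ and the first, purely group-theoretic, assertion are routine.
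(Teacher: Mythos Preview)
Your proposal is correct and follows essentially the same route as the paper. The first assertion is handled identically---indeed your treatment of the parity sign is slightly more careful than the paper's, which writes $X\Del I=\delta'_{n-1}(X)$ without the exponent $(-1)^{n+1}$, though of course either way one lands in $B^n$. For the second assertion the paper does exactly what you outline: it extracts from $\alpha$ an equivalence $\beta\colon \M^{\pm1}\Del\C\to\delta'_{n-1}(\M^{op})$ via $\ev_{\delta'_{n-1}(\M)}(\Id\Del\beta)=\alpha$ and then carries out in full the coherence check you flag as the main obstacle, using \equref{delta delta ev} to compute $\delta'_n(\ev_{\delta'_{n-1}(\M)})=\lambda'_{\M}\Del\crta\lambda'_{\M}$, comparing against $\delta_{n+1}(\alpha)=\lambda_{\M}$, and cancelling via \leref{basic lema} to obtain $\lambda'_{\M}\comp\delta'_n(\beta)=\alpha^{\pm1}\Del\C$. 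Your instinct that this verification is where the work lies, and that \equref{2.2.1.1b} and \equref{delta delta ev} are the right tools, is on the mark.
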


\begin{proof}
By the n-cocycle condition and \leref{e_i s distintos} we have: $X\Del I=X_{n+1}=X_1X_2^{-1}\cdots X_n^{\pm 1}\\=\delta'_{n-1}(X)\in B^n(\C\Del \C/\C,\Inv)$.

For the second part, observe that the equivalence $\alpha:\ \M_1\Del_{\C^{\Del (n+1)}}\M^{op}_2\Del_{\C^{\Del (n+1)}}\cdots\Del_{\C^{\Del {n+1}}}\M_{n+1}^{\pm 1}\to \C^{\Del (n+1)}$
induces an equivalence
$$\beta:\ \M_{n+1}^{\pm 1}=\M^{\pm 1}\Del \C\to \M^{op}_1\Del_{\C^{\Del {n+1}}}\M_2\Del_{\C^{\Del {n+1}}}\cdots\Del_{\C^{\Del {n+1}}}\M_n^{\pm 1}=\delta_{n-1}'(\M^{op}).$$
It is given by the universal property of the evaluation functor through
\begin{equation}\eqlabel{beta}
\ev_{\delta_{n-1}'(\M)}(\Id_{\delta_{n-1}'(\M)}\Del_{\C^{\Del (n+1)}}\beta)=\alpha.
\end{equation}
We have $\delta'_n(\ev_{\delta_{n-1}'(\M)})=\delta'_n\delta_{n-1}'(\ev_{\M})\stackrel{\equref{delta delta ev}}{=}
\lambda'_{\M} \Del_{\C^{\Del (n+2)}}\crta\lambda'_{\M}$.
Applying the functor $\delta'_n$ to the equation \equref{beta} and by its monoidality we obtain:
\begin{multline*}
(\lambda'_{\M} \Del_{\C^{\Del (n+2)}}\crta\lambda'_{\M})(\delta'_n(\Id_{\delta_{n-1}'(\M)})\Del_{\C^{\Del (n+2)}}\delta'_n(\beta))=\\
\lambda'_{\M}\Del_{\C^{\Del (n+2)}}(\lambda'_{\M}\comp\delta'_n(\beta))=\alpha_1\Del_{\C^{\Del (n+2)}}\crta\alpha_2^{-1}\Del_{\C^{\Del (n+2)}}\dots
\Del_{\C^{\Del (n+2)}}\alpha_{n+1}^{\pm 1}.
\end{multline*}
Here $\alpha_{n+1}^{\pm 1}$ is $\alpha_{n+1}$ if $n$ is odd, and it is $\crta\alpha_{n+1}^{-1}$ if $n$ is even.
Now, observe that $\M\in\dul\Pic(\C^{\Del n})=\dul\Pic((\C\Del\C)^{\Del_{\C}(n-1)})$, so
$\lambda'_{\M}= (\boxtimes_{\C^{\Del (n+2)}})_{j=2}^{n+1}(\boxtimes_{\C^{\Del (n+2)}})_{i=1}^{j-1} \ev_{\M_{ij}}$.
Here we use the identification \equref{A-C}: $(\C\Del\C)^{\Del_{\C}(n+1)}\cong \C^{\Del (n+2)}$ in the Deligne tensor product of bimodule categories.
On the other hand, we have that $\delta_{n+1}(\alpha)=\lambda_{\M}$, which means:
\begin{multline*}
\alpha_1\Del_{\C^{\Del (n+2)}}\crta\alpha_2^{-1}\Del_{\C^{\Del (n+2)}}\dots\Del_{\C^{\Del (n+2)}}\alpha_{n+2}^{\mp 1}=\\
(\boxtimes_{\C^{\Del (n+2)}})_{j=2}^{n+2}(\boxtimes_{\C^{\Del (n+2)}})_{i=1}^{j-1} \ev_{\M_{ij}}=
\lambda'_{\M}\boxtimes_{\C^{\Del (n+2)}} (\boxtimes_{\C^{\Del (n+2)}})_{i=1}^{n+1} \ev_{\M_{i,n+2}}
\end{multline*}
Combining the last two equations and cancelling out the functor $\lambda'_{\M}$ by \leref{basic lema}, 2), we get:
$$(\lambda'_{\M}\comp\delta'_n(\beta)) \Del_{\C^{\Del (n+2)}}\alpha_{n+2}^{\mp 1}= (\boxtimes_{\C^{\Del (n+2)}})_{i=1}^{n+1} \ev_{\M_{i,n+2}}=
\delta_n(\ev_{\M})_{n+2}=(\ev_{\delta_n(\M)})_{n+2}.$$
Due to \leref{alfa dagger}, the right hand-side is equal to $(\alpha \Del_{\C^{\Del (n+1)}}\crta\alpha^{- 1})_{n+2}=
\alpha_{n+2}\Del_{\C^{\Del (n+2)}}\crta\alpha^{-1}_{n+2}$. Now by \leref{basic lema}, 2), applied to the functor $\alpha_{n+2}^{\mp 1}$
yields $\lambda'_{\M}\comp\delta'_n(\beta)=\alpha_{n+2}^{\pm 1}=\alpha^{\pm 1}\Del\C$ depending on whether $n$ is odd or even.
We have proved that $\beta$ is an isomorphism between $(\M\Del\C, \alpha\Del\C)$ and $(\delta'_{n-1}(\M), \lambda'_{\M})$
in $\dul{Z}^n(\C\Del \C/\C,\dul{\Pic})$.
\qed\end{proof}

\medskip

\begin{cor}
Let $\M\in\dul\Pic(\C^{\Del n})$. Then $\delta'_n(\M\Del\C)=\delta_n(\M)\Del\C$.
\end{cor}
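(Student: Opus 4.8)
The plan is to unwind both sides of the claimed identity from the definition of the coboundary functors, to replace the functors $E^{'n}_i$ by the augmentation functors $E^{n+1}_i$ via \leref{e_i s distintos}, to use the cosimplicial relations among the augmentation functors of \equref{e's}, and finally to pull the trailing factor $\C$ out of the relative Deligne tensor products by means of \leref{Weq}. Throughout I keep the identification \equref{A-C}, $(\C\Del\C)^{\Del_{\C}m}\simeq\C^{\Del(m+1)}$ (a braided monoidal equivalence for symmetric $\C$ by \leref{equiv fun}), fixed in each Deligne power that occurs, so that the relative tensor products $\Del_{(\C\Del\C)^{\Del_{\C}(n+1)}}$ that appear in $\delta'_n$ are read as $\Del_{\C^{\Del(n+2)}}$.

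First I would observe that $\M\Del\C\simeq E^n_{n+1}(\M)$, an object of $\dul\Pic(\C^{\Del(n+1)})\simeq\dul\Pic((\C\Del\C)^{\Del_{\C}n})$, so that $\delta'_n(\M\Del\C)$ is defined and, by the definition of the coboundary,
$$\delta'_n(\M\Del\C)=E^{'n}_1(\M\Del\C)\Del_{\C^{\Del(n+2)}}E^{'n}_2(\M\Del\C)^{op}\Del_{\C^{\Del(n+2)}}\cdots\Del_{\C^{\Del(n+2)}}E^{'n}_{n+1}(\M\Del\C)^{\pm1}.$$
Next I would apply \leref{e_i s distintos}(2) with $n$ replaced by $n+1$: for $\A\in\dul\Pic(\C^{\Del(n+1)})$ and $i=1,\dots,n+1$ one has $E^{'n}_i(\A)\simeq E^{n+1}_i(\A)$. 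Taking $\A=\M\Del\C\simeq E^n_{n+1}(\M)$ gives $E^{'n}_i(\M\Del\C)\simeq E^{n+1}_iE^n_{n+1}(\M)$. A direct check of the cosimplicial relation $e^{n+1}_i\comp e^n_{n+1}=e^{n+1}_{n+2}\comp e^n_i$ for $i=1,\dots,n+1$ (both composites, applied to $X^1\Del\cdots\Del X^n$, produce $X^1\Del\cdots\Del X^{i-1}\Del I\Del X^i\Del\cdots\Del X^n\Del I$) then yields $E^{n+1}_iE^n_{n+1}(\M)=E^{n+1}_{n+2}(\M_i)\simeq\M_i\Del\C$, so that $E^{'n}_i(\M\Del\C)\simeq\M_i\Del\C$ for every $i=1,\dots,n+1$.

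It remains to assemble these equivalences into
$$\delta'_n(\M\Del\C)\simeq(\M_1\Del\C)\Del_{\C^{\Del(n+2)}}(\M_2\Del\C)^{op}\Del_{\C^{\Del(n+2)}}\cdots\Del_{\C^{\Del(n+2)}}(\M_{n+1}\Del\C)^{\pm1}$$
and to factor out the common $\Del\C$. Since $\C^{op}\simeq\C$ and ${}^{op}\C\simeq\C$ as $\C$-bimodule categories, $(\M_i\Del\C)^{op}\simeq\M_i^{op}\Del\C$; and iterating \leref{Weq} in the form $(\M_i^{\pm1}\Del_{\C^{\Del(n+1)}}\M_{i+1}^{\mp1})\Del(\C\Del_{\C}\C)\iso(\M_i^{\pm1}\Del\C)\Del_{\C^{\Del(n+2)}}(\M_{i+1}^{\mp1}\Del\C)$, together with $\C\Del_{\C}\C\simeq\C$, extracts the trailing $\C$ from all $n+1$ of the relative tensor products, giving $(\M_1\Del_{\C^{\Del(n+1)}}\cdots\Del_{\C^{\Del(n+1)}}\M_{n+1}^{\pm1})\Del\C=\delta_n(\M)\Del\C$. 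I expect the only real difficulty to be bookkeeping: one must verify that the left and right $\C^{\Del(n+2)}$-module structures on all the categories in sight are matched by the identification \equref{A-C}, and check the cosimplicial relation against the slot-insertion convention of \equref{e's}; granting these, the corollary follows from the displayed chain of canonical equivalences.
\qed
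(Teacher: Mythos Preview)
Your proof is correct and follows essentially the same route as the paper's. The only cosmetic difference is that where the paper invokes the ready-made identity \equref{2.2.1.1b}, $\M_{ij}\cong\M_{j(i+1)}$ for $i\geq j$ (with $i=n+1$), you instead re-derive the special case needed by checking the cosimplicial relation $e^{n+1}_i\comp e^n_{n+1}=e^{n+1}_{n+2}\comp e^n_i$ directly on objects; after that, both proofs finish identically via \leref{Weq}.
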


\begin{proof}
We have that $\M\Del\C\in\dul\Pic(\C^{\Del (n+1)})\iso\dul\Pic((\C\Del\C)^{\Del_{\C} n})$ and by \leref{e_i s distintos} $\M\Del\C=\M_{n+1}$ in both categories.
Thus, applying \equref{2.2.1.1b}, we find:
\begin{eqnarray*}
\delta'_n(\M\Del\C)&=& \M_{n+1,1}\Del_{\C^{\Del (n+2)}}\M^{op}_{n+1,2}\Del_{\C^{\Del (n+2)}}\cdots\Del_{\C^{\Del (n+2)}}\M^{\pm 1}_{n+1, n+1}\\
&=&\M_{1,n+2}\Del_{\C^{\Del (n+2)}}\M^{op}_{2,n+2}\Del_{\C^{\Del (n+2)}}\cdots\Del_{\C^{\Del (n+2)}}\M^{\pm 1}_{n+1, n+2}\\
&=&(\M_1\Del\C)\Del_{(\C^{\Del (n+1)}\Del\C)}(\M^{op}_2\Del\C)\Del_{(\C^{\Del (n+1)}\Del\C)}\cdots\Del_{(\C^{\Del (n+1)}\Del\C)}(\M^{\pm 1}_{n+1}\Del\C)\\
&\iso &(\M_1\Del_{\C^{\Del (n+1)}}\M^{op}_2\Del_{\C^{\Del (n+1)}}\cdots\Del_{\C^{\Del (n+1)}}(\M^{\pm 1}_{n+1})\Del\C\\
&=&\delta_n(\M)\Del\C
\end{eqnarray*}
where the isomorphism in the penultimate line is due to \leref{Weq}.
\qed\end{proof}

\subsection{Few words on middle cohomology groups $H^n(\C,\dul{\Pic})$}

The following result will be crucial in \ssref{Full group} where we discuss the full group of Azumaya quasi coring categories.

\begin{prop}\prlabel{coh maps dul}
Let $F: \C\to\D$ be a functor between two symmetric finite tensor categories.
Then $F$ induces group morphisms $F_*:\ H^n(\C,\dul{\Pic})\to H^n(\D,\dul{\Pic})$.
If $G: \C\to\D$ is another symmetric tensor functor, then $F_*=G_*$.
\end{prop}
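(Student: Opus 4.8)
The plan is to mimic the proof of \prref{Knus}, replacing the additive functor $P$ and its values in abelian groups by the functor $\dul\Pic$ taking values in the symmetric monoidal categories $\dul{Z}^n(\bullet,\dul\Pic)$, and to show that the two chain maps induced by $F$ and $G$ on the ``Amitsur complex of categories'' are \emph{homotopic}, in the sense that the homotopy operators $\theta^{(n)}$ constructed from the functors $\theta_i^n$ still make sense and still satisfy the required relation up to coherent equivalence. Concretely, first I would recall that $F$ induces, for each $n$, a symmetric tensor functor $F^n=F^{\Del n}:\C^{\Del n}\to\D^{\Del n}$, hence pullback functors $F^n_*:\dul\Pic(\C^{\Del n})\to\dul\Pic(\D^{\Del n})$ (given by $\M\mapsto\M\Del_{\C^{\Del n}}{}_{F^n}\D^{\Del n}$, using the change-of-functor equivalence \equref{functor change}), which are compatible with the $E^n_i$ and therefore with $\delta_n$ and with $\ev$, and so descend to group maps $F_*:Z^n(\C,\dul\Pic)\to Z^n(\D,\dul\Pic)$ carrying $B^n$ into $B^n$; this defines $F_*:H^n(\C,\dul\Pic)\to H^n(\D,\dul\Pic)$ and shows it is well defined.

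Then for the equality $F_*=G_*$ I would reuse the functors $\theta^n_i:\C^{\Del(n+1)}\to\D^{\Del n}$ and $\sigma^n_i:\C^{\Del n}\to\D^{\Del n}$ from the proof of \prref{Knus}: \emph{verbatim} the same formulas $\theta^n_i(X^1\Del\cdots\Del X^{n+1})=F(X^1)\Del\cdots\Del F(X^i)G(X^{i+1})\Del\cdots\Del G(X^{n+1})$, which by that proof are symmetric tensor functors (this is where symmetry of $\D$ was used, and it is available here by hypothesis). They satisfy the same simplicial identities $\theta^n_j e^n_i=e^{n-1}_{i-1}\theta^{n-1}_j$ (for $j\le i-2$), $=\sigma^n_i$ (for $j=i-1$ or $j=i$), $=e^{n-1}_i\theta^{n-1}_{j-1}$ (for $j\ge i+1$). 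Pulling back $\dul\Pic$ along these functors and taking the alternating ``sum'' (i.e. the alternating Deligne tensor product over the appropriate power of $\D$, with $\M^{op}$ playing the role of $-\M$) gives operators $\theta^{(n)}:Z^{n+1}(\C,\dul\Pic)\to Z^n(\D,\dul\Pic)$, and the simplicial identities above, together with the telescoping of the two middle sums exactly as in \prref{Knus}, yield an equivalence $\delta'_{n-1}\theta^{(n-1)}(\M)\Del\, \theta^{(n)}\delta_n(\M)\simeq G_*^{(n)}(\M)\Del F_*^{(n)}(\M)^{op}$ as objects of $\dul{Z}^n(\D,\dul\Pic)$ (equivalently, after the book-keeping of $op$'s, a homotopy between the chain maps $F'$ and $G'$). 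Passing to Grothendieck groups, the $\delta'_{n-1}$-term lands in $B^n(\D,\dul\Pic)$, so $F_*$ and $G_*$ agree in $H^n(\D,\dul\Pic)$.

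The main obstacle is that $\dul\Pic$ is valued in a 2-categorical/monoidal-category setting rather than in abelian groups, so the homotopy identity holds only up to natural equivalence of module categories, not on the nose; one must check that all the equivalences assembling $\theta^{(n)}\delta_n$, $\delta'_{n-1}\theta^{(n-1)}$ and the telescoped $\sigma$-terms are \emph{coherent} and respect the evaluation-functor data $(\M,\alpha)$ that define objects of $\dul{Z}^n$ — i.e. that the constructed equivalence $\beta$ in $\dul{Z}^n(\D,\dul\Pic)$ is genuinely a morphism there (compatible with $\lambda$ via $\delta'_n(\beta)$), in the same spirit as the verification carried out in \leref{extended coc}. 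I expect this to go through by the same arguments: biexactness of $\Del$ and $\crta\ot$, \leref{Weq}, \leref{basic lema} (to cancel invertible module functors), \coref{coev inverse ev} and \equref{delta delta ev}, and the associativity equivalences for $\Del_{\C^{\Del m}}$, reduce every coherence check to naturality and to the braiding/symmetry axioms in $\D$, exactly the routine diagrammatics appearing in the proofs of \prref{Knus} and \leref{extended coc}. Once these coherences are in place, the argument is a formal transcription of the classical homotopy-of-Amitsur-complexes proof (\cite[Prop.~5.1.7]{KO1}, \cite[Prop.~5.1.7]{KO1}) into the categorified setting, and we conclude $F_*=G_*$.
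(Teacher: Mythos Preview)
Your approach is genuinely different from the paper's and substantially more laborious. The paper does \emph{not} redo the homotopy argument in the categorified setting. Instead, having already defined $F_*$ on $H^n(\C,\dul{\Pic})$ via \equref{fun ind cat}, it observes that $F$ and $G$ induce two morphisms between the long exact sequence \equref{VZ seq} for $\C$ and the analogous sequence for $\D$. On the terms $H^n(\bullet,\Inv)$ and $H^n(\bullet,\Pic)$ these morphisms agree by \prref{Knus}, and then a five-lemma style argument forces them to agree on the middle terms $H^n(\bullet,\dul{\Pic})$. This bootstraps from the ordinary Amitsur cohomology result and costs almost nothing beyond what was already proved.

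Your direct categorified-homotopy route could in principle work, but the obstacle you flag is real and you have not actually discharged it. The point is that $H^n(\C,\dul{\Pic})$ is \emph{not} the cohomology of an ordinary complex: an element is represented by a pair $(\M,\alpha)$ with $\alpha:\delta_n(\M)\to\C^{\Del(n+1)}$ satisfying $\delta_{n+1}(\alpha)\simeq\lambda_\M$, and a coboundary is a pair of the very specific form $(\delta_{n-1}(\N),\lambda_\N)$. Your homotopy operator $\theta^{(n-1)}$ produces a candidate $\N=\theta^{(n-1)}(\M)\in\dul{\Pic}(\D^{\Del(n-1)})$, and the simplicial identities do give an equivalence $G^*(\M)\Del_{\D^{\Del n}} F^*(\M)^{op}\simeq\delta'_{n-1}(\N)$ of module categories (here one uses that $\theta^{(n)}(\delta_n(\M))$ is trivialized by $\alpha$). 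But to conclude in $H^n(\D,\dul{\Pic})$ you must show this equivalence is a morphism in $\dul{Z}^n(\D,\dul{\Pic})$, i.e.\ that after applying $\delta'_n$ it carries $G^*(\alpha)\Del(F^*(\alpha))^{\dagger}$ to $\lambda_\N$. That is a genuine computation with the $\alpha$-data, comparable in spirit to \leref{extended coc} but not a consequence of it, and ``I expect this to go through'' is not a proof. (Also, your sentence ``operators $\theta^{(n)}:Z^{n+1}(\C,\dul{\Pic})\to Z^n(\D,\dul{\Pic})$'' is misstated: the homotopy lives at the level of $\dul{\Pic}(\C^{\Del(n+1)})\to\dul{\Pic}(\D^{\Del n})$, not on cocycle categories.)

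In short: your outline is plausible and self-contained, but the $\alpha$-compatibility is the entire content and is left open. The paper sidesteps all of this by invoking the exact sequence and the already-established \prref{Knus}.
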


\begin{proof}
The induced functor $F^*:\ \dul{Z}^n(\C,\dul{\Pic})\to\dul{Z}^n(\D,\dul{\Pic})$ is given by
\begin{equation} \eqlabel{fun ind cat}
F^*(\M,\alpha)=(\M\Del_{\C^{\Del n}}{}_{F^{\Del n}}(\D^{\Del n}),\alpha\Del_{\C^{\Del (n+1)}} {}_{F^{\Del (n+1)}}(\D^{\Del (n+1)})).
\end{equation}
It induces maps $F_*:\ H^n(\C,\dul{\Pic})\to H^n(\D,\dul{\Pic})$ - recall \equref{coh gr dul Pic} - as we clearly have:
$$(\delta_{n-1}(\N)\Del_{\C^{\Del n}}\D^{\Del n},\lambda_{\N}\Del_{\C^{\Del (n+1)}}\D^{\Del (n+1)})=
(\delta_{n-1}(\N\Del_{\C^{\Del (n-1)}}\D^{\Del (n-1)}),\lambda_{\N\Del_{\C^{\Del (n-1)}}\D^{\Del (n-1)}}).$$
(Here the action of the functor $F$ is suppressed in the notation.)
So far we have that two tensor functors $F$ and $G$ induce maps $F_*, G_*: \ H^n(\C,P)\to H^n(\D,P)$ for $P$ being one of the functors: $\Inv, \Pic$ and $\dul{\Pic}$,
and for every natural $n$, recall \prref{Knus}. Hence we have the corresponding maps between the exact sequence \equref{VZ seq} and its analog with $\C$ replaced
by $\D$. We know from \prref{Knus} that these maps coincide on $H^n(\C,\Inv)$ and $H^n(\D,\Pic)$. Then by the five lemma we obtain that they also coincide on $H^n(\D,\dul{\Pic})$.
\qed\end{proof}

The result of \prref{coh maps dul} enables us to build a functor
\begin{equation} \eqlabel{H2 functor}
H^n(\bullet/vec,\dul{\Pic}): FTC_{symm}\to\dul{Ab}
\end{equation}
from the category of symmetric finite tensor categories to that of abelian groups. Then we can consider the colimit of $H^n(\C/vec,\dul{\Pic})$
over symmetric finite tensor categories $\C$. Namely, let $\Omega_{sf}$ denote the family of equivalence
classes of symmetric finite tensor categories. Consider the following relation of order on $\Omega_{sf}: [\C]\leq[\D]$ if and only if
there is a symmetric tensor functor $\C\to\D$. Then $\Omega_{sf}$ is a directed family: given $\C$ and $\D$, the category $\C\Del\D\in FTC_{symm}$
and we have symmetric tensor functors $\C\to\C\Del\D$ and $\D\to\C\Del\D$, given by $X\mapsto X\Del I$ and $Y\mapsto I\Del Y$, respectively.
For each symmetric tensor functor $\C\to\D$, by \prref{coh maps dul} we have a unique group map $\Omega_{\C, \D}: H^n(\C,\dul{\Pic})\to H^n(\D,\dul{\Pic})$
satisfying $\Omega_{\C, \C}=\Id$ and $\Omega_{\D, \E}\comp\Omega_{\C, \D}=\Omega_{\C, \E}$ for every triple $[\C]\leq[\D]\leq[\E]$. Then we may define
$$H^n(vec,\dul{\Pic})=\colim_{[\C]\in\Omega_{sf}} H^n(\C/vec,\dul{\Pic})=\colim~ H^n(\bullet/vec,\dul{\Pic}).$$

\section{Interpretation of $H^2(\C,\dul\Pic)$} \selabel{Interpret}

{\em Let $\C$ throughout be a symmetric finite tensor category. }
To develop the results on the interpretation of the middle term in the second level of the sequence \equref{VZ seq} necessary pieces of information
are the following. On one hand, a fact that we already needed when defining Amitsur cohomology in \seref{Amitsur coh} -
that the dual object for an invertible one-sided bimodule category is its opposite category and that the evaluation functor involved is an
equivalence functor, \cite[Section 4]{Femic2}. At this point we will also use the result of \coref{coev inverse ev} that the coevaluation functor is the quasi-inverse
of the evaluation functor. On the other hand, crucial are: \prref{sim bimod} - which enables us to freely interchange the order of the factors
in the Deligne tensor product over $\C$, since $(\C^{br}\x\dul\Mod, \Del_{\C}, \C)$ is a symmetric monoidal category; \leref{basic lema} -
which justifies the cancellation of equivalence functors between two invertible $\C$-bimodule categories in the product in $\Pic(\C)$, and
finally \leref{alfa-delta}, as we see next.

\begin{lma} \lelabel{alfa-delta}
Let 
$\M\in\dul\Pic(\C\Del\C)$.
Let $\Delta:\M\to\M\Del_{\C}\M$ be a $\C$-bimodule functor and assume
that its corresponding $\C^{\Del 3}$-module functor $\tilde\Delta: \M_2\to \M_3\Del_{\C^{\Del 3}}\M_1$ (from \leref{Delta tilde}) is an equivalence. Then we may consider
the following equivalence of $\C^{\Del 3}$-module categories:
\begin{equation} \eqlabel{alfa-delta}
\alpha^{-1}:=(\tilde\Delta\Del_{\C^{\Del 3}}\M_2^{op})\crta\coev_{\M_2}:\C^{\Del 3}\to\M_3\Del_{\C^{\Del 3}}\M_1\Del_{\C^{\Del 3}}\M_2^{op}.
\end{equation}

Then $\Delta$ is coassociative (in the sense that $(\Delta\Del_{\C}\Id)\Delta\simeq(\Id\Del_{\C}\Delta)\Delta$) if and only if
$(\M, \alpha)\in\dul Z^2(\C, \dul\Pic)$.
\end{lma}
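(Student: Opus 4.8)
The plan is to note first that membership $(\M,\alpha)\in\dul{Z}^2(\C,\dul\Pic)$ splits into two requirements, the easier of which comes for free, and then to reduce the coassociativity of $\Delta$ to the cocycle identity $\delta_3(\alpha)\simeq\lambda_{\M}$. Recall that $(\M,\alpha)\in\dul{Z}^2(\C,\dul\Pic)$ asks that $\alpha$ be an equivalence of $\C^{\Del 3}$-module categories with source $\delta_2(\M)$, and that $\delta_3(\alpha)\simeq\lambda_{\M}$. The first holds automatically under our hypotheses: $\crta\coev_{\M_2}$ is an equivalence by \coref{coev inverse ev}, $\tilde\Delta$ is one by assumption, and $\M_2^{op}$ is invertible, so $\alpha^{-1}=(\tilde\Delta\Del_{\C^{\Del 3}}\M_2^{op})\crta\coev_{\M_2}$, hence $\alpha$, is an equivalence; moreover $\M_3\Del_{\C^{\Del 3}}\M_1\Del_{\C^{\Del 3}}\M_2^{op}\simeq\M_1\Del_{\C^{\Del 3}}\M_2^{op}\Del_{\C^{\Del 3}}\M_3=\delta_2(\M)$ because $(\C^{br}\x\dul\Mod,\Del_{\C},\C)$ is symmetric monoidal (\prref{sim bimod}). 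Hence the entire content of the lemma is the equivalence: $\Delta$ is coassociative $\iff$ $\delta_3(\alpha)\simeq\lambda_{\M}$.

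To handle this I would first restate coassociativity. By the triple-Deligne-product analog of \leref{Delta tilde} (the lemma immediately following it) there is an isomorphism $\Fun_{\C}(\M,\M\Del_{\C}\M\Del_{\C}\M)_{\C}\iso\Fun_{\C^{\Del 4}}(\M_{23},\M_{34}\Del_{\C^{\Del 4}}\M_{14}\Del_{\C^{\Del 4}}\M_{12})$, and by \equref{4H} the two composites $(\Delta\Del_{\C}\Id)\Delta$ and $(\Id\Del_{\C}\Delta)\Delta$ correspond to the $\C^{\Del 4}$-module functors obtained from $\tilde\Delta$ by applying it once and then re-applying it in the first, respectively the second, tensorand, with the index shifts $3\mapsto34,\ 1\mapsto14,\ 2\mapsto12$ dictated by \equref{3H}. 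So $\Delta$ is coassociative exactly when these two $\C^{\Del 4}$-functors are isomorphic; call this condition $(\star)$.

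Next I would unpack $\delta_3(\alpha)$. Since $\delta_3$ is monoidal on one-sided bimodule categories, from $\alpha^{-1}=(\tilde\Delta\Del_{\C^{\Del 3}}\M_2^{op})\crta\coev_{\M_2}$ one gets $\delta_3(\alpha)^{-1}=\delta_3(\tilde\Delta\Del_{\C^{\Del 3}}\M_2^{op})\comp\delta_3(\crta\coev_{\M_2})$. Rewriting each $\delta_3(\M_i)$ via $\M_{ij}\cong\M_{j(i+1)}$ (\equref{2.2.1.1b}) as a product of the categories $\M_{kl}$, the factor coming from $\delta_3(\tilde\Delta)$ becomes, because $\tilde\Delta$ is given componentwise by \equref{3H}, exactly an iterate of $\tilde\Delta$ — the data occurring in $(\star)$; the factor $\delta_3(\crta\coev_{\M_2})$ is governed by $\crta\coev$, and by \equref{delta delta ev} together with \coref{coev inverse ev} (so that $\crta\coev\simeq\crta\ev^{-1}$ up to reordering) it supplies a $\crta\lambda$-type piece exactly as in the proof of \leref{extended coc}; and $\delta_3(\M_2^{op})$ supplies the remaining $\coev_{\M_{kl}}$-type factor, matched against $\lambda_{\M}=(\boxtimes_{\C^{\Del 4}})_{j=2}^{4}(\boxtimes_{\C^{\Del 4}})_{i=1}^{j-1}\ev_{\M_{ij}}$ by means of \leref{alfa dagger}, again as in that proof. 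Cancelling the common invertible factors — which is legitimate by \leref{basic lema} — collapses $\delta_3(\alpha)\simeq\lambda_{\M}$ precisely to $(\star)$, and reading the computation in the opposite direction gives the converse implication.

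The step I expect to be the main obstacle is this last piece of bookkeeping: one has to check that the symmetric-monoidal reorderings of \prref{sim bimod} and the index shifts of \equref{2.2.1.1b} line up with the order of the factors used to build $\lambda_{\M}$, that under \leref{Delta tilde} and its triple analog the coherence isomorphisms $a,l,r$ of the comonoidal structure of $\M$ correspond to the coherence data implicit in $\delta_3$ and $\lambda$, and that the $\ev$/$\coev$ cancellations are carried out with compatible associativity and balancing constraints. The remainder is an assembly of the cited results.
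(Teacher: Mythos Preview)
Your strategy coincides with the paper's: both reduce the question to $\delta_3(\alpha)\simeq\lambda_{\M}$, unpack each side in terms of the $\tilde\Delta_i$ and the $\ev_{\M_{ij}}$, cancel via \leref{basic lema} and the dual-basis axioms, and land on the $\C^{\Del 4}$-level identity $(\tilde\Delta_4\Del_{\C^{\Del 4}}\M_{12})\tilde\Delta_2\simeq(\M_{34}\Del_{\C^{\Del 4}}\tilde\Delta_1)\tilde\Delta_3$, which via \equref{3H} and \equref{4H} is exactly coassociativity. The paper organizes the middle computation differently: rather than factoring $\delta_3(\alpha^{-1})$, it writes $\delta_3(\alpha)=\alpha_1\Del_{\C^{\Del 4}}\alpha_2^{\dagger}\Del_{\C^{\Del 4}}\alpha_3\Del_{\C^{\Del 4}}\alpha_4^{\dagger}$ using \leref{alfa dagger}, tensors both sides by $\alpha_1^{\dagger}\Del_{\C^{\Del 4}}\alpha_3^{\dagger}$ to turn everything into $\ev$'s, and then performs the cancellations diagrammatically in the symmetric monoidal category $\dul\Pic(\C^{\Del 4})$.

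One point in your sketch is not right as stated and would bite if you tried to execute it literally: the piece you call ``$\delta_3(\M_2^{op})$'' is the identity functor on the object $\delta_3(\M_2^{op})$ and does not by itself produce any $\coev$-type factors; likewise $\delta_3(\tilde\Delta)$ contributes \emph{four} copies $\tilde\Delta_1,\tilde\Delta_2^{\dagger},\tilde\Delta_3,\tilde\Delta_4^{\dagger}$, not immediately an ``iterate''. The pairing of these four into the two composites appearing in $(\star)$ only happens after the $\ev/\coev$ cancellations coming from $\lambda_{\M}$ and from $\delta_3(\crta\coev_{\M_2})$. This is precisely the bookkeeping you flag as the main obstacle, and the paper's $\alpha^{\dagger}$ device is what makes it manageable.
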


\begin{proof}
In this setting, $(\M, \alpha)\in\dul Z^2(\C, \dul\Pic)$ if and only if $\delta_3(\alpha)=\lambda_{\M}$. Observe that
$$\begin{array}{ccr}
\lambda_{\M} =(\Del_{\C^{\Del 4}})_{j=2}^{4}(\Del_{\C^{\Del 4}})_{i=1}^{j-1} \ev_{\M_{ij}} &=&  \ev_{12}\Del_{\C^{\Del 4}}\ev_{13}\Del_{\C^{\Del 4}}\ev_{14}\\
& &  \Del_{\C^{\Del 4}} \ev_{23}\Del_{\C^{\Del 4}}\ev_{24}\\
& &  \Del_{\C^{\Del 4}} \ev_{34}.
\end{array}$$
and that by \leref{alfa dagger} we have $\delta_3(\alpha)=\alpha_1\Del_{\C^{\Del 4}}\alpha_2^{\dagger}\Del_{\C^{\Del 4}}\alpha_3\Del_{\C^{\Del 4}}\alpha_4^{\dagger}$.
We tensor the equation $\delta_3(\alpha)=\lambda_{\M}$ (by $\Del_{\C^{\Del 4}}$) with the equivalence functor $\alpha_1^{\dagger}\Del_{\C^{\Del 4}}\alpha_3^{\dagger}$.
Then due to \leref{alfa dagger} and \leref{basic lema}, 2), it is equivalent to
\begin{multline*}
\ev_{\delta_2(\M)^{op}_1} \Del_{\C^{\Del 4}} \ev_{\delta_2(\M)^{op}_3} \Del_{\C^{\Del 4}}\alpha_2^{\dagger} \Del_{\C^{\Del 4}}\alpha_4^{\dagger}\\
=\ev_{12}\Del_{\C^{\Del 4}}\ev_{13}\Del_{\C^{\Del 4}}\ev_{14} \Del_{\C^{\Del 4}} \ev_{23}\Del_{\C^{\Del 4}}\ev_{24} \Del_{\C^{\Del 4}} \ev_{34}  \Del_{\C^{\Del 4}} \alpha_1^{\dagger}\Del_{\C^{\Del 4}}\alpha_3^{\dagger}
\end{multline*}
Observe that
\begin{eqnarray*}
&&\hspace*{-4,5cm}
\delta_2(\M)_1 \Del_{\C^{\Del 3}} \delta_2(\M)_3 = \M_{11}\Del_{\C^{\Del 3}}\M^{op}_{21}\Del_{\C^{\Del 3}}\M_{31}
 \Del_{\C^{\Del 3}}\M_{13}\Del_{\C^{\Del 3}}\M^{op}_{23}\Del_{\C^{\Del 3}}\M_{33}\\
\hspace*{3cm}&=& \hspace*{-0,2cm}
\M_{12}\Del_{\C^{\Del 3}}\M^{op}_{13}\Del_{\C^{\Del 3}}\M_{14}\Del_{\C^{\Del 3}}
\M_{13}\Del_{\C^{\Del 3}}\M^{op}_{23}\Del_{\C^{\Del 3}}\M_{34}
\end{eqnarray*}
then canceling out the same evaluation functors in the above equation we get equivalently:
\begin{equation}\eqlabel{dagger equation}
\crta\ev_{13}\Del_{\C^{\Del 4}}\alpha_2^{\dagger} \Del_{\C^{\Del 4}}\alpha_4^{\dagger}=  \ev_{24} \Del_{\C^{\Del 4}} \alpha_1^{\dagger}\Del_{\C^{\Del 4}}\alpha_3^{\dagger}.
\end{equation}
Recall that the equivalence functor $\alpha_i^{\dagger}: \delta_2(\M)^{op}_i\to\C^{\Del 4}$ is given by:
$$\alpha_i^{\dagger}=\crta\ev_{\delta_2(\M)_i}(\Id_{\delta_2(\M)_i^{op}}\Del_{\C^{\Del 4}}\alpha^{-1}_i)
=\crta\ev_{\delta_2(\M)_i}(\Id_{\delta_2(\M)_i^{op}}\Del_{\C^{\Del 4}}(\tilde\Delta_i\Del_{\C^{\Del 4}}\M_{2i}^{op})\crta\coev_{\M_{2i}}).$$
Let us write $\alpha_i^{\dagger}$ in braided diagrams. Observe that $\tilde\Delta_i: \M_{2i}\to \M_{3i}\Del_{\C^{\Del 4}}\M_{1i}$.
We will write $ij$ for $\M_{ij}$ and $\crta{ij}$ for $\M_{ij}^{op}$, we have:
$$\alpha_i^{\dagger}=
\gbeg{6}{6}
\got{1}{\crta{1i}} \got{1}{2i} \got{1}{\crta{3i}} \gnl
\gcl{2} \gcl{2} \gcl{2} \gwdb{3} \gnl
\gvac{3} \glmptb\gnot{\hspace{-0,4cm}\tilde\Delta_i}\grmpb \gcl{3} \gnl
\gbr \gev \gcl{1} \gnl
\gcl{1} \gwev{4} \gnl
\gwev{6} \gnl
\gob{1}{}
\gend
$$
Equation \equref{dagger equation} written in braided diagrams (in the symmetric monoidal category $\dul{\Pic}(\C^{\Del 4})$, so the order of the in- and output objects is irrelevant!
and the same holds for the sign of the braiding) looks like this:
$$
\gbeg{14}{6}
\got{1}{\crta{13}} \got{1}{13} \got{1}{\crta{12}} \got{1}{22} \got{2}{\crta{32}=\crta{24}} \got{2}{}
 \got{1}{\crta{14}} \got{1}{24} \got{1}{\crta{34}} \gnl
\gev \gcl{2} \gcl{2} \gcl{2} \gwdb{3} \gcl{2} \gcl{2} \gcl{2} \gwdb{3} \gnl
\gvac{2} \gvac{3} \glmptb\gnot{\hspace{-0,4cm}\tilde\Delta_2}\grmpb \gcl{3} \gvac{3} \glmptb\gnot{\hspace{-0,4cm}\tilde\Delta_4}\grmpb \gcl{3} \gnl
\gvac{2} \gbr \gev \gcl{1} \gvac{1} \gbr \gev \gcl{1} \gnl
\gvac{2} \gcl{1} \gwev{4} \gvac{1} \gcl{1} \gwev{4} \gnl
\gvac{2} \gwev{6} \gwev{6} \gnl
\gob{1}{}
\gend=
\gbeg{14}{6}
\got{1}{\crta{24}} \got{1}{24} \got{1}{\stackrel{\crta{11}}{\crta{12}}} \got{1}{\stackrel{21}{13}} \got{1}{\stackrel{\crta{31}}{\crta{14}}}
 \got{3}{} \got{1}{\crta{13}} \got{1}{23} \got{1}{\stackrel{\crta{33}}{\crta{34}}} \gnl
\gev \gcl{2} \gcl{2} \gcl{2} \gwdb{3} \gcl{2} \gcl{2} \gcl{2} \gwdb{3} \gnl
\gvac{2} \gvac{3} \glmptb\gnot{\hspace{-0,4cm}\tilde\Delta_1}\grmpb \gcl{3} \gvac{3} \glmptb\gnot{\hspace{-0,4cm}\tilde\Delta_3}\grmpb \gcl{3} \gnl
\gvac{2} \gbr \gev \gcl{1} \gvac{1} \gbr \gev \gcl{1} \gnl
\gvac{2} \gcl{1} \gwev{4} \gvac{1} \gcl{1} \gwev{4} \gnl
\gvac{2} \gwev{6} \gwev{6} \gnl
\gob{1}{}
\gend
$$
We now cancel out $\ev_{12},\ev_{14},\ev_{34}$ and the (identity functors on) the tensor factors $\crta{12}, \crta{14}$ and $\crta{34}$ to get equivalently:
$$
\gbeg{11}{7}
\got{1}{\crta{13}} \got{1}{13} \got{1}{23} \got{1}{\crta{24}} \got{3}{} \got{1}{24} \gnl
\gev \gcl{3} \gcl{2} \gwdb{3} \gcl{2} \gwdb{3} \gnl
\gvac{1} \gvac{3} \glmptb\gnot{\hspace{-0,4cm}\tilde\Delta_2}\grmpb \gcl{1} \gvac{1} \glmptb\gnot{\hspace{-0,4cm}\tilde\Delta_4}\grmpb \gcl{3} \gnl
\gvac{3} \gev \gbr \gbr \gcl{1} \gcl{1} \gnl
\gvac{2} \gwev{4} \gcl{2} \gcl{2} \gbr \gnl
\gvac{8} \gcl{1} \gev \gnl
\gvac{6} \gob{1}{12} \gob{1}{34} \gob{1}{14}
\gend=
\gbeg{14}{7}
\got{1}{\crta{24}} \got{1}{24} \got{1}{\stackrel{21}{13}} \got{3}{} \got{1}{\crta{13}} \got{1}{23} \gnl
\gev \gcl{2} \gwdb{3} \gcl{3} \gcl{2} \gwdb{3} \gnl
\gvac{3} \glmptb\gnot{\hspace{-0,4cm}\tilde\Delta_1}\grmpb \gcl{3} \gvac{2} \glmptb\gnot{\hspace{-0,4cm}\tilde\Delta_3}\grmpb \gcl{3} \gnl
\gvac{2} \gbr \gcl{1} \gvac{2} \gbr \gcl{1} \gnl
\gvac{2} \gcl{2} \gbr \gvac{1} \gbr \gibr \gnl
\gvac{3} \gcl{1} \gev \gcl{1} \gev \gev \gnl
\gvac{2} \gob{1}{\stackrel{31}{14}} \gob{1}{\stackrel{11}{12}} \gvac{2} \gob{1}{34}
\gend
$$
Next, apply the dual basis axiom for the factors 23 and 24 on the left hand-side, and on the factors 13 and 23 on the right hand-side (recall that we identify $ev$
and $\crta{ev}=ev\comp\tau$), to obtain:
$$
\gbeg{8}{5}
\got{1}{\crta{13}} \got{1}{13} \got{1}{\crta{24}} \got{1}{23} \got{3}{24} \gnl
\gev \gcl{2} \gcl{1} \gvac{1} \gcl{1} \gnl
\gvac{3} \glmptb\gnot{\hspace{-0,4cm}\tilde\Delta_2}\grmpb \glmptb\gnot{\hspace{-0,4cm}\tilde\Delta_4}\grmpb \gnl
\gvac{2} \gev \gcl{1} \gcl{1} \gcl{1} \gnl
\gvac{4} \gob{1}{12} \gob{1}{34} \gob{1}{14}
\gend=
\gbeg{8}{5}
\got{1}{\crta{24}} \got{1}{24} \got{1}{13} \got{3}{23} \got{1}{\crta{13}} \gnl
\gev \gcl{1} \gvac{1} \gcl{1} \gvac{1} \gcl{2} \gnl
\gvac{2} \glmptb\gnot{\hspace{-0,4cm}\tilde\Delta_1}\grmpb \glmptb\gnot{\hspace{-0,4cm}\tilde\Delta_3}\grmpb \gnl
\gvac{2} \gcl{1} \gcl{1} \gcl{1} \gev \gnl
\gvac{2} \gob{1}{14} \gob{1}{12} \gob{1}{34}
\gend
$$
Finally, compose from ``above'' with equivalent functors $\coev_{13}$ and  $\coev_{24}$. By \coref{coev inverse ev} we have $\crta\ev\comp\coev=\Id$, so we
get equivalently:
$$
\gbeg{6}{6}
\got{1}{23}  \gnl
\gcl{1} \gwdb{3} \gnl
\gibr \gvac{1} \gcl{1} \gnl
\gcl{1} \glmptb\gnot{\hspace{-0,4cm}\tilde\Delta_2}\grmpb \glmptb\gnot{\hspace{-0,4cm}\tilde\Delta_4}\grmpb \gnl
\gev \gcl{1} \gcl{1} \gcl{1} \gnl
\gvac{2} \gob{1}{12} \gob{1}{34} \gob{1}{14}
\gend=
\gbeg{8}{6}
\got{5}{} \got{1}{23} \gnl
\gvac{1} \gwdb{4} \gcl{1} \gnl
\gvac{1} \gcl{1} \gvac{2} \gbr \gnl
\gvac{1} \glmptb\gnot{\hspace{-0,4cm}\tilde\Delta_1}\grmpb \glmpb\gnot{\hspace{-0,4cm}\tilde\Delta_3}\grmptb \gcl{1} \gnl
\gvac{1} \gcl{1} \gcl{1} \gcl{1} \gev \gnl
\gvac{1} \gob{1}{14} \gob{1}{12} \gob{1}{34}
\gend
$$
Applying the dual basis axiom for 24 on the left hand-side and 13 on the right hand-side, we obtain equivalently: 
$$
\gbeg{3}{6}
\got{1}{} \got{1}{23} \gnl
\gvac{1} \gcl{1} \gnl
\gvac{1} \glmptb\gnot{\hspace{-0,4cm}\tilde\Delta_2}\grmpb \gnl
\glmpb\gnot{\hspace{-0,4cm}\tilde\Delta_4}\grmptb \gcl{2} \gnl
\gcl{1} \gcl{1} \gnl
\gob{1}{34} \gob{1}{14} \gob{1}{12}
\gend=
\gbeg{3}{6}
\got{1}{23} \gnl
\gcl{1} \gnl
\glmptb\gnot{\hspace{-0,4cm}\tilde\Delta_3}\grmpb \gnl
\gcl{2} \glmptb\gnot{\hspace{-0,4cm}\tilde\Delta_1}\grmpb \gnl
\gvac{1} \gcl{1} \gcl{1} \gnl
\gob{1}{34} \gob{1}{14} \gob{1}{12}
\gend
$$
When we apply this equation to $M_2\in\M_2$ we obtain:
$$(\tilde\Delta_4\Del_{\C^{\Del 4}} \M_{12})\tilde\Delta_2(M_{22})\iso(\M_{34}\Del_{\C^{\Del 4}}\tilde\Delta_1)\tilde\Delta_3(M_{22})$$
which by \equref{3H} is equal to:
$$\tilde\Delta_4(M_{(1)32})\Del_{\C^{\Del 4}} M_{(2)12}\iso M_{(1)33}\Del_{\C^{\Del 4}}\tilde\Delta_1(M_{(2)13})$$
which we can read as:
$$\tilde\Delta_4(M_{(1)24})\Del_{\C^{\Del 4}} M_{(2)12}\iso M_{(1)34}\Del_{\C^{\Del 4}}\tilde\Delta_1(M_{(2)21})$$
and then again by \equref{3H} interpret as:
$$M_{{(1)}_{(1)34}}\Del_{\C^{\Del 4}} M_{(1)_{(2)14}}\Del_{\C^{\Del 4}} M_{(2)12}\iso M_{(1)34}\Del_{\C^{\Del 4}}M_{(2)_{(1)31}}\Del_{\C^{\Del 4}}M_{(2)_{(2)11}}.$$
The above is an identity in
$\Fun_{\C^{\Del 4}}(\M_{23}, \M_{34}\Del_{\C^{\Del 4}}\M_{14}\Del_{\C^{\Del 4}}\M_{12})$
that by \equref{4H} corresponds to the identity:
$$M_{{(1)}_{(1)}}\Del_{\C} M_{(1)_{(2)}}\Del_{\C} M_{(2)}\iso M_{(1)}\Del_{\C}M_{(2)_{(1)}}\Del_{\C}M_{(2)_{(2)}}.$$
This is precisely the coassociativity condition for $\Delta:\M\to\M\Del_{\C}\M$.

\qed\end{proof}

\begin{rem} \rmlabel{Delta tilde za CxC}
Observe that if $\M=\C\Del\C$, then $\tilde\Delta((X\Del Y)_2)=\tilde\Delta(X\Del I\Del Y)=(X\Del I\Del I)\Del_{\C^{\Del 3}}(I\Del I\Del Y)\simeq
X\Del I\Del Y$, 
hence $\tilde\Delta\simeq\Id$.
\end{rem}

\begin{rem}\rmlabel{comput can cor}
Let $\N\in \dul{\Pic}(\C)$. Then $\delta_1(\N)=(\C\Del\N)\Del_{\C^{\Del 2}} (\N^{op}\Del\C)\simeq
\N^{op}\Del \N=\Can(\N;\C)$ with the coassociative comultiplication functor as in \exref{can}, \exref{canonical cor}.
We then have: $\delta_1(\N)_2\simeq\N^{op}\Del\C\Del\N$. Observe that there is an equivalence:
$$(\N^{op}\Del \N)_3\Del_{\C^{\Del 3}} (\N^{op}\Del \N)_1=(\N^{op}\Del \N\Del \C)\Del_{\C^{\Del 3}} (\C\Del \N^{op}\Del \N)
\simeq \N^{op}\Del \C \Del \N=(\N^{op}\Del \N)_2$$
where we applied \leref{Weq} and it involves $\crta\ev$ for $\N$ in the middle component of the threefold Deligne tensor product.
This makes $\tilde\Delta:(\N^{op}\Del \N)_2\to (\N^{op}\Del \N)_3\Del_{\C^{\Del 3}} (\N^{op}\Del \N)_1\simeq(\N^{op}\Del \N)_2$ an equivalence
(recall from \leref{Delta tilde} how $\tilde\Delta$ is defined). In particular we may write: $\tilde\Delta^{-1} \simeq\N^{op}\Del\ev_{\N}\Del\N
\simeq\ev_{\N_{13}}\Del_{\C^{\Del 3}}(\N^{op}\Del\C\Del\N)$.

Furthermore, it is:
\begin{eqnarray*}
&&\hspace*{-2cm}
\delta_2\delta_1(\N)=\delta_1(\N)_1 \Del_{\C^{\Del 3}} \delta_1(\N)_3 \Del_{\C^{\Del 3}} \delta_1(\N^{op})_2\\
&=&
\N_{11}\Del_{\C^{\Del 3}}\N^{op}_{21}\Del_{\C^{\Del 3}}
\N_{13}\Del_{\C^{\Del 3}}\N^{op}_{23}\Del_{\C^{\Del 3}}
\N^{op}_{12}\Del_{\C^{\Del 3}}\N_{22}\\
&=&
\N_{12}\Del_{\C^{\Del 3}}\N^{op}_{13}\Del_{\C^{\Del 3}}
\N_{13}\Del_{\C^{\Del 3}}\N^{op}_{23}\Del_{\C^{\Del 3}}
\N^{op}_{12}\Del_{\C^{\Del 3}}\N_{23}
\end{eqnarray*}
and $\lambda_\N=
\ev_{\N_{12}}\Del_{\C^{\Del 3}}\ev_{\N_{13}}\Del_{\C^{\Del 3}}\ev_{\N_{23}}.$
Putting $\tilde\Delta$ in the formula \equref{alfa-delta}, we get:
$$\begin{array}{lll}
\alfa &=& \crta\coev^{-1}_{\delta_1(\N)_2}\comp(\tilde\Delta^{-1}\Del_{\C^{\Del 3}}\delta_1(\N)_2^{op})\\
&=& \ev_{\delta_1(\N)_2}\comp(\ev_{\N_{13}}\Del_{\C^{\Del 3}}(\N^{op}\Del\C\Del\N)\Del_{\C^{\Del 3}}(\N^{op}\Del\C\Del\N))\\
&=& \lambda_{\N},
\end{array}$$ \vspace{-1cm}
\begin{equation} \eqlabel{alfa can}
\end{equation}
where we applied \coref{coev inverse ev}.
\end{rem}

\par\medskip

\begin{lma} \lelabel{cobound coring}
Let 
$\M\in\dul\Pic(\C\Del\C)$,
$\Delta:\M\to\M\Del_{\C}\M, \tilde\Delta: \M_2\to \M_3\Del_{\C^{\Del 3}}\M_1$ and
$\alpha: \M_3\Del_{\C^{\Del 3}}\M_1\Del_{\C^{\Del 3}}\M_2^{op} \to \C^{\Del 3}$ be as in \leref{alfa-delta}, and let $\N\in\dul\Pic(\C)$. Then there is an equivalence
functor $\M\simeq\N^{op}\Del \N=\Can(\N, \C)$ of $\C$-bimodule categories with coassociative comultiplication functors if and only if
$(\M, \alpha)\iso(\delta_1(\N), \lambda_{\N})$ in $\dul Z^2(\C, \dul\Pic)$.
\end{lma}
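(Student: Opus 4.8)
The plan is to combine \leref{alfa-delta} with \rmref{comput can cor} by means of a naturality argument. The first ingredient is the content of \rmref{comput can cor}: the canonical quasi $\C$-coring category $\Can(\N;\C)=\N^{op}\Del\N$ is equivalent as a $\C$-bimodule category to $\delta_1(\N)$, its comultiplication functor $\Delta_{\mathrm{can}}$ from \exref{can} has the property that the associated $\C^{\Del 3}$-module functor $\tilde\Delta_{\mathrm{can}}$ is an equivalence, and feeding $\tilde\Delta_{\mathrm{can}}$ into \equref{alfa-delta} returns precisely $\lambda_{\N}$. Thus, writing $\alpha_{\mathrm{can}}$ for the image of $\Delta_{\mathrm{can}}$ under the assignment $\Delta\mapsto\alpha$ of \leref{alfa-delta} (which belongs to $\dul Z^2(\C,\dul\Pic)$ because $\Delta_{\mathrm{can}}$ is coassociative by \exref{can}), we have $(\Can(\N;\C),\alpha_{\mathrm{can}})\iso(\delta_1(\N),\lambda_{\N})$ in $\dul Z^2(\C,\dul\Pic)$.

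The second, and main, ingredient is that the assignment $\Delta\mapsto\alpha$ is \emph{natural} with respect to $\C$-bimodule equivalences. Precisely, let $\Pp\in\dul\Pic(\C\Del\C)$ carry $\Delta_{\Pp}$ with $\tilde\Delta_{\Pp}$ an equivalence and corresponding $\alpha_{\Pp}$, and let $F\colon\M\to\Pp$ be a $\C$-bimodule equivalence, hence, by \equref{summ one sided bimod} and \coref{symmetric}, a $\C^{\Del 2}$-module equivalence. I claim that $F$ intertwines the comultiplications, $(F\Del_{\C}F)\Delta_{\M}\simeq\Delta_{\Pp}F$, if and only if $\alpha_{\Pp}\comp\delta_2(F)\simeq\alpha_{\M}$, i.e. $F$ is a morphism $(\M,\alpha_{\M})\to(\Pp,\alpha_{\Pp})$ in $\dul Z^2(\C,\dul\Pic)$. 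This is checked by peeling off the layers of the construction: the isomorphism of categories of \leref{Delta tilde} is natural in the base module category because it is produced from the adjunction $(\F_2,\G_2)$, so intertwining the $\Delta$'s is equivalent to $(F_3\Del_{\C^{\Del 3}}F_1)\tilde\Delta_{\M}\simeq\tilde\Delta_{\Pp}F_2$; the coevaluation functor $\crta\coev$ transforms naturally under module equivalences (inner homs being preserved by equivalences), so Deligne-tensoring the latter identity with $F_2^{op}$ and precomposing with $\crta\coev$ turns it, via $\delta_2(F)=F_1\Del_{\C^{\Del 3}}F_2^{op}\Del_{\C^{\Del 3}}F_3$ and \leref{alfa dagger}, exactly into $\alpha_{\Pp}\comp\delta_2(F)\simeq\alpha_{\M}$. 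The order of the Deligne factors is immaterial throughout by \prref{sim bimod}.

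Granting these two points, the lemma is a short deduction. For the ``only if'' direction, a $\C$-bimodule equivalence $F\colon\M\to\Can(\N;\C)$ intertwining coassociative comultiplications is, by the second paragraph, a morphism $(\M,\alpha)\to(\Can(\N;\C),\alpha_{\mathrm{can}})$ in $\dul Z^2(\C,\dul\Pic)$, and composing with the identification of the first paragraph gives $(\M,\alpha)\iso(\delta_1(\N),\lambda_{\N})$. Conversely, an isomorphism $(\M,\alpha)\iso(\delta_1(\N),\lambda_{\N})$ in $\dul Z^2(\C,\dul\Pic)$ is a $\C^{\Del 2}$-module equivalence $G\colon\M\to\delta_1(\N)$ with $\lambda_{\N}\comp\delta_2(G)\simeq\alpha$; composing with $\delta_1(\N)\simeq\Can(\N;\C)$ and using the first paragraph produces a $\C$-bimodule equivalence $F\colon\M\to\Can(\N;\C)$ with $\alpha_{\mathrm{can}}\comp\delta_2(F)\simeq\alpha$, so by the second paragraph $F$ intertwines $\Delta_{\M}$ with $\Delta_{\mathrm{can}}$, the latter coassociative by \exref{can} and the former coassociative by \leref{alfa-delta} since $(\M,\alpha)\in\dul Z^2(\C,\dul\Pic)$. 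The main obstacle is the naturality statement of the second paragraph; within it the delicate step is spelling out that $\crta\coev$ (equivalently the dual basis object) behaves naturally under module equivalences in a way compatible with formula \equref{alfa-delta}, after which the remainder is a formal diagram chase using only results already proved.
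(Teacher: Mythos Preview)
Your argument is correct and follows essentially the same route as the paper: identify $\alpha_{\mathrm{can}}$ with $\lambda_{\N}$ via \rmref{comput can cor}, pass from intertwining the $\Delta$'s to intertwining the $\tilde\Delta$'s using the adjunction of \leref{Delta tilde}, and then convert this via $\crta\coev$ and \equref{alfa-delta} into the condition $\alpha'\comp\delta_2(F)\simeq\alpha$. The paper presents these steps as a single chain of commutative squares rather than packaging them as a general ``naturality of $\Delta\mapsto\alpha$'' statement, but the content is the same; only take care that on the $\M_2^{op}$ factor the component of $\delta_2(F)$ is $(F_2^{op})^{-1}$ rather than $F_2^{op}$.
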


\begin{proof}
Set $\Delta', \tilde{\Delta'}$ and $\alpha'$ for the corresponding functors for $\delta_1(\N)$. By \equref{alfa can} we know that
$\alpha'=\lambda_{\N}$.
There is an equivalence $F:\M\to\delta_1(\N)$ of $\C$-bimodule categories with coassociative
comultiplication functors if and only if $F$ is $\C$-bilinear and the following diagram commutes:
\begin{eqnarray}
\scalebox{0.88}{\bfig
 \putmorphism(-50,500)(1,0)[\M`\M\Del_{\C} \M`\Delta]{770}1a
 \putmorphism(-50,0)(1,0)[\delta_1(\N)`\delta_1(\N)\Del_{\C}\delta_1(\N).`\Delta']{800}1a
\putmorphism(-40,500)(0,-1)[\phantom{B\ot B}``F]{480}1l
\putmorphism(700,500)(0,-1)[\phantom{B\ot B}``F\Del_{\C} F]{480}1r
\efig}
\end{eqnarray}
By the adjunction isomorphism from \leref{Delta tilde} this is equivalent to commutativity of
the diagram
\begin{eqnarray}
\scalebox{0.88}{\bfig
 \putmorphism(-50,500)(1,0)[\M_2`\M_3\Del_{\C^{\Del 3}}\M_1`\tilde{\Delta}]{870}1a
 \putmorphism(-50,0)(1,0)[\delta_1(\N)_2`\delta_1(\N)_3\Del_{\C^{\Del 3}}\delta_1(\N)_1`\tilde{\Delta}']{900}1a
\putmorphism(-40,500)(0,-1)[\phantom{B\ot B}``F_2]{480}1l
\putmorphism(760,500)(0,-1)[\phantom{B\ot B}``F_3\Del_{\C^{\Del 3}}F_1]{480}1r
\efig}
\end{eqnarray}
where $F$ is $\C$-bilinear.
This, in turn, is equivalent to commutativity of the right square in the next diagram
\begin{eqnarray}
\hspace{-0,6cm}
\scalebox{0.88}{\bfig
\putmorphism(-30,500)(1,0)[\C^{\Del 3}`\M_2\Del_{\C^{\Del 3}}\M_2^{op}`\crta\coev_{\M_2}]{940}1a
\putmorphism(-50,0)(1,0)[\C^{\Del 3}`{\delta_1(\N)}_2\Del_{\C^{\Del 3}}\delta_1(\N)^{op}_2`\crta\coev_{\delta_1(\N)_2}]{970}1a
\putmorphism(-40,500)(0,-1)[\phantom{B\ot B}``=]{480}1l
\putmorphism(850,500)(0,-1)[\phantom{B\ot B}``F_2\Del_{\C^{\Del 3}} (F_2^{op})^{-1}]{480}1r
\putmorphism(1220,500)(1,0)[`\M_3 \Del_{\C^{\Del 3}}\M_1\Del_{\C^{\Del 3}} \M^{op}_2`\tilde{\Delta}\Del_{\C^{\Del 3}}\M^{op}_2]{1720}1a
\putmorphism(1370,0)(1,0)[`\delta_1(\N)_3\Del_{\C^{\Del 3}}\delta_1(\N)_1 \Del_{\C^{\Del 3}} {\delta_1(\N)}^{op}_2.`\tilde{\Delta}' \Del_{\C^{\Del 3}} {\delta_1(\N)}^{op}_2]{1680}1a
\putmorphism(2680,500)(0,-1)[\phantom{B\ot B}`` F_3\Del_{\C^{\Del 3}} F_1 \Del_{\C^{\Del 3}}(F^{op}_2)^{-1}]{480}1r
\efig}
\end{eqnarray}
Observe that the left square is also commutative. Now, commutativity of the full diagram
is equivalent to $\alpha'\comp \delta_2(F)=\alpha$, meaning that $(\M, \alpha)\iso(\delta_1(\N), \lambda_{\N})$.
\qed\end{proof}

\begin{lma} \lelabel{Azumaya corings}
Let 
$\M\in\dul\Pic(\C\Del\C)$,
$\Delta:\M\to\M\Del_{\C}\M, \tilde\Delta: \M_2\to \M_3\Del_{\C^{\Del 3}}\M_1$ and
$\alpha: \M_3\Del_{\C^{\Del 3}}\M_1\Del_{\C^{\Del 3}}\M_2^{op} \to \C^{\Del 3}$ be as in \leref{alfa-delta}. If $\Delta$ is coassociative (or,
equivalently, $(\M, \alpha)\in\dul Z^2(\C, \dul\Pic)$), then there is an equivalence functor $\M\Del\C\simeq\Can(\M; \C\Del\C/\C)$ of bimodule
categories with coassociative comultiplication functors.
\end{lma}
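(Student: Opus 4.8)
The plan is to deduce the assertion from the dictionary between the cocycle category $\dul{Z}^2$ and coassociative comultiplication functors, already established in \leref{alfa-delta} and \leref{cobound coring}, but now run inside the Amitsur complex $C(\C\Del\C/\C,\dul\Pic)$ instead of $C(\C/vec,\dul\Pic)$, together with \leref{extended coc}, which is exactly the statement that a $2$-cocycle over $\C/vec$ becomes a $2$-coboundary after the base change to $\C\Del\C/\C$.

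First I would observe that the substitution $\C\rightsquigarrow\C\Del\C$, $vec\rightsquigarrow\C$ is legitimate: $\C\Del\C$ is symmetric and the base over which Deligne powers are now taken, $(\C\Del\C)^{\Del_{\C}2}\simeq\C^{\Del 3}$, is braided by \leref{braided: product over braided} and \leref{equiv fun}; hence \coref{coev inverse ev}, \leref{basic lema}, \leref{alfa dagger}, \leref{Delta tilde} and the arguments of \seref{adjunctions} all go through verbatim. This yields the ``primed'' versions: an object $\mathcal P\in\dul\Pic((\C\Del\C)^{\Del_{\C}2})$ with a $\C\Del\C$-bimodule functor $\Delta_{\mathcal P}$ for which $\widetilde{\Delta_{\mathcal P}}$ is an equivalence carries a datum $(\mathcal P,\alpha_{\mathcal P})\in\dul{Z}^2(\C\Del\C/\C,\dul\Pic)$ precisely when $\Delta_{\mathcal P}$ is coassociative, and $(\mathcal P,\alpha_{\mathcal P})\iso(\delta'_1(\M),\lambda'_{\M})$ in $\dul{Z}^2(\C\Del\C/\C,\dul\Pic)$ if and only if there is a $\C\Del\C$-bimodule equivalence $\mathcal P\simeq\delta'_1(\M)$ compatible with the coassociative comultiplications.

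Next I would set up the two sides. Since $\M\in\dul\Pic(\C\Del\C)=\dul\Pic(\C^{\Del 2})$, the object $\M\Del\C=E^2_3(\M)$ lies in $\dul\Pic(\C^{\Del 3})\simeq\dul\Pic((\C\Del\C)^{\Del_{\C}2})$; I would equip it with the comultiplication induced from $\Delta$ by applying $-\Del\C$ and using \leref{Weq} to identify $(\M\Del_{\C}\M)\Del\C\simeq(\M\Del\C)\Del_{\C\Del\C}(\M\Del\C)$, check that this comultiplication is again coassociative with $\widetilde{\Delta\Del\C}\simeq\tilde\Delta\Del\C$ (hence an equivalence), and that the cocycle attached to it by the primed \leref{alfa-delta} is exactly $\alpha\Del\C$. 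On the other side, repeating the computation of \rmref{comput can cor} mutatis mutandis gives $\delta'_1(\M)=\M_1\Del_{(\C\Del\C)^{\Del_{\C}2}}\M_2^{op}\simeq\M^{op}\Del_{\C}\M=\Can(\M;\C\Del\C/\C)$, with $(\delta'_1(\M),\lambda'_{\M})$ carried onto the canonical quasi coring structure on $\Can(\M;\C\Del\C/\C)$ from the construction after \leref{braided: product over braided}.

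The proof then finishes in one line: coassociativity of $\Delta$ gives $(\M,\alpha)\in\dul{Z}^2(\C,\dul\Pic)$ by \leref{alfa-delta}, so \leref{extended coc} with $n=2$ gives $(\M\Del\C,\alpha\Del\C)\iso d'_1(\M)=(\delta'_1(\M),\lambda'_{\M})$ in $\dul{Z}^2(\C\Del\C/\C,\dul\Pic)$; feeding this into the primed \leref{cobound coring} and composing with $\delta'_1(\M)\simeq\Can(\M;\C\Del\C/\C)$ produces the desired equivalence $\M\Del\C\simeq\Can(\M;\C\Del\C/\C)$ of $\C\Del\C$-bimodule categories respecting the coassociative comultiplications. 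The step I expect to be the main obstacle is the bookkeeping of the previous paragraph: checking that $-\Del\C$ is compatible with the formula \equref{alfa-delta}, with $\crta\coev$ and $\ev$, and with the identification of the $\C\Del\C$-bimodule structures, so that the cocycle of $(\M\Del\C,\Delta\Del\C)$ is literally $\alpha\Del\C$. This is a diagram chase of the same flavour as in the proof of \leref{alfa-delta}; the primed analogs of the auxiliary lemmas themselves are routine, only the symmetry of $\C\Del\C$ and \leref{braided: product over braided} being needed.
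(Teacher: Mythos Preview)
Your proposal is correct and follows essentially the same route as the paper: both arguments apply \leref{extended coc} at $n=2$ to pass from $(\M,\alpha)\in\dul Z^2(\C,\dul\Pic)$ to $(\M\Del\C,\alpha\Del\C)\iso(\delta'_1(\M),\lambda'_\M)$, identify $\delta'_1(\M)\simeq\M^{op}\Del_{\C}\M=\Can(\M;\C\Del\C/\C)$, and then invoke the primed version of \leref{cobound coring} to obtain compatibility with the comultiplications. The paper's proof is terser and leaves implicit exactly the bookkeeping you flag as the main obstacle (that the primed lemmas hold and that the cocycle attached to $(\M\Del\C,\Delta\Del\C)$ really is $\alpha\Del\C$); your write-up makes these points explicit, which is a virtue rather than a deviation.
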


\begin{proof}
If $(\M, \alpha)\in Z^2(\C,\dul\Pic)$, by \leref{extended coc} we have: $(\M\Del\C, \alpha\Del\C)\iso (\delta'_{1}(\M), \lambda'_{\M})$
in $B^2(\C\Del \C/\C,\dul\Pic)$. Then there is an equivalence $\M\Del\C\simeq \delta'_1(\M)= ((\C\Del\C)\Del_{\C}\M)\Del_{(\C\Del\C)_{\C}(\C\Del\C)}
(\M^{op}\Del_{\C}(\C\Del\C))\simeq \M^{op}\Del_{\C}\M=\Can(\M; \C\Del\C/\C)$ of $\C\Del\C$-bimodule categories. By \leref{cobound coring}
this equivalence is compatible with the comultiplication functors.
\qed\end{proof}

\bigskip

\begin{defn}
Let $\C$ be a symmetric finite tensor category. An invertible $\C$-bimodule category $\M$ with a coassociative $\C$-bimodule functor $\Delta:\M\to\M\Del_{\C}\M$
is an invertible quasi $\C$-coring category. If moreover the corresponding $\C^{\Del 3}$-module functor $\tilde\Delta: \M_2\to \M_3\Del_{\C^{\Del 3}}\M_1$
(from \leref{Delta tilde}) is an equivalence, we will say that $\M$ is an {\em Azumaya quasi $\C$-coring category}.
\end{defn}

\begin{rem}
The name {\em Azumaya} in the above definition is inherited from Azumaya corings introduced in \cite[Theorem 3.4]{CF}. It has to do with the fact that
in the case of Azumaya corings 
the coassociativity of the map $\Delta$ is equivalent to an isomorphism analogous to the equivalence of categories in \leref{Azumaya corings}. When working
with Azumaya coring categories, the coassociativity only implies the mentioned equivalence. The reason for loosing the ``if'' part in the claim is the missing
of the ``faithful flatness'' condition, as we announced in the introduction.
\end{rem}

\begin{ex}
From the definition  and due to \rmref{comput can cor} it is clear that the canonical quasi coring categories $\Can(\N; \C)=\N^{op}\Del \N$ with $\N\in\dul{\Pic}(\C)$
are Azumaya quasi $\C$-coring categories. Similarly, by \rmref{Delta tilde za CxC} and \exref{canonical cor} we have that $\C\Del\C$ is an  Azumaya $\C$-coring category.
\end{ex}

We will denote by $\dul{AzQCor}(\C)$ the category of Azumaya quasi $\C$-coring categories and their equivalences. We have:

\begin{prop}
For a symmetric finite tensor category $\C$ the category \\ $(\dul{AzQCor}(\C),\Del_{\C^{\Del 2}},\Can(\C;\C))$ is symmetric monoidal.
\end{prop}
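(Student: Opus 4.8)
The plan is to verify, step by step, the data of a symmetric monoidal category on $\dul{AzQCor}(\C)$, reusing as much of the already-established structure as possible. First I would recall that by \prref{sim bimod} the category $(\C^{br}\x\dul\Mod, \Del_{\C}, \C)$ is symmetric monoidal, so $\dul\Pic(\C)$ is a symmetric monoidal groupoid; an Azumaya quasi $\C$-coring category is an object $\M\in\dul\Pic(\C)$ (equivalently, by \equref{summ one sided bimod}, a one-sided $\C\Del\C$-bimodule category) together with the extra structure of a coassociative $\C$-bimodule functor $\Delta\colon\M\to\M\Del_{\C}\M$ whose associated $\C^{\Del 3}$-module functor $\tilde\Delta$ is an equivalence. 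So the task is to promote the symmetric monoidal structure of $\dul\Pic(\C)$ to the category of such pairs.

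The key steps, in order: (1) \emph{Tensor product of objects.} Given Azumaya quasi $\C$-coring categories $(\M,\Delta^{\M})$ and $(\N,\Delta^{\N})$, equip $\M\Del_{\C}\N\in\dul\Pic(\C)$ with comultiplication $\Delta^{\M}\Del_{\C}\Delta^{\N}$ followed by the symmetry isomorphism of $\Del_{\C}$ that swaps the two middle factors, i.e.\ $(\M\Del_{\C}\N)\Del_{\C}(\M\Del_{\C}\N)\simeq (\M\Del_{\C}\M)\Del_{\C}(\N\Del_{\C}\N)$; this uses \prref{sim bimod} in an essential way. One checks coassociativity holds because it holds for each factor and the symmetry is coherent (or, equivalently and more slickly, via \leref{alfa-delta}: if $(\M,\alpha^{\M}),(\N,\alpha^{\N})\in\dul Z^2(\C,\dul\Pic)$ then their product $(\M\Del_{\C}\N,\alpha^{\M}\Del_{\C^{\Del 3}}\alpha^{\N})$ is again in $\dul Z^2(\C,\dul\Pic)$, which is precisely the monoidal structure on $\dul Z^2$ recalled before \equref{coh gr dul Pic}). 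One also checks that $\widetilde{(\Delta^{\M}\Del_{\C}\Delta^{\N}\text{-twisted})}$ is an equivalence, which follows since it is built out of $\tilde\Delta^{\M}$, $\tilde\Delta^{\N}$ and symmetry equivalences via \leref{Delta tilde} and \leref{Weq}. (2) \emph{Unit object.} Take $\Can(\C;\C)=\C\Del\C$ with the comultiplication of \leref{CC coring}; by \rmref{Delta tilde za CxC}, $\tilde\Delta\simeq\Id$, so it is Azumaya, and the unit constraints come from the left/right unit equivalences of $\Del_{\C}$ combined with the identifications $\C\Del_{\C}\M\simeq\M\simeq\M\Del_{\C}\C$. (3) \emph{Associativity and unit constraints.} These are inherited from $(\dul\Pic(\C),\Del_{\C},\C)$; one must check they are morphisms of quasi coring categories, i.e.\ compatible with the comultiplications, which reduces to the coherence (pentagon/triangle) of the associativity and symmetry constraints of the symmetric monoidal category $\C^{br}\x\dul\Mod$. (4) \emph{Symmetry.} The braiding is the symmetry $c_{\M,\N}\colon\M\Del_{\C}\N\to\N\Del_{\C}\M$ of $\dul\Pic(\C)$; one checks it intertwines the twisted comultiplications, again a diagram chase using only the braiding axioms for $\Del_{\C}$, and that $c^2=\Id$ since the underlying braiding is a symmetry. (5) \emph{Invertibility / well-definedness on equivalence classes.} Every object of $\dul\Pic(\C)$ is invertible with inverse $\M^{op}$; one checks $\M^{op}$ carries an induced Azumaya quasi coring structure (dualize $\Delta$), so $\dul{AzQCor}(\C)$ is in fact a symmetric categorical group, and all constraints descend to the Grothendieck level.

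I expect the main obstacle to be step (1): checking that the ``swap-and-tensor'' comultiplication $(\Delta^{\M}\Del_{\C}\Delta^{\N})$ composed with the symmetry is genuinely coassociative and that its associated $\C^{\Del 3}$-functor $\tilde{\Delta}$ is an equivalence. Coassociativity is a sizable coherence diagram mixing the coassociativity of $\Delta^{\M}$, that of $\Delta^{\N}$, the pentagon for $\Del_{\C}$, and the hexagon/symmetry identities from \prref{sim bimod}; the cleanest route is to avoid the diagram entirely by transporting everything through \leref{alfa-delta}, where coassociativity of the comultiplication is \emph{equivalent} to membership in $\dul Z^2(\C,\dul\Pic)$, and then invoke the already-known fact (stated before \equref{coh gr dul Pic}) that $(\dul Z^2(\C,\dul\Pic),\Del_{\C^{\Del n}})$ is symmetric monoidal. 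Concretely: the assignment $(\M,\Delta)\mapsto(\M,\alpha)$ of \leref{alfa-delta} should be shown to be an equivalence of categories $\dul{AzQCor}(\C)\simeq\dul Z^2(\C,\dul\Pic)$ (monomorphic part of the data being $\tilde\Delta$ an equivalence, and $\crta\coev$ an equivalence by \coref{coev inverse ev}), intertwining $\Del_{\C}$ with $\Del_{\C^{\Del 3}}$; then the symmetric monoidal structure on $\dul{AzQCor}(\C)$ is simply transported from that of $\dul Z^2(\C,\dul\Pic)$, with unit $\Can(\C;\C)=\C\Del\C$ corresponding to the unit object $(\C^{\Del 2},\C^{\Del 3})$ via \rmref{Delta tilde za CxC} and \rmref{comput can cor}. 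That reduces the whole proposition to bookkeeping already done in \cite{Femic2}, and the only genuinely new verification is that the equivalence of \leref{alfa-delta} is monoidal, which is a direct check using \leref{Delta tilde}, \leref{Weq} and \coref{coev inverse ev}.
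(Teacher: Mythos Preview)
Your overall strategy is sound and close to the paper's, but there are two notational slips that would derail the computations if left uncorrected. First, an Azumaya quasi $\C$-coring category $\M$ lies in $\dul\Pic(\C\Del\C)$, not in $\dul\Pic(\C)$ (it is an invertible $\C$-\emph{bimodule} category, hence by \equref{summ one sided bimod} a one-sided $\C^{\Del 2}$-module category). Second, and correspondingly, the monoidal product on $\dul{AzQCor}(\C)$ is $\Del_{\C^{\Del 2}}$, not $\Del_{\C}$: in your step~(1) you should be constructing a comultiplication on $\M\Del_{\C^{\Del 2}}\N$, not on $\M\Del_{\C}\N$. The comultiplication itself does land in $(\M\Del_{\C^{\Del 2}}\N)\Del_{\C}(\M\Del_{\C^{\Del 2}}\N)$, so both $\Del_{\C}$ and $\Del_{\C^{\Del 2}}$ appear, but in different roles. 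Once this is fixed, your Route~A is exactly the paper's argument, except that the paper works in the reverse direction: it \emph{first} defines $\tilde D$ at the $\C^{\Del 3}$-level as the composite $(\tilde\Delta\Del_{\C^{\Del 3}}\tilde\Delta')$ followed by the symmetry swap $\M_3\Del_{\C^{\Del 3}}\tau\Del_{\C^{\Del 3}}\M'_1$, and then reads off $D$ via \leref{Delta tilde}, obtaining the Sweedler formula $D(M\Del_{\C^{\Del 2}}M')=(M_{(1)}\Del_{\C^{\Del 2}}M'_{(1)})\Del_{\C}(M_{(2)}\Del_{\C^{\Del 2}}M'_{(2)})$, from which coassociativity is immediate. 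This has the advantage that $\tilde D$ is \emph{manifestly} an equivalence (being a composite of equivalences), so the Azumaya condition for the product is free.

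Your preferred Route~B---transporting the symmetric monoidal structure from $\dul Z^2(\C,\dul\Pic)$ via \leref{alfa-delta}---is also legitimate and is in fact what the paper does \emph{immediately after} this proposition, in the theorem that gives the monoidal isomorphism $\HH\colon\dul{AzQCor}(\C)\to\dul Z^2(\C,\dul\Pic)$ of \equref{cats Az and coc}. So your Route~B collapses the proposition and the subsequent theorem into a single step; the paper keeps them separate, first building the monoidal structure on $\dul{AzQCor}(\C)$ by hand and only afterwards matching it with $\dul Z^2$. Either order works, but note that for Route~B you still have to check that the bijection of \leref{alfa-delta} sends $\Del_{\C^{\Del 2}}$ to $\Del_{\C^{\Del 2}}$ on the underlying categories and $\alpha^{\M}\Del_{\C^{\Del 3}}\alpha^{\N}$ on the equivalences---this is the ``monoidality'' verification you flag, and it is not entirely trivial since $\alpha$ is built from $\tilde\Delta$ and $\crta\coev$ via \equref{alfa-delta}.
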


\begin{proof}
Let $(\M,\Delta)$ and $(\M',\Delta')$ be two Azumaya quasi $\C$-coring categories. We define $\tilde{D}$ to be the composition:
$$ \bfig
\putmorphism(0, 500)(1, 0)[(\M\Del_{\C^{\Del 2}}\M')_2 = \M_2\Del_{\C^{\Del 3}}\M'_2`
   \M_3\Del_{\C^{\Del 3}}\M_1 \Del_{\C^{\Del 3}} \M'_3\Del_{\C^{\Del 3}}\M'_1`
  \tilde{\Delta}\Del_{\C^{\Del 3}} \tilde{\Delta}']{2000}1a
\putmorphism(-200, 250)(1, 0)[\phantom{(\M\Del_{\C^{\Del 2}}\M')_2 = \M_2\Del_{\C^{\Del 3}}\M'_2}`
   \M_3\Del_{\C^{\Del 3}}\M'_3 \Del_{\C^{\Del 3}} \M_1\Del_{\C^{\Del 3}}\M'_1`
   \M_3\Del_{\C^{\Del 3}} \tau\Del_{\C^{\Del 3}} \M'_1]{2400}1a
\putmorphism(70, 0)(1, 0)[\phantom{(\M\Del_{\C^{\Del 2}}\M')_2 = \M_2\Del_{\C^{\Del 3}}\M'_2}`
  (\M\Del_{\C^{\Del 2}}\M')_3 \Del_{\C^{\Del 3}}(\M\Del_{\C^{\Del 2}}\M')_1.`=]{1930}1a
\efig
$$
The functor corresponding to $\tilde{D}$ by the isomorphism in \leref{Delta tilde} is the comultiplication functor on $\M\Del_{\C^{\Del 2}}\M'$:
$$D:\ \M\Del_{\C^{\Del 2}}\M' \to (\M\Del_{\C^{\Del 2}}\M')\Del_{\C} (\M\Del_{\C^{\Del 2}}\M').$$
From the definition of $\tilde{D}$ we have:
$$\tilde{D}(M\Del_{\C^{\Del 2}}M')_2= (M_{(1)}\Del_{\C^{\Del 2}} M'_{(1)})_{3}
\Del_{\C^{^{\Del 2}}} (M_{(2)}\Del_{\C^{\Del 2}} M'_{(2)})_{1},$$
and this yields:
\begin{equation} \eqlabel{Delta grande}
D(M\Del_{\C^{\Del 2}}M')=(M_{(1)}\Del_{\C^{\Del 2}} M'_{(1)}) \Del_{\C} (M_{(2)}\Del_{\C^{\Del 2}} M'_{(2)}).
\end{equation}
Now it is easy to see that $D$ is coassociative.
\qed\end{proof}

Furthermore, we have:

\begin{thm} 
Let $\C$ be a symmetric finite tensor category and let $(\M,\Delta)$ and $(\M',\Delta')$ be Azumaya quasi $\C$-coring categories.
Consider the corresponding $(\M,\alpha),(\M',\alpha')\in\dul{Z}^2(\C,\dul{\Pic})$. Let $F:\ \M\to \M'$ be an equivalence
in $\dul{\Pic}(\C\Del\C)$. Then $F$ is an equivalence of quasi coring categories if and only
if $F$ defines an isomorphism in $\dul{Z}^2(\C,\dul{\Pic})$.
\end{thm}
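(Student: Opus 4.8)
The plan is to unravel both directions through the chain of equivalences established in Lemmas~\ref{le:Delta tilde} and~\ref{le:cobound coring}, exploiting the fact that the passage $\Delta\mapsto\tilde\Delta\mapsto\alpha$ is built from adjunction isomorphisms and hence is functorial in the pair $(\M,\Delta)$. First I would spell out what ``$F$ is an equivalence of quasi coring categories'' means unravelled: $F:\M\to\M'$ is a $\C$-bimodule equivalence (equivalently a $\C\Del\C$-module equivalence, by \equref{summ one sided bimod}) such that the square
\begin{equation*}
\scalebox{0.88}{\bfig
 \putmorphism(-50,500)(1,0)[\M`\M\Del_{\C} \M`\Delta]{770}1a
 \putmorphism(-50,0)(1,0)[\M'`\M'\Del_{\C}\M'`\Delta']{800}1a
\putmorphism(-40,500)(0,-1)[\phantom{B\ot B}``F]{480}1l
\putmorphism(700,500)(0,-1)[\phantom{B\ot B}``F\Del_{\C} F]{480}1r
\efig}
\end{equation*}
commutes. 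What ``$F$ defines an isomorphism in $\dul Z^2(\C,\dul\Pic)$'' means is that $F$ is a $\C^{\Del 2}$-module equivalence satisfying $\alpha'\comp\delta_2(F)\simeq\alpha$. These are exactly the two conditions whose equivalence was already proved, inside the proof of \leref{cobound coring}, in the special case where the target is $\delta_1(\N)$; the point of the present theorem is that the same chain of implications goes through verbatim for an arbitrary Azumaya quasi coring category $\M'$ in place of $\delta_1(\N)$, since that proof never used any special property of $\delta_1(\N)$ beyond the standing hypotheses (invertibility of the bimodule category, coassociativity of $\Delta'$, and the fact that $\tilde{\Delta'}$ is an equivalence), all of which hold for $\M'$ by assumption.

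The key steps, in order, are the following. (1) Note that $F:\M\to\M'$ is a $\C$-bimodule equivalence if and only if it is a $\C^{\Del 3}$-module equivalence after applying the induction functor $\F_2$, i.e. $F_2:\M_2\to\M'_2$ is an equivalence, because $\F_2$ and its adjoint $\G_2$ induce an equivalence of functor categories (this is the adjunction isomorphism used in \leref{Delta tilde}); similarly $F_3\Del_{\C^{\Del 3}}F_1:\M_3\Del_{\C^{\Del 3}}\M_1\to\M'_3\Del_{\C^{\Del 3}}\M'_1$ corresponds to $F\Del_{\C}F$. (2) Transport the commuting square above through this adjunction: it commutes if and only if the square with $\tilde\Delta$, $\tilde{\Delta'}$ on the sides and $F_2$, $F_3\Del_{\C^{\Del 3}}F_1$ on top and bottom commutes. (3) Post-compose with $\Del_{\C^{\Del 3}}$ by the appropriate coevaluation/opposite functors to convert this into the statement that the square with $\crta\coev_{\M_2}$, $\crta\coev_{\M'_2}$ on one side and $\tilde\Delta\Del_{\C^{\Del 3}}\M_2^{op}$, $\tilde{\Delta'}\Del_{\C^{\Del 3}}\M_2'^{op}$ on the other commutes, exactly as in the display in \leref{cobound coring}; the left square there (involving only coevaluations) commutes automatically because $\crta\coev$ is natural. (4) Recognize that the outer rectangle of that diagram commuting is precisely the assertion $\alpha^{-1}\simeq(F_3\Del_{\C^{\Del 3}}F_1\Del_{\C^{\Del 3}}(F_2^{op})^{-1})\comp\alpha'^{-1}\comp\beta_2(F)^{-1}$ for the relevant reindexing, which on taking inverses and using $\delta_2(F)=F_1\Del_{\C^{\Del 4}}F_2^{\dagger}\Del_{\C^{\Del 4}}\dots$ (cf. the computation of $\delta_3(\alpha)$ via \leref{alfa dagger}) is exactly $\alpha'\comp\delta_2(F)\simeq\alpha$, i.e. $F$ is an isomorphism in $\dul Z^2(\C,\dul\Pic)$. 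Running the implications backwards gives the converse.

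The one genuinely new bookkeeping point — and the step I expect to be the main obstacle — is step (4): matching up the three ``opposite'' tensor factors and the indexing $\M_{ij}$ in $\delta_2(F)$ with the factors appearing in $\tilde\Delta\Del_{\C^{\Del 3}}\M_2^{op}$, and checking that the isomorphism $\delta_2(F)$ built from $F$ by the $\delta$-functor really does coincide (up to the canonical braiding isomorphisms of \prref{sim bimod}, which we are free to use since $\dul{\Pic}(\C^{\Del k})$ is symmetric monoidal) with the composite $F_3\Del F_1\Del (F_2^{op})^{-1}$ that appears in the diagram. This is essentially the content of \leref{alfa dagger}(2), $\alpha^{\dagger}=({}^{op}\alpha)^{-1}$, applied componentwise together with the identity $({}^{op}F)^{-1}={}^{op}(F^{-1})$ recorded in the preliminaries; once that identification is in place, the equivalence of the two conditions is a formal diagram chase identical in shape to the one already carried out in \leref{cobound coring}. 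I would therefore present the argument by reducing, via Lemmas~\ref{le:Delta tilde},~\ref{le:alfa dagger},~\ref{le:basic lema} and the adjunction $(\F_2,\G_2)$, to that diagram and remarking that it commutes for general $\M'$ exactly as it did for $\delta_1(\N)$.
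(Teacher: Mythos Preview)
Your proposal is correct and follows exactly the paper's approach: the paper's entire proof is the single sentence ``The proof is analogous to that of \leref{cobound coring} with $\M'=\delta_1(\N)$ and $\alpha'=\lambda_{\N}$,'' which is precisely your observation that nothing in that argument used any special feature of $\delta_1(\N)$. Your step~(4) supplies more bookkeeping than the paper bothers with (and contains a minor slip --- $\delta_2(F)$ lives over $\C^{\Del 3}$, not $\C^{\Del 4}$, and the ``$\beta_2(F)^{-1}$'' is spurious), but the identification you worry about is exactly the one the paper takes for granted in the last line of the proof of \leref{cobound coring}.
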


\begin{proof}
The proof is analogous to that of \leref{cobound coring} with $\M'= \delta_1(\N)$ and $\alpha'=\lambda_{\N}$.
\qed\end{proof}

Consecuently, there is a monoidal isomorphism of categories:

\begin{equation} \eqlabel{cats Az and coc}
\HH:\ \dul{AzQCor}(\C)\to \dul{Z}^2(\C,\dul{\Pic}).
\end{equation}

\bigskip

Let $K_0 [\dul{AzQCor}(\C)]$ be the Grothendieck group of Azumaya quasi $\C$-coring categories and let $\Can(\C)$ denote its
subgroup consisting of equivalence classes of quasi $\C$-coring categories represented by a 
category $\Can(\N;\C)$ for some $\N\in \dul{\Pic}(\C)$. Set
$$AzQCor(\C)=K_0 [\dul{AzQCor}(\C)]/\Can(\C)$$
for the {\em group of Azumaya quasi $\C$-coring categories}.

\begin{cor} \colabel{Brauer}
There is an isomorphism of abelian groups
$$\chi: AzQCor(\C)\to H^2(\C,\dul{\Pic}).$$
Consequently, there is an exact sequence
\begin{eqnarray}\eqlabel{2.2.2.1b}
1&\longrightarrow&H^2(\C,\Inv)\stackrel{\alpha_2}{\longrightarrow}H^1(\C,\dul{\Pic})\stackrel{\beta_2}{\longrightarrow}H^1(\C,{\Pic})\\
&\stackrel{\gamma_2}{\longrightarrow}&H^3(\C,\Inv)\stackrel{\omega}{\longrightarrow}AzQCor(\C)\stackrel{\beta_3}{\longrightarrow}H^2(\C,{\Pic})\nonumber\\
&\stackrel{\gamma_3}{\longrightarrow}&H^4(\C,\Inv)\stackrel{\alpha_4}{\longrightarrow}H^3(\C,\dul{\Pic})\stackrel{\beta_4}{\longrightarrow}H^3(\C,{\Pic})\nonumber\\
&\stackrel{\gamma_4}{\longrightarrow}&\cdots \nonumber
\end{eqnarray}
\end{cor}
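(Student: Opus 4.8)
\textbf{Proof of \coref{Brauer} (plan).}
The plan is to show that the monoidal isomorphism of categories $\HH\colon \dul{AzQCor}(\C)\to \dul{Z}^2(\C,\dul{\Pic})$ from \equref{cats Az and coc} descends to the quotient groups defining $AzQCor(\C)$ and $H^2(\C,\dul{\Pic})$, and then to transport the exact sequence of \thref{VZ} along the resulting isomorphism.

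First I would record that, since $\HH$ is a monoidal isomorphism of categories and every object of $\dul{Z}^2(\C,\dul{\Pic})$ is invertible, every object of $\dul{AzQCor}(\C)$ is invertible as well, so $K_0[\dul{AzQCor}(\C)]$ is a genuine abelian group and $\HH$ induces a group isomorphism $K_0[\HH]\colon K_0[\dul{AzQCor}(\C)]\to Z^2(\C,\dul{\Pic})$. The heart of the argument is then to check that $K_0[\HH]$ carries the subgroup $\Can(\C)$ onto $B^2(\C,\dul{\Pic})$. By definition $\Can(\C)$ is generated by the classes $[\Can(\N;\C)]=[\N^{op}\Del\N]$ with $\N\in\dul{\Pic}(\C)$, and $B^2(\C,\dul{\Pic})$ is generated by the classes $[d_1(\N)]=[(\delta_1(\N),\lambda_{\N})]$. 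Here I would invoke \rmref{comput can cor}: it exhibits an equivalence $\delta_1(\N)\simeq\N^{op}\Del\N=\Can(\N;\C)$ of $\C$-bimodule categories under which the coassociative comultiplication functor of $\delta_1(\N)$ matches the one of \exref{can}, and by \equref{alfa can} (which itself rests on \coref{coev inverse ev}) the equivalence $\alpha$ attached to $\Can(\N;\C)$ via \equref{alfa-delta} equals $\lambda_{\N}$. Consequently $\HH([\Can(\N;\C)])=[(\delta_1(\N),\lambda_{\N})]=[d_1(\N)]$, so $K_0[\HH]$ restricts to a bijection $\Can(\C)\xrightarrow{\sim} B^2(\C,\dul{\Pic})$.

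It follows that $K_0[\HH]$ descends to an isomorphism
$$\chi\colon AzQCor(\C)=K_0[\dul{AzQCor}(\C)]/\Can(\C)\ \xrightarrow{\ \sim\ }\ Z^2(\C,\dul{\Pic})/B^2(\C,\dul{\Pic})=H^2(\C,\dul{\Pic}),$$
using \equref{coh gr dul Pic} for the last identification. For the exact sequence \equref{2.2.2.1b} I would then simply take the sequence \equref{VZ seq} of \thref{VZ} and replace its second–level middle term $H^2(\C,\dul{\Pic})$ by $AzQCor(\C)$, transporting the adjacent arrows through $\chi$: put $\omega:=\chi^{-1}\comp\alpha_3$ for the incoming map from $H^3(\C,\Inv)$ and replace the outgoing map $\beta_3\colon H^2(\C,\dul{\Pic})\to H^2(\C,\Pic)$ by $\beta_3\comp\chi$. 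Since $\chi$ is a group isomorphism, exactness at the new term (and the untouched exactness elsewhere) is immediate from exactness of \equref{VZ seq}.

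The main obstacle is the identification $K_0[\HH](\Can(\C))=B^2(\C,\dul{\Pic})$, i.e.\ matching the canonical quasi coring structure on $\N^{op}\Del\N$ with the coboundary datum $(\delta_1(\N),\lambda_{\N})$; this has however been arranged in advance in \rmref{comput can cor}, so once that remark is in place the remainder of the proof is formal. \qed
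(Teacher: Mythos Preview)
Your proposal is correct and follows essentially the same route the paper intends: the corollary is an immediate consequence of the monoidal isomorphism $\HH$ from \equref{cats Az and coc} once one knows that $\HH$ identifies $\Can(\C)$ with $B^2(\C,\dul{\Pic})$, and this identification is precisely the content of \leref{cobound coring} (of which \rmref{comput can cor} and \equref{alfa can} are the computational core you cite). The transport of the exact sequence of \thref{VZ} along $\chi$, with $\omega=\chi^{-1}\comp\alpha_3$, is exactly what the paper does.
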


\subsection{The connecting map on the second level}

Here we will describe the map $\omega$ appearing in the second level of the above sequence.

\begin{lma}
Let $\C$ be a symmetric finite tensor category and let $[X]\in H^3(\C,\Inv)$ with $X=X^1\Del X^2\Del X^3$.
There is a structure of a $\C$-coring category on $(\C\Del\C)_X$, where $(\C\Del\C)_X=\C\Del\C$ as tensor categories with the $\C$-bilinear
comultiplication and the counit functors
$\Delta_X: (\C\Del\C)_X\to (\C\Del\C)_X\Del_{\C}(\C\Del\C)_X\simeq\C\Del\C\Del\C$ and $\varepsilon_X: (\C\Del\C)_X\to\C$
defined via $\Delta_X(I\Del I)=X^1\Del X^2\Del X^3$ and $\varepsilon_X(I\Del I)=\vert X\vert^{-1}$ on objects.
For $f: Y\Del Z\to Y'\Del Z'$ in $\C\Del\C$ we define $\Delta_X(f)$ by the commuting diagram
\begin{equation} \eqlabel{Delta_X on f}
\scalebox{0.84}{
\bfig 
\putmorphism(0,500)(1,0)[YX^1\Del X^2\Del X^3Z ` Y'X^1\Del X^2\Del X^3Z' ` \Delta_X(f)]{3100}1a
\putmorphism(0,0)(1,0)[YX^1\Del X^2\Del ZX^3` Y\Del I\Del Z ` m(X^{-1})]{1050}1b
\putmorphism(950,0)(1,0)[\phantom{(X \ot (Y \ot U))}` Y'\Del I\Del Z'` f_2]{1000}1b
\putmorphism(2200,0)(1,0)[` Y'X^1\Del X^2\Del Z'X^3 ` m(X)]{910}1b
\putmorphism(60,500)(0,1)[``\Id\Del\Id\Del\Phi_{X^3, Z}]{500}1l
\putmorphism(3100,500)(0,1)[``\Id\Del\Id\Del\Phi_{Z', X^3}]{500}{-1}r
\efig}
\end{equation}
and $\varepsilon_X(f)= \vert X\vert^{-1}\ot\varepsilon(f)$, where $\varepsilon(f)$ is from \leref{CC coring}.

\end{lma}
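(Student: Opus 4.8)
The plan is to present $(\C\Del\C)_X$ as a twist of the canonical $\C$-coring category $\C\Del\C$ of \leref{CC coring} by the $3$-cocycle $X$; write $\A=(\C\Del\C)_X$. The first step is to check that $\Delta_X$ and $\varepsilon_X$ are well-defined $\C$-bilinear functors. Here $\varepsilon_X$ is honestly defined on morphisms --- in contrast with the counit ``functor'' of \exref{can} --- because it is the composite of the genuine tensor functor $\ot\colon\C\Del\C\to\C$ from \leref{CC coring} with the equivalence $|X|^{-1}\ot(-)\colon\C\to\C$, so that $\varepsilon_X(f)=|X|^{-1}\ot\varepsilon(f)$ makes sense; and $\Delta_X$ on morphisms is the composite \equref{Delta_X on f}, a composite of the natural isomorphisms obtained by tensoring with the fixed invertible objects $X$ and $X^{-1}$, the braiding $\Phi$, and the functor $f\mapsto f_2$, so functoriality is immediate. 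Both $\Delta_X$ and $\varepsilon_X$ are $\C$-bilinear, since on objects they are built from tensoring with fixed objects, from $\ot$, and from $\Phi$, the bimodule-structure isomorphisms being the corresponding symmetries; recording these compatibilities is routine.

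Next is coassociativity. Since $\Delta_X$ is $\C$-bilinear, any $\C$-bimodule natural transformation between $(\Id_{\A}\Del_{\C}\Delta_X)\Delta_X$ and $(\Delta_X\Del_{\C}\Id_{\A})\Delta_X$ is determined by its component at $I\Del I$, so it suffices to produce a single isomorphism there in $\C^{\Del 4}$ and extend. Using the equivalences $(\C\Del\C)\Del_{\C}(\C\Del\C)\simeq\C^{\Del 3}$ of \leref{CC coring}, iterated, one computes
$$(\Id_{\A}\Del_{\C}\Delta_X)\Delta_X(I\Del I)\cong X^1\Del \crta{X^1}X^2\Del\crta{X^2}\Del\crta{X^3}X^3,\qquad (\Delta_X\Del_{\C}\Id_{\A})\Delta_X(I\Del I)\cong X^1\crta{X^1}\Del\crta{X^2}\Del X^2\crta{X^3}\Del X^3,$$
up to the symmetries reordering the middle tensor factors, where $\crta{X}=\crta{X^1}\Del\crta{X^2}\Del\crta{X^3}$ is the copy of $X$ produced by the second application of $\Delta_X$. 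These two objects are isomorphic precisely by the $3$-cocycle condition \equref{3-cocycle}; one takes $a_{I\Del I}$ to be that isomorphism composed with the relevant symmetries, and $a$ its $\C$-bilinear extension.

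For the counit one argues similarly: $l$ and $r$ are determined by their values at $I\Del I$, and the same kind of computation gives $(\varepsilon_X\Del_{\C}\Id_{\A})\Delta_X(I\Del I)\cong |X|^{-1}X^1X^2\Del X^3$ and $(\Id_{\A}\Del_{\C}\varepsilon_X)\Delta_X(I\Del I)\cong X^1\Del |X|^{-1}X^2X^3$, each of which is isomorphic to $I\Del I$ by \leref{compatibility for coring}(1); this furnishes $l_{I\Del I}$ and $r_{I\Del I}$. It then remains to verify the two coherence diagrams for $a,l,r$ and that $a,l,r$ are $\C$-bimodule natural isomorphisms. The latter is routine, since their components are morphisms in $\C^{\Del k}$ between objects carrying the standard $\C$-actions, and the building blocks --- braidings, the cocycle isomorphism, and the isomorphisms of \leref{compatibility for coring}(1) --- are all $\C$-natural. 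The triangle relating $a$, $l$ and $r$ unwinds, after inserting the definitions, into an identity that follows from \leref{compatibility for coring} and the braiding axioms, in the spirit of the coherence checks in \leref{CC coring} and \leref{braided: product over braided}.

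The main obstacle is the pentagon for $a$. A priori the $3$-cocycle isomorphism is merely \emph{some} isomorphism between the two sides of \equref{3-cocycle}, so one must see that the choices can be made coherently and that no leftover scalar in $\End(I^{\Del 5})=k$ obstructs commutativity. The way I would handle this is to replace $X$ by a cohomologous normalized representative (\leref{compatibility for coring}(2)) --- which does not change $(\C\Del\C)_X$ up to isomorphism of coring categories --- and then observe that the invertible objects of the symmetric category $\C^{\Del k}$, together with the isomorphisms generated by $\Phi$, the associativity constraints, and the cocycle isomorphism, form a Picard groupoid, which is coherent; hence the pentagon, being a diagram of such canonical isomorphisms between a fixed pair of invertible objects, commutes. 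Equivalently, one checks directly that the pentagon translates into an identity among the augmentation functors applied to $X$ that is a formal consequence of the cocycle relation $\delta_3(X)\cong I^{\Del 4}$ together with naturality and the braiding axioms. With this in hand all coherence diagrams hold, and $(\C\Del\C)_X$ is a $\C$-coring category with the stated $\Delta_X$ and $\varepsilon_X$.
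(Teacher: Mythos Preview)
Your approach matches the paper's: extend $\Delta_X$ and $\varepsilon_X$ by $\C$-bilinearity, compute the two iterates of $\Delta_X$, identify them via the $3$-cocycle relation \equref{3-cocycle}, and use \leref{compatibility for coring}(1) for $l$ and $r$. The paper carries this out at a general object $Y\Del Z$ rather than at $I\Del I$, and explicitly writes out the naturality check for the resulting transformation $\alpha$ (which you wave through as ``routine''), but the substance is the same.

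However, you are doing work that is not required. Re-read the paper's definition of a \emph{$\C$-coring category}: unlike a \emph{comonoidal $\C$-category}, it is a coalgebra in the truncated monoidal $1$-category $(\C\x\Bimod,\Del_{\C},\C)$, so one only needs the \emph{existence} of $\C$-bimodule natural isomorphisms $a,l,r$, not the pentagon or triangle coherences. The paper's proof accordingly stops after constructing $a,l,r$ and checking that they are $\C$-bimodule natural isomorphisms. Your ``main obstacle'' is therefore not an obstacle at all.

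This matters, because your pentagon argument as written is not solid. The Picard-groupoid sentence is essentially circular: the cocycle isomorphism in \equref{3-cocycle} is an arbitrary choice, and asserting that the resulting groupoid ``is coherent'' is precisely what would need to be shown. The normalization maneuver is also problematic at this stage: replacing $X$ by a cohomologous cocycle changes $(\C\Del\C)_X$ to an \emph{a priori} different structure, and transporting along an equivalence of coring categories presupposes that the target structure already exists. Fortunately none of this is needed for the statement at hand.
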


\begin{proof}
The functors $\Delta_X$ and $\varepsilon_X$ extend to $\C$-bilinear functors as follows:
$\Delta_X(Y\Del Z)=(YX^1\Del X^2)\Del_{\C}(I\Del X^3Z)\simeq YX^1\Del X^2\Del X^3Z$ and $\varepsilon_X(Y\Del Z)=Y\vert X\vert^{-1}Z$.
The coassociativity of $\Delta_X$ means that there is a $\C$-bimodule natural isomorphism between the functors: $F=(\Id_{\C\Del\C}\Del_{\C}\Delta_X)\Delta_X$ and
$G=(\Delta_X\Del_{\C}\Id_{\C\Del\C})\Delta_X$. Applying them to an object $Y\Del Z\in\C\Del\C$ we obtain:
$(\Id_{\C\Del\C}\Del_{\C}\Delta_X)((YX^1\Del X^2)\Del_{\C}(I\Del X^3Z))=(YX^1\Del X^2)\Del_{\C}(I\crta{X^1}\Del\crta{X^2})\Del_{\C}(I\Del \crta{X^3}X^3Z))
\simeq YX^1\Del X^2\crta{X^1}\Del\crta{X^2}\Del\crta{X^3}X^3Z$, where $X=X^1\Del X^2\Del X^3=\crta{X^1}\Del\crta{X^2}\Del\crta{X^3}$.
On the other hand, $(\Delta_X\Del_{\C}\Id_{\C\Del\C})((YX^1\Del X^2)\Del_{\C}(I\Del X^3Z))=(YX^1\crta{X^1}\Del\crta{X^2})\Del_{\C}(I\Del\crta{X^3} X^2)
\Del_{\C}(I\Del X^3Z)\simeq YX^1\crta{X^1}\Del\crta{X^2}\Del\crta{X^3} X^2\Del X^3Z$. Let $\alpha: F\to G$ be the following transformation:
\begin{equation} \eqlabel{nat tr X-coring}
\scalebox{0.8}{
\bfig \hspace{-0,4cm}
\putmorphism(0,500)(1,0)[YX^1\Del X^2\crta X^1\Del\crta X^2\Del \crta X^3X^3Z=F(Y\Del Z) ` G(Y\Del Z)=YX^1\crta X^1\Del\crta X^2\Del\crta X^3X^2\Del X^3Z ` \alpha(Y\Del Z)]{2700}1a
\putmorphism(-300,0)(1,0)[YX^1\Del \crta X^1X^2\Del\crta X^2\Del \crta X^3X^3Z` YX^1\crta{X^1}\Del\crta{X^2}\Del X^2\crta{X^3}\Del X^3Z ` =]{3280}1b
\putmorphism(-280,500)(0,1)[``\Id\Del\Phi_{X^2,\crta X^1}\Del\Id\Del\Id]{500}1r
\putmorphism(3000,500)(0,1)[``\Id\Del\Id\Del\Phi_{X^2,\crta{X^3}}\Del\Id]{500}{-1}l
\efig}
\end{equation}
where the bottom line is equality by the 3-cocycle condition \equref{3-cocycle}. It is clear that $\alpha(Y\Del Z)$ is an isomorphism for all $Y\Del Z\in\C\Del\C$.
Observe that by definition \equref{Delta_X on f} the morphism $F(f)$
for $f: Y\Del Z\to Y'\Del Z'$ in $\C\Del\C$ is given by the commuting diagram $\langle 1\rangle$ (and the outer diagram) in:
\begin{equation} \eqlabel{F(f)}
\scalebox{0.8}{
\bfig \hspace{-0,4cm}
\putmorphism(0,500)(1,0)[YX^1\Del X^2\crta X^1\Del\crta X^2\Del \crta X^3X^3Z=F(Y\Del Z) ` F(Y'\Del Z')=Y'X^1\Del X^2\crta X^1\Del\crta X^2\Del \crta X^3X^3Z' ` F(f)]{2700}1a
\putmorphism(-280,500)(0,1)[`YX^1\Del X^2\crta X^1\Del\crta X^2\Del Z\crta X^3X^3`\Id\Del\Id\Del\Id\Del\Phi_{\crta X^3X^3,Z}]{500}1r
\putmorphism(-280,0)(0,1)[`(Y\Del I\Del I\Del Z)\Del_{\C^4}W` =]{300}1r
\putmorphism(280,-300)(1,0)[`(Y'\Del I\Del I\Del Z')\Del_{\C^4}W` f_{23}\Del_{\C^4}W]{2500}1a
\putmorphism(-280,-300)(1,-1)[`Y\Del I\Del I\Del Z` m(W^{-1})]{500}1r
\putmorphism(600,-810)(1,0)[`Y'\Del I\Del I\Del Z'` f_{23}]{1800}1b
\putmorphism(2600,-680)(1,1)[`` m(W)]{240}1l
\putmorphism(3000,0)(0,1)[Y'X^1\Del X^2\crta X^1\Del\crta X^2\Del Z'\crta X^3X^3`` =]{300}{-1}r
\putmorphism(3000,500)(0,1)[``\Id\Del\Id\Del\Id\Del\Phi_{Z', \crta{X^3}X^3}]{500}{-1}l
\put(1300,100){\fbox{1}}
\put(1300,-600){\fbox{2}}
\efig}
\end{equation}
where $W=X^1\Del X^2\crta X^1\Del\crta X^2\Del \crta X^3X^3$ (observe that the trapeze $\langle 2\rangle$ above commutes automatically). The morphism $G(f)$ is similarly defined, with
$W$ replaced by $U=X^1\crta X^1\Del\crta X^2\Del\crta X^3X^2\Del X^3$. Set $a_X=\alpha(I\Del I)$. To see that $\alpha$ is a natural transformation observe the following diagram:
\begin{equation*} \eqlabel{}
\scalebox{0.8}{
\bfig 
\putmorphism(0,500)(1,0)[Y\crta\ot W\crta\ot Z=F(Y\Del Z) ` G(Y\Del Z)=Y\crta\ot U\crta\ot Z ` \alpha(Y\Del Z)]{2700}1a
\putmorphism(-280,500)(0,1)[`` \Id\Del\Id\Del\Id\Del\Phi_{\crta X^3X^3,Z}]{500}1r  %
\putmorphism(0,0)(1,0)[(Y\Del I\Del I\Del Z)\Del_{\C^4}W`(Y\Del I\Del I\Del Z)\Del_{\C^4}U` \Id\Del_{\C^4}a_X]{2800}1a
\putmorphism(3000,500)(0,1)[``\Id\Del\Id\Del\Id\Del\Phi_{X^3,Z}]{500}1l
\putmorphism(3000,0)(0,1)[``f_{23}\Del_{\C^4}U]{500}1l
\putmorphism(3000,-500)(0,1)[``\Id\Del\Id\Del\Id\Del\Phi_{Z',X^3}]{500}1l
\putmorphism(-280,0)(0,1)[`` f_{23}\Del_{\C^4}W]{500}1r  %
\putmorphism(0,-500)(1,0)[(Y'\Del I\Del I\Del Z')\Del_{\C^4}W`(Y'\Del I\Del I\Del Z')\Del_{\C^4}U` \Id\Del_{\C^4}a_X]{2800}1a
\putmorphism(-280,-500)(0,1)[`` \Id\Del\Id\Del\Id\Del\Phi_{Z',\crta X^3X^3}]{500}1r  %
\putmorphism(0,-1000)(1,0)[Y'\crta\ot W\crta\ot Z'=F(Y'\Del Z') ` G(Y'\Del Z')=Y'\crta\ot U\crta\ot Z'` \alpha(Y'\Del Z')]{2700}1a
\efig}
\end{equation*}
All the three inner rectangulars above clearly commute, and the commutativity of the outer diagram assures the naturality of $\alpha$. It is direct to see that
$\alpha$ is a $\C$-bilinear transformation.

We next prove the compatibility condition for the comultiplication and the counit functors:
$(\varepsilon_X\Del_{\C}\Id)\Delta_X(Y\Del Z)=(\varepsilon_X\Del_{\C}\Id)((YX^1\Del X^2)\Del_{\C}(I\Del X^3Z))=YX^1\vert X\vert^{-1} X^2\Del_{\C}(I\Del X^3Z)
\simeq YX^1\vert X\vert^{-1} X^2\Del X^3Z$ and $(\Id\Del_{\C}\varepsilon_X)\Delta_X(Y\Del Z)=(\Id\Del_{\C}\varepsilon_X)((YX^1\Del X^2)\Del_{\C}(I\Del X^3Z))=
(YX^1\Del X^2)\Del_{\C} \vert X\vert^{-1} X^3Z\simeq YX^1\Del X^2 \vert X\vert^{-1} X^3Z$. Consider the transformations $l$ and $r$ defined below:
$$\scalebox{0.8}{
\bfig 
\putmorphism(100,500)(2,-1)[ ` Y\Del Z ` ]{1000}1r
\putmorphism(100,530)(2,-1)[ `  ` l]{1000}0r
\putmorphism(450,0)(1,0)[` ` \iso]{480}1a
\putmorphism(0,500)(0,1)[YX^1\vert X\vert^{-1} X^2\Del X^3Z` Y\vert X\vert^{-1} X^1 X^2\Del X^3Z ` \Phi_{X^1,\vert X\vert^{-1}} X^1 X^2\Del\Id]{500}1l
\efig}\quad\textnormal{and}\quad
\scalebox{0.8}{
\bfig \hspace{-0,4cm}
\putmorphism(100,500)(2,-1)[ ` Y\Del Z ` ]{1000}1r
\putmorphism(100,530)(2,-1)[ `  ` r]{1000}0r
\putmorphism(450,0)(1,0)[` ` \iso]{480}1a
\putmorphism(0,500)(0,1)[YX^1\Del X^2 \vert X\vert^{-1} X^3Z ` YX^1\Del\vert X\vert^{-1} X^2X^3Z ` \Id\Del\Phi_{X^2,\vert X\vert^{-1}}X^3Z]{500}1l
\efig}$$
where the bottom isomorphisms are due to \leref{compatibility for coring}. They are clearly $\C$-bimodule natural isomorphisms.
\qed\end{proof}

The coring structure presented in the previous lemma is the main protagonist in the next result:

\begin{prop} \prlabel{connection}
Let $\C$ be a symmetric finite tensor category. 
There is a group morphism
$$\omega: H^3(\C,\Inv)\to AzQCor(\C)$$ 
given by $\omega([X])=[(\C\Del\C)_X]$ for $[X]\in H^3(\C,\Inv)$.
\end{prop}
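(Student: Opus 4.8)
The plan is to verify that $\omega$ is well defined on cohomology classes and that it respects the group structure, using the machinery assembled in the preceding sections. First I would check that for each $3$-cocycle $X=X^1\Del X^2\Del X^3\in Z^3(\C,\Inv)$ the coring category $(\C\Del\C)_X$ from the previous lemma is actually an \emph{Azumaya} quasi $\C$-coring category, not merely a quasi coring category: since $\C\Del\C\simeq\C\Del\C$ is invertible in $\dul\Pic(\C\Del\C)$ (it is the unit object), and the associated functor $\tilde\Delta_X:((\C\Del\C)_X)_2\to((\C\Del\C)_X)_3\Del_{\C^{\Del 3}}((\C\Del\C)_X)_1$ acts, by \rmref{Delta tilde za CxC} adapted with the twist $X$, as $Y\Del I\Del Z\mapsto (YX^1\Del I\Del I)\Del_{\C^{\Del 3}}(I\Del X^2\Del X^3Z)$, this is an equivalence because tensoring with the invertible objects $X^1,X^2,X^3$ is an equivalence of module categories. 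Hence $(\C\Del\C)_X\in\dul{AzQCor}(\C)$ and $[(\C\Del\C)_X]$ makes sense in $AzQCor(\C)$.

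Next I would establish that the class $[(\C\Del\C)_X]$ in $AzQCor(\C)=K_0[\dul{AzQCor}(\C)]/\Can(\C)$ depends only on $[X]\in H^3(\C,\Inv)$. By the isomorphism $\HH$ of \equref{cats Az and coc}, $(\C\Del\C)_X$ corresponds to a pair $(\C\Del\C,\alpha_X)\in\dul Z^2(\C,\dul\Pic)$, where $\alpha_X$ is built from $\tilde\Delta_X$ via \equref{alfa-delta}; under this correspondence the subgroup $\Can(\C)$ maps onto $B^2(\C,\dul\Pic)$, so $AzQCor(\C)\cong H^2(\C,\dul\Pic)$, which is \coref{Brauer}. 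I would then trace through the identification to see that the coboundary $\delta_2$ applied to an $\Inv$-valued $2$-cochain that realizes a cohomology between $X$ and $X'$ produces precisely an isomorphism $(\C\Del\C,\alpha_X)\cong(\C\Del\C,\alpha_{X'})$ up to an element of $B^2(\C,\dul\Pic)$; concretely, if $X'=X\,\delta_2(\mu)$ for $\mu\in\Inv(\C^{\Del 2})$, then twisting by $\mu$ gives a $\C$-bimodule equivalence $(\C\Del\C)_{X'}\simeq(\C\Del\C)_X$ compatible with the comultiplications, which is exactly the condition of the theorem preceding \equref{cats Az and coc}. In particular the normalization step of \leref{compatibility for coring}(2) shows we may assume $X$ is normalized, which keeps $\varepsilon_X=|X|^{-1}$ well defined.

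For multiplicativity I would show $(\C\Del\C)_{X}\Del_{\C^{\Del 2}}(\C\Del\C)_{X'}\simeq(\C\Del\C)_{XX'}$ as Azumaya quasi coring categories. On underlying categories both sides are $\C\Del\C$ (the unit of $\dul\Pic(\C\Del\C)$ absorbs Deligne products), and from the formula \equref{Delta grande} for the comultiplication on a Deligne product one computes $D(I\Del I)=(X^1X'^1\Del X^2X'^2\Del X^3X'^3)$ after using the symmetry of $\C$ to reorder the tensor factors — that is, the comultiplication on the product is the one twisted by the componentwise product $XX'=|X\odot X'$'s factors, which is the group operation in $Z^3(\C,\Inv)$. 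Combined with $\omega$ sending the trivial cocycle $I^{\Del 3}$ to $\Can(\C;\C)=\C\Del\C$ with its canonical coring structure (\leref{CC coring}), this gives a group homomorphism $Z^3(\C,\Inv)\to AzQCor(\C)$ which, by the previous paragraph, factors through $H^3(\C,\Inv)$.

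The main obstacle I expect is the bookkeeping in the second step: carefully matching the twisted coring structure $\Delta_X$ and its associated equivalence $\alpha_X$ with the explicit $\delta_2$-coboundary of an $\Inv$-valued cochain, so as to see that cohomologous cocycles give isomorphic objects in $\dul Z^2(\C,\dul\Pic)$ (equivalently, equal classes modulo $\Can(\C)$). This requires unwinding the definition \equref{alfa-delta} of $\alpha^{-1}=(\tilde\Delta\Del_{\C^{\Del 3}}\M_2^{op})\crta\coev_{\M_2}$ for $\M=\C\Del\C$, keeping track of the braidings $\Phi_{X^3,Z}$, $\Phi_{X^2,\crta X^1}$, etc., appearing in \equref{Delta_X on f} and \equref{nat tr X-coring}, and checking they assemble to the $\delta_3$-image expected from $\delta_3(\alpha_X)=\lambda_{\C\Del\C}$. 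The symmetry hypothesis on $\C$ is essential throughout: it is what lets the factors of Deligne products be permuted (\prref{sim bimod}) and what forces $\tilde\Phi$ to be symmetric in \leref{equiv fun}, so the twists by $X$ and $X'$ commute past one another.
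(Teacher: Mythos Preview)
Your proposal is essentially correct and contains the paper's argument within it. The paper's proof takes precisely the direct route you sketch under ``concretely'': given $X\iso\delta_2(Z)\cdot Y$ with $Z=Z^1\Del Z^2\in\Inv(\C^{\Del 2})$, it defines the $\C$-bimodule equivalence $F:(\C\Del\C)_Y\to(\C\Del\C)_X$ by $F(I\Del I)=Z^1\Del Z^2$ and exhibits an explicit $\C$-bimodule natural isomorphism $\gamma:\Delta_X\comp F\to(F\Del_{\C}F)\Delta_Y$ built from braidings and the relation \equref{homogeneous}; for multiplicativity it computes exactly as you do via \equref{Delta grande}.

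Where you diverge is in the framing of well-definedness. You propose first passing through $\HH$ to $\dul Z^2(\C,\dul\Pic)$ and comparing $(\C\Del\C,\alpha_X)$ with $(\C\Del\C,\alpha_{X'})$ there, which would require unwinding $\alpha_X$ via \equref{alfa-delta} and tracking the braidings you list in your final paragraph. The paper avoids this entirely: it never computes $\alpha_X$ for the purposes of this proposition, working directly at the level of the comultiplication functors $\Delta_X,\Delta_Y$. The computation $\alpha(\tilde\Delta_X)\simeq m(X^{-1})$ is carried out only \emph{after} the proposition, to check that the diagram with $\alpha_3$ and $\chi$ commutes. So your anticipated ``main obstacle'' is in fact unnecessary for the proposition itself; the direct coring-category argument is both simpler and sufficient. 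One minor point: you say the classes agree ``up to an element of $B^2(\C,\dul\Pic)$'', but in fact the paper shows the stronger statement that $(\C\Del\C)_X$ and $(\C\Del\C)_Y$ are isomorphic already in $\dul{AzQCor}(\C)$, not merely equal in the quotient.
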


\begin{proof}
Set $[X=X^1\Del X^2\Del X^3]\in H^3(\C,\Inv)$.
Let us prove that $\omega$ does not depend on the representatives of classes. If $X\sim Y$ in $ H^3(\C,\Inv)$, then there is $Z=Z^1\Del Z^2=\crta{Z^1}\Del\crta{Z^2}\in\Inv(\C\Del\C)$
such that $X\iso\tilde\delta Z\cdot Y$, or, equivalently,
\begin{equation}\eqlabel{homogeneous}
Z^1X^1\Del X^2\Del Z^2X^3\iso\crta{Z^1}Y^1\Del Z^1\crta{Z^2}Y^2\Del Z^2Y^3.
\end{equation}
In order to prove that $\omega([X])=\omega([Y])$, observe the following $\C$-bimodule functor: $F: (\C\Del\C)_Y\to(\C\Del\C)_X$, given by $F(I\Del I)=Z^1\Del Z^2$.
We find that $(F\Del_{\C}F)\Delta_Y(I\Del I)=(F\Del_{\C}F)((Y^1\Del Y^2)\Del_{\C}(I\Del Y^3))=(Y^1Z^1\Del Z^2Y^2)\Del_{\C}(\crta{Z^1}\Del\crta{Z^2}Y^3)
\simeq Y^1Z^1\Del Z^2Y^2\crta{Z^1}\Del\crta{Z^2}Y^3$, whereas $\Delta_X F(I\Del I)=\Delta_X(Z^1\Del Z^2)=(Z^1X^1\Del X^2)\Del_{\C}(I\Del X^3Z^2)\simeq
Z^1X^1\Del X^2\Del X^3Z^2$. Let $\gamma:\Delta_X\comp F\to (F\Del_{\C}F)\Delta_Y$ be the (obviously) $\C$-bimodule natural isomorphism given by the commuting diagram:
\begin{equation*} 
\scalebox{0.8}{
\bfig 
\putmorphism(0,500)(1,0)[UZ^1X^1\Del X^2\Del X^3Z^2V ` UY^1Z^1\Del Z^2Y^2\crta Z^1\Del\crta Z^2Y^3V ` \gamma(U\Del V)]{2700}1a
\putmorphism(0,0)(1,0)[UZ^1X^1\Del X^2\Del Z^2X^3V` UZ^1Y^1\Del \crta Z^1Z^2Y^2\Del\crta Z^2Y^3V ` \iso]{2700}1b
\putmorphism(-280,500)(0,1)[``\Id\Del\Id\Del\Phi_{X^3, Z^2}\ot V]{500}1r
\putmorphism(3000,500)(0,1)[``U\ot\Phi_{Z^1,Y^1}\Del\Phi_{\crta{Z^1},Z^2Y^2}\Del\Id]{500}{-1}l
\efig}
\end{equation*}
The bottom isomorphism is due to \equref{homogeneous}.
Then it is clear that $F: (\C\Del\C)_Y\to(\C\Del\C)_X$ defines an equivalence of quasi $\C$-coring categories, since $Z$ is invertible.

Finally we prove that $\omega$ is a group map. Take $[X], [Y]\in H^3(\C,\Inv)$.
We have: $\omega([X][Y])=\omega([XY])=[(\C\Del\C)_{XY}]$ and $\omega([X])\omega([Y])=[(\C\Del\C)_{X}][(\C\Del\C)_{Y}]=
[(\C\Del\C)_{X}\Del_{\C\Del\C}(\C\Del\C)_{Y}]$. The comultiplication functor for the latter coring category is given by \equref{Delta grande}, that is,
$D((I\Del I)\Del_{\C\Del\C}(I\Del I))=((X^1\Del X^2)\Del_{\C\Del\C}(Y^1\Del Y^2))\Del_{\C}((I\Del X^3)\Del_{\C\Del\C}(I\Del Y^3))\simeq
(X^1Y^1\Del X^2Y^2)\Del_{\C}(I\Del X^3Y^3)\simeq XY = \Delta_{XY}(I\Del I)$. Then it is clear that $\omega([X][Y])$ and $\omega([X])\omega([Y])$ determine the same
quasi coring category.
\qed\end{proof}

\begin{rem}
Given two coring categories $\M$ and $\N$ one would expect 
the counit functor on 
their product $\varepsilon:\M\Del_{\C\Del\C}\N\to\C$ to be 
given by $\varepsilon(M\Del N)=\varepsilon_{\M}(M)\varepsilon_{\N}(N)$. However, it is not clear how to prove the compatibility of the counit and the comultiplication
functors on $\M\Del_{\C\Del\C}\N$. This is even not clear in such a simple case as for the 3-cocycle twisted coring categories $(\C\Del\C)_X$.

Though, we do have that the map $\omega$ restricted to the set $H^3(\C,\Inv)$ produces proper $\C$-coring categories (the class of the coring $[(\C\Del\C)_X]$ - with the counit functor -
does not depend on the representative of the class of $X$ in the cohomology group). Indeed, with the notations as in the above proof we have that
$\varepsilon_X\comp F(U\Del V)=\varepsilon_X(UZ^1\Del Z^2V)=UZ^1\vert X\vert^{-1} Z^2V$
and $\varepsilon_Y(U\Del V)=U\vert Y\vert^{-1}V=U\vert X\vert^{-1}\vert Z\vert V$, so define the (obviously) $\C$-bimodule natural
isomorphism $\delta: \varepsilon_X\comp F\to\varepsilon_Y$ to be $\delta(U\Del V)=U\ot\Phi_{Z^1, \vert X\vert^{-1}}\ot Z^2V$.
\end{rem}

\bigskip

The map $\omega$ from \prref{connection} makes the following diagram commutative (up to an anti-group switch):
$$
\bfig
\putmorphism(100,470)(1,0)[H^3(\C, \Inv)`H^2(\C, \dul\Pic)`\alpha_3]{800}1a
\putmorphism(70,470)(1,-1)[``\omega]{380}1l
\putmorphism(150,470)(1,-1)[`AzQCor(\C)`]{380}0l
\putmorphism(700,235)(1,1)[``\chi]{100}1r
\efig
\qquad
\bfig
\putmorphism(100,470)(1,0)[[X]`(\C\Del\C, m(X))`\alpha_3]{800}2a
\putmorphism(70,470)(1,-1)[``\omega]{380}2l
\putmorphism(110,470)(1,-1)[`((\C\Del\C)_X, \Delta_X).`]{380}0l
\putmorphism(700,235)(1,1)[``\chi]{100}2r
\efig
$$
Here $\chi$ is the map from \coref{Brauer} and $\chi([(\M, \Delta)])=[(\M, \alpha(\tilde\Delta))]$, with $\alpha$ from \equref{alfa-delta}. To see this,
recall from \equref{3H} that $\tilde{\Delta}(M_2)=M_{(1)3}\Del_{\C^{\ot 3}} M_{(2)1}$, where $M_{(1)}\Del_{\C^{\ot 2}} M_{(2)}=\Delta(M)$.
Then it is immediate that $\tilde{\Delta}_X=m(X)$, so it is indeed a $\C^{\Del 3}$-module equivalence. Let us compute $\alpha(\tilde\Delta_X)$.
First of all observe that $\tilde\Delta_X((Y\Del Z)_2)=(YX^1\Del X^2\Del I)\Del_{\C^{\ot 3}} (I\Del I\Del X^3Z)$, then by \equref{alfa-delta} we have:
$$\begin{array}{rl}
\alpha^{-1}(\tilde\Delta_X)(I^{\Del 3})\hskip-1em&\hspace{-0,2cm} =(\tilde\Delta_X\Del_{\C^{\Del 3}}(\C\Del\C)_2^{op})\comp\crta\coev_{(\C\Del\C)_2}(I^{\Del 3})\\
&=
(\tilde\Delta_X\Del_{\C^{\Del 3}}(\C\Del\C)_2^{op})(\displaystyle\bigoplus_{\substack{i\in J\\j\in J}} (V_i\Del I\Del V'_j)\Del_{\C^{\ot 3}} (W_i\Del I\Del W'_j))\\
&=\displaystyle\bigoplus_{\substack{i\in J\\j\in J}} (V_iX^1\Del X^2\Del I)\Del_{\C^{\ot 3}} (I\Del I\Del X^3 V'_j)\Del_{\C^{\ot 3}} (W_i\Del I\Del W'_j)\\
&\simeq \displaystyle\bigoplus_{\substack{i\in J\\j\in J}} (V_iX^1\Del X^2\Del X^3 V'_j)\Del_{\C^{\ot 3}} (W_i\Del I\Del W'_j)\\
&\simeq m(X)(\displaystyle\bigoplus_{\substack{i\in J\\j\in J}} (V_i\Del I\Del V'_j)\Del_{\C^{\ot 3}} (W_i\Del I\Del W'_j)) \\
& \stackrel{\ev_{(\C\Del\C)_2}}{\simeq} m(X)(\displaystyle\bigoplus_{\substack{i\in J\\j\in J}} \u\Hom_{\C}(V_i, W_i)\Del I\Del \u\Hom_{\C}(V'_j, W'_j)) \\
& \simeq m(X)
\end{array}$$
where the last identity is due to \coref{coev inverse ev}. 
Then $\alpha(\tilde\Delta_X)\simeq m((X)^{-1})$, and we obtain $\chi([((\C\Del\C)_X, \Delta_X)])=[((\C\Del\C)_X, m(X^{-1}))]$.

\section{The full group of Azumaya quasi coring categories} \selabel{Full group}

So far we have studied module categories over a symmetric finite tensor category $\C$. In the next two subsections we will investigate which relations
we have if we take another symmetric finite tensor category $\D$ into a consideration. 

\subsection{The colimit over symmetric finite tensor categories}

Let $\F:\C\to \D$ be a functor between symmetric finite tensor categories.
Then the induced functor $\F':\dul{AzQCor}(\C)\to \dul{AzQCor}(\D)$ is given by $\F'(\M)=\M\Del_{\C^{\Del 2}}\D\Del\D$. The comultiplication
functor on the latter is induced by those on $\M$ and $\D\Del\D$. That is,
$\Delta: \M\Del_{\C^{\Del 2}}\D\Del\D\to (\M\Del_{\C^{\Del 2}}\D\Del\D)\Del_{\D}(\M\Del_{\C^{\Del 2}}\D\Del\D)$ is given by:
$\Delta(M\Del_{\C^{\Del 2}}(D\Del D'))=(M_{(1)}\Del_{\C^{\Del 2}}(D\Del I))\Del_{\D}(M_{(2)}\Del_{\C^{\Del 2}}(I\Del D'))$ for $M\in\M, D,D'\in\D$.
It is direct to check that $\Delta$ is $\C\Del\C$-balanced, as $\C$ and $\D$ are symmetric. The left $\D$-module category structure on
$\M\Del_{\C^{\Del 2}}\D\Del\D$ is induced by that on $\D\Del\D$. The functor $\F'$ is such that the following diagram commutes:
$$\bfig
\putmorphism(0, 400)(1, 0)[\dul{AzQCor}(\C)`\phantom{\dul{Z}^2(\C,\dul{\Pic})}`\HH]{900}1a
\putmorphism(60, 400)(0, -1)[\phantom{\dul{Cor}^*(\C)}`\phantom{[M, Y]\ot M}`\F']{400}1l
\putmorphism(900, 400)(0, -1)[\dul{Z}^2(\C,\dul{\Pic})`\phantom{\dul{Z}^1(T/R,\dul{\Pic}).}`\F^*]{400}1r
\putmorphism(0, 0)(1, 0)[\dul{AzQCor}(\D)`\dul{Z}^2(\D,\dul{\Pic})`\HH]{900}1a
\efig
$$
where $\HH$ is the monoidal isomorphism from \equref{cats Az and coc} and $\F^*$ was defined in \equref{fun ind cat}.
After taking the corresponding Grothendieck groups and their quotients, we obtain a commutative diagram:
$$\bfig
\putmorphism(0, 400)(1, 0)[AzQCor(\C)`\phantom{H^1(S/R,\dul{\Pic})}`\cong]{900}1a
\putmorphism(60, 400)(0, -1)[\phantom{\dul{Cor}^*(\C)}`\phantom{[M, Y]\ot M}`\F'']{400}1l
\putmorphism(900, 400)(0, -1)[H^2(\C,\dul{\Pic})`\phantom{H^1(T/R,\dul{\Pic}).}`\F_*]{400}1r
\putmorphism(0, 0)(1, 0)[AzQCor(\D)`H^2(\D,\dul{\Pic}).`\cong]{900}1a
\efig
$$
This is to say that the group isomorphisms in \coref{Brauer} induces an isomorphism of functors
$$AzQCor(\bullet/vec)\cong H^2(\bullet/vec,\dul{\Pic}):\ FTC_{symm}\to\dul{Ab}$$
where the second functor is that from \equref{H2 functor} for $n=2$. Here we identify $AzQCor(\C)=AzQCor(\C/vec)$ for any $\C\in FTC_{symm}$.
So for the colimits we have:
\begin{equation}\eqlabel{colimit Hn}
\colim~ AzQCor(\bullet/vec)\cong \colim~ H^2(\bullet/vec,\dul{\Pic}).
\end{equation}

\subsection{The full group of Azumaya quasi coring categories} \sslabel{Full group}

In this subsection we will introduce the full group of Azumaya quasi coring categories and we will prove that it is the colimit of the {\em relative} groups
$AzQCor(\C/vec)$. For these constructions \prref{coh maps dul} is of great importance.

We define an {\em Azumaya quasi coring category over $vec$} as a pair $(\C,\M)$, where $\C$ is a symmetric finite tensor category, and $\M$ is an Azumaya quasi
$\C$-coring category. Given two Azumaya quasi coring categories $(\C,\M)$ and $(\D,\N)$ over $vec$, a morphism between them 
is a pair $(F,\F)$, where $F:\C\to\D$ is an equivalence of symmetric tensor categories and $\F:\ \M\to\N$ is a $\C$-bimodule category equivalence that preserves
the comultiplication functors.
Let $\dul{AzQCor}(vec)$ be the category of Azumaya quasi coring categories over $vec$. In what follows, the (canonical) Azumaya quasi coring categories of the form
$\delta_1(\N)$ for $\N\in\dul{\Pic}(\C)$ and $\C$ a symmetric finite tensor category, we will call {\em elementary}. 

\medskip

We start by an easy to prove result:

\begin{lma} 
Let $\C$ and $\D$ be tensor categories. If $\M\in\C\x\Mod$ and $\N\in\D\x\Mod$, then $\M\Del\N\in\C\Del\D\x\Mod$.
\end{lma}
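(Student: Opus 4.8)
The plan is to construct the $\C\Del\D$-module structure on $\M\Del\N$ componentwise, using the fact that the Deligne tensor product is functorial and biexact in each variable. First I would define the action bifunctor $(\C\Del\D)\times(\M\Del\N)\to\M\Del\N$ as the composite obtained from the two given actions: namely, starting from $\ot_{\M}:\C\times\M\to\M$ and $\ot_{\N}:\D\times\N\to\N$, one forms $\ot_{\M}\Del\ot_{\N}:(\C\times\M)\Del(\D\times\N)\to\M\Del\N$ and precomposes with the canonical identification $(\C\Del\D)\times(\M\Del\N)\simeq(\C\times\M)\Del(\D\times\N)$ (this shuffling of factors is legitimate since $-\Del-$ is a symmetric monoidal structure on $\Cc_k$, as recalled after the definition of comonoidal categories). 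Concretely, $(X\Del Y)\crta\ot(M\Del N):=(X\crta\ot M)\Del(Y\crta\ot N)$. Biexactness of $-\Del-$ in both variables, together with biexactness of the two actions, guarantees this is a well-defined biexact bifunctor on the Deligne tensor products.

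Next I would produce the associativity and unit constraints. The associator is $m^{\M\Del\N}_{X\Del Y,\,X'\Del Y',\,M\Del N}:=m^{\M}_{X,X',M}\Del m^{\N}_{Y,Y',N}$, using that the tensor product in $\C\Del\D$ is componentwise, $(X\Del Y)\odot(X'\Del Y')=XX'\Del YY'$; the unit constraint is $\ell^{\M}\Del\ell^{\N}$, recalling that the unit of $\C\Del\D$ is $I_{\C}\Del I_{\D}$. These are natural isomorphisms because $\Del$ of two natural isomorphisms is a natural isomorphism.

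Finally I would verify the pentagon and triangle coherence axioms for $\M\Del\N$. Here the key point is that both coherence diagrams for $\M\Del\N$ are obtained by applying the functor $-\Del-$ to the (already valid) coherence diagrams for $\M$ and for $\N$ separately, after transporting along the associativity and symmetry isomorphisms of $(\Cc_k,\Del,vec)$; since $\Del$ is a (2-)functor it sends commuting diagrams to commuting diagrams, and the extra bookkeeping isomorphisms satisfy their own coherence by the coherence theorem for the symmetric monoidal 2-category $(\Cc_k,\Del,vec)$. I expect the main obstacle to be precisely this bookkeeping: keeping track of the canonical interchange equivalences relating $(\C\Del\D)\times(\M\Del\N)$, $(\C\times\M)\Del(\D\times\N)$, and their iterates in the pentagon, and checking they are coherent — but this is routine given the symmetric monoidal 2-categorical framework already invoked in the paper, so no genuinely new argument is needed.
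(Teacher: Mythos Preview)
Your proposal is correct and spells out exactly the natural componentwise construction that the paper has in mind; the paper itself omits the proof entirely, introducing the lemma only as ``an easy to prove result.'' There is nothing to compare --- your argument is the expected one.
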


\begin{lma} 
Suppose that $\C$ and $\D$ are braided tensor categories.
If $\M$ is an Azumaya quasi $\C$-coring category and $\N$ is an Azumaya quasi $\D$-coring category, then $\M\Del\N$ is an Azumaya quasi
$\C\Del\D$-coring category.

An analogous statement holds true for invertible coring categories.
\end{lma}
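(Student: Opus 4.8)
The claim bundles together three things that need to be checked for the external product $\M\Del\N$: (i) it is an invertible one-sided $(\C\Del\D)$-bimodule category; (ii) it carries a coassociative comultiplication functor; (iii) the associated functor $\widetilde{(\Delta_{\M\Del\N})}$ is an equivalence (the Azumaya condition). I would organize the proof along exactly these three items, treating the last (invertible case) as a literal sub-case obtained by forgetting the comultiplication.

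\emph{Step 1: invertibility.} By the previous (just-stated) lemma $\M\Del\N\in(\C\Del\D)\x\Mod$, and since $\C\Del\D$ is braided (a Deligne product of braided finite tensor categories is braided), it is automatically a one-sided $(\C\Del\D)$-bimodule category by \coref{symmetric}-type reasoning (or rather \equref{right associator}, \equref{mixed assoc}, applied with the braiding of $\C\Del\D$). For invertibility one must exhibit equivalences $(\M\Del\N)^{op}\Del_{\C\Del\D}(\M\Del\N)\simeq\C\Del\D$ and its mirror. The key computational input is \leref{Weq}: taking there $\M_1=\M^{op}$, $\M_1'=\M$ over $\C$ and $\N_1=\N^{op}$, $\N_1'=\N$ over $\D$, one gets
$$(\M^{op}\Del_{\C}\M)\Del(\N^{op}\Del_{\D}\N)\iso(\M^{op}\Del\N^{op})\Del_{\C\Del\D}(\M\Del\N).$$
One checks $(\M\Del\N)^{op}\simeq\M^{op}\Del\N^{op}$ (the op-construction is componentwise, because left/right duals in $\C\Del\D$ are computed componentwise), and then the left-hand side is $\C\Del\D$ since $\M^{op}\Del_{\C}\M\simeq\C$ and $\N^{op}\Del_{\D}\N\simeq\D$ by invertibility of $\M,\N$. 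The mirror equivalence is symmetric. This already proves the ``invertible coring categories'' clause.

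\emph{Step 2: the comultiplication and coassociativity.} Define $\Delta_{\M\Del\N}:\M\Del\N\to(\M\Del\N)\Del_{\C\Del\D}(\M\Del\N)$ as the composite of $\Delta_{\M}\Del\Delta_{\N}:\M\Del\N\to(\M\Del_{\C}\M)\Del(\N\Del_{\D}\N)$ with the equivalence of \leref{Weq} just used (in the variant with $\M_1=\M_1'=\M$, $\N_1=\N_1'=\N$). One must check this lands in the one-sided bimodule structure compatibly, i.e. that $\Delta_{\M}\Del\Delta_{\N}$ is a $(\C\Del\D)$-bimodule functor; this follows from $\Delta_{\M}$ being $\C$-bilinear and $\Delta_{\N}$ being $\D$-bilinear, componentwise. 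Coassociativity: both $(\Delta\Del_{\C\Del\D}\Id)\Delta$ and $(\Id\Del_{\C\Del\D}\Delta)\Delta$ unwind, via repeated use of \leref{Weq} and the associativity equivalence $(\M\Del_{\D}\N)\Del_{\E}\Pp\simeq\M\Del_{\D}(\N\Del_{\E}\Pp)$ quoted in the Preliminaries, into the external product of the two coassociativity diagrams for $\Delta_{\M}$ and $\Delta_{\N}$. Since each of those commutes by hypothesis, so does the product; here \prref{sim bimod} is used to freely reshuffle the Deligne factors over $\C\Del\D$.

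\emph{Step 3: the Azumaya condition.} This is the step I expect to be the main obstacle. One needs that $\widetilde{\Delta_{\M\Del\N}}:(\M\Del\N)_2\to(\M\Del\N)_3\Del_{(\C\Del\D)^{\Del 3}}(\M\Del\N)_1$, built via \leref{Delta tilde} applied to the symmetric finite tensor category $\C\Del\D$, is an equivalence. The natural route is to show that this functor is identified, under the equivalences $(\M\Del\N)_i\simeq\M_i\Del\N_i$ (the functors $E^{\bullet}_i$ from \equref{e's} act componentwise on a Deligne product, since the $e^n_i$ do) together with \leref{Weq}, with $\widetilde{\Delta_\M}\Del\widetilde{\Delta_\N}$ up to a permutation of tensor factors. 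Concretely, on objects \equref{3H} gives $\widetilde{\Delta_{\M\Del\N}}((M\Del N)_2)=(M\Del N)_{(1)3}\Del(M\Del N)_{(2)1}$, and since $\Delta_{\M\Del\N}(M\Del N)=(M_{(1)}\Del N_{(1)})\Del_{\C\Del\D}(M_{(2)}\Del N_{(2)})$ this becomes $(M_{(1)3}\Del N_{(1)3})\Del(M_{(2)1}\Del N_{(2)1})$, which after applying \leref{Weq} is $(M_{(1)3}\Del_{\C^{\Del 3}}M_{(2)1})\Del(N_{(1)3}\Del_{\D^{\Del 3}}N_{(2)1})=\widetilde{\Delta_\M}(M_2)\Del\widetilde{\Delta_\N}(N_2)$. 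A tensor product of two equivalences is an equivalence, so $\widetilde{\Delta_\M}\Del\widetilde{\Delta_\N}$ is one, and hence so is $\widetilde{\Delta_{\M\Del\N}}$. The delicate point --- and the reason this is the hard step --- is verifying that all the identifications of the $(-)_i$ functors with componentwise operations, and of the $\Del_{(\C\Del\D)^{\Del 3}}$ over $\C\Del\D$ with $\Del_{\C^{\Del 3}}\otimes\Del_{\D^{\Del 3}}$, are compatible with the $\C\Del\D$-balancing isomorphisms defining $\widetilde{\Delta_{\M\Del\N}}$ in \leref{Delta tilde}; this requires tracking the braiding of $\C\Del\D$, which is the componentwise braiding, through the construction, and is exactly where \leref{braided: product over braided} and \leref{equiv fun}-style bookkeeping is needed. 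Once these coherences are in place, the three steps together give that $\M\Del\N$, with $\Delta_{\M\Del\N}$, is an Azumaya quasi $(\C\Del\D)$-coring category, and dropping $\Delta$ throughout yields the invertible statement.
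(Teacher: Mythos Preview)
Your proof is correct and follows essentially the same route as the paper: invertibility via \leref{Weq}, the comultiplication defined as $\Delta_\M\Del\Delta_\N$ composed with the \leref{Weq} equivalence, and the Azumaya condition via the computation $\widetilde{\Delta_{\M\Del\N}}((M\Del N)_2)\simeq\widetilde{\Delta_\M}(M_2)\Del\widetilde{\Delta_\N}(N_2)$. You are in fact more scrupulous than the paper about the coherence bookkeeping in Step~3; the paper simply asserts the displayed chain of isomorphisms and then concludes by noting that $\Del$ is biexact, so a Deligne product of equivalences is an equivalence.

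One point of misreading: the clause ``an analogous statement holds true for invertible coring categories'' does not refer to the bare invertibility of $\M\Del\N$ as a bimodule category (which is already part of the Azumaya quasi case), but to \emph{coring} categories in the proper sense, i.e.\ with counit. The paper's interpretation is: if $\M$ and $\N$ additionally carry counit functors $\varepsilon_\M,\varepsilon_\N$, then $\varepsilon:=\varepsilon_\M\Del\varepsilon_\N$ is a counit for $\M\Del\N$. So your remark at the end of Step~1 (``this already proves the `invertible coring categories' clause'') and your final sentence (``dropping $\Delta$ throughout yields the invertible statement'') misidentify what that last line is asking for. The fix is trivial --- just add the obvious definition of the counit --- but you should correct the reading.
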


\begin{proof}
Since $\C$ and $\D$ are braided, so is $\C\Del\D$ (e.g. 
\cite[Section 2.1, equation (8)]{Femic2}).
We have that $\M\Del\N$ is indeed an invertible $\C\Del\D$-bimodule category:
$$(\M\Del\N)\Del_{\C\Del\D}(\M^{op}\Del\N^{op})\stackrel{\leref{Weq}}{\simeq} (\M\Del_{\C}\M^{op})\Del(\N\Del_{\D}\N^{op})\simeq\C\Del\D.$$
The functor $\Delta$ defined through the composition:
$$ \bfig
\putmorphism(0, 200)(1, 0)[\M\Del\N` (\M\Del_{\C}\M) \Del (\N\Del_{\D}\N)` \Delta_{\M}\Del\Delta_{\N}]{1250}1a
\putmorphism(-550, 0)(1, 0)[\phantom{(\M\Del\N` (\M\Del_{\C}\M) \Del (\N\Del\N)}`
  (\M\Del\N) \Del_{\C\Del\D}(\M\Del\N)`\simeq]{1850}1a
\efig
$$
is $\C\Del\D$-bilinear, because $\Delta_{\M}$ is $\C$-bilinear and $\Delta_{\N}$ is $\D$-bilinear.
For $M\in\M$ and $N\in\N$ it is $\Delta(M\Del N)=(M_{(1)}\Del N_{(1)}) \Del_{\C\Del\D} (M_{(2)}\Del N_{(2)})$.
Then it is clear that $\M\Del\N$ equipped with $\Delta$ is a quasi $\C\Del\D$-coring category.

Let $\tilde\Delta$ denote the corresponding functor for $\M\Del\N$. To see that it is an equivalence, we compute:
$$\begin{array}{rl}
\tilde\Delta((M\Del N)_2)\hskip-1em&\hspace{-0,2cm}=(M\Del N)_{(1)3}\Del_{(\C\Del\D)^{\Del 3}}(M\Del N)_{(2)1} \\
&\simeq(M_{(1)}\Del N_{(1)})_3\Del_{(\C\Del\D)^{\Del 3}}(M_{(2)}\Del N_{(2)})_1 \\
&\simeq(M_{(1)3}\Del N_{(1)3})\Del_{(\C\Del\D)^{\Del 3}}(M_{(2)1}\Del N_{(2)1}) \\
&\simeq\tilde\Delta_{\M}(M_2)\Del\tilde\Delta_{\N}(N_2).
\end{array}$$
Since both $\tilde\Delta_{\M}$ and $\tilde\Delta_{\N}$ are equivalences and the bifunctor $\Del$ is biexact, we conclude that $\tilde\Delta$ is an
equivalence, too, proving that $\M\Del\N$ is Azumaya.

If $\varepsilon_{\M}$ and $\varepsilon_{\N}$ are the counit functors for $\M$ and $\N$ respectively, then $\varepsilon:=\varepsilon_{\M}\Del\varepsilon_{\N}$
is clearly the counit functor for $\M\Del\N$.
\qed\end{proof}

\bigskip

Take $(\M,\alpha), (\N,\beta)\in \dul{Z}^2(\C,\dul{\Pic})$ such that $[(\M,\alpha)]=[(\N,\beta)]$ in $H^2(\C,\dul{\Pic})$.
Let $F: \C\to \D$ be a symmetric tensor functor and consider two further functors:
$\Lambda,\Psi: \C\to \C\Del\D$ given by:
$$\Lambda(X)=I_{\C}\Del F(X)\qquad\textnormal{and}\qquad\Psi(X)=X\Del I_{\D}$$
for $X\in\C$, here $I_{\C}, I_{\D}$ are the unit objects in $\C$ and $\D$, respectively. They are symmetric monoidal functors.
We may rewrite them as: $\Lambda=(\Id_{\C}\Del F)e_1$ and $\Psi=-\Del\Id_{\D}$. Then clearly if $\C=\D$ and $F=\Id_{\C}$, we have:
$\Lambda=e_1$ and $\Psi=e_2$, where the functors $e_1,e_2: \C\to \C\Del\C$ are those from \equref{e's}.
Applying \prref{coh maps dul} to the functors $\Lambda$ and $\Psi$ we obtain:
$$[\Lambda^*(\M,\alpha)]=[\Psi^*(\N,\beta)]$$
in $H^2(\C\Del \D/vec,\dul{\Pic})$, where $\Lambda^*, \Psi^*: \dul{Z}^2(\C,\dul{\Pic})\to\dul{Z}^2(\C\Del\D,\dul{\Pic})$ are the induced functors as
in \equref{fun ind cat}. We apply $\chi^{-1}$ from \coref{Brauer} to both sides and we get:
$$[(\M\Del_{\C^{\Del 2}} {}_{\Lambda^{\Del 2}}(\C\Del\D)^{\Del 2}]=[(\N\Del_{\C^{\Del 2}}{}_{\Psi^{\Del 2}}(\C\Del\D)^{\Del 2}]$$
in $AzQCor(\C\Del \D)$. By the definition of $\Lambda$ and $\Psi$ this can be rewritten as:
\begin{equation*} 
[\C\Del\C\Del(\M\Del_{\C^{\Del 2}} {}_{F^{\Del 2}}(\D^{\Del 2}))]=[\N\Del\D\Del\D].
\end{equation*}
If we multiply the inverse of one of them with the other one we get the unit, hence the quasi coring categories:
$$
(\C\Del\C\Del(\M^{op}\Del_{\C^{\Del 2}} {}_{F^{\Del 2}} (\D\Del\D))) \Del_{\C\Del\C\Del\D\Del\D} \left( \N\Del\D\Del\D\right)$$
\begin{equation} \eqlabel{2fold appl of coh Prop}
\simeq \N\Del \M^{op}\Del_{\C\Del\C} {}_{F^{\Del 2}}(\D^{\Del 2}) 
\end{equation}
and similarly:
\begin{equation} \eqlabel{1fold appl of coh Prop}
\N^{op}\Del(\M\Del_{\C\Del\C} {}_{F^{\Del 2}}(\D^{\Del 2}))
\end{equation}
are elementary.

\medskip

We will apply this idea in the results that follow. Observe that for an Azumaya quasi $\C$-coring category $(\M,\Delta)$ and its corresponding
$(\M,\alpha)\in \dul{Z}^2(\C,\dul{\Pic})$, the inverse of the latter in $Z^2(\C,\dul{\Pic})$ is represented by $(\M^{op}, (\alpha^{op})^{-1})$,
and its corresponding quasi coring category is $(\M^{op},(\Delta^{op})^{-1})$.

\begin{prop} \prlabel{5.2}
Let $\M$ be an Azumaya quasi $\C$-coring category. Then $\M\Del \M^{op}$ is an elementary quasi coring category over $vec$.
\end{prop}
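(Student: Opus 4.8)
The plan is to deduce the statement as a special case of the construction carried out in the paragraphs preceding \equref{2fold appl of coh Prop}, with the choices $\D=\C$ and $F=\Id_{\C}$.

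Recall that that construction starts from two elements $(\M,\alpha),(\N,\beta)\in\dul{Z}^2(\C,\dul{\Pic})$ with $[(\M,\alpha)]=[(\N,\beta)]$ in $H^2(\C,\dul{\Pic})$ and a symmetric tensor functor $F:\C\to\D$, and — via \prref{coh maps dul} applied to the functors $\Lambda$ and $\Psi$, followed by the isomorphism $\chi$ of \coref{Brauer} — produces the conclusion that the quasi coring category $\N\Del\big(\M^{op}\Del_{\C\Del\C}{}_{F^{\Del 2}}(\D^{\Del 2})\big)$ appearing in \equref{2fold appl of coh Prop} is elementary over $vec$; here $\M^{op}$ carries the inverse coring structure $(\Delta^{op})^{-1}$, as recalled just before the present statement. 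First I would take $(\N,\beta)=(\M,\alpha)$, so that the hypothesis $[(\M,\alpha)]=[(\N,\beta)]$ in $H^2(\C,\dul{\Pic})$ holds trivially and the construction applies with no further input.

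Next I would put $\D=\C$ and $F=\Id_{\C}$. Then $F^{\Del 2}=\Id_{\C^{\Del 2}}$, so ${}_{F^{\Del 2}}(\D^{\Del 2})$ is just the regular $\C\Del\C$-bimodule category $\C\Del\C$, which is the unit object of the monoidal category $\big(\dul{\Pic}(\C\Del\C),\Del_{\C\Del\C},\C\Del\C\big)$ (equivalently of $\big((\C\Del\C)\x\Bimod,\Del_{\C\Del\C},\C\Del\C\big)$). Hence there is an equivalence of quasi coring categories
$$\M^{op}\Del_{\C\Del\C}{}_{\Id_{\C^{\Del 2}}}(\C\Del\C)\;\simeq\;\M^{op},$$
and \equref{2fold appl of coh Prop} in this case states exactly that $\M\Del\M^{op}$ is an elementary quasi coring category over $vec$, the underlying pair being $(\C\Del\C,\M\Del\M^{op})$ — note that $\C\Del\C$ is again a symmetric finite tensor category and $\M\Del\M^{op}\in\dul{\Pic}\big((\C\Del\C)^{\Del 2}\big)$ is an Azumaya quasi $\C\Del\C$-coring category (by \coref{symmetric} and the fact that the Deligne product of Azumaya quasi coring categories is again one).

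I do not expect a genuine obstacle here: beyond the bookkeeping above — keeping the inverse coring structure on $\M^{op}$, and remembering that the resulting elementary coring lives over $\C\Del\C$ rather than over $\C$, so that it is an object of $\dul{AzQCor}(vec)$ — nothing new is required. Should one prefer, the same conclusion follows from \equref{1fold appl of coh Prop} with the identical choices, which yields that $\M^{op}\Del\M$ is elementary over $vec$; transporting along the symmetry constraint of the Deligne tensor product $\Del$ then gives the claim for $\M\Del\M^{op}$, at the routine cost of checking that this symmetry is compatible with the coring structures.
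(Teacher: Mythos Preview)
Your proposal is correct and follows precisely the paper's own proof: set $\HH(\M)=(\M,\alpha)$ and invoke \equref{2fold appl of coh Prop} with $\N=\M$, $\D=\C$, $F=\Id_{\C}$. The additional remarks you make (on the simplification ${}_{\Id_{\C^{\Del 2}}}(\C\Del\C)\simeq\C\Del\C$, on the inverse coring structure on $\M^{op}$, and on the alternative via \equref{1fold appl of coh Prop}) are sound elaborations but not required beyond what the paper states.
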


\begin{proof}
Consider $\HH(\M)=(\M,\alpha)\in \dul{Z}^2(\C,\dul{\Pic})$, where $\HH$ is from \equref{cats Az and coc}. The assertion follows from \equref{2fold appl of coh Prop}
with $\N=\M, \C=\D$ and $F=\Id_{\C}$.
\qed\end{proof}

Let $(\C,\M)$ and $(\D,\N)$ be Azumaya quasi coring categories over $vec$. We say that $\M$ and $\N$ are {\em Brauer equivalent} (notation: $\M\sim \N$)
if there exist elementary quasi coring categories $\E_1$ and $\E_2$ over $vec$ such that $\M\Del \E_1\simeq \N\Del \E_2$ as (Azumaya) invertible quasi coring categories over $vec$.
Since the Deligne tensor product of two elementary quasi coring categories is elementary, it is easy to show that $\sim$ is an equivalence relation.
Let $AzQCor(vec)$ be the set of Brauer equivalence classes of equivalence classes of Azumaya quasi coring categories over $vec$.
From \prref{5.2} it follows that $AzQCor(vec)$ is an (abelian) group under the operation induced by the Deligne tensor product $\Del$, with unit element $[vec]$ and
the inverse $[(\M^{op},(\Delta^{op})^{-1})]$ of $[(\M,\Delta)]$. Observe that it is abelian since the monoidal category $(\Cc_k, \Del, vec)$ is symmetric.

\begin{lma}\lelabel{5.5}
Let $\M$ and $\E$ be Azumaya quasi $\C$-coring categories so that $\E=\Can(\Ll;\C)$ is elementary, for some $\Ll\in\dul{\Pic}(\C)$.
Then the Azumaya quasi coring categories $\M\Del_{\C^{\Del 2}}\E$ and $\M$ are Brauer equivalent.
\end{lma}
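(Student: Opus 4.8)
The statement to prove is \leref{5.5}: for $\M$ an Azumaya quasi $\C$-coring category and $\E = \Can(\Ll;\C)$ elementary (so $\E = \delta_1(\Ll)$ for some $\Ll\in\dul\Pic(\C)$), the coring categories $\M\Del_{\C^{\Del 2}}\E$ and $\M$ are Brauer equivalent as Azumaya quasi coring categories over $vec$. The natural approach is to transport everything to the cohomological side via the monoidal isomorphism $\HH:\dul{AzQCor}(\C)\to\dul Z^2(\C,\dul\Pic)$ from \equref{cats Az and coc}, work there, and transport back.

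First I would apply $\HH$: write $\HH(\M)=(\M,\alpha)$ and $\HH(\E)=(\E,\alpha_\E)$. Since $\HH$ is a monoidal isomorphism of categories, $\HH(\M\Del_{\C^{\Del 2}}\E)=(\M\Del_{\C^{\Del 2}}\E,\ \alpha\Del_{\C^{\Del 3}}\alpha_\E)$ (up to the obvious associativity identifications), so it suffices to show $(\M\Del_{\C^{\Del 2}}\E,\alpha\Del\alpha_\E)$ and $(\M,\alpha)$ are "Brauer equivalent" in $\dul Z^2(\C,\dul\Pic)$ in the sense of differing by elementary objects. But now the key point is \equref{alfa can}: for $\E=\delta_1(\Ll)$ the associated element of $\dul Z^2(\C,\dul\Pic)$ is exactly $(\delta_1(\Ll),\lambda_\Ll)$, which by definition lies in $B^2(\C,\dul\Pic)$ — it is $d_0(\Ll)$. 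Therefore in $H^2(\C,\dul\Pic)$ we have $[\alpha\Del\alpha_\E]=[\alpha]$, i.e.\ the two cocycle-pairs represent the same cohomology class. Passing through the group isomorphism $\chi: AzQCor(\C)\to H^2(\C,\dul\Pic)$ of \coref{Brauer}, this says precisely $[\M\Del_{\C^{\Del 2}}\E]=[\M]$ in $AzQCor(\C)=K_0[\dul{AzQCor}(\C)]/\Can(\C)$. Unwinding the definition of the quotient, this means there exist canonical quasi $\C$-coring categories $\Can(\N_1;\C)$ and $\Can(\N_2;\C)$ (elementary) with $(\M\Del_{\C^{\Del 2}}\E)\Del_{\C^{\Del 2}}\Can(\N_1;\C)\simeq \M\Del_{\C^{\Del 2}}\Can(\N_2;\C)$ as Azumaya quasi $\C$-coring categories.

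The remaining step is to upgrade this equivalence over $\C$ to a Brauer equivalence over $vec$: the pair $(\Id_\C,\ \text{that equivalence})$ is a morphism in $\dul{AzQCor}(vec)$ between $(\C,\M\Del_{\C^{\Del 2}}\E\Del_{\C^{\Del 2}}\Can(\N_1;\C))$ and $(\C,\M\Del_{\C^{\Del 2}}\Can(\N_2;\C))$, and the two factors $\Can(\N_i;\C)$ are elementary quasi coring categories over $vec$ (being $\delta_1(\N_i)$). Hence $\M\Del_{\C^{\Del 2}}\E$ and $\M$ differ by elementary factors over $vec$, which is exactly the definition of $\M\sim\M\Del_{\C^{\Del 2}}\E$.

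I expect the main obstacle to be purely bookkeeping: one must be careful that the associativity equivalences $\M\Del_{\C^{\Del 2}}(\E\Del_{\C^{\Del 2}}\Can(\N_1;\C))\simeq(\M\Del_{\C^{\Del 2}}\E)\Del_{\C^{\Del 2}}\Can(\N_1;\C)$ are compatible with the comultiplication functors (this follows from \leref{Weq} together with the coassociativity verifications already carried out in the construction of $\Del_{\C^{\Del 2}}$ on $\dul{AzQCor}(\C)$), and that the identification $\HH(\M\Del_{\C^{\Del 2}}\E)=(\M\Del\E,\alpha\Del\alpha_\E)$ is exactly the one making $\HH$ monoidal. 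All of this is either already in the excerpt (the monoidality of $\HH$, \equref{alfa can}, \coref{Brauer}) or routine, so the lemma follows essentially formally once one recognizes that "$\E$ elementary" means "$\HH(\E)\in B^2(\C,\dul\Pic)$".
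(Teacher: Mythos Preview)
Your argument correctly establishes that $[\M\Del_{\C^{\Del 2}}\E]=[\M]$ in $AzQCor(\C)$, but the final step contains a genuine gap. Unwinding the quotient gives you an equivalence
\[
(\M\Del_{\C^{\Del 2}}\E)\Del_{\C^{\Del 2}}\Can(\N_1;\C)\ \simeq\ \M\Del_{\C^{\Del 2}}\Can(\N_2;\C)
\]
of quasi $\C$-coring categories, and you then assert that this is ``exactly the definition of $\M\sim\M\Del_{\C^{\Del 2}}\E$''. It is not: Brauer equivalence over $vec$ is defined via the Deligne product $\Del$ over $vec$, i.e.\ one needs elementary $\E_1,\E_2$ with $(\M\Del_{\C^{\Del 2}}\E)\Del\E_1\simeq\M\Del\E_2$. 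You have produced products over $\C^{\Del 2}$, not over $vec$. Converting a $\Del_{\C^{\Del 2}}$-relation with an elementary factor into a $\Del$-relation with an elementary factor is precisely the content of the lemma you are trying to prove (and is, for instance, what the paper later uses this lemma to do in showing $i_\C$ is a group map). So the argument is circular at this point.

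The paper's proof avoids this by never leaving the $\Del$-over-$vec$ world. Starting from the same equality $[(\M\Del_{\C^{\Del 2}}\E,\alpha\Del\lambda_\Ll)]=[(\M,\alpha)]$ in $H^2(\C,\dul\Pic)$, it applies \prref{coh maps dul} to the two tensor embeddings $\C\to\C\Del\C$ (this is \equref{2fold appl of coh Prop} with $\N=\M\Del_{\C^{\Del 2}}\E$, $\D=\C$, $F=\Id$), which yields directly that the \emph{Deligne} product $(\M\Del_{\C^{\Del 2}}\E)\Del\M^{op}$ is elementary. Tensoring over $vec$ by $\M$ and invoking \prref{5.2} (that $\M^{op}\Del\M$ is elementary) then gives the Brauer equivalence in the required form. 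The missing ingredient in your approach is exactly this invocation of \prref{coh maps dul}/\equref{2fold appl of coh Prop}, which is what produces an elementary factor attached by $\Del$ rather than by $\Del_{\C^{\Del 2}}$.
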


\begin{proof}
For $\E$ elementary we have $\HH(\E)=(\Ll^{op}\Del \Ll,\lambda_{\Ll})$, and if $\HH(\M)=(\M,\alpha)$, we have:
$$[(\M\Del_{\C^{\Del 2}}\E,\alpha\Del_{\C^{\Del 3}}\lambda_{\Ll})]=[(\M,\alpha)]$$
in $H^2(\C,\dul{\Pic})$. Let now $\N=\M\Del_{\C^{\Del 2}}\E, \C=\D$ and $F=\Id_{\C}$ in \equref{2fold appl of coh Prop}.
Then $(\M\Del_{\C^{\Del 2}}\Ee)\Del \M^{op}=\Pp$ is an elementary quasi coring category, and
$$(\M\Del_{\C^{\Del 2}}\Ee)\Del \M^{op}\Del\M=\Pp\Del \M.$$
By \prref{5.2} we have that $\M\Del\M^{op}$ is elementary, so it follows that $\M\Del_{\C^{\Del 2}}\Ee\sim \M$.
\qed\end{proof}

\begin{lma}\lelabel{5.5a}
Let $F: \C\to \D$ be a functor between two symmetric finite tensor categories. If $\M$
is an Azumaya quasi $\C$-coring category, then $\M\sim F'(\M)=\M\Del_{\C^{\Del 2}} {}_{F^{\Del 2}}\Can(\D;\D)$.
\end{lma}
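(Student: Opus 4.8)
The plan is to use the setup prepared in \equref{1fold appl of coh Prop}, which was obtained by a single application of \prref{coh maps dul} to the pair of functors $\Lambda,\Psi:\C\to\C\Del\D$. Recall from that discussion that \equref{1fold appl of coh Prop} asserts: for any $(\M,\alpha),(\N,\beta)\in\dul{Z}^2(\C,\dul{\Pic})$ with $[(\M,\alpha)]=[(\N,\beta)]$ in $H^2(\C,\dul{\Pic})$, the quasi coring category $\N^{op}\Del(\M\Del_{\C\Del\C}{}_{F^{\Del 2}}(\D^{\Del 2}))$ is elementary. First I would specialize this to $\N=\M$, which is legitimate since trivially $[(\M,\alpha)]=[(\M,\alpha)]$, and $\beta=\alpha$ as well. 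This yields that
$$\Pp:=\M^{op}\Del(\M\Del_{\C\Del\C}{}_{F^{\Del 2}}(\D^{\Del 2}))=\M^{op}\Del F'(\M)$$
is an elementary quasi coring category over $vec$, where $F'(\M)=\M\Del_{\C^{\Del 2}}{}_{F^{\Del 2}}\Can(\D;\D)=\M\Del_{\C^{\Del 2}}{}_{F^{\Del 2}}(\D\Del\D)$ by the definition of $F'$ given at the start of this subsection.

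Next I would assemble the Brauer equivalence. Starting from the elementary category $\Pp=\M^{op}\Del F'(\M)$, take the Deligne product with $\M$ on the left:
$$\M\Del\Pp\simeq\M\Del\M^{op}\Del F'(\M)$$
using associativity and symmetry of $(\Cc_k,\Del,vec)$ (\prref{sim bimod}-type commutativity, and the fact that the ambient monoidal 2-category is symmetric). By \prref{5.2}, $\M\Del\M^{op}$ is itself an elementary quasi coring category over $vec$; call it $\E_1$. Since the Deligne tensor product of two elementary quasi coring categories is again elementary (this was observed just before the definition of the Brauer equivalence relation), both $\M\Del\Pp$ and $\E_1\Del F'(\M)$ are legitimate and we have
$$\M\Del\Pp\simeq\E_1\Del F'(\M)$$
as Azumaya invertible quasi coring categories over $vec$, with $\Pp$ and $\E_1$ elementary. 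This is precisely the definition of $\M\sim F'(\M)$.

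Concretely, I would phrase the final line as: the equivalence $\M\Del(\M^{op}\Del F'(\M))\simeq(\M\Del\M^{op})\Del F'(\M)$ of quasi coring categories over $vec$, together with $\Pp=\M^{op}\Del F'(\M)$ and $\E_1=\M\Del\M^{op}$ both elementary, witnesses $\M\sim F'(\M)$. The only genuine point requiring care — the ``main obstacle'' — is checking that the comultiplication functors match under the reassociation equivalence, i.e.\ that the canonical associativity equivalence in $(\Cc_k,\Del,vec)$ is compatible with the comultiplications defined componentwise on Deligne products of quasi coring categories; but this is exactly the coherence built into the previous lemma showing that $\Del$ of quasi coring categories is again a quasi coring category, and into the associativity constraint of the monoidal 2-category, so no new computation is needed. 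Everything else is bookkeeping with \prref{coh maps dul}, \equref{1fold appl of coh Prop}, and \prref{5.2}.

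\qed
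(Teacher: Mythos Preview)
Your proof is correct and follows essentially the same route as the paper: specialize \equref{1fold appl of coh Prop} to $\N=\M$ to obtain that $\M^{op}\Del F'(\M)$ is elementary, then tensor with $\M$ and invoke \prref{5.2} to conclude $\M\sim F'(\M)$. Your added remark about compatibility of comultiplications under the associativity equivalence is a reasonable point of care, but the paper leaves it implicit.
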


\begin{proof}
We set $\HH(\M)=(\M,\alpha)$ as before, now putting $\N=\M$ in \equref{1fold appl of coh Prop} we have that
$\M^{op}\Del (\M\Del_{\C^{\Del 2}} {}_{F^{\Del 2}}\Can(\D;\D))=\Ee$ is an elementary quasi $\C\Del \D$-coring category. We then have:
$$\M\Del \M^{op}\Del (\M\Del_{\C^{\Del 2}}\Can(\D;\D))\simeq\M\Del\Ee,$$
being $\M\Del\M^{op}$ 
elementary, 
we may deduce that $\M\sim F'(\M)=\M\Del_{\C^{\Del 2}} {}_{F^{\Del 2}}\Can(\D;\D)$.
\qed\end{proof}

\begin{prop}\prlabel{relative and full}
Let $\C$ be a symmetric finite tensor category. There is a well-defined group monomorphism:
$$i_{\C}: AzQCor(\C)\to AzQCor(vec),~~i_{\C}([\M])=[\M].$$
If $F:\C\to\D$ is a functor between two symmetric finite tensor categories, then there is a commutative diagram:
$$
\bfig
\putmorphism(380,0)(0,-1)[`AzQCor(\D).`\tilde{F}]{450}1l
\putmorphism(380,0)(1,0)[AzQCor(\C)`AzQCor(vec)`i_{\C}]{850}1a
\putmorphism(1300,-30)(-2,-1)[``]{850}{-1}r
\putmorphism(1300,-50)(-2,-1)[``i_{\D}]{935}0r
\efig
$$
\end{prop}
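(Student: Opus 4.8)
The plan is to first establish that the assignment $i_{\C}([\M])=[\M]$ is well defined, then injective, and finally to verify the commutativity of the triangle. For well-definedness I would argue as follows. Two Azumaya quasi $\C$-coring categories that represent the same class in $AzQCor(\C)$ differ, up to equivalence of quasi coring categories, by a product with canonical quasi $\C$-coring categories of the form $\Can(\N;\C)$ with $\N\in\dul\Pic(\C)$; but $\Can(\N;\C)$ is exactly an elementary quasi coring category over $vec$ (it equals $\delta_1(\N)$ by \rmref{comput can cor}), so by \leref{5.5} the Brauer class over $vec$ is unchanged. Hence $i_{\C}$ descends to the quotient $AzQCor(\C)=K_0[\dul{AzQCor}(\C)]/\Can(\C)$. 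That $i_{\C}$ is a group homomorphism is immediate, since the product in both groups is induced by $\Del_{\C^{\Del 2}}$ (respectively $\Del$ over $vec$), and the unit of $AzQCor(\C)$, namely $\Can(\C;\C)=\C\Del\C$, maps to an elementary coring category over $vec$, hence to the unit $[vec]$.

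For injectivity, suppose $[\M]\in AzQCor(\C)$ maps to the trivial class in $AzQCor(vec)$. Then there are elementary quasi coring categories $\E_1,\E_2$ over $vec$ — say $\E_1=\Can(\Ll_1;\C_1)$ and $\E_2=\Can(\Ll_2;\C_2)$ for symmetric finite tensor categories $\C_1,\C_2$ — with $\M\Del\E_1\simeq\E_2$ as invertible quasi coring categories over $vec$. Translating through the isomorphism $\HH$ of \equref{cats Az and coc} and the colimit description \equref{colimit Hn}, this says that the image of $[(\M,\alpha)]$ in $\colim H^2(\bullet/vec,\dul\Pic)$ is trivial. But the key point is that, by \prref{coh maps dul}, the transition maps $\Omega_{\C,\D}\colon H^2(\C,\dul\Pic)\to H^2(\D,\dul\Pic)$ in this colimit system are induced by symmetric tensor functors and, crucially, \emph{any two} symmetric tensor functors $\C\to\D$ induce the same map; combined with \coref{Brauer}, which identifies $AzQCor(\C)\cong H^2(\C,\dul\Pic)$ as a \emph{fixed} group, this forces the transition maps to be injective. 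Indeed, a factorization $\C\to\D\to\C$ (which exists whenever $[\C]\le[\D]$, e.g. via $\C\to\C\Del\D\to\C$ using the unit-insertion functors and a projection-type functor) composes, by \prref{coh maps dul}, to the identity on $H^2(\C,\dul\Pic)$, so $\Omega_{\C,\D}$ is a split monomorphism. Therefore triviality of the image in the colimit already forces $[(\M,\alpha)]$ to be trivial in $H^2(\C,\dul\Pic)$, i.e.\ $[\M]=1$ in $AzQCor(\C)$.

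For the commutative triangle, given $F\colon\C\to\D$ let $\tilde F\colon AzQCor(\C)\to AzQCor(\D)$ be the map $F''$ from the previous subsection, $\tilde F([\M])=[\M\Del_{\C^{\Del2}}{}_{F^{\Del2}}(\D\Del\D)]$. One needs $i_{\D}\circ\tilde F=i_{\C}$, i.e.\ that $\M$ and $F'(\M)=\M\Del_{\C^{\Del2}}{}_{F^{\Del2}}\Can(\D;\D)$ have the same Brauer class over $vec$; but this is precisely the content of \leref{5.5a}. Thus the diagram commutes on the nose.

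The main obstacle I anticipate is the injectivity step: one must make rigorous the claim that the maps in the colimit system are monomorphisms. The cleanest route is the split-monomorphism argument sketched above, relying essentially on \prref{coh maps dul} (functor-independence of the induced map) together with \coref{Brauer}; some care is needed to produce, for each pair $[\C]\le[\D]$, a tensor functor $\D\to\C$ (or more precisely to argue within $\C\Del\D$ using the insertions $X\mapsto X\Del I$, $Y\mapsto I\Del Y$ and the functor $\C\Del\D\to\C$ coming from the counit-type structure, or simply to restrict attention to the directed system where such retractions are available) so that the composite $\C\to\D\to\C$ is a symmetric tensor functor and hence, by \prref{coh maps dul}, induces the identity. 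Everything else reduces to bookkeeping with \leref{5.5}, \leref{5.5a}, \prref{5.2}, and the identification $\Can(\N;\C)=\delta_1(\N)$.
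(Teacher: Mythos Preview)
Your treatment of well-definedness (via \leref{5.5}) and of the commutative triangle (via \leref{5.5a}) matches the paper's. Two points, however, need attention.

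\medskip

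\textbf{The homomorphism step is not immediate.} The product in $AzQCor(\C)$ is $\Del_{\C^{\Del 2}}$, while in $AzQCor(vec)$ it is the plain Deligne product $\Del$; so one must actually prove $[\M\Del_{\C^{\Del 2}}\N]=[\M\Del\N]$ in $AzQCor(vec)$ for Azumaya quasi $\C$-coring categories $\M,\N$. The paper does this by the chain
\[
\M\Del\N \;\sim\; (\M\Del\N)\Del_{\C^{\Del 4}}(\M^{op}\Del\M)\;\simeq\;(\M\Del_{\C^{\Del 2}}\M^{op})\Del(\N\Del_{\C^{\Del 2}}\M)\;\sim\;\M\Del_{\C^{\Del 2}}\N,
\]
invoking \leref{5.5}, \leref{Weq}, and \prref{5.2}. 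Your proposal skips this; it is short, but it is the content of the homomorphism claim.

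\medskip

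\textbf{The injectivity argument has a genuine gap.} Your split-monomorphism strategy needs, for the relevant pair $\C\to\C\Del\D$, a symmetric tensor functor $\C\Del\D\to\C$. Since symmetric tensor functors out of $\C\Del\D$ to a symmetric target correspond to pairs of symmetric tensor functors out of $\C$ and out of $\D$, this amounts to producing a symmetric tensor functor $\D\to\C$. No such functor exists in general: take $\D=svec$ and $\C=\Rep(\mathbb{Z}/3)$, where $\C$ has no central involution, so there is no symmetric tensor functor $svec\to\C$. Thus the ``projection-type'' or ``counit-type'' retraction you invoke is simply unavailable in the directed system $\Omega_{sf}$, and restricting to a subsystem where retractions exist would not prove the proposition as stated. (For comparison, the paper only asserts ``It is clearly injective'' without argument, so this is a place where the paper's proof is also incomplete; but the route you sketch does not close the gap.)
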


\begin{proof}
Let us first make sure that $i_{\C}$ is well-defined. For that purpose take $[\M]=[\N]$ in
$AzQCor(\C)=K_0[\dul{AzQCor}(\C)]/\Can(\C)$. That means that for an elementary quasi $\C$-coring category $\E$
we have $\M\Del_{\C^{\Del 2}}\N^{op}\simeq\Ee$. Multiplying this in the group $AzQCor(\C)$ by $\N$, i.e. tensoring the equation over $\C\Del\C$ by $\N$, we obtain
$\M\simeq\N\Del_{\C^{\Del 2}}\Ee$. By \leref{5.5} this is Brauer equivalent to $\N$ in
$AzQCor(vec)$, thus $i_{\C}$ is well-defined. Let us now show that $i_{\C}$ is a group
homomorphism. Let $\M$ and $\N$ be two Azumaya quasi $\C$-coring categories. Then by \prref{5.2}
the quasi $\C\Del \C$-coring category $\M^{op}\Del \M=\Ee_1$, and clearly the quasi $\C$-coring category $\M\Del_{\C^{\Del 2}}\M^{op}=\Ee_2$,
are both elementary. Now we find:
\begin{eqnarray*}
\M\Del \N&\sim & (\M\Del \N)\Del_{\C^{\Del 4}} (\M^{op}\Del \M)\\
&\simeq & (\M\Del_{\C^{\Del 2}} \M^{op})\Del (\N\Del_{\C^{\Del 2}} \M)\\
&\sim &\N\Del_{\C^{\Del 2}} \M\simeq \M\Del_{\C^{\Del 2}} \N
\end{eqnarray*}
where the first identity holds by \leref{5.5}, the second identity is due to \leref{Weq} and the third one holds because $vec$ is trivially an elementary quasi coring category.
Thus, in $AzQCor(vec)$ we have:
$$i_{\C}([\M\Del_{\C^{\Del 2}} \N]) = [\M\Del \N] = i_{\C}([\M])i_{\C}([\N]).$$
This proves that $i_{\C}$ is a group map. It is clearly injective.

Finally, from \leref{5.5a} it follows that $i_{\C}([\M])=[\M]=[\M\Del_{\C^{\Del 2}} {}_{F^{\Del 2}}(\D\Del\D)]=(i_{\D}\circ \tilde{F})([\M])$.
\qed\end{proof}

\begin{thm}
We have the following isomorphism of groups:
$$AzQCor(vec)\cong \colim~ AzQCor(\bullet/vec)\cong \colim~ H^2(\bullet/vec,\dul{\Pic}).$$
\end{thm}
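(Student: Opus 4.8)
The second isomorphism is precisely \equref{colimit Hn}, so the content of the statement lies in the first one, $AzQCor(vec)\cong\colim~AzQCor(\bullet/vec)$. By \prref{relative and full} the homomorphisms $i_{\C}\colon AzQCor(\C/vec)\to AzQCor(vec)$ satisfy $i_{\D}\circ\tilde F=i_{\C}$ for every symmetric tensor functor $F\colon\C\to\D$, hence are compatible with the transition maps of the directed family $\{AzQCor(\C/vec)\}_{[\C]\in\Omega_{sf}}$ and therefore assemble into a group homomorphism $\Phi\colon\colim~AzQCor(\bullet/vec)\to AzQCor(vec)$. Surjectivity is immediate: an Azumaya quasi coring category over $vec$ is by definition a pair $(\C,\M)$ with $\M$ an Azumaya quasi $\C$-coring category, and then $[\M]=i_{\C}([\M])$ is the image under $\Phi$ of the class of $(\C,[\M])$. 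Thus everything reduces to showing that $\Phi$ has trivial kernel.

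So let a class in the colimit, represented by $(\C,[\M])$, lie in $\ker\Phi$, i.e. $\M\sim vec$ as Azumaya quasi coring categories over $vec$. By definition there are elementary quasi coring categories $\E_1=\delta_1(\Ll_1)$ over some $\C_1$ and $\E_2=\delta_1(\Ll_2)$ over some $\D_1$, an equivalence of symmetric tensor categories $G\colon\C\Del\C_1\to\D_1$, and a $G$-linear bimodule equivalence $\F\colon\M\Del\E_1\to\E_2$ preserving the comultiplication functors. Transporting $\E_2=\delta_1(\Ll_2)$ along $G^{-1}$ and using that $\delta_1$ commutes with the functors induced by tensor functors (the naturality of $\delta$ exploited in the proof of \prref{coh maps dul}), one sees that $\M\Del\E_1$ is itself an elementary quasi $\C\Del\C_1$-coring category, and consequently $[\M\Del\E_1]=0$ in $AzQCor(\C\Del\C_1)$.

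It then remains to identify $[\M\Del\E_1]$ with the transition image $\tilde\iota([\M])$ of $[\M]$ under $[\C]\le[\C\Del\C_1]$, realized by $\iota\colon\C\to\C\Del\C_1,\ X\mapsto X\Del I$. The description of induced functors in \ssref{Full group}, together with \leref{Weq} and the reordering $(\C\Del\C_1)^{\Del 2}\simeq\C^{\Del 2}\Del\C_1^{\Del 2}$, gives $\tilde\iota([\M])=[\M\Del_{\C^{\Del 2}}{}_{\iota^{\Del 2}}(\C\Del\C_1)^{\Del 2}]=[\M\Del\Can(\C_1;\C_1)]$, whereas $\M\Del\E_1=\M\Del\Can(\Ll_1;\C_1)$; so the quotient of these two classes in $AzQCor(\C\Del\C_1)$ is, by \leref{Weq}, represented by $(\M\Del_{\C^{\Del 2}}\M^{op})\Del\big(\Can(\C_1;\C_1)\Del_{\C_1^{\Del 2}}\Can(\Ll_1;\C_1)^{op}\big)$. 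Here I would invoke the sharp form of \coref{Brauer}: since $\HH$ from \equref{cats Az and coc} is an isomorphism of categories carrying the canonical coring categories onto the objects representing $B^2(\C,\dul\Pic)$, any Azumaya quasi coring category whose class in the relative group vanishes is already equivalent to a single $\delta_1(\Ll)$. Applied to $\M\Del_{\C^{\Del 2}}\M^{op}$ (class $[\M]\cdot[\M]^{-1}=0$ in $AzQCor(\C)$) and to $\Can(\C_1;\C_1)\Del_{\C_1^{\Del 2}}\Can(\Ll_1;\C_1)^{op}\simeq\Can(\Ll_1;\C_1)^{op}$ (class $0$ in $AzQCor(\C_1)$), both Deligne factors are elementary, and a Deligne product of elementary coring categories is again elementary since $\delta_1(\Ll)\Del\delta_1(\Ll')\simeq\delta_1(\Ll\Del\Ll')$. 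Hence the quotient class is $0$, so $\tilde\iota([\M])=[\M\Del\E_1]=0$ in $AzQCor(\C\Del\C_1)$; therefore the class of $(\C,[\M])$ in the colimit equals the class of $(\C\Del\C_1,0)$, i.e. it is trivial. Thus $\ker\Phi=0$ and $\Phi$ is an isomorphism.

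The main obstacle is precisely the identification $[\M\Del\E_1]=\tilde\iota([\M])$: one must show that forming the Deligne product with an elementary coring category living over an auxiliary tensor category, and then reading the outcome over the product category, does not alter the class in the relative group. The manipulations of $\Del_{\C^{\Del n}}$, of $(-)^{op}$ and of $\delta_1$ are routine via \leref{Weq}, but the genuinely needed input --- the same one underlying \prref{5.2}, \leref{5.5} and \leref{5.5a} --- is that $\HH$ is an equivalence of categories and not merely a group isomorphism, so that ``trivial class in $AzQCor$'' is equivalent to ``equivalent to a single $\delta_1(\Ll)$''; this is exactly what lets all the elementary corrections be absorbed.
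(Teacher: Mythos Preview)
Your argument is correct, but it takes a different route from the paper. The paper does not verify bijectivity of the comparison map $\Phi$; instead it shows directly that $AzQCor(vec)$ satisfies the universal property of the colimit. Given a compatible family $\beta_{\C}\colon AzQCor(\C)\to A$, the paper defines $\beta\colon AzQCor(vec)\to A$ by $\beta([\M])=\beta_{\C}([\M])$ when $\M$ lives over $\C$, and checks well-definedness using the injectivity of $i_{\C\Del\D}$ already established in \prref{relative and full} together with \leref{5.5a}. This packages the hard work once (in \prref{relative and full}) and then the theorem falls out in a few lines.

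Your approach instead unwinds the definition of Brauer equivalence and chases elements to show $\ker\Phi=0$. This works, and your identification $\tilde\iota([\M])=[\M\Del\E_1]$ via \leref{Weq} and the sharp form of the isomorphism $\HH$ in \equref{cats Az and coc} is sound: the crucial points---that an Azumaya quasi coring category of trivial relative class is already equivalent to a single $\delta_1(\Ll)$, that $\delta_1^{\C}(\Ll)\Del\delta_1^{\C_1}(\Ll')\simeq\delta_1^{\C\Del\C_1}(\Ll\Del\Ll')$, and that pulling back $\delta_1(\Ll_2)$ along the equivalence $G$ preserves elementarity---are all available. What your route costs is a fair amount of bookkeeping with $(-)^{op}$, the reorderings $(\C\Del\C_1)^{\Del 2}\simeq\C^{\Del 2}\Del\C_1^{\Del 2}$, and the verification that the relevant coring structures match; the paper avoids all of this by appealing to the universal property, where the only nontrivial check (well-definedness of $\beta$) reduces to the already-proved injectivity of $i_{\C\Del\D}$.
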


\begin{proof}
From \prref{relative and full} and the definition of the colimit we have that there is a unique map
$$i:\ \colim~ AzQCor(\bullet/vec)\to AzQCor(vec).$$
Let $A$ be an arbitrary abelian group, and let $\beta_{\C}: AzQCor(\C)\to A$ be a collection of maps such that
$$\beta_{\D}\circ \widetilde{F}=\beta_{\C},$$
for every functor of symmetric finite tensor categories $F: \C\to \D$.

We define the map
$$\beta: AzQCor(vec)\to A$$
as follows. An element $X$ of $AzQCor(vec)$ is represented by an Azumaya quasi $\C$-coring category $\M$ for some symmetric finite tensor category $\C$.
Then define
$$\beta(X)=\beta_{\C}([\M]).$$
To show that it is well-defined, take an Azumaya quasi $\D$-coring category $\N$ such that $[\N]=[\M]$. 
Then
$$\M\Del (\D\Del\D)\sim \M\sim \N\sim \N\Del (\C\Del\C)$$
and from the injectivity of $i_{\C\Del\D}$ (see \prref{relative and full}) we obtain that
$[\M\Del (\D\Del\D)]=[\N\Del (\C\Del\C)]$ in $AzQCor(\C\Del\D)$, hence
$$\beta_{\C}([\M])=\beta_{\C\Del\D}([\M\Del (\D\Del\D)])=\beta_{\C\Del\D}([\N\Del (\C\Del\C)])=\beta_{\D}([\N]),$$
so $\beta$ is well-defined. It is constructed so that the diagrams
$$
\bfig
\putmorphism(-30, 500)(2, -1)[\phantom{AzQCor(\C)}`\phantom{P\cong P \Del I}`]{850}1r
\putmorphism(-30, 450)(2, -1)[\phantom{AzQCor(\C)}`\phantom{P\cong P \Del I}`\beta_{\C}]{850}0l
\putmorphism(770, 500)(0, -1)[`A`\beta]{420}1r
\putmorphism(0, 500)(1, 0)[AzQCor(\C)`AzQCor(vec)`i_{\C}]{850}1a
\efig
$$
commute for all $\C\in FTC_{symm}$. To prove that $\beta$ is unique with such a property, assume that there exists
$\gamma: AzQCor(vec)\to A$ with $\gamma i_{\C}=\beta_{\C}$ for all symmetric finite tensor categories $\C$.
Let $X=[\M]\in AzQCor(vec)$ for some symmetric finite tensor category $\C$ and an Azumaya quasi $\C$-coring category $\M$. We then
have: $\gamma(X)=\gamma i_{\C}([\M])=\beta_{\C}([\M])=\beta i_{\C}([\M])=\beta(X)$, hence $\gamma=\beta$. This means that
$AzQCor(vec)$ satisfies the required universal property of a colimit. Finally, apply \equref{colimit Hn}.
\qed\end{proof}


Observe that all the results in this subsection are valid for Azumaya quasi- as well for proper coring categories, so we have:
$$AzCor^*(vec)\cong \colim~ AzCor^*(\bullet/vec)$$
for the corresponding groups of Azumaya coring categories.

\subsection*{Acknowledgments} The work of the author was partially supported by the Mathematical Institute of the Serbian Academy of Sciences and Arts (MI SANU),
Serbia. 

\bibliographystyle{amsalpha}

\begin{thebibliography}{AE}

\bibitem{Al1} {\sc A. Brugui\`eres}. {\em Cat\'egories pr\'emodulaires, modularisations et
invariants desvari\'et\'es de dimension 3}, Math. Ann. {\bf 316} (2000), 215--236.


\bibitem{Brz1} {\sc T. Brzezi\'nski}. {\em The structure of corings. Induction functors,
Maschke-type theorem, and Frobenius and Galois-type properties}, Algebr. Represent.
Theory {\bf 5} (2002), 389-410.

\bibitem{CF} {\sc S. Caenepeel} and {\sc B. Femi\'c}. {\em The Brauer Group of Azumaya Corings and the Second Cohomology Group},
K-theory {\bf 34} (2005), 361-393.


\bibitem{CY} {\sc L. Crane} and {\sc D. Yetter}. {\em Deformations of (Bi) tensor Categories}, Cahiers Topologie G\'eom. Diff\'erentielle Cat\'eg. 39,
{\bf 3} (1998), 163--180.


\bibitem{DGNO1} {\sc V. Drinfeld, S. Gelaki, D. Nikshych} and {\sc V. Ostrik}. \emph{On braided fusion categories I} ,
Selecta Mathematica 16 (2010), no. 1, 1--119.


\bibitem{DN}  {\sc A. Davydov} and {\sc D. Nikshych}. \emph{The Picard crossed module of a braided tensor category}, Algebra and Number Theory
{\bf 7}/6 (2013), 1365--1403.


\bibitem{DZ} {\sc J. Dello} and {\sc Y. Zhang}. Braided Autoequivalences and the Equivariant Brauer group  of a quasi-triangular Hopf algebra, Preprint: ArXive: Oct. 2014


\bibitem{EGNO}  {\sc P. Etingof}, {\sc S. Gelaki}, {\sc D. Nikshych} and {\sc V. Ostrik}. \emph{Tensor categories}. Lecture notes for MIT 18.769,
2009. http://www-math.mit.edu/~etingof/tenscat1.pdf.


\bibitem{ENO}  {\sc P. Etingof}, {\sc D. Nikshych} and {\sc V. Ostrik}. \emph{Fusion categories and
 homotopy theory}. Quantum Topol. \textbf{1}, No. 3, (2010) 209--273.


\bibitem{EO}  {\sc P. Etingof} and {\sc V. Ostrik}. \emph{Finite tensor categories}. Mosc. Math. J. \textbf{4}, No. 3	(2004) 627--654.


\bibitem{Femic2} {\sc B. Femi\'c}. {\em Villamayor-Zelinsky sequence for symmetric finite tensor categories}, ArXive...


\bibitem{fsv} {\sc J. Fuchs}, {\sc  C. Schweigert} and {\sc A. Valentino}. {\em Bicategories for boundary conditions
and for surface defects in 3-d TFT}, Comm.  Math. Physics \textbf{321}, Issue 2, (2013) 543--575.


\bibitem{Gr}  \textsc{J. Greenough}. \emph{Monoidal 2-structure of Bimodule Categories}.
  J. Algebra \textbf{324} (2010) 1818--1859.


\bibitem{Gr1}  \textsc{J. Greenough}. \emph{Relative centers and tensor products of tensor and braided fusion categories},
  J. Algebra \textbf{388} (2013) 374--396.


\bibitem{KK} {\sc A. Kitaev} and  {\sc L. Kong}. \emph{Models for gapped boundaries and domain walls}. Comm.  Math. Physics \textbf{313}, Issue 2, (2012) 351--373.


\bibitem{KO1} {\sc M.A. Knus} and {\sc M. Ojanguren}. \emph{Th\'eorie de la descente et alg\`ebres d'Azumaya}, Lecture Notes in Math. {\bf 389}, Springer Verlag, Berlin, 1974.


\bibitem{Mnotas}  {\sc M. Mombelli}. \emph{Una introducci\'on a las categor\'ias tensoriales y sus representaciones}, http://www.famaf.unc.edu.ar/~mombelli/categorias-tensoriales3.pdf


\bibitem{M} {\sc M. M\"uger}. {\em On the Structure of Modular Categories},
Proc. London Math. Soc. {\bf 87}/3 (2003), 291--308.

\bibitem{Mu}  {\sc M. M\"uger}. \emph{From Subfactors to Categories and Topology I}, J. Pure Appl. Algebra {\bf 180} (2003), 81-157.


\bibitem{Neu}  {\sc M. Neuchl}. \emph{Representation theory of Hopf categories}, PhD thesis.

\bibitem{Sw4} {\sc M.E. Sweedler}. {\em The predual Theorem to the Jacobson-Bourbaki
Theorem}, Trans. Amer. Math. Soc. {\bf 213} (1975), 391--406.


\bibitem{VZ1} {\sc F. Van Oystaeyen} and {\sc Y.H. Zhang}. {\em The Brauer group of a
braided monoidal category}, J. Algebra {\bf 202} (1998), 96-128.

\bibitem{ViZ} {\sc O.E. Villamayor} and {\sc D. Zelinsky}. {\em Brauer groups and Amitsur cohomology for
general commutative ring extensions}, J. Pure Appl. Algebra {\bf 10} (1977), 19-55.


\end{thebibliography}

\end{document}